\documentclass[a4paper]{article}

\usepackage{amsfonts}
\usepackage{amsmath}
\usepackage{amssymb}
\usepackage{amsthm}
\usepackage[UKenglish]{babel}
\usepackage{Baskervaldx}
\usepackage{bm}
\usepackage{booktabs}
\usepackage{caption}
\usepackage[noadjust]{cite}
\usepackage{enumitem}
    \setlist{topsep=0pt,itemsep=0pt}
\usepackage{faktor}
\usepackage{float}
\usepackage[T1]{fontenc}
\usepackage[margin=25mm]{geometry}
\usepackage{mathscinet}
\usepackage{mathtools}
\usepackage{microtype}
\usepackage{sectsty}
    \sectionfont{\large}
    \subsectionfont{\normalsize}
\usepackage{tabularx}
\usepackage{tikz}
    \usetikzlibrary{arrows.meta}
\usepackage{titlesec}
    \titlespacing{\section}{0pt}{12pt}{0pt}
    \titlespacing{\subsection}{0pt}{6pt}{0pt}
\usepackage{titling}
\usepackage[nottoc]{tocbibind}
\usepackage{tocloft}
\usepackage[normalem]{ulem}
\usepackage{url}
\usepackage{ytableau}

\linespread{1.1}

\setlength\parindent{0pt}
\setlength{\parskip}{6pt}
\setlength{\skip\footins}{12pt}
\setlength{\cftbeforetoctitleskip}{4pt}
\setlength{\cftaftertoctitleskip}{4pt}
\setlength{\cftbeforesecskip}{4pt}

\usepackage[colorlinks=true, linkcolor=blue, citecolor=blue, urlcolor=magenta, breaklinks=true]{hyperref}
\usepackage[capitalise,noabbrev]{cleveref}
    \crefname{equation}{equation}{equations}
    \crefname{conjecture}{Conjecture}{Conjectures}

\theoremstyle{plain}
    \newtheorem{theorem}{Theorem}
    \newtheorem{proposition}[theorem]{Proposition}
    \newtheorem{corollary}[theorem]{Corollary}
    \newtheorem{conjecture}[theorem]{Conjecture}
    \newtheorem{theoremx}{Theorem}
        
    \newtheorem{conjecturex}[theoremx]{Conjecture}
        
    \numberwithin{theorem}{section}
\theoremstyle{definition}
    \newtheorem{definition}[theorem]{Definition}
    \newtheorem{example}[theorem]{Example}
    \newtheorem{remark}[theorem]{Remark}

\renewcommand{\leq}{\leqslant}
\renewcommand{\geq}{\geqslant}

\newcommand{\dd}{\mathrm{d}}
\newcommand{\h}{\hbar}
\newcommand{\e}{\mathfrak{e}}
\newcommand{\m}{\mathfrak{m}}
\newcommand{\n}{\mathfrak{n}}
\newcommand{\p}{\mathfrak{p}}
\newcommand{\w}{\mathfrak{w}}
\renewcommand{\O}{\mathbf{O}}
\newcommand{\J}{\mathcal{J}}
\renewcommand{\P}{\mathcal{P}}
\newcommand{\V}{\mathcal{V}}
\newcommand{\X}{\mathcal{X}}
\newcommand{\Z}{\mathcal{Z}}
\newcommand{\hb}{\vec{h}^{(b)}}
\newcommand{\Hb}{\vec{H}^{(b)}}
\newcommand{\hbt}{\vec{h}^{(bt)}}
\newcommand{\Hbt}{\vec{H}^{(bt)}}
\newcommand{\wga}{\mathrm{Wg}^{\mathbf{A}}}
\newcommand{\wgb}{\mathrm{Wg}^{(b)}}
\newcommand{\wgo}{\mathrm{Wg}^{\mathbf{O}}}
\newcommand{\wgbt}{\mathrm{Wg}^{(bt)}}
\newcommand{\Tab}{\mathsf{Tab}}
\newcommand{\mmid}{\,|\,}

\title{From Weingarten calculus for real Grassmannians to deformations of monotone Hurwitz numbers and Jucys--Murphy elements}
\author{Xavier Coulter \and Norman Do}

\begin{document}

\makeatletter
\textbf{\large \thetitle}

\textbf{\theauthor}
\makeatother

Department of Mathematics, The University of Auckland, Auckland 1142 New Zealand \\
School of Mathematics, Monash University, VIC 3800 Australia \\
Email: \href{mailto:xavier.coulter@auckland.ac.nz}{xavier.coulter@auckland.ac.nz}, 
\href{mailto:norm.do@monash.edu}{norm.do@monash.edu}

\emph{Abstract.} The present work is inspired by three interrelated themes: Weingarten calculus for integration over unitary groups, monotone Hurwitz numbers which enumerate certain factorisations of permutations into transpositions, and Jucys--Murphy elements in the symmetric group algebra. The authors and Moskovsky recently extended this picture to integration on complex Grassmannians, leading to a deformation of the monotone Hurwitz numbers to polynomials that are conjectured to satisfy remarkable interlacing phenomena.

In this paper, we consider integration on the real Grassmannian $\mathrm{Gr}_\mathbb{R}(M,N)$, interpreted as the space of $N \times N$ idempotent real symmetric matrices of rank $M$. We show that in the regime of large $N$ and fixed~$\frac{M}{N}$, such integrals have expansions whose coefficients are variants of monotone Hurwitz numbers that are polynomials in the parameter $t = 1 - \frac{N}{M}$.

We define a ``$b$-Weingarten calculus'', without reference to underlying matrix integrals, that recovers the unitary case at $b = 0$ and the orthogonal case at $b = 1$. The $b$-monotone Hurwitz numbers, previously introduced by Bonzom, Chapuy and Do\l\polhk{e}ga, arise naturally in this context as monotone factorisations of pair partitions. The $b$- and $t$-deformations can be combined to form a common generalisation, leading to the notion of $bt$-monotone Hurwitz numbers, for which we state several results and conjectures.

Finally, we introduce certain linear operators inspired by the aforementioned $b$-Weingarten calculus that can be considered as $b$-deformations of the Jucys--Murphy elements in the symmetric group algebra. We make several conjectures regarding these operators that generalise known properties of the Jucys--Murphy elements and make a connection to the family of Jack symmetric functions.

\emph{Acknowledgements.} XC was supported by a University of Auckland Doctoral Scholarship. ND was supported by the Australian Research Council grant FT240100795. Both authors thank Brice Arrigo for useful discussions as well as Valentin Bonzom and Guillaume Chapuy for insightful questions that inspired the present work.

\emph{2020 Mathematics Subject Classification.} 05A15, 05E10, 15B52, 60B20

\vspace{3mm} \hrule \vspace{2mm}

\tableofcontents

\vspace{3mm} \hrule

\section{Introduction}

\subsection*{Background and context}

The present work is inspired by the following three themes and the rich connections between them.

\begin{itemize}
\item {\em Weingarten calculus} was originally developed to compute integrals of products of matrix elements over unitary groups, with respect to the Haar measure~\cite{col03,wei78}. 
\begin{equation} 
\int_{\mathbf{U}(N)} U_{i(1)j(1)} U_{i(2)j(2)} \cdots U_{i(k)j(k)} U^*_{\overline{i}(1)\overline{j}(1)} U^*_{\overline{i}(2)\overline{j}(2)} \cdots U^*_{\overline{i}(k)\overline{j}(k)} \, \dd \mu
\end{equation}
In recent decades, Weingarten calculus has developed into a rich theory broadly concerned with integration on compact groups and related objects, with respect to the Haar measure~\cite{col-mat-nov22}. Modern accounts of Weingarten calculus often rely on elegant algebraic approaches via Schur--Weyl duality~\cite{col-sni06}. In this paper, we instead follow the approach via orthogonality relations utilised by Collins and Matsumoto~\cite{col-mat17}, which is in turn inspired by the ideas contained in the seminal paper of Weingarten~\cite{wei78}.

\item {\em Monotone Hurwitz numbers} enumerate certain factorisations of permutations into transpositions 
\begin{equation} \label{eq:factorisation}
p = (a_1~b_1) \circ (a_2~b_2) \circ \cdots \circ (a_r~b_r)
\end{equation}
that satisfy the monotonicity constraint $b_1 \leq b_2 \leq \cdots \leq b_r$, where we always write $a_i < b_i$. They were introduced by Goulden, Guay-Paquet and Novak, who recognised their appearance as coefficients in the large~$N$ expansion of the HCIZ matrix integral over the unitary groups~\cite{gou-gua-nov14}. They are now known to appear in various contexts, such as enumeration of maps~\cite{BCDG19}, free probability theory~\cite{BCGLS21}, and topological recursion~\cite{do-dye-mat17}.

\item The {\em Jucys--Murphy elements} $J_1, J_2, \ldots, J_k$ are elements of the symmetric group algebra defined by
\begin{equation} \label{eq:jucysmurphy}
J_i = (1~i) + (2~i) + \cdots + (i-1~i) \in \mathbb{C}[S_k],
\end{equation}
where we interpret the formula for $i = 1$ as $J_1 = 0$. They were introduced independently by Jucys~\cite{juc74} and Murphy~\cite{mur81}, and their seemingly simple definition belies their remarkable properties. For example, they commute with each other and indeed, generate a maximal commutative subalgebra of $\mathbb{C}[S_k]$. Any symmetric function of the Jucys--Murphy elements lies in the centre of the symmetric group algebra $Z\mathbb{C}[S_k]$ and the class expansions of such expressions are of significant interest, appearing in various contexts~\cite{las09}. Furthermore, the Jucys--Murphy elements are essential components of the Okounkov--Vershik approach to the representation theory of symmetric groups~\cite{oko-ver96}.
\end{itemize}

In recent work, the authors and Moskovsky developed a Weingarten calculus for integration over the space of $N \times N$ idempotent Hermitian matrices of rank~$M$~\cite{cou-do-mos23}. This space is naturally homeomorphic to the Grassmannian $\mathrm{Gr}_\mathbb{C}(M,N)$ of $M$-dimensional subspaces of an $N$-dimensional complex vector space. The coefficients of the large $N$ fixed $\frac{M}{N}$ expansion of these integrals gives rise to a deformation of the monotone Hurwitz numbers that promotes them to polynomials in the parameter $t = 1 - \frac{N}{M}$. These are obtained by counting with the weight $t^{|\{b_1, b_2, \ldots, b_r\}|}$ assigned to the factorisation of \cref{eq:factorisation}, where duplicates are removed from the set $\{b_1, b_2, \ldots, b_r\}$. In the same work, it was empirically observed that the polynomials arising from these so-called $t$-monotone Hurwitz numbers are real-rooted and satisfy remarkable interlacing phenomena, which remain conjectural at present~\cite{cou-do-mos23}.

\subsection*{Weingarten calculus for real Grassmannians}

In the present work, our starting point is the natural analogue of the previous work of the authors and Moskovsky~\cite{cou-do-mos23}, in the setting of integration on real Grassmannians. In particular, we consider the space $\mathbf{A}(M,N)$ of $N \times N$ idempotent real symmetric matrices of rank $M$, which is naturally homeomorphic to the Grassmannian $\mathrm{Gr}_{\mathbb{R}}(M,N)$ of $M$-dimensional subspaces of an $N$-dimensional real vector space.

We develop a Weingarten calculus for integration of polynomials in the matrix elements over $\mathbf{A}(M,N)$, with respect to the Haar measure $\dd \mu$. Whereas the original Weingarten calculus for unitary groups involves permutations, symmetric groups, and Jucys--Murphy elements~\cite{col03,nov10,mat-nov13}, our work relies on the Weingarten calculus for orthogonal groups, which analogously involves pair partitions, hyperoctahedral groups, and odd Jucys--Murphy elements~\cite{col-sni06,mat11}. A minimal introduction to all of these notions is provided in \cref{sec:prelim}. For the present purposes, it suffices to define a pair partition of a set to be a partition of its elements into unordered pairs and to note that the symmetric group $S_{2k}$ acts on the set of pair partitions of $\{1, 2, \ldots, 2k\}$, which we denote by $\P_k$.

Define the Weingarten function $\wga$ on a pair partition $\m$ of the set $\{1, 2, \ldots, 2k\}$ via the equation
\begin{equation} \label{eq:wgafunction}
\wga(\m) = \int_{\mathbf{A}(M,N)} A_{i(1)i(2)} A_{i(3)i(4)} \cdots A_{i(2k-1)i(2k)} \, \dd \mu.
\end{equation}
Here, $i: \{1, 2, \ldots, 2k\} \to \{1, 2, \ldots, N\}$ is any function such that $i(a) = i(b)$ if and only if $\{a,b\} \in \m$. We prove that any integral of a polynomial in the matrix elements over $\mathbf{A}(M,N)$ can be expressed as a sum of values of the Weingarten function. Furthermore, we show that these values can be calculated recursively, have large $N$ fixed $\frac{M}{N}$ expansions whose coefficients count monotone factorisations of pair partitions, and are related to the odd Jucys--Murphy elements $J_1, J_3, \ldots, J_{2k-1} \in \mathbb{C}[S_{2k}]$.

\begin{theoremx}[Weingarten calculus for $\mathbf{A}(M,N)$] ~ \\
The Weingarten function $\wga$ defined by \cref{eq:wgafunction} satisfies the following.
\begin{itemize}
\item {\em Convolution formula (\cref{prop:convolution_A})} \\
For any function $i: \{1, 2, \ldots, 2k\} \to \{1, 2, \ldots, N\}$,
\[
\int_{\mathbf{A}(M,N)} A_{i(1) i(2)} A_{i(3) i(4)} \cdots A_{i(2k-1) i(2k)} \, \dd \mu = \sum_{\m \in \P_{k}} \Delta_{\m}(i) \, \wga(\m).
\]
Here, we set $\Delta_\m(i) = 1$ if $i$ is a function such that $\{a,b\} \in \m$ implies $i(a) = i(b)$ and $\Delta_\m(i) = 0$ otherwise.

\item {\em Orthogonality relations (\cref{thm:orthogonality_A})} \\
For each non-empty pair partition $\m \in \P_k$, the Weingarten function $\wga$ satisfies the relation
\begin{align*}
\wga(\m) = &-\frac{1}{N} \sum_{i=1}^{2k-2} \wga((i~2k-1) \cdot \m) + \frac{M}{N} \delta_{\{2k-1,2k\} \in \m} \wga(\m^\downarrow) \\
&+ \frac{1}{N} \sum_{i=1}^{2k-2} \delta_{\{i,2k\}\in\m} \wga([(i~2k-1)\cdot\m]^\downarrow).
\end{align*}
Here, we set $\delta_{\{i,j\} \in \m} = 1$ if $\{i,j\} \in \m$ and $\delta_{\{i,j\} \in \m} = 0$ otherwise. If $\{2k-1,2k\} \in \m$, we define $\m^\downarrow \in \P_{k-1}$ to be the pair partition obtained by removing the pair $\{2k-1, 2k\}$ from $\m$.

\item {\em Large $N$ expansion (\cref{thm:expansion_A})} \\
For each pair partition $\m \in \P_k$, we have the large $N$ fixed $\frac{M}{N}$ expansion 
\[
\wga(\m) = \frac{1}{(1-t)^k} \sum_{r=0}^\infty \vec{h}^{(t)}_r(\m) \left(-\frac{1}{N}\right)^r,
\]
where $t = 1 - \frac{N}{M}$. The coefficient $\vec{h}^{(t)}_r(\m) \in \mathbb{Z}[t]$ denotes the weighted enumeration of monotone factorisations of $\m$ with length $r$, where the weight of a monotone factorisation $\bm{\tau}$ is given by $t^{\mathrm{hive}(\bm{\tau})}$, terminology that is explained in \cref{subsec:expansion_A}.

\item {\em Representation-theoretic interpretation (\cref{prop:jucysmurphy_A})} \\
For each positive integer $k$, we have the following equality in the vector space $\mathbb{C}[\P_k]$, where $\e_k$ denotes the identity pair partition $(1 ~ 2 \mmid 3 ~ 4 \mmid \cdots \mid 2k-1 ~ 2k)$.
\[
\sum_{\m \in \P_k} \wga(\m) \, \m = \prod_{i=1}^k \frac{M + J_{2i-1}}{N + J_{2i-1}} \cdot \e_k
\]
\end{itemize}
\end{theoremx}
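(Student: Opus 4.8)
The plan is to induct on $k$, using the orthogonality relations of \cref{thm:orthogonality_A} to convert the recursion satisfied by $\wga$ into a clean operator identity on $\mathbb{C}[\P_k]$. Write $W_k = \sum_{\m \in \P_k} \wga(\m)\,\m$ for the element whose closed form is sought, and let $\iota \colon \P_{k-1} \hookrightarrow \P_k$ denote the embedding that adjoins the pair $\{2k-1,2k\}$, extended linearly to a map $\mathbb{C}[\P_{k-1}] \to \mathbb{C}[\P_k]$. The base case $k=1$ is immediate: here $\P_1 = \{\e_1\}$ and $J_1 = 0$, so the claimed formula reduces to $\wga(\e_1) = \frac{M}{N}$, which follows from $\O(N)$-invariance of the Haar measure together with $\mathrm{tr}\,A = M$.

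For the inductive step, I would multiply the orthogonality relation by $\m$ and sum over all $\m \in \P_k$. The left-hand side yields $W_k$. In the first sum on the right, reindexing by $\m \mapsto (i~2k-1)\cdot\m$—legitimate since each transposition is an involution—and recognising $J_{2k-1} = \sum_{i=1}^{2k-2}(i~2k-1)$ identifies the contribution as $-\frac{1}{N}\,J_{2k-1}\cdot W_k$. The second sum ranges over exactly those $\m$ containing $\{2k-1,2k\}$, which form the image of $\iota$, and contributes $\frac{M}{N}\,\iota(W_{k-1})$. For the third sum, the condition $\{i,2k\}\in\m$ forces $(i~2k-1)\cdot\m$ to contain $\{2k-1,2k\}$, so the same reindexing followed by collecting $J_{2k-1}$ gives $\frac{1}{N}\,J_{2k-1}\cdot\iota(W_{k-1})$. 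Gathering the four pieces produces
\[
(N + J_{2k-1})\,W_k = (M + J_{2k-1})\,\iota(W_{k-1}).
\]

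It then remains to solve for $W_k$, substitute the inductive hypothesis $W_{k-1} = \prod_{i=1}^{k-1}\frac{M+J_{2i-1}}{N+J_{2i-1}}\cdot\e_{k-1}$, and commute $\iota$ past the lower odd Jucys–Murphy elements. The structural facts needed are that $\iota(\e_{k-1}) = \e_k$, that $\iota$ is equivariant for the action of each $J_{2i-1}$ with $i \leq k-1$ (these involve only transpositions fixing $2k-1$ and $2k$, hence preserve the image of $\iota$ and restrict there to their $\P_{k-1}$-counterparts), and that the odd Jucys–Murphy elements pairwise commute. Together these give $\iota(W_{k-1}) = \prod_{i=1}^{k-1}\frac{M+J_{2i-1}}{N+J_{2i-1}}\cdot\e_k$, and applying $\frac{M+J_{2k-1}}{N+J_{2k-1}}$ completes the induction.

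The main obstacle is making rigorous sense of inverting $N + J_{2k-1}$. I would handle this by regarding the whole computation as taking place over the field of rational functions in $N$ (equivalently in $t$): the operator $J_{2k-1}$ acts on the finite-dimensional space $\mathbb{C}[\P_k]$ and so has finitely many eigenvalues, whence $N + J_{2k-1}$ is invertible for all but finitely many $N$. Since each $\wga(\m)$, and hence $W_k$ and the right-hand side, are rational in $N$, the polynomial identity above then forces equality as rational functions, and the inversion is justified. A secondary technical point is confirming that the functional calculus defining $\frac{M+J_{2i-1}}{N+J_{2i-1}}$ is compatible with $\iota$; this follows because $\iota(\mathbb{C}[\P_{k-1}])$ is an invariant subspace for each $J_{2i-1}$ with $i \leq k-1$, on which $J_{2i-1}$ is intertwined with its $\P_{k-1}$-analogue, so that inverses and products of these commuting operators transport correctly through $\iota$.
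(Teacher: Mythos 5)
Your proposal proves only one of the four assertions bundled in this theorem, namely the representation-theoretic interpretation (\cref{prop:jucysmurphy_A}), and it does so by taking the orthogonality relations --- the second assertion --- as a given input. As a proof of the full statement this is circular: \cref{thm:orthogonality_A} is itself part of what must be proved, and it is where the genuine analytic content lies. In the paper, its proof rests on the convolution formula (\cref{prop:convolution_A}), obtained by writing $A = O I_{M,N} O^T$, lifting the integral to $\O(N)$, and invoking the orthogonal convolution formula; one then derives the orthogonality relations by integral manipulations that exploit the two special features of this ensemble, $\mathrm{Tr}(A) = M$ and $A^2 = A$, treated in separate cases according to whether $\{2k-1,2k\} \in \m$. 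None of these steps appears in your proposal, and they cannot be recovered from the operator formalism you set up. The third bullet (large $N$ expansion) is likewise untouched: it requires constructing the Weingarten graph $\mathcal{G}^{\mathbf{A}}$ from the orthogonality relations, recording the transpositions along a path to produce a monotone factorisation of a pair partition, and computing the fibres of that map (of cardinality $2^{\mathrm{hive}(\bm{\tau})}$) to convert the $\left(\frac{M}{N}\right)^{\ell_B}\left(\frac{1}{N}\right)^{\ell_C}$ path weights into the weight $t^{\mathrm{hive}(\bm{\tau})}$ with $t = 1 - \frac{N}{M}$; this is precisely where the parameter $t$ and the polynomiality claim $\vec{h}^{(t)}_r(\m) \in \mathbb{Z}[t]$ come from.

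The part you did prove is correct, and it follows essentially the same route as the paper's own proof of \cref{prop:jucysmurphy_A}: multiply the orthogonality relation by $\m$, sum over $\P_k$, recognise the two sums over $i$ as the action of the odd Jucys--Murphy element $J_{2k-1}$, arrive at $(N+J_{2k-1})\,W_k = (M+J_{2k-1})\,\iota(W_{k-1})$, and conclude via the inductive hypothesis together with the $\iota$-equivariance of $J_1, J_3, \ldots, J_{2k-3}$. Your justification for inverting $N + J_{2k-1}$ by working over rational functions in $N$ is a legitimate point that the paper leaves implicit. To make the proposal a proof of the stated theorem, you must supply proofs of the first three bullets; the fourth alone, conditional on the second, does not suffice.
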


Our basis for understanding the Weingarten calculus for $\mathbf{A}(M,N)$ is by forming the Weingarten graph~$\mathcal{G}^{\mathbf{A}}$, whose vertices are pair partitions and whose edges represent the terms of the orthogonality relations above. By construction, the large $N$ fixed $\frac{M}{N}$ expansion of the Weingarten function $\wga$ has coefficients that are weighted enumerations of paths in the Weingarten graph $\mathcal{G}^{\mathbf{A}}$. Equivalently, these coefficients can be expressed as a weighted enumeration of monotone factorisations of pair partitions
\begin{equation} \label{eq:oddfactorisation}
(a_1~b_1) \circ (a_2~b_2) \circ \cdots \circ (a_r~b_r) \cdot \m = \e_k,
\end{equation}
where $b_1 \leq b_2 \leq \cdots \leq b_r$ are odd integers. In the following, we show that enumerating monotone factorisations of permutations (see \cref{eq:factorisation}) and enumerating monotone factorisations of pair partitions (see \cref{eq:oddfactorisation}) can be encompassed by a single weighted enumeration that carries a parameter $b$.

\subsection*{The $bt$-monotone Hurwitz numbers}

Inspired by the $b$-deformation arising from Jack functions, we introduce a $b$-Weingarten calculus that carries a parameter $b$ and is constructed to recover the unitary Weingarten calculus at $b = 0$ and the orthogonal Weingarten calculus at $b = 1$. Although there are no underlying matrix integrals for general values of $b$, we construct a $b$-Weingarten graph and a $b$-Weingarten function that enumerates paths within the graph. The fundamental object underlying these constructions is a weight function that takes as input two pair partitions and returns as output an element of the set $\{0, 1, b\}$. As a consequence, we show that the \mbox{$b$-Weingarten} function has a large $N$ expansion whose coefficients are $b$-monotone Hurwitz numbers, which were first introduced by Bonzom, Chapuy and Do\l{}\k{e}ga via Jack functions~\cite{bon-cha-dol23}. A consequence of our work is a combinatorial interpretation for $b$-monotone Hurwitz numbers as an enumeration of monotone factorisations of pair partitions, weighted by $b$ to the power of a certain ``flip number''.

It is natural to ask whether the $b$-deformation of the previous paragraph is compatible with the $t$-deformation arising from the Weingarten calculus for Grassmannians, defined previously in the complex/unitary case~\cite{cou-do-mos23} and presently in the real/orthogonal case. The two deformations are indeed compatible and this leads to the notion of a $bt$-Weingarten calculus and $bt$-monotone Hurwitz numbers.

Following the definition of $b$-monotone Hurwitz numbers of Bonzom, Chapuy and Do\l\polhk{e}ga~\cite{bon-cha-dol23}, we may define $bt$-monotone Hurwitz numbers via the following equivalence of generating functions. The Jack functions and related constructions that appear in the first line are explained in \cref{subsec:jack}, while the sum over $g$ that appears in the second line is actually over $\frac{1}{2}\mathbb{N} = \{0, \frac{1}{2}, 1, \frac{3}{2}, \ldots\}$.
\begin{align*}
Z^{(bt)}(p_1, p_2, \ldots; \h, z) &= \sum_{k \geq 0} \frac{z^k}{\h^k} \sum_{\lambda \vdash k} \Bigg( \prod_{\Box \in \lambda} \frac{1- (1-t)
\h c_b(\Box)}{1 - \h c_b(\Box)} \Bigg) \, \frac{J^{(b)}_\lambda(p_1, p_2, \ldots)}{\mathrm{hook}_b(\lambda) \, \mathrm{hook}_b'(\lambda)} \\
&= \exp \Bigg[ \sum_{k \geq 0} z^k \sum_{g \geq 0} \sum_{n \geq 1} \frac{\h^{2g-2+n}}{n!} \sum_{\mu_1, \ldots, \mu_n \geq 1} \Hbt_{g,n}(\mu_1, \ldots, \mu_n) \, \frac{p_{\mu_1} \cdots p_{\mu_n}}{(b+1)^n} \Bigg]
\end{align*}

One can ask whether known properties of monotone Hurwitz numbers carry over to this two-parameter deformation.

\begin{theoremx}[$bt$-monotone Hurwitz numbers] ~ \\
The $bt$-monotone Hurwitz numbers satisfy the following.
\begin{itemize}
\item {\em Virasoro constraints} (\cref{thm:virasoro_bt}) \\
The partition function $Z^{(bt)}(p_1, p_2, \ldots; \h, z)$ satisfies $L_m^{(bt)} \cdot Z^{(bt)} = 0$, for $m = 1, 2, 3, \ldots$, where the differential operator $L_m^{(bt)}$ is defined in \cref{thm:virasoro_bt}. Moreover, the operators $L_1^{(bt)}, L_2^{(bt)}, L_3^{(bt)}, \ldots$ satisfy the Virasoro commutation relations $[L_m^{(bt)}, L_n^{(bt)}] = (m-n) \, L_{m+n}^{(bt)}$ for $m, n \geq 1$.

\item {\em Cut-join-flip recursion} (\cref{cor:cjfrecursion}) \\
The $bt$-monotone Hurwitz numbers satisfy an effective recursion appearing in \cref{cor:cjfrecursion}, which generalises the cut-join recursion for usual monotone Hurwitz numbers~\cite{gou-gua-nov13}.
\item {\em Combinatorial interpretation} (\cref{prop:btmonotonecombinatorics}) \\
The $bt$-monotone Hurwitz numbers satisfy
\[
\Hbt_{g,n}(\lambda) = \frac{1}{\prod \lambda_i} \sum_{\substack{\bm{\tau}\in \mathrm{CMono}(\m) \\ \ell(\bm{\tau}) = |\lambda|+2g-2+n}} b^{\mathrm{flip}(\bm{\tau})} t^{\mathrm{hive}(\bm{\tau})},
\]
where $\m$ is any pair partition of coset-type $\lambda$. The summation is over connected monotone factorisations $\bm{\tau}$ of $\m$ with length $|\lambda|+2g-2+n$. Connected monotone factorisations and hive number are defined in \cref{def:factorisation}, while flip is defined in \cref{def:flip}.
\end{itemize}
\end{theoremx}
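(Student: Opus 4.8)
The plan is to establish the three assertions in the order that reflects their logical dependence: first the combinatorial interpretation (\cref{prop:btmonotonecombinatorics}), which anchors the Jack-function definition of $\Hbt_{g,n}$ to the $b$-Weingarten model, then the Virasoro constraints (\cref{thm:virasoro_bt}), and finally the cut-join-flip recursion (\cref{cor:cjfrecursion}) as a consequence. The unifying object is the $bt$-Weingarten function $\wgbt$ together with its large $N$ expansion, which I would relate on one side to the content-weighted Jack expansion defining $Z^{(bt)}$ and on the other to weighted enumerations of monotone factorisations of pair partitions. For the combinatorial interpretation, I would first promote the representation-theoretic identity of \cref{prop:jucysmurphy_A} to its $b$-deformation, writing $\sum_{\m \in \P_k} \wgbt(\m)\, \m$ as the product $\prod_{i=1}^{k} \frac{M + J_{2i-1}}{N + J_{2i-1}}$ reinterpreted through Jack functions $J^{(b)}_\lambda$ rather than zonal spherical functions. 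Diagonalising this product in the Jack basis converts each factor $\frac{M + \alpha c_b(\Box)}{N + \alpha c_b(\Box)}$ into $\frac{1}{1-t}\cdot\frac{1 - (1-t)\h c_b(\Box)}{1 - \h c_b(\Box)}$ under the substitutions $t = 1 - \tfrac{N}{M}$ and $\h = -\tfrac{\alpha}{N}$, so that the product over the $|\lambda|$ boxes reproduces exactly the content factor and the prefactor $\frac{1}{(1-t)^k}$ appearing in the first line defining $Z^{(bt)}$, consistent with \cref{thm:expansion_A}.

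The combinatorial side then comes from expanding $\wgbt$ as a path count in the $b$-Weingarten graph, equivalently a sum over monotone factorisations of the relevant pair partition weighted by $b^{\mathrm{flip}} t^{\mathrm{hive}}$. Taking the logarithm of the resulting generating series passes from all factorisations to connected ones, turning the product over partitions into the exponential appearing in the second line of the definition and the full count into the connected count $\mathrm{CMono}(\m)$; the grading $\ell(\bm{\tau}) = |\lambda| + 2g - 2 + n$ emerges from the Euler-characteristic bookkeeping that matches the power $\h^{2g-2+n}$. The main obstacle lies precisely here: verifying that the power of $b$ produced by the Jack expansion, governed by the $b$-deformed contents $c_b(\Box)$ and the Jack structure constants, coincides term-by-term with the flip statistic $\mathrm{flip}(\bm{\tau})$ of \cref{def:flip}. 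I expect this to require either an interpolation argument -- the identity being known at $b = 0$ and $b = 1$, with both sides polynomial in $b$ of controlled degree and agreeing on enough specialisations -- or, more satisfyingly, a direct check that the $b$-Weingarten weight function valued in $\{0, 1, b\}$ reproduces the relevant Jack Pieri coefficients.

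For the Virasoro constraints, I would exploit that the Jack functions are joint eigenfunctions of the Sekiguchi--Debiard content operators, so that the content factor multiplying $J^{(b)}_\lambda$ in $Z^{(bt)}$ is the eigenvalue of an operator diagonal in the Jack basis. The cleanest route, if available, is to produce a dressing operator $\Phi$ conjugating the standard Virasoro generators $\ell_m$ (the non-negative half of a Heisenberg--Virasoro representation on the $p$-variables) to the operators $L_m^{(bt)}$, in which case the relations $[L_m^{(bt)}, L_n^{(bt)}] = (m-n)\, L_{m+n}^{(bt)}$ are inherited automatically from $[\ell_m, \ell_n] = (m-n)\, \ell_{m+n}$. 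To prove annihilation $L_m^{(bt)} \cdot Z^{(bt)} = 0$ I would act on the Jack expansion, using the Pieri rule for multiplication by $p_1$ and its adjoint for $\partial_{p_1}$ to move between $\lambda$ and $\lambda \pm \Box$, and check that the ratios of content factors between neighbouring partitions telescope against the Jack structure constants; the specific rational form $\frac{1 - (1-t)\h c_b(\Box)}{1 - \h c_b(\Box)}$ is what makes this cancellation possible.

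Finally, the cut-join-flip recursion follows by extracting the coefficient of a fixed monomial $p_{\mu_1} \cdots p_{\mu_n}$ from the lowest Virasoro constraints, converting the operator identity on $Z^{(bt)}$ into an effective recursion on the numbers $\Hbt_{g,n}(\mu_1, \ldots, \mu_n)$. The three families of terms -- those lowering $n$, those splitting a part, and those carrying an explicit factor of $b$ -- correspond respectively to the cut, join, and flip moves, and specialising $b = t = 0$ recovers the cut-join recursion of Goulden--Guay-Paquet--Novak. The combinatorial interpretation of \cref{prop:btmonotonecombinatorics} then furnishes an independent cross-check of this recursion by peeling off the last transposition of a connected monotone factorisation and tracking whether it merges pairs, splits a cycle, or contributes a flip.
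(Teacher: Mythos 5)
There is a genuine gap, and it sits exactly where you flagged it. Your route to the combinatorial interpretation (\cref{prop:btmonotonecombinatorics}) hinges on diagonalising the $b$-deformed analogue of $\prod_{i=1}^{k} \frac{M+J_{2i-1}}{N+J_{2i-1}} \cdot \e_k$ in a ``Jack basis'' on which the deformed Jucys--Murphy operators act with eigenvalues the $b$-contents $c_b(\Box)$. The correct $b$-analogue of \cref{prop:jucysmurphy_A} does exist (\cref{prop:jucysmurphy_b}, with the $\J$-operators in place of the odd Jucys--Murphy elements), and your algebra converting each factor into $\frac{1}{1-t}\cdot\frac{1-(1-t)\h c_b(\Box)}{1-\h c_b(\Box)}$ is sound \emph{conditional on} that diagonalisation; but the diagonalisation itself is precisely \cref{con:jucys-murphy,con:bJMsymmetric}, which are open. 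At $b=0$ and $b=1$ it follows from the representation theory of $S_k$ and of the Gelfand pair $(S_{2k},H_k)$, but for general $b$ the paper states explicitly that no such foundation is known --- that is the main obstruction the whole of \cref{sec:jucys-murphy} is organised around. Your fallbacks do not repair this: interpolation from $b\in\{0,1\}$ cannot determine polynomials in $b$ whose degree grows with the length of the factorisations being counted, and matching the weight function $\omega^{(b)}$ against Jack--Pieri coefficients is a restatement of the open problem rather than a method for solving it. (Invoking \cref{prop:bJucysMurphy} would be circular, since the paper deduces that result \emph{from} the expansion statements you are trying to prove.)

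The paper's actual proof reverses your logical order precisely to avoid any spectral input. It proves the Virasoro constraints (\cref{thm:virasoro_bt}) first, entirely on the Jack side: a general result for $b$-deformations of weighted Hurwitz numbers gives the single equation $\sum_{n\geq 1} p_n L_n^{(bt)} \cdot Z^{(bt)} = 0$, and an inductive commutator argument (adapted from Bonzom--Chapuy--Do\l\polhk{e}ga) splits it into the individual constraints $L_m^{(bt)} \cdot Z^{(bt)} = 0$; the commutation relations are checked by brute force rather than by constructing a dressing operator. The bridge to combinatorics is then \cref{prop:expansion_BT}: the orthogonality relations of \cref{prop:orthogonality_BT} give a recursion for the path-counting coefficients of $\wgbt$, whose cut/join/flip case analysis (via the charge construction of \cref{def:weightfunction}) reproduces term-by-term the recursion that the constraint $L_{\lambda_1}^{(bt)} \cdot Z^{(bt)} = 0$ imposes on $\hbt_r$; since both sequences share initial conditions, they coincide by induction --- no diagonalisation is ever needed. \cref{prop:btmonotonecombinatorics} then follows from the path-to-factorisation map, the one nontrivial check being that trading consecutive type $A$ and $B$ edges for a type $C$ edge preserves the weight: the two relevant vertices are both adjacent to the vertex $2j$ in $\Gamma(\mathfrak{a})$, hence carry the same charge, so the $\omega^{(b)}$-value attached to the type $A$ edge is $1$. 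Your derivation of the cut-join-flip recursion from the Virasoro constraints does agree with the paper's \cref{cor:cjfrecursion}; but as written, your plan for the first and central item cannot be completed without first resolving the conjectures of \cref{sec:jucys-murphy}.
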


When considered as polynomials in $t$, the coefficients and roots of the $bt$-monotone Hurwitz numbers possess interesting structure. For example, their coefficients form a sequence of polynomials in $b$ that is symmetric. At the level of roots, it appears that the $bt$-monotone Hurwitz numbers are real-rooted and that they exhibit interlacing phenomena. Two real-rooted polynomials are said to {\em interlace} if their degrees differ by one and their roots weakly alternate on the real number line. We have gathered overwhelming numerical
evidence to support the following conjectures.

\begin{conjecturex} [Real-rootedness and interlacing -- \cref{con:btrealrooted,con:btinterlacing}]
For any value of $b \in \mathbb{R} \setminus \{0, -1\}$, 
\begin{itemize}
\item the $bt$-monotone Hurwitz number $\Hbt_{g,n}(\mu_1, \ldots, \mu_n)$ is a real-rooted polynomial in $t$;

\item the polynomial $\Hbt_{g,n}(\mu_1, \ldots, \mu_n)$ interlaces each of the $n$ polynomials
\[
\Hbt_{g,n}(\mu_1+1, \mu_2, \ldots, \mu_n), \quad \Hbt_{g,n}(\mu_1, \mu_2+1, \ldots, \mu_n), \quad \ldots, \quad \Hbt_{g,n}(\mu_1, \mu_2, \ldots, \mu_n+1).
\]
\end{itemize}
\end{conjecturex}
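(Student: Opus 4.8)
The plan is to prove both statements simultaneously by induction on the complexity $2g-2+n$, using the cut-join-flip recursion of \cref{cor:cjfrecursion} as the driving engine, with $b$ held fixed as a real parameter and $t$ as the variable. Real-rootedness alone is rarely preserved under the sums and products that appear in such a recursion, so the essential idea is to bundle real-rootedness together with a full package of interlacing relations into a single inductive invariant. Concretely, I would prove the stronger statement that, for each fixed $(g,n)$ and fixed $b$, the family $\{\Hbt_{g,n}(\mu_1,\ldots,\mu_n)\}$ indexed by compositions $\mu$ forms a \emph{consistently interlacing family}: every member is real-rooted in $t$, incrementing any single part $\mu_i \mapsto \mu_i+1$ raises the $t$-degree by exactly one and yields a polynomial interlaced by the original, and any two polynomials obtained from a common $\mu$ by distinct single increments admit a common interlacer. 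Both bullets of \cref{con:btrealrooted,con:btinterlacing} are then immediate consequences of this invariant.

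The base cases are the initial data of the recursion, namely the $\Hbt_{g,n}$ with $2g-2+n$ minimal; these should be accessible in closed form via \cref{prop:btmonotonecombinatorics} and checked directly, since at small values $\Hbt$ factors into linear forms in $t$ whose roots are governed by the $b$-contents appearing in the generating function. For the inductive step I would feed the lower-complexity data, which by hypothesis already forms consistently interlacing families, through the recursion of \cref{cor:cjfrecursion}: the join term shifts $n \mapsto n-1$ while merging two parts, the cut term contributes products $\Hbt_{g_1,n_1}\cdot\Hbt_{g_2,n_2}$, and the flip term carries the factor of $b$. The key classical inputs are the lemma that a sum of real-rooted polynomials with positive leading coefficients which pairwise share a common interlacer is again real-rooted, together with the fact that a product of real-rooted polynomials is real-rooted and behaves predictably under interlacing. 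The bulk of the work is a bookkeeping argument showing that, after normalising signs, each term of the recursion lands in the interlacing sector dictated by the target and that these terms are pairwise compatible, so that the combination lemma applies and the invariant is reproduced at complexity $2g-2+n$.

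The hard part will be the dependence on the sign of $b$. The conjecture asserts real-rootedness for all $b \in \mathbb{R}\setminus\{0,-1\}$, in particular for $b<-1$ and for $-1<b<0$, where the combinatorial positivity underlying \cref{prop:btmonotonecombinatorics} fails and the coefficients of $\Hbt_{g,n}$ in $t$ need no longer be sign-definite. In these regimes the naive nonnegative-combination version of the combination lemma is unavailable, and one is forced into a more robust real-stability argument in which $b$ is allowed to vary and the real-rootedness locus is tracked as a semialgebraic condition. A secondary obstacle is the cut term: a product of real-rooted polynomials is real-rooted in isolation, but ensuring that such a product interlaces the remaining terms of the recursion requires control of the relative positions of the roots of the two factors, which the inductive invariant must be strengthened to supply. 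Keeping the $t$-degree and leading-coefficient accounting consistent across the join, cut, and flip contributions is the part most likely to demand a subtle reformulation of the invariant.

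As an alternative that may sidestep the sign difficulties, I would try to realise $\Hbt_{g,n}$, or a single generating polynomial packaging the whole family for fixed $(g,n)$, as the characteristic polynomial of a self-adjoint operator on a suitable inner-product space, building on the representation-theoretic description in \cref{prop:jucysmurphy_A} and the $b$-deformed Jucys--Murphy operators introduced later in the paper. If such a self-adjoint model exists, real-rootedness follows at once from the spectral theorem, and the interlacing of $\Hbt_{g,n}(\mu)$ with its single-part increments would follow from Cauchy interlacing for the eigenvalues of a principal submatrix or a rank-one perturbation. The obstacle here is constructing the correct self-adjoint model for general $b$, which is closely tied to the paper's own open conjectures on the $b$-deformed Jucys--Murphy elements, so a proof along these lines would likely have to resolve those first.
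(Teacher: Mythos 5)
This statement is Conjecture~C of the paper (\cref{con:btrealrooted,con:btinterlacing}): the authors do not prove it, and offer only computational verification (several thousand cases of $(g,n,\mu,b)$). Your proposal, likewise, is not a proof but a research plan whose decisive steps are left open --- and you say so yourself. Judged as a proof attempt, it has genuine gaps at exactly the points where the conjecture is hard. First, your inductive invariant (a ``consistently interlacing family'') is neither established in any base case nor shown to propagate; the propagation step rests on the classical lemma that nonnegative combinations of real-rooted polynomials with a common interlacer are real-rooted, and nonnegativity is precisely what fails in the regimes $b<-1$ and $-1<b<0$ that the conjecture covers: in the recursion of \cref{cor:cjfrecursion}, the join term carries the factor $(b+1)$ and the flip term the factor $b$, both of which change sign there, so the terms being summed are not sign-coherent and the combination lemma simply does not apply. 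Second, the cut term is a convolution $\sum_{g_1+g_2=g,\ I_1\sqcup I_2=S}\Hbt_{g_1,|I_1|+1}(\alpha,\mu_{I_1})\,\Hbt_{g_2,|I_2|+1}(\beta,\mu_{I_2})$, also summed over $\alpha+\beta=\mu_1$; products of real-rooted polynomials are real-rooted, but sums of such products are not without strong mutual interlacing control, which your invariant does not yet supply (you flag this as ``the hard part'' without resolving it).

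There are two further structural problems. The recursion is not a recursion on $2g-2+n$ alone: the term $(t-1)(\mu_1-1)\,\Hbt_{g,n}(\mu_1-1,\mu_S)$ sits at the \emph{same} $(g,n)$ with $|\mu|$ decreased, so your induction must be organised over $(2g-2+n,|\mu|)$; more seriously, multiplying by $(t-1)$ plants a root at $t=1$, whereas all roots of the $bt$-monotone Hurwitz numbers appear (from the tables and from the $\Lambda$-polynomial property for $b>0$) to be non-positive, so this term cannot share a common interlacer with the others in the naive sense --- the invariant would need genuine reformulation, not just bookkeeping. Finally, your alternative spectral route presupposes a self-adjoint model whose existence is essentially the paper's own open Conjecture~D on the $b$-deformed Jucys--Murphy operators (note also that \cref{prop:jucysmurphy_A} concerns the undeformed odd Jucys--Murphy elements at $b=1$, not general $b$), so that route is circular relative to the present state of knowledge. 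In short: the statement is open, the paper gives no proof, and your proposal does not close any of the gaps that make it open.
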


\cref{tab:variants} summarises the context of the present work in the general landscape of monotone Hurwitz numbers. In each cell, we name the particular variant of the monotone Hurwitz numbers, the corresponding matrix integrals, and the references in which these were introduced. Note that the notion of orthogonal monotone Hurwitz numbers had not been previously introduced explicitly in the literature.

\begin{table}[pht!]
\caption{Variants of the monotone Hurwitz numbers and their corresponding matrix integrals.}
\label{tab:variants}
\begin{tabularx}{\textwidth}{lXXX} \toprule
 & $b = 0$ & $b = 1$ & general $b$ \\ \midrule
$t = 1$ & monotone & ``orthogonal monotone'' & $b$-monotone \\
 & unitary groups & orthogonal groups & {\em no matrix integral} \\
 & Goulden et al.~\cite{gou-gua-nov14} & implicit in other works & Bonzom et al.~\cite{bon-cha-dol23} \\ \midrule
general $t$ & $t$-monotone & ``$t$-orthogonal monotone'' & $bt$-monotone \\
 & complex Grassmannian & real Grassmannian & {\em no matrix integral} \\
 & Coulter et al.~\cite{cou-do-mos23} & [present work, \cref{sec:weingarten-A}] & [present work, \cref{sec:hurwitz}] \\ \bottomrule
\end{tabularx}
\end{table}

\subsection*{A deformation of the Jucys--Murphy elements}

The Jucys--Murphy elements appear naturally in the Weingarten calculus for integration over unitary groups~\cite{nov10,mat-nov13}. In a certain sense, they encapsulate the adjacency structure of the unitary Weingarten graph. By analogy, our construction of the $b$-Weingarten graph motivates the definition of {\em $b$-deformed Jucys--Murphy operators} $\J_1, \J_2, \ldots, \J_k$ that depend on a parameter $b$ and act on the vector space with basis the set of pair partitions of $\{1, 2, \ldots, 2k\}$.

It is natural to ask whether known properties of the Jucys--Murphy elements lift to the setting of these $\J$-operators. This leads to the following series of conjectures, which are discussed in further detail in \cref{sec:jucys-murphy}.

\begin{conjecturex}[$b$-deformed Jucys--Murphy operators -- \cref{con:jucys-murphy,con:bJMsymmetric,con:b-laplace}] \label{con:jmmeta} ~ \\
For $k$ a positive integer, let $\X(k) = \langle \J_1, \J_2, \ldots, \J_k \rangle \cdot \e_k$. That is, $\X(k)$ is the orbit of the identity pair partition under the action of the algebra of $\J$-operators.
\begin{itemize}
\item The $\J$-operators commute when restricted to $\X(k)$.

\item There exists a basis $\{\w_\mathsf{T}: \mathsf{T} \in \Tab(k)\}$ of $\X(k)$, where $\Tab(k)$ denotes the set of standard Young tableaux with $k$ boxes. The $\J$-operators $\J_1, \J_2, \ldots, \J_k$ act diagonally on $\X(k)$, according to the formula $\J_i \cdot \w_\mathsf{T} = c_b(\mathsf{T}_i) \, \w_\mathsf{T}$, where $\mathsf{T}_i$ is the box labelled $i$ in the tableau $\mathsf{T}$ and $c_b$ denotes the $b$-content, defined in \cref{subsec:jack}.

\item For any tableau $\mathsf{S} \in \Tab(j)$ with $j \leq k$, the vector $\w_\mathsf{S} \in \X(j) \subseteq \X(k)$ satisfies $
\w_\mathsf{S} = \displaystyle\sum_{\substack{\mathsf{T} \in \Tab(k) \\ \mathsf{S} \subseteq \mathsf{T}}} \w_\mathsf{T}$.

\item For any tableau $\mathsf{T} \in \Tab(k)$, the vector $\w_\mathsf{T} \in \X(k)$ satisfies the recursive equation
\[
\w_\mathsf{T} = \Bigg(\prod_{\substack{\mathsf{S} \in \Tab(k) \\ \mathsf{S} \neq \mathsf{T}, \overline{\mathsf{S}} = \overline{\mathsf{T}}}} \frac{\J_k - c_b(\mathsf{S}_k)}{c_b(\mathsf{T}_k) - c_b(\mathsf{S}_k)} \Bigg) \cdot \w_{\overline{\mathsf{T}}},
\]
where $\overline{\mathsf{T}}$ denotes the tableau obtained from $\mathsf{T}$ by removing the box labelled $k$.
\end{itemize}

In \cref{eq:pwbases}, we define $b$-deformed analogues of the conjugacy classes $\{C_\lambda: \lambda \vdash k\}$ and orthogonal idempotents $\{\epsilon_\lambda: \lambda \vdash k\}$ in the centre $Z\mathbb{C}[S_k]$ of the symmetric group algebra, respectively denoted $\{\p_\lambda: \lambda \vdash k\}$ and $\{\w_\lambda: \lambda \vdash k\}$.
\begin{itemize}
\item For each partition $\lambda \vdash k$, $\w_\lambda = \displaystyle\sum_{\mathsf{T} \in \Tab(\lambda)} \w_\mathsf{T}$.

\item For any symmetric function $f$ in $k$ variables and any partition $\lambda \vdash k$, $f(\J_1, \J_2, \ldots, \J_k) \cdot \w_\lambda = f(\mathrm{cont}_b(\lambda)) \, \w_\lambda$, where $\mathrm{cont}_b(\lambda)$ is the multiset of $b$-contents of boxes in $\lambda$.
\end{itemize}

In \cref{eq:char_b}, we define a $b$-deformed analogue $\mathrm{ch}^{(b)}$ of the Frobenius characteristic map that sends class functions on $S_k$ to homogeneous symmetric functions of degree $k$.
\begin{itemize}
\item For all $\lambda \vdash k$, $\mathrm{ch}^{(b)} \big( (\J_1 + \J_2 + \cdots + \J_k) \cdot \p_\lambda \big) = D(b) \cdot \mathrm{ch}^{(b)}(\p_\lambda)$, where $D(b)$ is the Laplace--Beltrami operator, defined in \cref{subsec:jack}, whose eigenfunctions are the Jack symmetric functions.
\end{itemize}
\end{conjecturex}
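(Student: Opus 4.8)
The plan is to establish this family of statements as a $b$-deformation of the Okounkov--Vershik theory of Jucys--Murphy elements, anchored at the two endpoints where the representation theory is classical: at $b = 0$ the $\J$-operators should recover the ordinary Jucys--Murphy elements acting on the Gelfand--Tsetlin basis of $\mathbb{C}[S_k]$, whose spectral theory is governed by Schur functions, while at $b = 1$ they should recover the odd Jucys--Murphy elements $J_1, J_3, \ldots, J_{2k-1}$ acting on $\mathbb{C}[\P_k]$, whose spectral theory is governed by the Gelfand pair $(S_{2k}, H_k)$ and the zonal polynomials. The unifying device is the parameter $\alpha = b+1$ and the Jack functions $J^{(b)}_\lambda$, together with the $b$-deformed characteristic map $\mathrm{ch}^{(b)}$ of \cref{eq:char_b}: I would aim to show that $\mathrm{ch}^{(b)}$ intertwines the commutative algebra generated by the $\J$-operators on $\X(k)$ with the algebra of commuting Sekiguchi--Debiard operators on symmetric functions, of which the Laplace--Beltrami operator $D(b)$ is the first nontrivial member and for which the Jack functions are the joint eigenfunctions with eigenvalues prescribed by $b$-contents.

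First I would prove commutativity on $\X(k)$ directly from the combinatorial definition of the $\J$-operators via the $b$-Weingarten graph, computing each commutator $[\J_i, \J_j]$ as an operator on pair partitions and verifying that it annihilates the orbit of $\e_k$; this is the analogue of the classical identity $[J_i, J_j] = 0$, but the bookkeeping is more delicate because the terms now carry weights valued in $\{0, 1, b\}$, so one must check that the requisite cancellations hold identically in $b$. With commutativity secured, the simultaneous eigenspace decomposition of $\X(k)$ is well defined, and I would next target the Laplace--Beltrami statement, namely that $\mathrm{ch}^{(b)}\big((\J_1 + \cdots + \J_k) \cdot \p_\lambda\big) = D(b) \cdot \mathrm{ch}^{(b)}(\p_\lambda)$. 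The strategy here is to evaluate the action of $\J_1 + \cdots + \J_k$ on the class-type basis $\{\p_\lambda\}$ of \cref{eq:pwbases} combinatorially, recognise the result as the image under $\mathrm{ch}^{(b)}$ of the power-sum expansion of $D(b)$, and invoke that $D(b) \cdot J^{(b)}_\lambda = \big(\sum_{\Box \in \lambda} c_b(\Box)\big) J^{(b)}_\lambda$ up to normalisation. The statement that $f(\J_1, \ldots, \J_k) \cdot \w_\lambda = f(\mathrm{cont}_b(\lambda)) \, \w_\lambda$ for arbitrary symmetric $f$ then follows by upgrading from the single operator $\sum_i \J_i$ (the case $f = p_1$) to the full family of power sums $p_j(\J_1, \ldots, \J_k)$, whose $\mathrm{ch}^{(b)}$-images are the higher Sekiguchi--Debiard operators, simultaneously diagonalised by the Jack functions with eigenvalues given by the power sums $\sum_{\Box} c_b(\Box)^j$ of the $b$-contents.

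With the partition-indexed statements in place, I would deduce the tableau-indexed refinements by running the Okounkov--Vershik induction on $k$. The decomposition $\w_\lambda = \sum_{\mathsf{T} \in \Tab(\lambda)} \w_\mathsf{T}$ and the branching relation $\w_\mathsf{S} = \sum_{\mathsf{S} \subseteq \mathsf{T}} \w_\mathsf{T}$ amount to refining each isotypic projector $\w_\lambda$ into the joint eigenprojectors of the individual $\J_i$, and the Lagrange interpolation recursion for $\w_\mathsf{T}$ is exactly the operator that carves the one-dimensional $\J_k$-eigenspace with eigenvalue $c_b(\mathsf{T}_k)$ out of $\w_{\overline{\mathsf{T}}}$. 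The eigenvalue statement $\J_i \cdot \w_\mathsf{T} = c_b(\mathsf{T}_i) \, \w_\mathsf{T}$ follows once one knows that the joint spectrum of $(\J_1, \ldots, \J_k)$ on $\X(k)$ is exactly the set of $b$-content vectors of standard Young tableaux; the key combinatorial input, valid as in the classical theory and essentially independent of $b$, is that the content vector determines the tableau and that admissible transitions correspond to adding a single box to a Young diagram.

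The main obstacle is that, for generic $b$, none of the representation-theoretic scaffolding available at $b = 0, 1$ is present: there is no underlying group, no Maschke semisimplicity, and no character theory, so every structural claim must be extracted either from the combinatorics of the $b$-Weingarten graph or from algebraic identities for Jack functions. Concretely, I expect the hardest point to be proving that the joint spectrum is exactly --- not merely contained in --- the set of content vectors, equivalently that each interpolation projector has nonzero one-dimensional image and that no eigenspace degenerates for special values of $b$. This requires showing that $\mathrm{ch}^{(b)}$ restricts to a nondegenerate correspondence between $\X(k)$ and the degree-$k$ symmetric functions, so that the dimension count matches the number of standard Young tableaux of each shape; establishing this nondegeneracy uniformly in $b$, while controlling the $\{0, 1, b\}$-valued weights, is where the real work lies.
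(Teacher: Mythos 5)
The statement you are addressing is \cref{con:jmmeta}, which the paper does \emph{not} prove: it is stated as a conjecture precisely because the representation-theoretic scaffolding your plan relies on does not exist for generic $b$. The paper's evidence consists of computational verification for $1 \leq k \leq 5$ together with a single rigorous partial result, \cref{prop:bJucysMurphy}, which establishes the penultimate bullet only for $f$ an elementary or complete homogeneous symmetric polynomial, and only in the summed form where the operators act on $\e_k$ rather than on an individual $\w_\lambda$, namely $h_r^{\geq}(\J_1, \ldots, \J_k) \cdot \e_k = \sum_{\lambda \vdash k} h_r(\mathrm{cont}_b(\lambda)) \, \w_\lambda$ (with a fixed ordering convention $f^{\geq}$, since commutativity is not available). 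That proof proceeds through Weingarten calculus: \cref{prop:jucysmurphy_b} expresses $\sum_{\m} \wgb(\m) \, \m$ as a product of resolvents of the $\J$-operators, and this is matched against the Jack-function expansion of the partition function in \cref{eq:btmonotone} via \cref{prop:expansion_BT}. This route --- the only one that has produced an actual theorem here --- is absent from your proposal.

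Your proposal itself is a research programme rather than a proof: each of its stages restates one of the open assertions without supplying the mechanism that would establish it. First, ``computing each commutator $[\J_i, \J_j]$ and verifying that it annihilates the orbit of $\e_k$'' is part (a) of \cref{con:jucys-murphy} verbatim; no cancellation scheme is given, and the classical computation of $[J_i, J_j] = 0$ does not transfer because the weights $\omega^{(b)}$ of \cref{def:weightfunction} depend non-locally on $\m$ through the charges on the cycles of $\Gamma(\m)$ --- which is exactly why the $\J$-operators genuinely fail to commute on all of $\V_k$ (the paper notes $\J_2 \J_3 (\m) \neq \J_3 \J_2(\m)$ for $\m = (13 \mmid 24 \mmid 56)$), so any argument must exploit special features of the orbit $\X(k)$ that nobody has identified. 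Second, the proposed intertwining of $\mathrm{ch}^{(b)}$ with the Sekiguchi--Debiard operators presupposes that symmetric functions of the $\J$-operators preserve $\Z\V_k$; but that preservation is itself a corollary of part (b) of \cref{con:bJMsymmetric}, i.e.\ of the conjecture being proved, and you offer no independent argument (even applying $\mathrm{ch}^{(b)}$ to $(\J_1 + \cdots + \J_k) \cdot \p_\lambda$ requires knowing this vector lies in $\Z\V_k$). Third, the completeness and nondegeneracy of the joint spectrum, which you correctly flag as the crux, is left entirely open. Finally, anchoring at $b = 0$ and $b = 1$ cannot by itself close any of these gaps: the desired statements are identities in $\mathbb{C}(b)$, and their validity at two special values of $b$ implies nothing about generic $b$. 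In short, your plan reproduces the paper's own motivation and its own diagnosis of the obstruction, but proves nothing beyond the known endpoint cases.
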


Each part of \cref{con:jmmeta} generalises a known statement about the usual Jucys--Murphy elements in the symmetric group algebra, which is recovered by setting $b = 0$. In that case, one can prove these statements by referring to the representation theory of the symmetric groups. However, the main obstruction to progress on \cref{con:jmmeta} is a lack of such representation-theoretic foundations for general values of $b$.

These conjectures have all been verified computationally for $1 \leq k \leq 5$. For $k = 5$, the vector space on which the $\J$-operators act has dimension 945 while the subspace $\X(5)$ has dimension 26, so the verification in this case is already rather substantial. Furthermore, we prove in \cref{prop:bJucysMurphy} that the penultimate part of \cref{con:jmmeta} holds whenever $f$ is a complete homogeneous symmetric polynomial. We believe that the motivation behind our construction of the $\J$-operators, our computations, and our supporting results together constitute compelling evidence towards \cref{con:jmmeta}.

By virtue of their construction and the putative properties of \cref{con:jmmeta}, the $b$-deformed Jucys--Murphy operators should be closely related to Jack functions. Our work supports the notion that the $b$-deformation obtained by passing from Schur functions to Jack functions is aligned with the $b$-deformation of the Jucys--Murphy elements that we introduce in the present work. Such deformations are currently of great interest, due to the multitude of conjectures concerning Jack functions~\cite{han88,sta89,gou-jac96,ale-hag-wan21} and connections to the more general notion of refinement in mathematical physics, particularly with regards to topological recursion~\cite{CDO24b,osu24}. One can speculate that our conjectures hint at a deeper underlying algebraic theory that may help to unlock some of the unresolved problems concerning Jack functions, $b$-deformations, and refinement.

\subsection*{Structure of the paper}

In \cref{sec:prelim}, we provide a brief introduction to Weingarten calculus for integration over orthogonal groups, beginning with some combinatorial preliminaries concerning the hyperoctahedral groups and pair partitions. We then define the family of Jack symmetric functions and show how they are used by Bonzom, Chapuy and Do\l{}\k{e}ga to construct the $b$-monotone Hurwitz numbers~\cite{bon-cha-dol23}.

In \cref{sec:weingarten-A}, we introduce and develop the Weingarten calculus for integration over the space $\mathbf{A}(M,N)$ of $N \times N$ idempotent real symmetric matrices of rank $M$. This space provides a matrix realisation of the real Grassmannian $\mathrm{Gr}_\mathbb{R}(M,N)$. We define a Weingarten function in this context, and prove a convolution formula and orthogonality relations for it. These orthogonality relations lead to the Weingarten function being expressed in various ways: as a weighted enumeration of walks in an appropriate Weingarten graph, as a weighted enumeration of monotone factorisations of pair partitions, and as a particular expression involving odd Jucys--Murphy elements.

In \cref{sec:hurwitz}, we introduce a $b$-Weingarten calculus that is constructed to recover the unitary Weingarten calculus at $b = 0$ and the orthogonal Weingarten calculus at $b = 1$. The theory relies fundamentally on the construction of a weight function that takes as input two pair partitions and returns as output an element of the set $\{0, 1, b\}$. We show that the $b$-deformation of this section can be combined with the $t$-deformation of the previous section to produce a $bt$-Weingarten calculus and corresponding $bt$-monotone Hurwitz numbers. On the basis of overwhelming numerical evidence, we conjecture that these $bt$-monotone Hurwitz numbers are real-rooted and display remarkable interlacing phenomena when considered as polynomials in $t$, generalising previous conjectures of the authors with Moskovsky~\cite{cou-do-mos23}.

In \cref{sec:jucys-murphy}, we use the construction of the weight function and the $b$-Weingarten graph from the previous section to motivate the definition of the $b$-deformed Jucys--Murphy operators. These act on the vector space with basis the set of pair partitions and naturally generalise the Jucys--Murphy elements in the symmetric group algebra. We state several conjectures concerning these operators that generalise known properties of the Jucys--Murphy elements. These conjectures hint at a deeper underlying algebraic theory that is currently unknown, as well as strong connections to the family of Jack symmetric functions.

\section{Preliminaries} \label{sec:prelim}

In this section, we provide a minimal introduction to orthogonal Weingarten calculus and its algebro-combinatorial foundations, the family of Jack symmetric functions, and the notion of $b$-monotone Hurwitz numbers. The reader already acquainted with any of these concepts is welcome to skip the corresponding subsections.

\subsection{Hyperoctahedral groups and pair partitions} \label{subsec:hyperoctahedral}

Weingarten calculus was originally developed to compute integrals of products of matrix elements and their conjugates over unitary groups, with respect to the Haar measure~\cite{wei78,col03}. The theory is closely related to the representation theory of symmetric groups via Schur--Weyl duality~\cite{col-sni06}. The original notions of Weingarten calculus can be transported to various other settings, perhaps the most natural being integration over orthogonal groups~\cite{col-mat09}. The algebraic underpinnings of orthogonal Weingarten calculus involve the hyperoctahedral groups~\cite{mac15}.

As an abstract group, the {\em hyperoctahedral group} $H_k$ can be defined as the wreath product $S_2 \wr S_k$. It has order $2^k \times k!$ and is naturally the symmetry group of a $k$-dimensional hypercube. We realise $H_k$ as the centraliser of the involution $(1~2) (3~4) \cdots (2k-1 ~ 2k) \in S_{2k}$.

\begin{example}
The centraliser of the involution $(1~2) (3~4) \in S_4$ is
\[
H_2 = \{ (1), \, (1~2), \, (3~4), \, (1~2)(3~4), \, (1~3)(2~4), \, (1~4)(2~3), \, (1~3~2~4), \, (1~4~2~3) \}.
\]
\end{example}

The cosets of $H_k \leq S_{2k}$ are naturally indexed by combinatorial objects known as pair partitions.

\begin{definition}
A {\em pair partition} is a way to express the set $\{1, 2, \ldots, 2k\}$ as a disjoint union of two-element sets, where $k$ is a positive integer. We denote a pair partition by
\[
(\m(1) \, \m(2) \mmid \m(3) \, \m(4) \mmid \cdots \mmid \m(2k-1) \, \m(2k)),
\]
where $\{\m(1), \m(2), \ldots, \m(2k)\} = \{1, 2, \ldots, 2k\}$. Furthermore, we adopt the standard convention that $\m(2i-1) < \m(2i)$ for $1 \leq i \leq k$ and $\m(1) < \m(3) < \cdots < \m(2k-1)$. We write the {\em trivial pair partition} as
\[
\e_k = (1 ~ 2 \mmid 3 ~ 4 \mmid \cdots \mid 2k-1 ~ 2k).
\]
Let $\P_k$ denote the set of pair partitions on the set $\{1, 2, \ldots, 2k\}$ and, for completeness, define $\P_0$ to be the set with only the empty pair partition $(~)$. Finally, let $\P_\bullet$ denote the set $\P_0 \sqcup \P_1 \sqcup \P_2 \sqcup \cdots$ of all pair partitions.
\end{definition}

\begin{example}
The set $\P_2$ comprises the pair partitions of $\{1, 2, 3, 4\}$, which are as follows.
\[
(1\,2 \mmid 3\,4) \qquad \qquad (1 \, 3 \mmid 2 \, 4) \qquad \qquad (1\,4 \mmid 2\,3) 
\]
The set $\P_3$ comprises the pair partitions of $\{1, 2, 3, 4, 5, 6\}$, which are as follows.
\begin{align*}
(1\,2 \mmid 3\,4 \mmid 5\,6) && (1\,3 \mmid 2\,4 \mmid 5\,6) && (1\,4 \mmid 2\,3 \mmid 5\,6) && (1\,5 \mmid 2\,3 \mmid 4\,6) && (1\,6 \mmid 2\,3 \mmid 4\,5) \\
(1\,2 \mmid 3\,5 \mmid 4\,6) && (1\,3 \mmid 2\,5 \mmid 4\,6) && (1\,4 \mmid 2\,5 \mmid 3\,6) && (1\,5 \mmid 2\,4 \mmid 3\,6) && (1\,6 \mmid 2\,4 \mmid 3\,5) \\
(1\,2 \mmid 3\,6 \mmid 4\,5) && (1\,3 \mmid 2\,6 \mmid 4\,5) && (1\,4 \mmid 2\,6 \mmid 3\,5) && (1\,5 \mmid 2\,6 \mmid 3\,4) && (1\,6 \mmid 2\,5 \mmid 3\,4)
\end{align*}
\end{example}

The symmetric group $S_{2k}$ naturally acts on the set of pair partitions $\P_k$ by permuting the elements of the pairs --- that is, the pair $\{a, b\}$ appears in the pair partition $\m$ if and only if the pair $\{\sigma(a), \sigma(b)\}$ appears in the pair partition $\sigma \cdot \m$.

The left cosets of $H_k \leq S_{2k}$ are naturally indexed by pair partitions of $\{1, 2, \ldots, 2k\}$. Under this correspondence, a pair partition $\m = (\m(1)\,\m(2) \mmid \m(3)\,\m(4) \mmid \cdots \mmid \m(2k-1)\,\m(2k))$ corresponds to the left coset $\m H_k$, where we abuse notation by using $\m$ to also denote the permutation that sends $i \mapsto \m(i)$ for $i = 1, 2, \ldots, 2k$.

\begin{example}
The left cosets of $H_2 \leq S_4$ are as follows, corresponding respectively to $\m = (1\,2 \mmid 3\,4) = (1)$, $\m = (1 \, 3 \mmid 2 \, 4) = (2~3)$ and $\m = (1\,4 \mmid 2\,3) = (2~4~3)$. Here and throughout, we compose permutations from right to left.
\begin{align*}
(1) \circ H_2 &= \{ (1), (1~2), (3~4), (1~2)(3~4), (1~3)(2~4), (1~4)(2~3), (1~3~2~4), (1~4~2~3) \} \\
(2~3) \circ H_2 &= \{ (2~3), (1~3~2), (2~3~4), (1~3~4~2), (1~2~4~3), (1~4), (1~2~4), (1~4~3) \} \\
(2~4~3) \circ H_2 &= \{ (2~4~3), (1~4~3~2), (2~4), (1~4~2), (1~2~3), (1~3~4), (1~2~3~4), (1~3) \}
\end{align*}
\end{example}

We associate to each permutation $\sigma \in S_{2k}$ a partition of $k$ in the following way. Consider the graph on the vertex set $\{1, 2, \ldots, 2k\}$, with edges joining $2i-1$ to $2i$ and $\sigma(2i-1)$ to $\sigma(2i)$ for $i = 1, 2, \ldots, k$. This graph necessarily comprises disjoint cycles of even lengths that sum to $2k$. The corresponding partition is simply the one that records these lengths divided by two. This construction extends to pair partitions by interpreting a pair partition as a permutation in the sense described above. We refer to the resulting partition as the {\em coset-type} of the permutation or pair partition.

Let $\lambda(\m)$ and $\Gamma(\m)$ respectively denote the coset-type and the graph corresponding to a pair partition $\m$, as described in the previous paragraph. We use the standard notations $|\lambda|$ to denote the sum of the parts of $\lambda$ and $\lambda \vdash k$ to denote the fact that $\lambda$ is a partition of the non-negative integer $k$. For $\lambda \vdash k$, we denote by $H_\lambda$ the set of permutations of coset-type $\lambda$ in $S_{2k}$.

\begin{proposition}
The double cosets of $H_k \leq S_{2k}$ are given by the sets $H_\lambda$, for $\lambda \vdash k$. The left coset $\m H_k$ is a subset of the double coset $H_\lambda$ if and only if $\lambda(\m) = \lambda$.
\end{proposition}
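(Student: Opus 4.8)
The plan is to translate the statement about double cosets into a statement about orbits of $H_k$ on pair partitions, and then to characterise these orbits by coset-type. First I would observe that $H_k$, being the centraliser of $\iota = (1~2)(3~4)\cdots(2k-1~2k)$, coincides with the stabiliser of the pair partition $\e_k$ under the $S_{2k}$-action on $\P_k$: conjugating $\iota$ by $\tau$ yields the involution whose transpositions are the pairs $\{\tau(2i-1),\tau(2i)\}$, so $\tau \iota \tau^{-1} = \iota$ holds exactly when $\tau \cdot \e_k = \e_k$. Orbit--stabiliser then gives the bijection $S_{2k}/H_k \to \P_k$, $\sigma H_k \mapsto \sigma \cdot \e_k$, under which the $H_k$-action on $S_{2k}/H_k$ becomes the restriction to $H_k$ of the action on $\P_k$. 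Since the double cosets $H_k \backslash S_{2k} / H_k$ are precisely the $H_k$-orbits on $S_{2k}/H_k$, it suffices to show that two pair partitions lie in the same $H_k$-orbit if and only if they have equal coset-type. I would also record that the coset-type of a permutation $\sigma$ equals the coset-type of the pair partition $\sigma \cdot \e_k$, since the defining graph of $\sigma$ has edge sets $\{2i-1,2i\}$ (the blocks of $\e_k$) and $\{\sigma(2i-1),\sigma(2i)\}$ (the blocks of $\sigma \cdot \e_k$).

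The easy direction is that coset-type is $H_k$-invariant. For $h \in H_k$ we have $h \cdot \e_k = \e_k$, so $h$ carries the graph $\Gamma(\m)$, built from the edges of $\e_k$ and of $\m$, isomorphically onto the graph built from $\e_k$ and $h \cdot \m$, which is exactly $\Gamma(h \cdot \m)$. A graph isomorphism preserves the multiset of cycle lengths, whence $\lambda(h \cdot \m) = \lambda(\m)$.

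The main work, and the step I expect to be the real obstacle, is the converse: given $\m, \m' \in \P_k$ with $\lambda(\m) = \lambda(\m')$, construct $h \in H_k$ with $h \cdot \m = \m'$. Here I would use that each vertex of $\Gamma(\m)$ meets exactly one $\e_k$-edge ("black") and one $\m$-edge ("white"), so $\Gamma(\m)$ is a disjoint union of cycles along which black and white edges alternate; the parts of $\lambda(\m)$ are the half-lengths of these cycles, and they sum to $k$, while a block of $\e_k$ that also happens to be a pair of $\m$ simply yields a length-two cycle contributing a part equal to $1$. Equality of coset-types means $\Gamma(\m)$ and $\Gamma(\m')$ have the same multiset of cycle lengths, so I may pair off their cycles and, traversing matched cycles $v_1 - v_2 - \cdots - v_{2\ell} - v_1$ and $w_1 - \cdots - w_{2\ell} - w_1$ in alternating order starting from a black edge, set $h(v_j) = w_j$. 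By construction $h$ sends each black edge to a black edge and each white edge to a white edge; the former guarantees that $h$ maps $\e_k$ to itself, so $h \in H_k$, while the latter gives $h \cdot \m = \m'$. The only delicacy is making consistent starting choices on each cycle, which has no bearing on membership in $H_k$.

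Finally I would assemble the two assertions. The fibres of the map $\sigma \mapsto \lambda(\sigma)$ are precisely the double cosets, and each such fibre is the set $H_\lambda$; realisability of every $\lambda \vdash k$, so that each $H_\lambda$ is a genuine nonempty double coset, follows by exhibiting one pair partition with prescribed alternating-cycle half-lengths. For the second claim, every element of the left coset $\m H_k$ has the form $\m h$ with $h \cdot \e_k = \e_k$, so its associated pair partition is $\m h \cdot \e_k = \m$, of coset-type $\lambda(\m)$; hence $\m H_k \subseteq H_{\lambda(\m)}$, and therefore $\m H_k \subseteq H_\lambda$ precisely when $\lambda = \lambda(\m)$.
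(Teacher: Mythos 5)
Your proof is correct. Note that the paper itself offers no proof of this proposition: it is stated as background in the preliminaries, with the surrounding theory of the Gelfand pair $(S_{2k}, H_k)$ attributed to Macdonald, so there is no internal argument to compare against. What you have written is essentially the standard argument, and all the key steps are sound: identifying $H_k$ (the centraliser of $(1~2)(3~4)\cdots(2k-1~2k)$) with the stabiliser of $\e_k$, so that the orbit--stabiliser bijection $\sigma H_k \mapsto \sigma\cdot\e_k$ identifies double cosets with $H_k$-orbits on $\P_k$; the observation that the coset-type of $\sigma$ agrees with that of the pair partition $\sigma\cdot\e_k$; invariance of coset-type under $H_k$ (since $h\cdot\e_k = \e_k$ makes $h$ a graph isomorphism $\Gamma(\m)\to\Gamma(h\cdot\m)$); and, for the converse, the matching of alternating cycles of equal length, which produces $h\in H_k$ with $h\cdot\m = \m'$ because $h$ sends $\e_k$-edges to $\e_k$-edges and $\m$-edges to $\m'$-edges. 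Two points are left implicit but are genuinely trivial: transitivity of the $S_{2k}$-action on $\P_k$ (needed for the orbit--stabiliser bijection to be onto, and witnessed by the permutation $i\mapsto\m(i)$), and the explicit exhibition of a pair partition of each coset-type $\lambda\vdash k$ (e.g.\ pairing cyclically within consecutive blocks of size $2\lambda_i$), which guarantees every $H_\lambda$ is nonempty. With those one-line additions the argument is complete.
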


\begin{example}
The double cosets of $H_2$ in $S_4$ are as follows.
\begin{align*}
H_{(1,1)} = \{ &(1), (1~2), (3~4), (1~2)(3~4), (1~3)(2~4), (1~4)(2~3), (1~3~2~4), (1~4~2~3) \} \\
H_{(2)} = \{ &(2~3), (1~3~2), (2~3~4), (1~3~4~2), (1~2~4~3), (1~4), (1~2~4), (1~4~3), \\
& (2~4~3), (1~4~3~2), (2~4), (1~4~2), (1~2~3), (1~3~4), (1~2~3~4), (1~3) \}
\end{align*}
\end{example}

The pair $(S_{2k}, H_k)$ is a Gelfand pair, meaning that the permutation representation of $S_{2k}$ on $S_{2k} / H_k$ is multiplicity-free~\cite[Section~VII.2]{mac15}. In general, a Gelfand pair $(G, K)$ has an associated Hecke algebra comprising the functions $f: G \to \mathbb{C}$ that are constant on double cosets of $K$, under the convolution product. In the case of the Gelfand pair $(S_{2k}, H_k)$, a natural basis for the Hecke algebra is the {\em double coset basis} $\{\phi_\lambda \mmid \lambda \vdash k\}$, where $\phi_\lambda: S_{2k} \to \mathbb{C}$ is the indicator function for the double coset $H_\lambda$. Another important basis is given by the {\em zonal spherical functions} $\{\omega^\lambda \mmid \lambda \vdash k\}$, where $\omega^\lambda: S_{2k} \to \mathbb{C}$ is defined by
\[
\omega^\lambda(\sigma) = \frac{1}{2^k \, k!} \sum_{\rho \in H_k} \chi^{2\lambda}(\sigma \rho),
\]
where $\chi^{2\lambda}$ is the irreducible character of $S_{2k}$ corresponding to the partition $2\lambda$, in which each part of $\lambda$ is multiplied by two.

\subsection{Weingarten calculus for orthogonal groups} \label{subsec:weingarten_O}

Weingarten calculus for orthogonal groups was originally developed using the Schur--Weyl duality between orthogonal groups and Brauer algebras~\cite{col-sni06}. The theory has been further refined and expanded in subsequent works~\cite{col-mat09,col-mat17,mat11}. Orthogonal Weingarten calculus is concerned with integrals of the form
\[
\int_{\mathbf{O}(N)} O_{i(1)j(1)} O_{i(2)j(2)} \cdots O_{i(2k)j(2k)} \, \dd \nu,
\]
for functions $i, j: \{1, 2, \ldots, 2k\} \to \{1, 2, \ldots, N\}$. Here, $O_{ij}$ denotes a matrix element and $\dd \nu$ denotes the normalised Haar measure on the orthogonal group $\mathbf{O}(N)$. Observe that such an integral with a product of an odd number of terms in the integrand is necessarily zero, by exploiting the symmetry $A \mapsto -A$.

Let $\m \in \P_k$ be a pair partition. We say that a function $i: \{1, 2, \ldots, 2k\} \to \{1, 2, \ldots, N\}$ is
\begin{itemize}
\item {\em admissible} for $\m$ if $\{a, b\} \in \m \Rightarrow i(a) = i(b)$ for all $a \neq b$; and
\item {\em strongly admissible} for $\m$ if $\{a, b\} \in \m \Leftrightarrow i(a) = i(b)$ for all $a \neq b$.
\end{itemize}

\begin{definition} \label{def:weingartenfn}
Define the {\em orthogonal Weingarten function} $\wgo: \P_\bullet \to \mathbb{R}(N)$ by
\[
\wgo(\m) = \int_{\mathbf{O}(N)} O_{1,i(1)} O_{1,i(2)} O_{2,i(3)} O_{2,i(4)} \cdots O_{k,i(2k-1)} O_{k,i(2k)} \, \dd \nu,
\]
where $i:\{1, 2, \ldots, 2k\} \to \{1, 2, \ldots, N\}$ is a strongly admissible function for $\m$. We extend the orthogonal Weingarten function to $\wgo: S_0 \sqcup S_2 \sqcup S_4 \sqcup \cdots \to \mathbb{R}(N)$ by considering a permutation $\sigma \in S_{2k}$ as the pair partition $(\sigma(1) \, \sigma(2) \mmid \sigma(3) \, \sigma(4) \mmid \cdots \mmid \sigma(2k-1) \, \sigma(2k))$.
\end{definition}

Admittedly, \cref{def:weingartenfn} is not particularly satisfactory, since one does not know a priori that the integral is independent of the choice of $\m$ or $\sigma$, nor that the result is a rational function of $N$. However, these are both indeed true, so the orthogonal Weingarten function is well-defined.

The following result asserts that integrals over $\O(N)$ of arbitrary monomials in the matrix elements are explicit sums of values of the orthogonal Weingarten function.

\begin{theorem}[Convolution formula for $\mathbf{O}(N)$~\cite{col-sni06}] \label{thm:convolution_O}
For functions $i, j: \{1, 2, \ldots, 2k\} \to \{1, 2, \ldots, N\}$,
\[
\int_{\mathbf{O}(N)} O_{i(1)j(1)} O_{i(2)j(2)} \cdots O_{i(2k)j(2k)} \, \dd \nu = \sum_{\m, \n \in \P_k} \Delta_\m(i) \, \Delta_\n(j) \, \wgo(\m^{-1}\n).
\]
Here, we set $\Delta_\m(i) = 1$ if $i$ is admissible for $\m$ and $\Delta_\m(i) = 0$ otherwise. The expression $\m^{-1} \n$ is understood by interpreting $\m$ and $\n$ as permutations in $S_{2k}$.
\end{theorem}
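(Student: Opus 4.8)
The plan is to prove this by exploiting the bi-invariance of the Haar measure together with the first fundamental theorem of classical invariant theory for the orthogonal group. Write $T(i,j) = \int_{\O(N)} O_{i(1)j(1)} O_{i(2)j(2)} \cdots O_{i(2k)j(2k)} \, \dd\nu$ for the integral in question. First I would observe that, for fixed $P, Q \in \O(N)$, the substitution $O \mapsto PO Q$ preserves $\dd\nu$. Expanding $(POQ)_{i(\ell)j(\ell)} = \sum_{a(\ell), b(\ell)} P_{i(\ell)a(\ell)} O_{a(\ell)b(\ell)} Q_{b(\ell)j(\ell)}$ and integrating term by term shows that $T(i,j) = \sum_{a,b} \big( \prod_\ell P_{i(\ell)a(\ell)} \big) \big( \prod_\ell Q_{b(\ell)j(\ell)} \big) T(a,b)$. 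Since $P$ and $Q$ range independently over $\O(N)$, this says precisely that $T$, viewed as a tensor in the row-indices $(i(1), \ldots, i(2k))$ and separately in the column-indices, is invariant under the diagonal action of $\O(N)$ on $(\mathbb{R}^N)^{\otimes 2k}$ on each side.

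Next I would invoke the first fundamental theorem for $\O(N)$: the space of $\O(N)$-invariant tensors in $(\mathbb{R}^N)^{\otimes 2k}$ is spanned by the contraction tensors $\Delta_\m$, for $\m \in \P_k$, where $\Delta_\m(i) = \prod_{\{a,b\} \in \m} \delta_{i(a)i(b)}$ coincides with the admissibility indicator in the statement (and the invariants of an odd tensor power vanish, explaining why only even-length products contribute). Applying this in the row- and column-indices simultaneously yields an expansion $T(i,j) = \sum_{\m, \n \in \P_k} c_{\m,\n} \, \Delta_\m(i) \, \Delta_\n(j)$ for coefficients $c_{\m,\n}$ depending only on $N$ and $k$.

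It then remains to identify $c_{\m,\n} = \wgo(\m^{-1}\n)$. To do so I would evaluate $T$ at strongly admissible index functions: if $i$ is strongly admissible for $\m$ and $j$ for $\n$, then $\Delta_{\m'}(i) = 1$ forces every pair of $\m'$ to be a pair of $\m$, hence $\m' = \m$, and similarly $\n' = \n$, so that $T(i,j) = c_{\m,\n}$. Two symmetries then pin down the coefficient. On one hand, reordering the commuting scalar factors in the product gives $T(i \circ \sigma, j \circ \sigma) = T(i,j)$ for every $\sigma \in S_{2k}$. On the other hand, interpreting $\m$ as a permutation and using the compatibility of the $S_{2k}$-action on pair partitions with the admissibility condition, one checks that $i \circ \m$ is strongly admissible for $\e_k$ while $j \circ \m$ is strongly admissible for $\m^{-1}\n$. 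Hence $c_{\m,\n} = T(i,j) = T(i \circ \m, j \circ \m) = c_{\e_k, \m^{-1}\n}$, and comparing with \cref{def:weingartenfn} — whose row-index sequence $(1, 1, 2, 2, \ldots, k, k)$ is exactly a strongly admissible function for $\e_k$ — identifies $c_{\e_k, \m^{-1}\n} = \wgo(\m^{-1}\n)$.

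The main obstacle will be making the coefficient-extraction step rigorous, since the contraction tensors $\{\Delta_\m\}_{\m \in \P_k}$ become linearly dependent when $N$ is small, so the $c_{\m,\n}$ are a priori not uniquely determined by the expansion. I would circumvent this by first running the argument for $N \geq k$, where strongly admissible functions exist and the evaluation above determines each $c_{\m,\n}$ unambiguously; because both sides of the claimed identity are rational functions of $N$ (the orthogonal Weingarten function being defined as such), agreement at all sufficiently large integers $N$ forces agreement as rational functions, hence for all $N$. A secondary point demanding care is stating the first fundamental theorem in the precise form needed and verifying that its spanning set is exactly the family of admissibility indicators $\Delta_\m$ appearing in the statement.
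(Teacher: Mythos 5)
The paper itself offers no proof of \cref{thm:convolution_O} --- it is imported from Collins--\'Sniady --- so your proposal can only be measured against the standard literature argument, and it is in essence that argument: two-sided Haar invariance plus Weyl's first fundamental theorem is how the convolution formula is classically established (Schur--Weyl duality for $\O(N)$ in tensor form), as opposed to the Gram-matrix/orthogonality-relations route of Collins--Matsumoto that the paper follows for its own Weingarten function in \cref{sec:weingarten-A}. Your execution is correct in the regime $N \geq k$: the invariance under $O \mapsto POQ$ places the tensor $T$ in $\big((\mathbb{R}^N)^{\otimes 2k}\big)^{\O(N)} \otimes \big((\mathbb{R}^N)^{\otimes 2k}\big)^{\O(N)}$ (the factorisation of invariants of a product group acting on a tensor product is a standard fact, but worth stating explicitly), the fundamental theorem identifies each factor with the span of the tensors $\Delta_\m$, evaluation at strongly admissible functions forces the coefficients --- and, as a useful by-product, proves the well-definedness of \cref{def:weingartenfn} that the paper concedes is not a priori clear --- and your relabelling identities ($T(i \circ \sigma, j \circ \sigma) = T(i,j)$, with $i \circ \m$ strongly admissible for $\e_k$ and $j \circ \m$ strongly admissible for $\m^{-1}\n$) correctly reduce $c_{\m,\n}$ to $c_{\e_k,\, \m^{-1}\n} = \wgo(\m^{-1}\n)$.

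The step that does not survive scrutiny is the extension to $N < k$. Your rational-function argument needs both sides to be rational functions of $N$, but the left side is merely a sequence of integrals indexed by integer $N$; agreeing with a rational function at all $N \geq k$ says nothing about its values at smaller integers. Worse, the right side can be genuinely undefined there: already for $k = 2$ one computes from \cref{thm:orthogonality_O} that $\wgo((1\,2 \mmid 3\,4)) = \frac{N+1}{N(N-1)(N+2)}$, which has a pole at $N = 1$, whereas the integral over $\O(1) = \{\pm 1\}$ is perfectly finite --- the singularities of the individual Weingarten values only cancel after summation. So for $N < k$ the formula as literally stated is problematic, and no continuation argument can repair it; the small-$N$ regime requires the pseudo-inverse formulation of Collins--\'Sniady. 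This costs you essentially nothing here, since the paper imposes $k \leq N$ throughout and \cref{def:weingartenfn} itself only makes sense when strongly admissible functions exist; but you should either restrict your statement to $N \geq k$ or replace the final paragraph of the proposal.
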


The values of the orthogonal Weingarten function are governed by the following key result.

\begin{theorem}[Orthogonality relations for $\wgo$~{\cite[Lemma~4.3]{col-mat17}}] \label{thm:orthogonality_O}
For each non-empty pair partition $\m \in \P_k$, the orthogonal Weingarten function satisfies the relation
\begin{align} \label{eq:orthogonality_O}
\wgo(\m) = - \frac{1}{N} \sum_{i=1}^{2k-2} \wgo((i~2k-1) \cdot \m) + \delta_{\{2k-1,2k\} \in \m} \frac{1}{N} \wgo(\m^\downarrow).
\end{align}
Here, we set $\delta_{\{i,j\} \in \m} = 1$ if $\{i,j\} \in \m$ and $\delta_{\{i,j\} \in \m} = 0$ otherwise. If $\{2k-1,2k\} \in \m$, we define $\m^\downarrow \in \P_{k-1}$ to be the pair partition obtained by removing the pair $\{2k-1, 2k\}$ from $\m$.
\end{theorem}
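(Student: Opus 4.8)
The plan is to derive the recursion directly from the integral definition, using only the defining relation $O^\top O = I$ of the orthogonal group; this is the ``orthogonality relation'' alluded to in the text and is exactly the strategy of Collins and Matsumoto. Fix a strongly admissible function $i$ for $\m$, so that $\wgo(\m) = \int_{\O(N)} O_{k,i(2k-1)} O_{k,i(2k)} \prod_{j=1}^{k-1} O_{j,i(2j-1)} O_{j,i(2j)} \, \dd\nu$, and single out the row-$k$ block $O_{k,i(2k-1)} O_{k,i(2k)}$. The starting point is the column-orthonormality identity $\sum_{a=1}^N O_{a,i(2k-1)} O_{a,i(2k)} = \delta_{i(2k-1),i(2k)} = \delta_{\{2k-1,2k\}\in\m}$, which holds by strong admissibility. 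Multiplying by $\prod_{j=1}^{k-1} O_{j,i(2j-1)} O_{j,i(2j)}$ and integrating gives
\[
\delta_{\{2k-1,2k\}\in\m}\, \wgo(\m^\downarrow) = \sum_{a=1}^N \int_{\O(N)} O_{a,i(2k-1)} O_{a,i(2k)} \prod_{j=1}^{k-1} O_{j,i(2j-1)} O_{j,i(2j)} \, \dd\nu,
\]
where the left-hand side equals $\wgo(\m^\downarrow)$ precisely when $\{2k-1,2k\}\in\m$ and vanishes otherwise.

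Next I would split the sum over the summation index $a$ according to whether it collides with the rows $1, \dots, k-1$ already present. Assuming $N \geq k$ (which is harmless, since $\wgo$ is a rational function of $N$ and an identity valid for infinitely many $N$ holds identically), the $N-k+1$ values $a \in \{k, k+1, \dots, N\}$ each produce an integrand that differs from that of $\wgo(\m)$ only by relabelling the doubled row; since $\wgo$ depends on its argument only through the coset type, each of these contributes exactly $\wgo(\m)$. The remaining $k-1$ values $a \in \{1, \dots, k-1\}$ yield ``collision'' integrals in which row $a$ occurs four times, at the columns $i(2k-1), i(2k), i(2a-1), i(2a)$.

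To evaluate a collision integral I would apply the convolution formula of \cref{thm:convolution_O}. The column multiset is unchanged by the collision, so each column value still occurs exactly twice and there is a \emph{unique} admissible pair partition $\n'$ for the columns; on the other hand the four positions in row $a$ admit exactly \emph{three} admissible pair partitions $\m'$ of the rows. Relabelling the two appended positions as $2k-1$ and $2k$, the pairing $\{2k-1,2k\},\{2a-1,2a\}$ recovers $\wgo(\m)$, while the two remaining pairings $\{2k-1,2a-1\},\{2k,2a\}$ and $\{2k-1,2a\},\{2k,2a-1\}$ recover $\wgo((2a-1~2k-1)\cdot\m)$ and $\wgo((2a~2k-1)\cdot\m)$. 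Summing these three contributions over $a = 1, \dots, k-1$ reproduces exactly $\sum_{i=1}^{2k-2} \wgo((i~2k-1)\cdot\m)$ together with a further $(k-1)\,\wgo(\m)$, so that the displayed identity becomes $\delta_{\{2k-1,2k\}\in\m}\,\wgo(\m^\downarrow) = N\,\wgo(\m) + \sum_{i=1}^{2k-2}\wgo((i~2k-1)\cdot\m)$. Dividing by $N$ and rearranging gives the stated orthogonality relation.

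The main obstacle is the bottleneck in the third step: verifying that the three admissible row-pairings $\m'$, paired with the unique column partition $\n'$ through the convolution formula, genuinely evaluate to $\wgo(\m)$ and to the two transposition terms. Concretely this amounts to computing the coset type of $\m'^{-1}\n'$ in each case and checking that it agrees with that of $\m$, $(2a-1~2k-1)\cdot\m$, and $(2a~2k-1)\cdot\m$ under the relabelling $1' \mapsto 2k-1$, $2' \mapsto 2k$; this is a finite but fiddly combinatorial check, and care is needed to confirm that the bookkeeping of multiplicities produces the coefficient $-\tfrac1N$ with the correct sign and that the argument runs uniformly in the degenerate case $\{2k-1,2k\}\in\m$, where $i(2k-1) = i(2k)$. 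This recovers \cite[Lemma~4.3]{col-mat17}, and one could alternatively cross-check the signs and normalisation against the characterisation of $\wgo$ as the inverse, on $\mathbb{C}[\P_k]$, of the Gram matrix with entries $N^{\ell(\m,\n)}$, where $\ell(\m,\n)$ counts the loops of the graph $\Gamma(\m) \cup \Gamma(\n)$.
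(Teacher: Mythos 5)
The paper never proves this theorem itself --- it is quoted directly from Collins and Matsumoto \cite{col-mat17} --- so there is no internal proof to compare against; the nearest relative is the paper's proof of the analogous relation for $\wga$ (\cref{thm:orthogonality_A}). Your argument is correct, and it is essentially that argument transposed back to the orthogonal group: the column-orthonormality $\sum_a O_{a,i(2k-1)}O_{a,i(2k)} = \delta_{i(2k-1),i(2k)}$ plays the role that $\operatorname{Tr}(A) = M$ and $A^2 = A$ play in the paper's proof for $\mathbf{A}(M,N)$, and the same trichotomy appears: $N-k+1$ collision-free values of the summation index each contributing $\wgo(\m)$, and $k-1$ collision values each contributing three terms via the convolution formula. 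The step you flag as the bottleneck does go through. For a collision value $a$, the row function is admissible for exactly the three pairings you list; writing the two non-identity pairings as $(2a~2k-1)\cdot\e_k$ and $(2a-1~2k-1)\cdot\e_k$, and using the fact that $\wgo(\m'^{-1}\n')$ depends only on the cycle structure of the superimposed matchings $\Gamma(\m')\cup\Gamma(\n')$, a simultaneous relabelling of vertices by the relevant transposition converts these two contributions into $\wgo((2a~2k-1)\cdot\m)$ and $\wgo((2a-1~2k-1)\cdot\m)$ respectively (your assignment of which pairing yields which term is swapped relative to this computation, but that is immaterial since both are summed). Summing over $a = 1, \ldots, k-1$ then gives $(k-1)\,\wgo(\m) + \sum_{i=1}^{2k-2}\wgo((i~2k-1)\cdot\m)$, the case $\{2k-1,2k\}\in\m$ requires no separate treatment since strong admissibility of the column function is unaffected, and the bookkeeping
\[
\delta_{\{2k-1,2k\}\in\m}\,\wgo(\m^\downarrow) = N\,\wgo(\m) + \sum_{i=1}^{2k-2}\wgo((i~2k-1)\cdot\m)
\]
rearranges to the stated relation exactly as you claim. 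In short: your proposal is a correct, self-contained proof of the cited lemma, and it is structurally identical to the proof the paper gives for its own Grassmannian analogue.
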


The orthogonality relations are linear equations that uniquely define the orthogonal Weingarten function from the base case $\wgo((~)) = 1$. Their repeated application leads to an interpretation of the orthogonal Weingarten function as a weighted enumeration of walks in a particular graph~\cite{col-mat17}. The two terms appearing on the right side of \cref{eq:orthogonality_O} lead to two different types of edges, which receive different weights, as per the following definition.

\begin{definition}
The {\em orthogonal Weingarten graph} $\mathcal{G}^{\mathbf{O}}$ is the infinite directed graph with vertex set $\P_\bullet$ and edge set $E_A \sqcup E_B$, where
\begin{itemize}
\item the set $E_A$ comprises ``type $A$'' edges, which are of the form $\m \longrightarrow (i~2k-1) \cdot \m$ for $\m \in \P_k$ and $1 \leq i \leq 2k-2$;
\item the set $E_B$ comprises ``type $B$'' edges, which are of the form $\m \longrightarrow \m^\downarrow$ for $\m \in \P_k$ with $\{2k-1, 2k\} \in \m$.
\end{itemize}

Consistent with usual graph-theoretic terminology, we define a {\em path} in $\mathcal{G}^{\mathbf{O}}$ to be a tuple of pair partitions 
$(\m_0, \m_1, \m_2, \ldots, \m_\ell)$ such that $\m_{i-1} \longrightarrow \m_i$ is a directed edge of $\mathcal{G}^{\mathbf{O}}$ for $i = 1, 2, \ldots, \ell$.

Let $\mathrm{Path}^{\mathbf{O}}(\m)$ denote the set of paths from the pair partition $\m$ to the empty pair partition $(~)$ in the Weingarten graph $\mathcal{G}^\mathbf{O}$. 
\end{definition}

\begin{figure}[ht!]
\centering
\tikzstyle{W} = [color=white, line width=4pt]
\tikzstyle{vBlue} = [draw=white, text=blue, very thick]
\tikzstyle{vRed} = [draw=white, text=red, very thick]
\tikzstyle{eBlack} = [color=black, thick]
\tikzstyle{eBlackDash} = [color=black, thick, dashed]
\begin{tikzpicture}[scale=0.7]
\def\x{2}
\def\y{2}
\def\l{0.5}

\node[vRed] (empty) at (0*\x,-4*\y) { $(~)$};

\node[vRed] (12) at (0*\x,-3*\y) {$(12)$};

\node[vRed] (1234) at (0*\x,-2*\y) {$(12 \mmid 34)$};
\node[vBlue] (1423) at (-1*\x,-1*\y) {$(14 \mmid 23)$};
\node[vBlue] (1324) at (1*\x,-1*\y) {$(13 \mmid 24)$};

\node[vRed] (123456) at (0*\x,2*\y) {$(12 \mmid 34 \mmid 56)$};
\node[vBlue] (123546) at (-4*\x,4*\y) {$(12 \mmid 35 \mmid 46)$};
\node[vBlue] (152634) at (4*\x,0*\y) {$(15 \mmid 26 \mmid 34)$};
\node[vBlue] (162534) at (4*\x,4*\y) {$(16 \mmid 25 \mmid 34)$};
\node[vBlue] (123645) at (-4*\x,0*\y) {$(12 \mmid 36 \mmid 45)$};
\node[vRed] (142356) at (-1*\x,3*\y) {$(14 \mmid 23 \mmid 56)$};
\node[vBlue] (152346) at (-5*\x,5*\y) {$(15 \mmid 23 \mmid 46)$};
\node[vBlue] (142635) at (3*\x,1*\y) {$(14 \mmid 26 \mmid 35)$};
\node[vBlue] (162345) at (3*\x,5*\y) {$(16 \mmid 23 \mmid 45)$};
\node[vBlue] (142536) at (-5*\x,1*\y) {$(14 \mmid 25 \mmid 36)$};
\node[vRed] (132456) at (1*\x,3*\y) {$(13 \mmid 24 \mmid 56)$};
\node[vBlue] (132546) at (-3*\x,5*\y) {$(13 \mmid 25 \mmid 46)$};
\node[vBlue] (132645) at (5*\x,1*\y) {$(13 \mmid 26 \mmid 45)$}; 
\node[vBlue] (162435) at (5*\x,5*\y) {$(16 \mmid 24 \mmid 35)$};
\node[vBlue] (152436) at (-3*\x,1*\y) {$(15 \mmid 24 \mmid 36)$};

\draw (1423) edge [eBlack] (1234);
\draw (1234) edge [eBlack] (1324);

\draw (123546) edge [eBlack] (152346);
\draw (123546) edge [eBlack] (132546);
\draw (132546) edge [eBlack] (152346);
\draw (162534) edge [eBlack] (162345);
\draw (162534) edge [eBlack] (162435);
\draw (162345) edge [eBlack] (162435);
\draw (123645) edge [eBlack] (142536);
\draw (123645) edge [eBlack] (152436);
\draw (142536) edge [eBlack] (152436);
\draw (152634) edge [eBlack] (142635);
\draw (152634) edge [eBlack] (132645);
\draw (132645) edge [eBlack] (142635);

\draw[W] (123456) edge (123546); \draw (123456) edge [eBlack] (123546);
\draw[W] (123456) edge (152634); \draw (123456) edge [eBlack] (152634);
\draw[W] (123456) edge (123645); \draw (123456) edge [eBlack] (123645);
\draw[W] (123456) edge (162534); \draw (123456) edge [eBlack] (162534);

\draw[W] (142356) edge (152346); \draw (142356) edge [eBlack] (152346);
\draw[W] (142356) edge (142635); \draw (142356) edge [eBlack] (142635);
\draw[W] (142356) edge (162345); \draw (142356) edge [eBlack] (162345);
\draw[W] (142356) edge (142536); \draw (142356) edge [eBlack] (142536);

\draw[W] (132456) edge (132546); \draw (132456) edge [eBlack] (132546);
\draw[W] (132456) edge (132645); \draw (132456) edge [eBlack] (132645);
\draw[W] (132456) edge (162435); \draw (132456) edge [eBlack] (162435);
\draw[W] (132456) edge (152436); \draw (132456) edge [eBlack] (152436);

\draw[eBlackDash] (12) edge (empty);

\draw[eBlackDash] (1234) edge (12);

\draw[eBlackDash] (123456) edge (1234);
\draw[eBlackDash] (142356) edge (1423);
\draw[eBlackDash] (132456) edge (1324);
\end{tikzpicture}
\caption{The orthogonal Weingarten graph $\mathcal{G}^{\O}$, restricted to $\P_0 \sqcup \P_1 \sqcup \P_2 \sqcup \P_3$. Each solid edge represents two type $A$ edges, one in each direction. Each dashed edge represents one type $B$ edge, directed down the page. Each blue vertex should also have a directed loop, although these are not depicted to avoid cluttering the diagram.}
\label{fig:weingartengraph}
\end{figure}
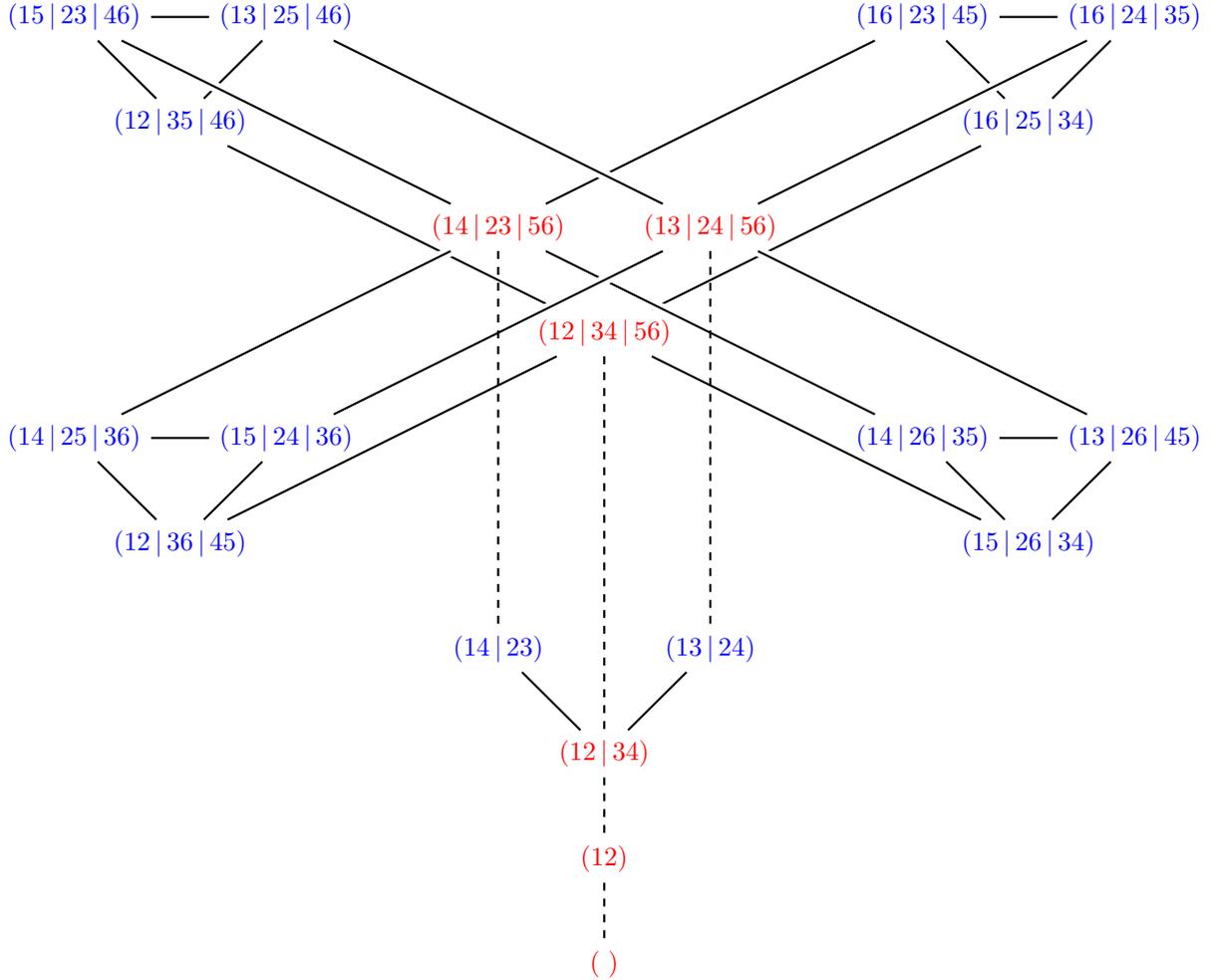

By virtue of the construction of the orthogonal Weingarten graph from the orthogonality relations, one obtains the following.

\begin{proposition}[Large $N$ expansion]
For each pair partition $\m$, we have the large $N$ expansion
\[
\wgo(\m) = \sum_{\bm{\rho} \in \mathrm{Path}^{\mathbf{O}}(\m)} \left( -\frac{1}{N} \right)^{\ell_A(\bm{\rho})} \left( \frac{1}{N} \right)^{\ell_B(\bm{\rho})},
\]
where the summation is over all paths in $\mathcal{G}^{\O}$ from $\m$ to the empty pair partition $(~)$. We use $\ell_A(\bm{\rho})$ to denote the number of type $A$ edges and $\ell_B(\bm{\rho})$ to denote the number of type $B$ edges on the path $\bm{\rho}$.
\end{proposition}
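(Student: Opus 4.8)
The plan is to define, for every pair partition $\m \in \P_\bullet$, the quantity on the right-hand side,
\[
W(\m) = \sum_{\bm{\rho} \in \mathrm{Path}^{\mathbf{O}}(\m)} \left(-\frac{1}{N}\right)^{\ell_A(\bm{\rho})}\left(\frac{1}{N}\right)^{\ell_B(\bm{\rho})},
\]
and to prove that $W = \wgo$ by showing that $W$ satisfies the same defining data as the orthogonal Weingarten function. First I would check that each $W(\m)$ is a well-defined element of $\mathbb{R}[[1/N]]$. Since a type $A$ edge maps $\P_k$ to $\P_k$ while a type $B$ edge maps $\P_k$ to $\P_{k-1}$, every path from $\m \in \P_k$ to $(~)$ uses exactly $k$ type $B$ edges, so $\ell_B(\bm{\rho}) = k$ is constant along the sum. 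Moreover each vertex in $\P_j$ has out-degree at most $2j-1 \leq 2k-1$, so only finitely many paths contribute to each fixed power of $1/N$; this makes $W(\m)$ a genuine power series, equal to $(1/N)^k$ times a power series in $1/N$.

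The key step is to verify that $W$ satisfies the base case and the orthogonality relations of \cref{thm:orthogonality_O}. The base case is immediate, since the only path from $(~)$ to itself is the empty path, giving $W((~)) = 1$. For the recursion, I would classify each path $\bm{\rho} \in \mathrm{Path}^{\mathbf{O}}(\m)$ with $\m \in \P_k$ nonempty by its first edge. That edge is either a type $A$ edge $\m \to (i~2k-1)\cdot\m$ with $1 \leq i \leq 2k-2$, contributing the factor $-\tfrac1N$ and followed by an arbitrary path in $\mathrm{Path}^{\mathbf{O}}((i~2k-1)\cdot\m)$, or a type $B$ edge $\m \to \m^\downarrow$, present exactly when $\{2k-1,2k\}\in\m$, contributing $\tfrac1N$ and followed by a path in $\mathrm{Path}^{\mathbf{O}}(\m^\downarrow)$. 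Summing over the choice of first edge and then over the remaining path factorises each term and yields
\[
W(\m) = -\frac1N \sum_{i=1}^{2k-2} W\big((i~2k-1)\cdot\m\big) + \delta_{\{2k-1,2k\}\in\m}\,\frac1N\,W(\m^\downarrow),
\]
which is precisely the orthogonality relation with $W$ in place of $\wgo$.

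Finally I would invoke the uniqueness already asserted in the discussion following \cref{thm:orthogonality_O}: the orthogonality relations together with $\wgo((~)) = 1$ determine the function uniquely. Since both $\wgo$ and $W$ satisfy the base case and the same relations, they coincide, which proves the proposition. If a self-contained uniqueness argument is wanted, I would give it by induction on $k$. The relations rewrite as $\big(I + \tfrac1N T\big)\,(\wgo(\m))_{\m \in \P_k} = r$, where $T$ is the finite integer adjacency operator of the type $A$ edges on $\P_k$ and the right-hand side $r$ depends only on the $\P_{k-1}$ values through type $B$ edges; the operator $I + \tfrac1N T$ is invertible over $\mathbb{R}[[1/N]]$ via the Neumann series $\sum_{m \geq 0}(-\tfrac1N T)^m$, so the $\P_k$ values are uniquely determined by the $\P_{k-1}$ values, and induction from $\P_0$ closes the argument.

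I expect the main obstacle to be the bookkeeping around type $A$ edges, which preserve $k$ and include self-loops $\m \to \m$ exactly when $2k-1$ is paired with the chosen index $i$. These self-loops mean a path may linger at a vertex for arbitrarily many steps, so a naive step-by-step induction on $k$ does not terminate. The resolution in both the well-definedness step and the invertibility step is the same observation: $\ell_B$ is fixed at $k$, and the type $A$ contributions at a single level assemble into a convergent geometric (Neumann) series in $1/N$, which is what makes the path sum a legitimate formal power series and the within-level linear system solvable.
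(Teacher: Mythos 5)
Your proof is correct and is essentially the paper's own argument made explicit: the paper simply asserts that the proposition follows ``by virtue of the construction'' of the orthogonal Weingarten graph from the orthogonality relations, i.e.\ that the weighted path sum is the unique solution of those relations with base case $\wgo((~)) = 1$, which is precisely what you verify by decomposing each path according to its first edge. Your added care about well-definedness (each power of $1/N$ receives only finitely many contributions) and the Neumann-series uniqueness argument --- needed because type $A$ edges, including self-loops, stay within a level, so the relations form a linear system rather than a terminating recursion --- supplies exactly the details the paper leaves implicit.
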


The orthogonality relations of \cref{thm:orthogonality_O} may be recast in a succinct and purely algebraic manner, using the Jucys--Murphy elements in the symmetric group algebra. Such a relation between Weingarten functions and Jucys--Murphy elements was initially realised by Matsumoto and Novak in the unitary setting~\cite{mat-nov13,nov10}.

\begin{proposition}[{\cite[Theorem~7.3]{mat11}}] \label{prop:ojucysmurphy}
For each positive integer $k$, we have the following equality in the vector space $\mathbb{C}[\P_k]$.
\[
\sum_{\m \in \P_k} \wgo(\m) \, \m = \prod_{i=1}^{k} \frac{1}{N + J_{2i-1}} \cdot \e_{k}
\]
\end{proposition}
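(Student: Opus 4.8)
The plan is to prove the identity by induction on $k$, using the orthogonality relations of \cref{thm:orthogonality_O} together with the observation that the odd Jucys--Murphy element $J_{2k-1} = \sum_{i=1}^{2k-2}(i~2k-1)$ is precisely the sum of transpositions appearing in the first term of those relations. Throughout, I would read each factor $\frac{1}{N+J_{2i-1}}$ as an operator on $\mathbb{C}[\P_k]$, well-defined for $N$ large since $J_{2i-1}$ acts with operator norm at most $2k-2$; equivalently, one may regard the identity as an equality of $\mathbb{C}[\P_k]$-valued formal Laurent series in $N$. I record at the outset that the Jucys--Murphy elements pairwise commute in $\mathbb{C}[S_{2k}]$ and hence, acting through the representation of $S_{2k}$ on $\mathbb{C}[\P_k]$, commute as operators; this makes the product $\prod_{i=1}^k \frac{1}{N+J_{2i-1}}$ order-independent.

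Write $W_k = \sum_{\m \in \P_k} \wgo(\m)\,\m$. The base case $k=0$ is immediate from $\wgo((~)) = 1$ and the empty product. For the inductive step, the key computation is to evaluate $(N + J_{2k-1}) \cdot W_k$ coefficientwise. Since $(i~2k-1)$ is an involution, the coefficient of a fixed pair partition $\n$ in $J_{2k-1} \cdot W_k$ is $\sum_{i=1}^{2k-2} \wgo((i~2k-1) \cdot \n)$, so the coefficient of $\n$ in $(N + J_{2k-1}) \cdot W_k$ is $N \wgo(\n) + \sum_{i=1}^{2k-2} \wgo((i~2k-1) \cdot \n)$. Rearranging the orthogonality relation of \cref{thm:orthogonality_O} shows this equals exactly $\delta_{\{2k-1,2k\} \in \n}\, \wgo(\n^\downarrow)$. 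Hence $(N + J_{2k-1}) \cdot W_k = \sum_{\n \in \P_k} \delta_{\{2k-1,2k\} \in \n}\, \wgo(\n^\downarrow)\, \n$.

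The next step is to recognise the right-hand side as the image of $W_{k-1}$ under the embedding $\iota : \P_{k-1} \hookrightarrow \P_k$ that adjoins the pair $\{2k-1,2k\}$, extended linearly to a map $\mathbb{C}[\P_{k-1}] \to \mathbb{C}[\P_k]$. Indeed, $\n \mapsto \n^\downarrow$ is a bijection from the pair partitions containing $\{2k-1,2k\}$ onto $\P_{k-1}$ with inverse $\iota$, so $(N + J_{2k-1}) \cdot W_k = \iota(W_{k-1})$. The crucial structural point is that $\iota$ intertwines the lower odd Jucys--Murphy operators: for $i \leq k-1$ the transpositions constituting $J_{2i-1}$ involve only indices $\leq 2k-3$ and therefore fix the adjoined pair, giving $\iota(J_{2i-1} \cdot v) = J_{2i-1} \cdot \iota(v)$ for all $v \in \mathbb{C}[\P_{k-1}]$. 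Applying this with the induction hypothesis $W_{k-1} = \prod_{i=1}^{k-1} \frac{1}{N+J_{2i-1}} \cdot \e_{k-1}$ and $\iota(\e_{k-1}) = \e_k$ yields $\iota(W_{k-1}) = \prod_{i=1}^{k-1} \frac{1}{N+J_{2i-1}} \cdot \e_k$. Combining the two displays and applying the inverse operator $\frac{1}{N+J_{2k-1}}$, which commutes with the remaining factors by the commutativity noted above, gives $W_k = \prod_{i=1}^{k} \frac{1}{N+J_{2i-1}} \cdot \e_k$, completing the induction.

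I expect the main obstacle to lie not in any single computation but in handling the formal status of the inverse operators cleanly and, above all, in verifying the intertwining property of $\iota$ with care, since this is exactly where the restriction to the \emph{odd} Jucys--Murphy elements $J_1, J_3, \ldots, J_{2k-1}$ is essential: only these are built from transpositions that stabilise the adjoined pair under the coset embedding. Confirming that the orthogonality relation collapses precisely to $\delta_{\{2k-1,2k\} \in \n}\, \wgo(\n^\downarrow)$, with the sign and normalisation matching the factor $N + J_{2k-1}$, is the linchpin that makes the induction close.
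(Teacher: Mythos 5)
Your proof is correct. The paper does not prove this proposition itself (it cites Matsumoto, Theorem~7.3 of \cite{mat11}), but your argument --- induction on $k$ via the orthogonality relations of \cref{thm:orthogonality_O}, recognising the sum over transpositions as the action of the odd Jucys--Murphy element $J_{2k-1}$, and transporting the inductive hypothesis through the embedding that adjoins the pair $\{2k-1,2k\}$ --- is essentially identical to the proof the paper gives for the analogous \cref{prop:jucysmurphy_A} for $\wga$, which is this statement's direct counterpart in the Grassmannian setting.
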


\begin{remark} \label{rem:symplectic}
One can also consider the Weingarten calculus for symplectic groups~\cite{col-sni06}. However, it is known that the symplectic Weingarten function satisfies $\mathrm{Wg}^{\mathbf{Sp}}(\m) = \pm \left. \wgo(\m) \right|_{N \mapsto -2N}$, so it does not store information beyond what is already contained in the orthogonal Weingarten function~\cite{mat13,col-mat17}.
\end{remark}

\subsection{Jack functions} \label{subsec:jack}

The Jack functions form a one-parameter family of symmetric functions that generalises the Schur functions and the zonal polynomials~\cite{mac15}. In various problems that involve Schur functions, they provide a natural deformation that retains many of the key features of the original problem. Perhaps surprisingly, there remain many unresolved conjectures concerning Jack functions, such as those of Hanlon~\cite{han88}, Stanley~\cite{sta89}, Goulden--Jackson~\cite{gou-jac96}, and Alexandersson--Haglund--Wang~\cite{ale-hag-wan21}.

To define the family of Jack functions, we introduce some combinatorial constructions pertaining to Young diagrams. For a box $\Box$ in row $i$ and column $j$ of a Young diagram, define
\begin{itemize}
\item the {\em $b$-content} to be $c_b(\Box) = (b+1) (j-1) - (i-1)$,
\item the {\em arm length} $a(\Box)$ to be the number of boxes to the right of $\Box$ in the same row,
\item the {\em leg length} $\ell(\Box)$ to be the number of boxes below $\Box$ in the same column,
\item the {\em $b$-hook length} to be $h_b(\Box) = (b+1) a(\Box) + \ell(\Box) + 1$, and
\item the {\em $b$-hook$'$ length} to be $h_b'(\Box) = (b+1) a(\Box) + \ell(\Box) + b + 1$.
\end{itemize}

For a partition $\lambda$, which we equate with its Young diagram, let $\mathrm{cont}_b(\lambda)$ denote the multiset of $b$-contents of boxes in $\lambda$. Furthermore, define the hook products
\[
\mathrm{hook}_b(\lambda) = \prod_{\Box \in \lambda} h_b(\lambda) \qquad \text{and} \qquad \mathrm{hook}_b'(\lambda) = \prod_{\Box \in \lambda} h_b'(\lambda).
\]

\begin{figure} [ht!]
\centering
\begin{small}
\ytableausetup{mathmode, boxframe=0.3mm, boxsize=12mm}
\begin{ytableau}
0 & b+1 & 2b+2 & 3b+3 & 4b+4 \\
-1 & b & 2b+1 & 3b+2 \\
-2 & b-1 & 2b & 3b+1 \\
-3 & b-2
\end{ytableau}
\qquad \qquad \qquad
\begin{ytableau}
4b+8 & 3b+7 & 2b+5 & b+4 & 1 \\
3b+6 & 2b+5 & b+3 & 2 \\
3b+5 & 2b+4 & b+2 & 1 \\
b+2 & 1
\end{ytableau}
\end{small}
\caption{The $b$-contents (left) and the $b$-hook lengths (right) of the partition $(5, 4, 4, 2)$.}
\end{figure}

Note that at $b = 0$, one recovers the usual notions of content, hook length, and hook product, which appears in the hook length formula for the dimension of the Specht module labelled by $\lambda$.
\[
\dim \lambda = \frac{|\lambda|!}{\mathrm{hook}_0(\lambda)}
\]

\begin{definition} \label{def:jack}
Write $\bm{p}$ to denote the infinite sequence of commuting variables $p_1, p_2, p_3, \ldots$ and write $\partial_i$ as a shorthand for the differential operator $\frac{\partial}{\partial p_i}$, where $i \geq 1$. For each partition $\lambda$, the {\em Jack function} $J^{(b)}_\lambda(\bm{p}) \in \mathbb{C}(b)[\bm{p}]$ is the unique eigenvector of the {\em Laplace--Beltrami operator}
\[
D(b) = \frac{1}{2} \bigg[ (b+1) \sum_{i,j\geq 1} i j p_{i+j} \partial_i \partial_j + \sum_{i,j\geq 1} (i+j) p_i p_j \partial_{i+j} + b \sum_{i\geq 1} i (i-1) p_i \partial_i \bigg],
\]
satisfying the following conditions.
\begin{itemize}
\item The eigenvalue corresponding to $J^{(b)}_\lambda(\bm{p})$ is given by the equation
\[]
D(b) \cdot J^{(b)}_\lambda(\bm{p}) = \bigg( \sum_{\Box \in \lambda} c_b(\Box) \bigg) J^{(b)}_\lambda(\bm{p}).
\]
\item The Jack function $J^{(b)}_\lambda(\bm{p})$ is normalised so that
\[
J^{(b)}_\lambda(\bm{p}) = \mathrm{hook}_b(\lambda) \, m_\lambda(\bm{p}) + \sum_{\mu \prec \lambda} a_\lambda^\mu \, m_\mu(\bm{p}),
\]
for some $a_\lambda^\mu\in \mathbb{C}[b]$, where $m_\mu(\bm{p})$ is the monomial symmetric function labelled by $\mu$ expressed in terms of the power-sum symmetric functions $p_1, p_2, p_3, \ldots$ and $\prec$ denotes the dominance order. 
\end{itemize}
\end{definition}

By setting $p_k = x_1^k + x_2^k + x_3^k + \cdots$ to be power-sum symmetric functions in commuting variables $x_1, x_2, x_3, \ldots$, the Jack functions can be considered to be a family of symmetric functions. Indeed, they form a vector space basis for the algebra of symmetric functions.

We will commonly omit the arguments $\bm{p} = (p_1, p_2, p_3, \ldots)$ from the Jack function when it is reasonable to do so. There are various competing normalisations for the Jack functions and the one from the definition above is known as the $J$-normalisation. The Jack functions were originally introduced using the parameter $\alpha = b+1$, but an increasing volume of literature suggests that deep combinatorial significance can be ascribed to Jack functions when expressed in terms of $b$~\cite{jac70, gou-jac96, las09}. The reader should beware that our notation for Jack functions in this paper uses $J_\lambda^{(b)}$ to denote the Jack function expressed in terms of $b$, which may appear at odds with the classical literature. In particular, if $\mathbf{J}_\lambda^{(\alpha)}$ denotes the Jack function according to Jack~\cite{jac70}, then $\mathbf{J}_\lambda^{(\alpha)} = \mathbf{J}_\lambda^{(b+1)} = J_\lambda^{(b)}$. We hope that this clarifies, rather than creates, any potential confusion.

\begin{example} \label{ex:jackfunctions}
The first several Jack functions are as follows.
\begin{align*}
J_\emptyset^{(b)} &=1 & J_1^{(b)} &= p_1 & J_{2}^{(b)} &= (b+1) \, p_{2} + p_1^2 & J_{3}^{(b)} &= 2(b+1)^2 \, p_{3} + 3(b+1) \, p_2 p_1 + p_1^3 \\
 & & & & J_{11}^{(b)} &= - p_2 + p_1^2 & J_{21}^{(b)} &= - (b+1) \, p_3 + b \, p_2 p_1 + p_1^ 3 \\
 & & & & & & J_{111}^{(b)} &= 2 \, p_3 -3 \, p_2 p_1 + p_1^3
\end{align*}
\end{example}

Jack functions can alternatively be characterised using the fact that they provide an orthogonal basis for the ring of symmetric functions with respect to the inner product defined by
\begin{equation} \label{eq:innerproduct_b}
\langle p_\lambda, p_\mu \rangle_b = \delta_{\lambda, \mu} (b+1)^{\ell(\lambda)} z_\lambda,
\end{equation}
where $\ell(\lambda)$ denotes the number of parts of $\lambda$ and $z_\lambda = \prod_{i \geq 1} i^{m_i(\lambda)} \, m_i(\lambda)!$ for $\lambda = 1^{m_1(\lambda)} 2^{m_2(\lambda)} 3^{m_3(\lambda)} \cdots$.
With respect to this inner product, we have
\[
\langle J_\lambda, J_\mu \rangle_b = \delta_{\lambda, \mu} \, \mathrm{hook}_b(\lambda) \, \mathrm{hook}_b'(\lambda).
\]

Jack functions can be recovered from Macdonald polynomials under a particular limit, while specialising the Jack functions to $b = 0$ ($\alpha = 1$) and $b = 1$ ($\alpha = 2$) recovers the Schur functions and the zonal polynomials, via the equations
\[
J^{(0)}_\lambda(\bm{p}) = \frac{|\lambda|!}{ \dim \lambda} \, s_\lambda(\bm{p}) \qquad \text{and} \qquad J^{(1)}_\lambda(\bm{p}) = \frac{(2|\lambda|)!}{ \dim 2 \lambda} \, Z_\lambda(\bm{p}).
\]

\subsection{The \texorpdfstring{$b$-}{b-}monotone Hurwitz numbers} \label{subsec:bmonotone}

Monotone Hurwitz numbers enumerate factorisations of permutations into transpositions that satisfy a certain monotonicity constraint, as per the definition below. They were first defined by Goulden, Guay-Paquet and Novak, who proved that they arise as coefficients in the large $N$ asymptotic expansion of the Harish-Chandra--Itzykson--Zuber (HCIZ) matrix integral over the unitary group $\mathbf{U}(N)$~\cite{gou-gua-nov14}. It was subsequently shown that monotone Hurwitz numbers are governed by the Chekhov--Eynard--Orantin topological recursion~\cite{che-eyn06,eyn-ora07,do-dye-mat17}.

\begin{definition} \label{def:monotonehurwitz}
The {\em monotone Hurwitz number} $\vec{H}_{g,n}(\mu_1, \ldots, \mu_n)$ is equal to $\frac{1}{|\mu|!}$ multiplied by the number of tuples $(\tau_1, \tau_2, \ldots, \tau_r)$ of transpositions in the symmetric group $S_{|\mu|}$ such that
\begin{itemize}
\item $r = |\mu| + 2g-2+n$,
\item the cycles of $\tau_1 \circ \tau_2 \circ \cdots \circ \tau_m$ are labelled $1, 2, \ldots, n$ such that cycle $i$ has length $\mu_i$ for $i = 1, 2, \ldots, n$,
\item if we write each transposition $\tau_i = (a_i ~ b_i)$ with $a_i < b_i$, then $b_1 \leq b_2 \leq \cdots \leq b_m$, and
\item $\tau_1, \tau_2, \ldots, \tau_r$ generate a transitive subgroup of $S_{|\mu|}$.
\end{itemize}
\end{definition}

Monotone Hurwitz numbers can be alternatively defined via the following equality of generating functions~\cite{gua-har15}.
\begin{align} \label{eq:monotone}
Z(\bm{p}; \h, z) &= \exp \Bigg[ \sum_{k \geq 0} z^k \sum_{g \geq 0} \sum_{n \geq 1} \frac{\h^{2g-2+n}}{n!} \sum_{\mu_1, \ldots, \mu_n \geq 1} \vec{H}_{g,n}(\mu_1, \ldots, \mu_n) \, p_{\mu_1} \cdots p_{\mu_n} \Bigg] \notag \\
&= \sum_{k \geq 0} \frac{z^k}{\h^k} \sum_{\lambda \vdash k} \Bigg( \prod_{\Box \in \lambda} \frac{1}{1-\h c_0(\Box)} \Bigg) \, \frac{s_\lambda(\bm{p})}{\mathrm{hook}_0(\lambda)}.
\end{align}

\begin{remark}
The input $(\mu_1, \ldots, \mu_n)$ to the monotone Hurwitz number $\vec{H}_{g,n}(\mu_1, \ldots, \mu_n)$ can be taken to be either a partition or a composition of an integer. We typically adopt the latter convention, which requires us to declare that $\vec{H}_{g,n}(\mu_1, \ldots, \mu_n)$ is a symmetric function of its arguments. These considerations apply here and throughout, for all of the variations of the monotone Hurwitz numbers that we encounter.
\end{remark}

Bonzom, Chapuy and Do\l{}\k{e}ga introduce a $b$-deformed version of this partition function by upgrading Schur functions to Jack functions, contents to $b$-contents, and hook products to $b$-hook products in the following way~\cite{bon-cha-dol23}.
\begin{align} \label{eq:bmonotone}
Z^{(b)}(\bm{p}; \h, z) &= \exp \Bigg[ \sum_{k \geq 0} z^k \sum_{g \geq 0} \sum_{n \geq 1} \frac{\h^{2g-2+n}}{n!} \sum_{\mu_1, \ldots, \mu_n \geq 1} \Hb_{g,n}(\mu_1, \ldots, \mu_n) \, \frac{p_{\mu_1} \cdots p_{\mu_n}}{(b+1)^n} \Bigg] \notag \\
&= \sum_{k \geq 0} \frac{z^k}{\h^k} \sum_{\lambda \vdash k} \Bigg( \prod_{\Box \in \lambda} \frac{1}{1-\h c_b(\Box)} \Bigg) \, \frac{J^{(b)}_\lambda(\bm{p})}{\mathrm{hook}_b(\lambda) \, \mathrm{hook}_b'(\lambda)}.
\end{align}

They proved that the resulting $b$-monotone Hurwitz numbers $\Hb_{g,n}(\mu_1, \ldots, \mu_n)$ enumerate possibly non-orientable monotone Hurwitz maps and satisfy Virasoro constraints. More recently, it was shown that the $b$-monotone Hurwitz numbers satisfy the refined topological recursion~\cite{CDO24b}.

\begin{remark}
There is a significant difference between \cref{eq:monotone,eq:bmonotone}. The former has a summation over $g \in \mathbb{N} = \{0, 1, 2, \ldots\}$, while the latter requires the summation to be over $g \in \frac{1}{2} \mathbb{N} = \{0, \frac{1}{2}, 1, \frac{3}{2}, \ldots\}$. It is useful to consider all summations to be over $g \in \frac{1}{2} \mathbb{N}$, so that $\vec{H}_{g,n}(\mu_1, \ldots, \mu_n) = 0$ whenever $g$ is non-integral. One could argue that it is more natural for the summations to be over an integer parameter equal to $2g$ instead. However, our choice of notation here reflects one that is common in the literature, where $g$ represents the genus of a surface and a non-integral genus indicates that the surface is non-orientable~\cite{bon-cha-dol23}.
\end{remark}

\begin{theorem}[Virasoro constraints for $Z^{(b)}$ {\cite[Theorem~2.7]{bon-cha-dol23}}] \label{thm:virasoro_b}
The partition function $Z^{(b)}(\bm{p}; \h, z)$ satisfies $L_m^{(b)} \cdot Z^{(b)} = 0$, for $m = 1, 2, 3, \ldots$, where
\[
L_m^{(b)} = \frac{m \partial_m}{\h} - (1+b) \sum_{i + j = m} ij \partial_i \partial_j - \sum_{i} (i + m) p_i \partial_{i+m} - b m (m-1) \partial_m - \delta_{m,1} \frac{z}{\h^2(1+b)}.
\]
Here, we write $\partial_i$ as a shorthand for the differential operator $\frac{\partial}{\partial p_i}$, for $i \geq 1$. These constraints uniquely determine $Z^{(b)}$ from the initial condition $\left. Z^{(b)} \right|_{z=0} = 1$. Moreover, the operators $L_1^{(b)}, L_2^{(b)}, L_3^{(b)}, \ldots$ satisfy the Virasoro commutation relations
\[
[L_m^{(b)}, L_n^{(b)}] = (m-n) \, L_{m+n}^{(b)}, \qquad \text{for } m, n \geq 1.
\]
\end{theorem}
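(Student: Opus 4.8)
The plan is to establish the two assertions of the theorem separately: first the operator identity $[L_m^{(b)}, L_n^{(b)}] = (m-n)\,L_{m+n}^{(b)}$, which makes no reference to $Z^{(b)}$, and then the annihilation statement $L_m^{(b)}\cdot Z^{(b)} = 0$, for which the commutation relations will do most of the work. Throughout I would work with the Jack expansion in \cref{eq:bmonotone}, writing $Z^{(b)} = \sum_\lambda a_\lambda \, \frac{J_\lambda^{(b)}}{\langle J_\lambda^{(b)}, J_\lambda^{(b)}\rangle_b}$ with content weights $a_\lambda = (z/\h)^{|\lambda|}\prod_{\Box\in\lambda}(1-\h \, c_b(\Box))^{-1}$, and using the normalisation $\langle J_\lambda^{(b)}, J_\lambda^{(b)}\rangle_b = \mathrm{hook}_b(\lambda)\,\mathrm{hook}_b'(\lambda)$.

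For the commutation relations I would compute $[L_m^{(b)}, L_n^{(b)}]$ directly from the explicit formula, using only the canonical relations $[\partial_i, p_j] = \delta_{ij}$ and $[\partial_i,\partial_j] = [p_i,p_j] = 0$. Here $L_m^{(b)}$ decomposes into a cross term $-\sum_i (i+m)\,p_i\partial_{i+m}$, a double-annihilation term $-(1+b)\sum_{i+j=m} ij\,\partial_i\partial_j$, and the diagonal pieces $\tfrac{m\partial_m}{\h} - bm(m-1)\partial_m$ together with the scalar $\delta_{m,1}$ term. Since the scalar term commutes with everything, it drops out of the bracket entirely; the only genuine care needed is in combining the diagonal $b$-term $-bm(m-1)\partial_m$ with the contributions of the cross terms so as to produce the coefficient $(m-n)$ and to confirm that no central (anomalous) term survives for $m,n\geq 1$. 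Grouping the commutator by total derivative order, I expect the degree-$(m+n)$ shift to reassemble precisely into $(m-n)\,L_{m+n}^{(b)}$.

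For $L_m^{(b)}\cdot Z^{(b)} = 0$ I would pair with $J_\mu^{(b)}$ under the inner product of \cref{eq:innerproduct_b} and use the adjoint relation $p_i^\ast = (b+1)\,i\,\partial_i$ to move $L_m^{(b)}$ across, so that $\langle J_\mu^{(b)}, L_m^{(b)} Z^{(b)}\rangle_b = 0$ becomes a linear relation among the $a_\lambda$ governed by the Pieri-type expansion of the creation operator $(L_m^{(b)})^\ast$ acting on $J_\mu^{(b)}$. Rather than treat all $m$ this way, I would prove only the base cases $m=1$ and $m=2$ directly, since these raise the degree by one and two and hence involve only the explicit one- and two-box Jack Pieri rules; every $m\geq 3$ then follows by induction from the commutation relations via $L_m^{(b)} Z^{(b)} = \tfrac{1}{m-2}\,[L_{m-1}^{(b)}, L_1^{(b)}]\cdot Z^{(b)}$, which vanishes once $L_1^{(b)} Z^{(b)} = L_{m-1}^{(b)} Z^{(b)} = 0$. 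The crucial structural input making this collapse is the multiplicativity $a_\lambda = \tfrac{z/\h}{1-\h\, c_b(\Box)}\,a_{\lambda^-}$ under adding a single box $\Box$ to $\lambda^-$.

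The main obstacle is the $m=1,2$ base cases. After clearing the box-addition weights, the vanishing reduces to an identity in $\h$ asserting that a content-weighted sum of Jack Pieri coefficients collapses to $\tfrac{1}{\h(b+1)}$ for $m=1$, with an analogous target for $m=2$; concretely, for $m=1$ one must show $\sum_{\Box}\tfrac{1}{1-\h\, c_b(\Box)}\,[J_{\mu+\Box}^{(b)}]\big((L_1^{(b)})^\ast J_\mu^{(b)}\big) = \tfrac{1}{\h(b+1)}$. Verifying this, equivalently matching each power of $\h$ after expanding $(1-\h\, c_b(\Box))^{-1}$ as a geometric series, is the technical heart, since for general $b$ there is no Schur-type representation-theoretic shortcut and one must invoke the explicit combinatorics of single- and double-box transitions for Jack functions. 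Finally, uniqueness is immediate: $L_1^{(b)}$ is triangular and degree-raising in the $z$-grading, so its constraint determines the coefficient of each $z^k$ recursively from the initial condition $Z^{(b)}\big|_{z=0} = 1$.
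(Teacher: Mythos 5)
Your architecture is genuinely different from the one used here, so let me first place it: the paper itself quotes this statement from Bonzom--Chapuy--Do{\l}\k{e}ga, and its own proof of the generalisation (\cref{thm:virasoro_bt}, which recovers this theorem at $t=1$) checks the commutation relations by brute force, then establishes the single \emph{summed} constraint $\sum_{n \geq 1} p_n L_n^{(b)} \cdot Z^{(b)} = 0$ --- which follows from the Laplace--Beltrami eigenvalue equation defining the Jack functions and requires no Pieri rules at all --- and finally disentangles it into the individual constraints by an inductive coefficient-extraction argument that uses only the commutation relations. Your plan inverts this: prove the cases $m=1,2$ directly and generate all $m \geq 3$ via $L_m^{(b)} = \frac{1}{m-2}[L_{m-1}^{(b)}, L_1^{(b)}]$. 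That induction step is correct, and your $m=1$ reduction is also on target: using $[D(b), p_1] = (b+1)\sum_i i\, p_{i+1} \partial_i$ together with the adjoints $p_i^\ast = (b+1) i \partial_i$, the constraint $L_1^{(b)} \cdot Z^{(b)} = 0$ collapses, exactly as your displayed identity asserts, to the known fact that the normalised one-box Pieri coefficients of $J_\mu^{(b)}$ sum to $1$.

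There are, however, two genuine gaps. First, the base case $m=2$ is not carried out, and for general $b$ it is where essentially all of the content of the theorem sits. The adjoint $(L_2^{(b)})^\ast$ involves $p_2$, $p_1^2$ and $\sum_i i\, p_{i+2}\partial_i = \frac{1}{2(b+1)}\big([D(b), p_2] - p_1^2 - b p_2\big)$, so the required identity mixes content-weighted \emph{two-box} Pieri coefficients of Jack polynomials with iterated one-box coefficients; verifying it is not a routine technicality but is comparable in difficulty to the theorem itself, which is precisely why the summed-constraint route is preferred in the literature. Second, your uniqueness argument fails as stated: $L_1^{(b)}$ alone does not determine $Z^{(b)}$. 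At order $z^k$ the constraint reads $\big(\tfrac{1}{\h}\partial_1 - \sum_i (i+1) p_i \partial_{i+1}\big) Z_k = \tfrac{1}{\h^2(1+b)} Z_{k-1}$, and the operator on the left maps the $p(k)$-dimensional space of weight-$k$ polynomials to the $p(k-1)$-dimensional space of weight-$(k-1)$ polynomials, so it has a nontrivial kernel for every $k \geq 2$ (for instance $\tfrac{1}{\h} p_2 + p_1^2$ is annihilated); hence $Z_k$ is not determined by $Z_{k-1}$ from this equation. Uniqueness needs all the constraints jointly: extracting the coefficient of $\h^{r-1} p_{\lambda_2} \cdots p_{\lambda_n}$ from $L_{\lambda_1}^{(b)} \cdot Z^{(b)} = 0$ yields a recursion computing every coefficient $\hb_r(\lambda)$ from the base case, exactly as in the proofs of \cref{thm:virasoro_A} and \cref{prop:expansion_B}.
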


It will also be convenient to introduce the following expansion of the partition function, where the exponential in \cref{eq:bmonotone} has been a removed, the indexing is over $r$ rather than $g$, and a factor of $\frac{1}{\mu_1 \cdots \mu_n}$ has been introduced.
\begin{equation} \label{eq:bmonotonedisc}
Z^{(b)}(\bm{p}; \h, z) = \sum_{k \geq 0} \frac{z^k}{\h^k} \sum_{r \geq 0} \sum_{n \geq 1} \frac{\h^r}{n!} \sum_{\mu_1, \ldots, \mu_n \geq 1} \frac{\hb_r(\mu_1, \ldots, \mu_n)}{\mu_1 \cdots \mu_n} \, \frac{p_{\mu_1} \cdots p_{\mu_n}}{(b+1)^n}
\end{equation}
It follows that for the value $b = 0$, $\vec{h}^{(0)}_r(\mu_1, \ldots, \mu_n)$ is the enumeration of \cref{def:monotonehurwitz} without the final transitivity constraint, where $r$ is the number of transpositions and the result is multiplied by $\mu_1 \cdots \mu_n$.

The $b$-monotone Hurwitz numbers provide an interesting yet manageable example of the more general notion of $b$-deformation of enumerative problems, which often involve the inclusion of ``non-orientable'' contributions. Such deformations appear to fit into the more general framework of refinement in mathematical physics. At present, the picture is far from complete, but the $b$-monotone Hurwitz numbers provide a useful case study from which more general results may follow in the future.

\section{Weingarten calculus for real Grassmannians} \label{sec:weingarten-A} 

In this section, we consider integration on the the real Grassmannian $\mathrm{Gr}_\mathbb{R}(M,N)$ for $M < N$, where we realise the Grassmannian as the space of $N \times N$ idempotent real symmetric matrices of rank $M$. In fact, this space can be described in the following three equivalent ways.

\begin{definition} \label{def:grassmannian}
Let $I_M$ denote the $M \times M$ identity matrix and let $I_{M,N}$ denote the $N \times N$ matrix whose first $M$ diagonal entries are 1 and whose remaining entries are 0. For $M < N$, define the space
\begin{align*}
\mathbf{A}(M,N) &= \{ A \in \mathrm{Mat}_{N \times N}(\mathbb{R}) \mid A^2 = A, A = A^T \text{ and } \mathrm{rank}(A) = M \} \\
&= \{ A = B^TB \mid B \in \mathrm{Mat}_{M \times N}(\mathbb{R}) \text{ and } BB^T = I_M \} \\
&= \{ A = O I_{M,N} O^T \mid O \in \O(N) \}.
\end{align*}
\end{definition}

The orthogonal group $\O(N)$ acts transitively on $\mathbf{A}(M,N)$ by conjugation, endowing it with the structure of a compact homogeneous space. Thus, the Haar measure on $\O(N)$ induces an $\O(N)$-invariant normalised Haar measure on $\mathbf{A}(M,N)$, which we denote succinctly by $\dd \mu$. Since the stabiliser of $I_{M,N} \in \mathbf{A}(M,N)$ is $\O(M) \times \O(N-M)$, we may identify $\mathbf{A}(M,N)$ with the real Grassmannian
\[
\mathrm{Gr}_\mathbb{R}(M,N) \cong \faktor{\O(N)}{\O(M) \times \O(N-M)}.
\]

For $1 \leq i, j \leq N$, define the function $A_{ij}: \mathbf{A}(M,N) \to \mathbb{R}$ that records the $(i,j)$ matrix element. Our primary goal is to calculate integrals of the form
\[
\int_{\mathbf{A}(M,N)} A_{i(1) i(2)} A_{i(3) i(4)} \cdots A_{i(2k-1) i(2k)} \, \dd \mu,
\]
for a function $i: \{1, 2 \ldots, 2k\} \to \{1, 2, \ldots, N\}$. We impose the technical assumption that $k \leq N$ for future convenience. However, this assumption has little bearing on our work, since we will eventually consider these matrix integrals in the regime of large $N$ and fixed ratio $\frac{M}{N}$.

\begin{remark}
A systematic study of Weingarten calculus for matrix ensembles associated with compact symmetric spaces has been carried out by Matsumoto~\cite{mat13}. The three so-called {\em chiral ensembles} give rise to integration on spaces that can be naturally identified with Grassmannians. However, the matrix ensembles studied by Matsumoto differ from the one considered here, leading to theories that are at least superficially distinct. On the other hand, it may be the case that there exists a non-trivial relationship between the Weingarten function that we define below and those defined by Matsumoto.
\end{remark}

\subsection{Convolution formula and orthogonality relations} \label{subsec:orthogonality_A}

We commence by defining a relevant Weingarten function, which focuses on a certain class of ``elementary'' integrals over the space $\mathbf{A}(M,N)$.

\begin{definition} \label{def:weingarten_A}
Define the {\em Weingarten function} $\wga: \P_\bullet \to \mathbb{R}(M, N)$ by
\[ 
\wga(\m) = \int_{\mathbf{A}(M,N)} A_{i(1)i(2)} A_{i(3)i(4)} \cdots A_{i(2k-1)i(2k)} \, \dd \mu,
\]
where $i: \{1, 2, \ldots, 2k\} \to \{1, 2, \ldots, N\}$ is a strongly admissible function for $\m$.
\end{definition}

As in the case of \cref{def:weingartenfn} for the orthogonal Weingarten function, one does not know a priori that the integral is independent of the choice of $\m$ nor that the result is a rational function. However, these are both consequences of the following result, which relates the integral above to the orthogonal Weingarten function. Furthermore, observe that the integral in the definition of $\wga(\m)$ is invariant under the change $i \mapsto i \circ \rho$ for $\rho \in H_k$. This follows directly from the fact that $A^T = A$ for $A \in \mathbf{A}(M,N)$ and the fact that the matrix elements commute. It follows that $\wga(\m)$ depends only on the coset-type of the pair partition $\m$ and thus, descends to a function on the set of partitions.

\begin{proposition}[Convolution formula for $\mathbf{A}(M,N)$] \label{prop:convolution_A}
For any function $i: \{1, 2, \ldots, 2k\} \to \{1, 2, \ldots, N\}$,
\[
\int_{\mathbf{A}(M,N)} A_{i(1) i(2)} A_{i(3) i(4)} \cdots A_{i(2k-1) i(2k)} \, \dd \mu = \sum_{\m \in \P_{k}} \Delta_{\m}(i) \, \wga(\m).
\]
Here, we set $\Delta_\m(i) = 1$ if $i$ is admissible for $\m$ and $\Delta_\m(i) = 0$ otherwise.
\end{proposition}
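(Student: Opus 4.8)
The plan is to push the integral over $\mathbf{A}(M,N)$ forward to an integral over $\O(N)$ and then invoke the orthogonal convolution formula (\cref{thm:convolution_O}). Writing a generic element of $\mathbf{A}(M,N)$ as $A = O I_{M,N} O^T$ with $O$ Haar-distributed on $\O(N)$ --- which realises $\dd \mu$ as the pushforward of the Haar measure $\dd \nu$ under the map $O \mapsto O I_{M,N} O^T$, by transitivity of the conjugation action and invariance of $\dd\mu$ --- the matrix elements become $A_{ab} = \sum_{p=1}^M O_{ap} O_{bp}$. Substituting this into the integrand and expanding the finite product yields
\[
\int_{\mathbf{A}(M,N)} A_{i(1)i(2)} \cdots A_{i(2k-1)i(2k)} \, \dd \mu = \sum_{s_1, \ldots, s_k = 1}^{M} \int_{\O(N)} O_{i(1) j(1)} O_{i(2) j(2)} \cdots O_{i(2k) j(2k)} \, \dd \nu,
\]
where $j \colon \{1, \ldots, 2k\} \to \{1, \ldots, M\} \subseteq \{1, \ldots, N\}$ is the function defined by $j(2l-1) = j(2l) = s_l$.

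First I would apply \cref{thm:convolution_O} to each summand, rewriting it as $\sum_{\m, \n \in \P_k} \Delta_\m(i) \, \Delta_\n(j) \, \wgo(\m^{-1}\n)$. The key observation is that the factor $\Delta_\m(i)$ is independent of the summation variables $s_1, \ldots, s_k$, since it depends only on the original function $i$. I would therefore interchange the orders of summation so as to pull $\sum_{\m} \Delta_\m(i)$ outside, leaving
\[
\sum_{\m \in \P_k} \Delta_\m(i) \Bigg[ \sum_{s_1, \ldots, s_k = 1}^M \sum_{\n \in \P_k} \Delta_\n(j) \, \wgo(\m^{-1}\n) \Bigg].
\]

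The hard part will be to recognise the bracketed quantity as precisely $\wga(\m)$. To do so, I would run the identical chain of equalities starting instead from a function $i^*$ that is \emph{strongly} admissible for $\m$. In that case $\Delta_{\m'}(i^*) = 1$ forces $\m' = \m$, so the convolution formula collapses to exactly the bracketed double sum; since the left-hand side of the chain is then $\wga(\m)$ by \cref{def:weingarten_A}, the bracket equals $\wga(\m)$. Crucially, the bracket manifestly does not involve $i$ at all, so its value is independent of the strongly admissible representative chosen. This single computation therefore does double duty: it simultaneously establishes that $\wga$ is well-defined and that it lies in $\mathbb{R}(M,N)$, since $\wgo(\m^{-1}\n)$ is rational in $N$ while the inner sum over $s_1, \ldots, s_k$ evaluates, for each fixed $\n$, to a polynomial in $M$. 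Substituting the bracket back yields the claimed expression $\sum_{\m \in \P_k} \Delta_\m(i) \, \wga(\m)$.

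The only genuine subtlety is thus the interchange-and-recognition step; everything else is routine, as the sum over $s_1, \ldots, s_k$ is finite and the orthogonal convolution formula is available as a black box. As an independent sanity check that the combinatorics are correct, one can verify directly from the symmetries of $\dd \mu$ that the integral depends only on the set partition $\ker(i)$ of $\{1, \ldots, 2k\}$ --- using invariance under conjugation by permutation matrices $P_\pi \in \O(N)$, which sends $A_{ab} \mapsto A_{\pi^{-1}(a)\pi^{-1}(b)}$ --- and that it vanishes whenever some level set of $i$ has odd cardinality --- using invariance under conjugation by diagonal sign matrices, which sends $A_{ab} \mapsto \epsilon_a \epsilon_b A_{ab}$. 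Both features are reproduced by the right-hand side, since $\Delta_\m(i)$ depends only on $\ker(i)$ and no pair partition can be admissible for a function with a level set of odd size.
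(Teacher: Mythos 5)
Your proposal is correct and follows essentially the same route as the paper's proof: both lift the integral to $\O(N)$ via $A = O I_{M,N} O^T$, apply the orthogonal convolution formula of \cref{thm:convolution_O}, and use the fact that a strongly admissible function is admissible only for $\m$ itself to collapse the resulting sum to $\wga(\m)$. The only difference is organisational --- the paper expands both sides of the claimed identity separately and matches them, whereas you expand the left side and recognise the inner bracket as $\wga(\m)$ via the strongly admissible special case --- and your remark that this simultaneously establishes the well-definedness of $\wga$ mirrors the discussion the paper gives just before the proposition.
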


\begin{proof}
We lift the integrals from $\mathbf{A}(M,N)$ to $\O(N)$, using \cref{def:grassmannian} and the compatibility of the Haar measures $\dd \mu$ on $\mathbf{A}(M,N)$ and $\dd \nu$ on $\O(N)$. Using the convolution formula for $\O(N)$ of \cref{thm:convolution_O}, the left side of the desired equality becomes
\begin{align*}
\text{LHS} &= \int_{\O(N)} [OI_{M,N}O^T]_{i(1)i(2)} \cdots [OI_{M,N}O^T]_{i(2k-1)i(2k)} \, \dd \nu \\
&= \sum_{j_1, \ldots, j_k = 1}^M \int_{\O(N)} O_{i(1) j_1} O_{i(2) j_1} \cdots O_{i(2k-1) j_k} O_{i(2k) j_k} \, \dd \nu \\
&= \sum_{j_1, \ldots, j_k = 1}^M \sum_{\m, \n \in \P_k} \Delta_\m(i)\, \Delta_\n(j) \, \wgo(\m^{-1} \n),
\end{align*}
where $(j(1), j(2), \ldots, j(2k)) = (j_1, j_1, j_2, j_2, \ldots, j_k, j_k)$.

The right side becomes
\begin{align*}
\text{RHS}&= \sum_{\m \in \P_{k}} \Delta_{\m}(i) \int_{\mathbf{A}(M,N)} A_{h(1) h(2)} \cdots A_{h(2k-1) h(2k)} \, \dd \mu \\
&= \sum_{j_1, \ldots, j_k = 1}^M \sum_{\m \in \P_{k}} \Delta_{\m}(i) \int_{\O(N)} O_{h(1) j_1} O_{h(2) j_1} \cdots O_{h(2k-1) j_k} O_{h(2k) j_k} \, \dd \nu \\
&= \sum_{j_1, \ldots, j_k = 1}^M \sum_{\m \in \P_{k}} \Delta_{\m}(i) \sum_{\mathfrak{l}, \n \in \P_k} \Delta_{\mathfrak{l}}(h) \, \Delta_\n(j) \, \wgo(\mathfrak{l}^{-1} \n) \\
&= \sum_{j_1, \ldots, j_k = 1}^M \sum_{\m, \n \in \P_{k}} \Delta_{\m}(i) \, \Delta_\n(j) \, \wgo(\m^{-1} \n),
\end{align*}
where $h$ is any strongly admissible function for $\m$ and again $(j(1), j(2), \ldots, j(2k)) = (j_1, j_1, j_2, j_2, \ldots, j_k, j_k)$. The third equality uses the convolution formula for $\O(N)$ of \cref{thm:convolution_O} and the fourth equality uses the fact that one only obtains a contribution when $\mathfrak{l} = \mathfrak{m}$.
\end{proof}

We now use the convolution formula of \cref{prop:convolution_A} to deduce orthogonality relations for the Weingarten function $\wga$.

\begin{theorem}[Orthogonality relations for $\wga$] \label{thm:orthogonality_A}
For each non-empty pair partition $\m \in \P_k$, the Weingarten function $\wga$ satisfies the relation
\begin{align*}
\wga(\m) = &-\frac{1}{N} \sum_{i=1}^{2k-2} \wga((i~2k-1) \cdot \m) + \frac{M}{N} \delta_{\{2k-1,2k\} \in \m} \wga(\m^\downarrow) \\
&+ \frac{1}{N} \sum_{i=1}^{2k-2} \delta_{\{i,2k\}\in\m} \wga([(i~2k-1)\cdot\m]^\downarrow).
\end{align*}
Here, we set $\delta_{\{i,j\} \in \m} = 1$ if $\{i,j\} \in \m$ and $\delta_{\{i,j\} \in \m} = 0$ otherwise. If $\{2k-1,2k\} \in \m$, we define $\m^\downarrow \in \P_{k-1}$ to be the pair partition obtained by removing the pair $\{2k-1, 2k\}$ from $\m$.
\end{theorem}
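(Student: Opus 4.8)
The plan is to mimic the strategy used to prove the convolution formula in \cref{prop:convolution_A}: lift the integral over $\mathbf{A}(M,N)$ to the orthogonal group $\O(N)$ and then feed the result into the orthogonality relations for the orthogonal Weingarten function that are already in hand (\cref{thm:orthogonality_O}). Concretely, I would begin from \cref{def:weingarten_A} with a strongly admissible $i$ and use the description $A = OI_{M,N}O^T$ from \cref{def:grassmannian}, which gives the entrywise identity $A_{ab} = \sum_{c=1}^M O_{ac}O_{bc}$. Applying this to every factor expresses $\wga(\m)$ as a sum over auxiliary column indices $c_1, \dots, c_k \in \{1, \dots, M\}$ of an honest $\O(N)$ integral; via the convolution formula for $\O(N)$ (\cref{thm:convolution_O}) each such integral is a value of $\wgo$.

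The heart of the argument is to single out the last factor, whose lifted form is $\sum_{c=1}^M O_{i(2k-1)\,c}\, O_{i(2k)\,c}$, and to run the orthogonal recursion on the pair of $O$-entries carrying the column index $c$. I expect three contributions. The ``diagonal'' contraction, in which these two entries pair with each other, forces $i(2k-1) = i(2k)$, that is $\{2k-1,2k\} \in \m$, and collapses the remaining integral to $\wga(\m^\downarrow)$; the fact that the auxiliary index $c$ is summed over the restricted range $\{1, \dots, M\}$ rather than all of $\{1, \dots, N\}$ is what I expect to promote the coefficient $\frac{1}{N}$ of \cref{thm:orthogonality_O} to $\frac{M}{N}$ here. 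The ``off-diagonal'' contractions, in which $c$ is matched against the column index of one of the other $k-1$ factors, should reproduce the sum $-\frac{1}{N}\sum_{i=1}^{2k-2} \wga((i~2k-1)\cdot\m)$ exactly as in the orthogonal case. Finally, the symmetry $A = A^T$ --- which makes $\wga$ a function of coset-type alone and lets the index $2k$ be contracted through its $\m$-partner --- is what I expect to generate the additional third term $\frac{1}{N}\sum_{i=1}^{2k-2}\delta_{\{i,2k\}\in\m}\wga([(i~2k-1)\cdot\m]^\downarrow)$, which has no counterpart in \cref{thm:orthogonality_O}. At each stage I would re-sum the auxiliary indices and re-read the surviving terms through \cref{prop:convolution_A} to recognise them as the asserted $\wga$-values.

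The main obstacle will be the combinatorial bookkeeping in the middle step: keeping careful track of how the auxiliary summation and the pairing operations $(i~2k-1)\cdot\m$ and $\m \mapsto \m^\downarrow$ interact, so that the restricted range $\{1,\dots,M\}$ yields exactly the coefficient $\frac{M}{N}$ and the symmetry of $A$ yields exactly the extra term with the correct index set and sign. The novel third term is the genuinely new phenomenon relative to the orthogonal case, so I would verify it first on small pair partitions with $\{i,2k\}\in\m$ and $i \leq 2k-2$ to pin down its shape before attempting the general identity. A cleaner alternative worth pursuing in parallel is a direct loop equation on $\mathbf{A}(M,N)$: combining the $\O(N)$-invariance of $\dd\mu$ with the idempotency $A^2 = A$ and the trace constraint $\operatorname{tr} A = M$ to produce a Stein-type expansion of the last entry $A_{i(2k-1)i(2k)}$, in which the mean $\frac{M}{N}$ of a diagonal entry supplies the second term and the covariances with the remaining factors supply the two sums; this route avoids the auxiliary indices entirely, at the cost of first establishing the relevant covariance identity for the entries of $A$.
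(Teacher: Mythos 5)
Your primary route has a genuine gap at its central step. After lifting via $A = OI_{M,N}O^T$, the last factor becomes $\sum_{c=1}^M O_{i(2k-1)c}O_{i(2k)c}$, and you propose to ``run the orthogonal recursion'' on this pair of entries. But the mechanism that produces the recursion of \cref{thm:orthogonality_O} is a contraction over a \emph{complete} index range, of the type $\sum_{c=1}^N O_{ac}O_{bc} = \delta_{ab}$; here the column index $c$ runs only over $\{1,\ldots,M\}$, so no such contraction is available. Indeed, the partial sum $\sum_{c=1}^M O_{ac}O_{bc}$ is by definition just $A_{ab}$ again, so this step is circular, and writing it as $\delta_{ab}$ minus the complementary projector changes the problem rather than solving it. If instead ``running the recursion'' means applying \cref{thm:orthogonality_O} termwise to the values $\wgo(\m^{-1}\n)$ produced by \cref{thm:convolution_O}, you face the unaddressed problem that left multiplication by $(i~2k-1)$ does not interact simply with the product $\m^{-1}\n$ or with the $M$-dependent weights, so the resummation over $\n$ is a nontrivial combinatorial task your sketch does not attempt. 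Relatedly, your picture of ``three simultaneous contributions'' misreads the structure of the statement: the second and third terms of \cref{thm:orthogonality_A} are mutually exclusive (their delta prefactors can never both be nonzero), so the relation is really two separate identities --- one for $\{2k-1,2k\}\in\m$ and one for $\{2k-1,2k\}\notin\m$ --- and in particular the third term is not generated by the symmetry $A=A^T$ on top of the other two.

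What does work is essentially the ``cleaner alternative'' you defer to your final sentence, and it is in fact the paper's proof --- except that it requires no Stein-type covariance identity, only the already-established convolution formula of \cref{prop:convolution_A}. The paper stays on $\mathbf{A}(M,N)$ and treats the two cases above separately. When $\{2k-1,2k\}\in\m$, it evaluates the auxiliary integral $\sum_{i=1}^N \int A_{j(1)j(2)} \cdots A_{j(2k-3)j(2k-2)} A_{ii}\,\dd\mu$ in two ways: by the trace constraint $\mathrm{Tr}(A)=M$ it equals $M\wga(\m^\downarrow)$, and by applying \cref{prop:convolution_A} to each summand it equals $N\wga(\m) + \sum_{i=1}^{2k-2}\wga((i~2k-1)\cdot\m)$; this trace identity, not a restricted-range contraction, is the true source of the coefficient $\frac{M}{N}$. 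When $\{2k-1,2k\}\notin\m$, it instead splits one entry by idempotency, $\sum_{i=1}^N A_{j(a-1)i}A_{ij(2k)} = A_{j(a-1)j(2k)}$, evaluates both sides via \cref{prop:convolution_A}, and uses coset-type invariance (this is where $A=A^T$ actually enters) to arrive at the third term. If you replace your lifted contraction with these two auxiliary integrals, the bookkeeping you were worried about becomes routine.
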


\begin{proof}
We consider the cases $\{2k-1,2k\} \in \m$ and $\{2k-1,2k\} \notin \m$ separately.

{\em Case 1.} Suppose $\{2k-1,2k\} \in \m$. \\
Let $j$ be a strongly admissible function for $\m^{\downarrow}$ with values in $\{1, 2, \ldots, k-1\}$ and consider the integral
\[
\sum_{i=1}^N \int_{\mathbf{A}(M,N)} A_{j(1)j(2)} \cdots A_{j(2k-3)j(2k-2)} A_{i i} \, \dd \mu.
\]
One can use $\sum_{i=1}^N A_{ii} = \mathrm{Tr}(A) = \mathrm{Tr}(O I_{M,N} O^T) = \mathrm{Tr}(I_{M,N}) = M$ and the definition of the Weingarten function $\wga$ to express the integral as $M \wga(\m^\downarrow)$.

Alternatively, the convolution formula of \cref{prop:convolution_A} can be applied directly to each summand as follows. For $k \leq i \leq N$, the sequence $(j(1), j(2), \ldots, j(2k-2), i, i)$ defines a strongly admissible function for $\m$ and is not admissible for any other pair partition. So the convolution formula implies that
\[
\sum_{i=k}^{N} \int_{\mathbf{A}(M,N)}A_{j(1)j(2)} \cdots A_{j(2k-3)j(2k-2)} A_{i i}\, \dd \mu = (N-k+1) \, \wga(\m).
\]

For $1\leq i \leq k-1$, the sequence $(j(1), j(2), \ldots, j(2k-2), i, i)$ defines a function that is admissible for $\m$ and for exactly two other pair partitions. These pair partitions arise from the three distinct ways to pair the four terms that are equal to $i$. They are given by $\m$, $(a~2k-1) \cdot \m$ and $(b~2k-1) \cdot \m$, where $\{a, b\} \in \m^{\downarrow}$ and $j(a) = j(b) = i$. Summing these contributions, one obtains
\[
\sum_{i=1}^{k-1} \int_{\mathbf{A}(M,N)} A_{j(1)j(2)} \cdots A_{j(2k-3)j(2k-2)} A_{i i} \, \dd \mu = (k-1) \, \wga(\m) + \sum_{i=1}^{2k-2} \wga((i ~2k-1) \cdot \m).
\]

These two sums combine to give
\begin{equation} \label{eq:wgacase1}
M \wga(\m^\downarrow) = N \wga(\m) + \sum_{i=1}^{2k-2} \wga((i ~2k-1) \cdot \m),
\end{equation}
which rearranges to yield the desired equality.

{\em Case 2.} Suppose $\{2k-1,2k\} \notin \m$. \\
Let $\{a, 2k-1\}$ and $\{b, 2k\}$ be the pairs in $\m$ containing $2k-1$ and $2k$, so that $1 \leq a, b\leq 2k-2$ with $a\neq b$. For ease of notation, suppose that $a$ is even, although the following argument applies equally when $a$ is odd.

Let $j: \{1,2,\ldots,2k\}\setminus\{a, 2k-1\} \to \{1, 2, \ldots, k-1\}$ be a function that is strongly admissible for the pair partition $\m \setminus \{\{a, 2k-1\}\}$. That is, $i$ assigns to each pair of $\m$, excluding $\{a, 2k-1\}$, a unique label from $1$ to $k-1$. Now consider the integral
\begin{equation} \label{eq:prooforthogonality_A}
\sum_{i=1}^{N} \int_{\mathbf{A}(M,N)} \left( A_{j(1)j(2)} \cdots A_{j(a-3)j(a-2)} \right) A_{j(a-1)i} \left( A_{j(a+1) j(a +2)} \cdots A_{j(2k-3)j(2k-2)} \right) A_{ij(2k) } \, \dd \mu.
\end{equation}

On the one hand, we can use the fact that $\sum_{i=1}^{N} A_{j(a-1)i} A_{ij(2k)} = A_{j(a-1)j(2k)}$ since $A^2 = A$ for any $A\in\mathbf{A}(M,N)$. Then commuting the sum and the integral yields
\[
\int_{\mathbf{A}(M,N)} \left( A_{j(1)j(2)} \cdots A_{j(a-3) j(a-2)} \right) A_{j(a-1)j(2k)} \left( A_{j(a+1)j(a +2)} \cdots A_{j(2k-3)j(2k-2)} \right) \, \dd \mu.
\]
The sequence of indices $(j(1), \ldots, j(a-1), j(2k), j(a+1), \ldots, j(2k-2))$ defines a strongly admissible function for $[(b~2k-1) \cdot \m]^{\downarrow}$. So the expression in \cref{eq:prooforthogonality_A} is simply equal to $\wga([(b~2k-1) \cdot \m]^{\downarrow})$.

On the other hand, we can apply the convolution formula of \cref{prop:convolution_A} directly to each summand of \cref{eq:prooforthogonality_A}. For a given $1 \leq i \leq N$, define the function $h$ by
\[
h(s) = \begin{cases}
j(s), & s \in \{1,2,\ldots, 2k\} \setminus \{a, 2k-1\}, \\
i, & s \in \{a, 2k-1\}.
\end{cases}
\]
Note that $h$ is the sequence of indices of the integrand of the $i$th summand of \cref{eq:prooforthogonality_A}.

For $k \leq i \leq N$, it is clear that $h$ is a strongly admissible function for the pair partition $\m$, so the summand is simply $\wga(\m)$.

For $1 \leq i \leq k-1$, there is a pair $\{a_1, a_2\} \in \m \setminus \{\{a, 2k-1\}\}$ such that $j(a_1) = j(a_2) = i$. 
Then $h$ is an admissible function precisely for the three pair partitions $\m$, $(a_1 ~ 2k-1) \cdot \m$ and $(a_2~2k-1) \cdot \m$. Thus, the $i$th summand of \cref{eq:prooforthogonality_A} is equal to
\[
\wga(\m) + \wga((a_1 ~ 2k-1) \cdot \m) + \wga((a_2 ~ 2k-1) \cdot \m).
\]

Summing over these contributions for $1 \leq i \leq N$, the integral of \cref{eq:prooforthogonality_A} can be expressed as
\[
N \wga(\m) + \sum_{\substack{i=1\\ i\neq a}}^{2k-2} \wga((i ~ 2k-1) \cdot \m) + \wga((2k-1 ~ 2k) \cdot \m).
\]
To obtain the desired formula, observe that the pair partitions $(2k-1 ~2k) \cdot \m$ and $(a ~ 2k-1) \cdot \m$ are of the same coset-type. Invoking the coset-type invariance of $\wga$ allows us to conclude that
\begin{equation} \label{eq:wgacase2}
\wga([(b~2k-1) \cdot \m]^{\downarrow}) = N \wga(\m) + \sum_{i=1}^{2k-2} \wga((i ~ 2k-1) \cdot \m).
\end{equation}

The desired result is simply the expression of \cref{eq:wgacase1} from Case 1 and \cref{eq:wgacase2} from Case 2 in a unified manner.
\end{proof}

\subsection{Large \texorpdfstring{$N$}{N} expansion of the Weingarten function} \label{subsec:expansion_A}

As in other contexts for Weingarten calculus, the orthogonality relations motivate the definition of a Weingarten graph, from which we derive a large $N$ expansion for the Weingarten function~\cite{col-mat17}. The coefficients of this expansion are weighted enumerations of monotone factorisations, naturally leading to a variation of the usual monotone Hurwitz numbers.

\begin{definition}
The {\em $t$-deformed orthogonal Weingarten graph} $\mathcal{G}^{\mathbf{A}}$ is the infinite directed graph with vertex set $\P_\bullet$ and edge set $E_A \sqcup E_B \sqcup E_C$, where
\begin{itemize}
\item the set $E_A$ comprises ``type $A$'' edges, which are of the form $\m \longrightarrow (i ~ 2k-1) \cdot \m$ for $\m \in \P_k$ and $1 \leq i \leq 2k-2$;
\item the set $E_B$ comprises ``type $B$'' edges, which are of the form $\m \longrightarrow \m^\downarrow$ for $\m \in \P_k$ with $\{2k-1, 2k\} \in \m$;
\item the set $E_C$ comprises ``type $C$'' edges, which are of the form $\m \longrightarrow [(i~2k-1) \cdot \m]^\downarrow$ for $\m \in \P_k$ with $\{i, 2k\} \in \m$ and $1 \leq i \leq 2k-2$.
\end{itemize}

Consistent with usual graph-theoretic terminology, we define a {\em path} in $\mathcal{G}^{\mathbf{A}}$ to be a tuple of pair partitions 
$(\m_0, \m_1, \m_2, \ldots, \m_\ell)$ such that $\m_{i-1} \longrightarrow \m_i$ is a directed edge of $\mathcal{G}^{\mathbf{A}}$ for $i = 1, 2, \ldots, \ell$.

Let $\mathrm{Path}^{\mathbf{A}}(\m)$ denote the set of paths from the pair partition $\m$ to the empty pair partition $(~)$ in the Weingarten graph $\mathcal{G}^\mathbf{A}$. 
\end{definition}

The three edge types in $\mathcal{G}^{\mathbf{A}}$ correspond to the three terms on the right side of the orthogonality relations of \cref{thm:orthogonality_A}. Thus, the Weingarten graph $\mathcal{G}^{\mathbf{A}}$ is obtained from the orthogonal Weingarten graph $\mathcal{G}^\O$ of \cref{fig:weingartengraph} by adding type $C$ edges. We omit a diagram of $\mathcal{G}^{\mathbf{A}}$, which would likely be too cluttered to be illustrative.

By construction, we can express the Weingarten function $\wga$ as a weighted enumeration of walks in $\mathcal{G}^{\mathbf{A}}$.

\begin{proposition}[Large $N$ expansion] \label{prop:graph-func-equiv}
For each pair partition $\m$, we have the large $N$ fixed $\frac{M}{N}$ expansion
\[
\wga(\m) = \sum_{\bm{\rho} \in \mathrm{Path}^{\mathbf{A}}(\m)} \left( -\frac{1}{N} \right)^{\ell_A(\bm{\rho})} \left( \frac{M}{N} \right)^{\ell_B(\bm{\rho})} \left( \frac{1}{N} \right)^{\ell_C(\bm{\rho})},
\]
where the summation is over all paths in $\mathcal{G}^{\mathbf{A}}$ from $\m$ to the empty pair partition $(~)$. We use $\ell_K(\bm{\rho})$ to denote the number of type $K$ edges for $K \in \{A, B, C\}$.
\end{proposition}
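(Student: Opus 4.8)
The plan is to recognise the claimed identity as the assertion that $\wga$ is the unique solution of the orthogonality relations of \cref{thm:orthogonality_A}, re-expressed as a walk-generating function on $\mathcal{G}^{\mathbf{A}}$ — exactly paralleling the proof of the corresponding large $N$ expansion for $\wgo$. Write $F(\m)$ for the right-hand side, the weighted sum over $\mathrm{Path}^{\mathbf{A}}(\m)$. I would then establish three things: (i) $F(\m)$ is a well-defined formal power series in $\tfrac1N$; (ii) $F$ satisfies the orthogonality relations together with the base case $F((~)) = 1$; and (iii) these two properties pin down $F$ uniquely, so that $\wga = F$.

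For (i), I would note that each type $B$ and type $C$ edge lowers the level (the number of pairs) by one, while each type $A$ edge preserves it; hence every path from $\m \in \P_k$ to $(~) \in \P_0$ uses exactly $k$ level-reducing edges, so $\ell_B(\bm{\rho}) + \ell_C(\bm{\rho}) = k$. With $\tfrac MN$ held fixed, type $A$ and type $C$ edges carry a factor of order $\tfrac1N$ and type $B$ edges carry the order-$1$ factor $\tfrac MN$, so a path contributes at order $\ell_A + \ell_C$ in $\tfrac1N$. For fixed order this forces $\ell_A, \ell_C$ bounded and $\ell_B \leq k$, hence bounded total length; since $\mathcal{G}^{\mathbf{A}}$ has finite out-degree, only finitely many paths contribute to each power of $\tfrac1N$, and $F(\m)$ is a well-defined power series with coefficients polynomial in $\tfrac MN$.

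For (ii), I would decompose each nonempty path according to its first edge: a path from $\m$ is an initial edge $\m \longrightarrow \m'$ followed by a path from $\m'$ to $(~)$, and the weight factorises accordingly. Summing over the three edge families of $\mathcal{G}^{\mathbf{A}}$ — type $A$ edges $\m \to (i~2k-1)\cdot\m$ with weight $-\tfrac1N$ for $1 \leq i \leq 2k-2$, the type $B$ edge $\m \to \m^\downarrow$ with weight $\tfrac MN$ when $\{2k-1,2k\} \in \m$, and type $C$ edges $\m \to [(i~2k-1)\cdot\m]^\downarrow$ with weight $\tfrac1N$ when $\{i,2k\} \in \m$ — reproduces verbatim the relation of \cref{thm:orthogonality_A}, since the index ranges and indicator functions match exactly. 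The base case $F((~)) = 1$ holds because the empty path is the unique path from $(~)$ to itself, agreeing with $\wga((~)) = \int_{\mathbf{A}(M,N)} 1 \, \dd\mu = 1$.

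For (iii), I would induct on the level $k$. The difference $G = \wga - F$ satisfies the homogeneous form of the relations with $G((~)) = 0$. Assuming $G$ vanishes on all levels below $k$, the type $B$ and type $C$ terms drop out, leaving $G(\m) = -\tfrac1N \sum_{i=1}^{2k-2} G((i~2k-1)\cdot\m)$ for all $\m \in \P_k$; in terms of the finite linear operator $L_k$ on $\mathbb{C}[\P_k]$ given by $(L_k G)(\m) = \sum_{i=1}^{2k-2} G((i~2k-1)\cdot\m)$, this reads $(I + \tfrac1N L_k)\, G = 0$. Since $I + \tfrac1N L_k$ has constant term the identity, it is invertible over the ring of formal Laurent series in $\tfrac1N$, forcing $G = 0$ on level $k$ and completing the induction. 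The main obstacle is exactly this self-reference of the orthogonality relations within a single level through the type $A$ edges, which obstructs a naive downward induction; it is overcome by exploiting that these terms carry the factor $\tfrac1N$, so the relevant operator is invertible over formal Laurent series.
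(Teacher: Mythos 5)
Your proposal is correct and follows essentially the same route as the paper: the paper presents this proposition as holding "by construction", namely by iterating the orthogonality relations of \cref{thm:orthogonality_A}, whose three terms correspond exactly to the three edge types of $\mathcal{G}^{\mathbf{A}}$, with the factor of $\tfrac1N$ on type $A$ and $C$ edges guaranteeing order-by-order convergence. Your write-up simply makes this implicit argument rigorous (well-definedness of the path series, matching recursions, and uniqueness via invertibility of $I + \tfrac1N L_k$ over formal Laurent series), which is a faithful formalisation rather than a different method.
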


Given a path in $\mathcal{G}^{\mathbf{A}}$ from a pair partition $\m \in \P_k$ to the empty pair partition $(~)$, one can record the sequence of transpositions $(\tau_r, \tau_{r-1}, \ldots, \tau_1)$ associated to the edges of types $A$ and $C$. By construction, the reversed sequence $(\tau_1, \tau_2, \ldots, \tau_r)$ is a monotone sequence of {\em odd} transpositions -- that is, transpositions of the form $(a~b)$ with $a < b$ and $b$ odd. Furthermore, one has the property $\tau_1 \circ \tau_2 \circ \cdots \circ \tau_r \cdot \m = \e_k$, which motivates the following definition.

\begin{definition} \label{def:factorisation}
A {\em monotone factorisation} of a pair partition $\m \in \P_k$ is a sequence $\bm{\tau} = (\tau_1, \tau_2, \ldots, \tau_r)$ of transpositions in $S_k$ such that 
\begin{itemize}
\item if we write $\tau_i = (a_i~b_i)$ with $a_i < b_i$, then $b_1 \leq b_2 \leq \cdots \leq b_m$ are odd, and
\item $\tau_1 \circ \tau_2 \circ \cdots \circ \tau_r \cdot \m = \e_k$.
\end{itemize}

Define the {\em hive number} of such a monotone factorisation $\bm{\tau}$ to be the number of distinct elements in the multiset $\{b_1, b_2, \ldots, b_m\}$ and denote it by $\mathrm{hive}(\bm{\tau})$.

Such a monotone factorisation is said to be {\em connected} if $\langle \tau_1, \tau_2, \ldots, \tau_r, \iota \rangle$ is a transitive subgroup of $S_{2k}$, where~$\iota$ denotes the involution $(1~2)(3~4) \cdots (2k-1~2k)$.

Denote by $\mathrm{Mono}(\m)$ the set of monotone factorisations of the pair partition $\m$ and by $\mathrm{CMono}(\m)$ the set of connected monotone factorisations of the pair partition $\m$.
\end{definition}

The recording of transpositions along a path in the Weingarten graph $\mathcal{G}^{\mathbf{A}}$ gives rise to a map
\[
\mathrm{F} : \mathrm{Path}^{\mathbf{A}}(\m) \to \mathrm{Mono}(\m),
\]
for each pair partition $\m$. This construction does not provide a bijection, since there are many different paths that give rise to the same monotone factorisation. To realise the preimage $\mathrm{F}^{-1}(\bm{\tau})$ for an arbitrary monotone factorisation $\bm{\tau}$ of length $r$, observe that there is a unique path $\bm{\rho}$ in $\mathcal{G}^{\mathbf{A}}$ comprising only edges of types $A$ and $B$ such that $F(\bm{\rho}) = \bm{\tau}$. All other preimages of $\bm{\tau}$ can be obtained from $\bm{\rho}$ by exchanging a pair of consecutive edges of types $A$ and $B$ in order with the corresponding edge of type $C$. The number of such pairs of edges that can be exchanged is precisely $\mathrm{hive}(\bm{\tau})$, so we have $|F^{-1}(\bm{\tau})| = 2^{\mathrm{hive}(\bm{\tau})}$. At the level of the weighted enumeration of paths, for a monotone factorisation $\bm{\tau}$ of a pair partition $\m \in \P_k$, we have the generating function identity
\begin{align*}
\sum_{\bm{\rho} \in F^{-1}(\bm{\tau})} x^{\ell_A(\bm{\rho})} y^{\ell_B(\bm{\rho})} z^{\ell_C(\bm{\rho})} &= x^{\ell_A(\bm{\rho}) + \ell_C(\bm{\rho})} y^{\ell_B(\bm{\rho}) + \ell_C(\bm{\rho})} (1 + x^{-1} y^{-1} z)^{\mathrm{hive}(\bm{\tau})} \\
&= x^{\ell(\bm{\tau})} y^k (1 + x^{-1} y^{-1} z)^{\mathrm{hive}(\bm{\tau})}.
\end{align*}
Here, we have used $\ell(\bm{\tau})$ to denote the length of $\bm{\tau}$.

The equation can now be applied to the large $N$ expansion of \cref{prop:graph-func-equiv} to give
\[
\wga(\m) = \sum_{r=0}^\infty \left(-\frac{1}{N}\right)^r \left(\frac{M}{N}\right)^k \sum_{\substack{\bm{\tau} \in \mathrm{Mono}(\m) \\ \ell(\bm{\tau}) = r}} \left(1- \dfrac{N}{M}\right)^{\mathrm{hive}(\bm{\tau})},
\]
which motivates the following.

\begin{definition} \label{def:monotonecount}
For $\m$ a pair partition and $r \geq 0$, let $\vec{h}^{(t)}_r(\m) \in \mathbb{Z}[t]$ denote the weighted enumeration of monotone factorisations of $\m$ with length $r$, where the weight of a monotone factorisation $\bm{\tau}$ is $t^{\mathrm{hive}(\bm{\tau})}$.
\end{definition}

\begin{remark}
One of the interesting features of monotone Hurwitz numbers and their variants is that they can be defined in several ways. It is worth noting that in \cref{sec:prelim,sec:hurwitz}, the monotone Hurwitz numbers are defined as coefficients of a particular generating function and then shown to enumerate monotone factorisations. In this section, our path through Weingarten calculus motivates us to take the reverse approach and define the monotone Hurwitz numbers above as weighted enumerations of monotone factorisations.
\end{remark}

The previous discussion then leads directly to the following result, which expresses the large $N$ fixed $\frac{M}{N}$ expansion of the Weingarten function $\wga$ as a weighted enumeration of monotone factorisations.

\begin{theorem}[Large $N$ expansion] \label{thm:expansion_A}
For each pair partition $\m \in \P_k$, we have the large $N$ fixed $\frac{M}{N}$ expansion 
\[
\wga(\m) = \frac{1}{(1-t)^k} \sum_{r=0}^\infty \vec{h}^{(t)}_r(\m) \left(-\frac{1}{N}\right)^r,
\]
where $t = 1 - \frac{N}{M}$.
\end{theorem}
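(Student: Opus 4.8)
The plan is to obtain the stated expansion as a direct substitution into the intermediate formula established immediately before \cref{def:monotonecount}, namely
\[
\wga(\m) = \sum_{r=0}^\infty \left(-\frac{1}{N}\right)^r \left(\frac{M}{N}\right)^k \sum_{\substack{\bm{\tau} \in \mathrm{Mono}(\m) \\ \ell(\bm{\tau}) = r}} \left(1 - \frac{N}{M}\right)^{\mathrm{hive}(\bm{\tau})}.
\]
All of the substantive work has already been carried out: the expansion of $\wga$ as a weighted enumeration of paths in $\mathcal{G}^{\mathbf{A}}$ (\cref{prop:graph-func-equiv}) and the fibre generating function for the map $F \colon \mathrm{Path}^{\mathbf{A}}(\m) \to \mathrm{Mono}(\m)$. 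Consequently, the proof requires no further analysis of the Weingarten graph and amounts purely to rewriting the displayed formula in terms of the parameter $t = 1 - \frac{N}{M}$.

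First I would record the two identities governing $t$. Since $1 - t = \frac{N}{M}$, the prefactor satisfies $\left(\frac{M}{N}\right)^k = \frac{1}{(1-t)^k}$; and since $t = 1 - \frac{N}{M}$ by definition, the weight satisfies $\left(1 - \frac{N}{M}\right)^{\mathrm{hive}(\bm{\tau})} = t^{\mathrm{hive}(\bm{\tau})}$. Substituting both into the intermediate formula allows the factor $\frac{1}{(1-t)^k}$, which is constant across the double sum, to be extracted, yielding
\[
\wga(\m) = \frac{1}{(1-t)^k} \sum_{r=0}^\infty \left(-\frac{1}{N}\right)^r \sum_{\substack{\bm{\tau} \in \mathrm{Mono}(\m) \\ \ell(\bm{\tau}) = r}} t^{\mathrm{hive}(\bm{\tau})}.
\]
The final step is to recognise the inner sum as the weighted enumeration $\vec{h}^{(t)}_r(\m)$, which is precisely the definition given in \cref{def:monotonecount}, giving the claimed identity.

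Since every step is a routine algebraic manipulation of previously established identities, I do not anticipate a genuine obstacle; the expansion is formal in $\frac{1}{N}$, so no convergence question arises. The only matters deserving care are purely bookkeeping: confirming that the exponent $k$ in the prefactor $\left(\frac{M}{N}\right)^k$ is exactly the number of pairs of $\m$ (so that it matches the exponent in $(1-t)^{-k}$), and that the weight $t^{\mathrm{hive}(\bm{\tau})}$ produced by the substitution coincides exactly with the weight in \cref{def:monotonecount}. Both are immediate, so the theorem follows at once from the preceding discussion.
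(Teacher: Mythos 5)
Your proposal is correct and follows essentially the same route as the paper: the paper likewise treats \cref{thm:expansion_A} as an immediate consequence of the intermediate formula derived from \cref{prop:graph-func-equiv} and the fibre generating function for $\mathrm{F}$, with the final step being exactly the substitution $t = 1 - \frac{N}{M}$ (so that $\left(\frac{M}{N}\right)^k = \frac{1}{(1-t)^k}$) and the identification of the inner sum with $\vec{h}^{(t)}_r(\m)$ from \cref{def:monotonecount}. Indeed, the paper offers no separate proof environment for this theorem, stating that it ``leads directly'' from the preceding discussion, which is precisely the content of your argument.
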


It follows from the fact that $\wga(\m)$ is coset-type invariant that $\vec{h}^{(t)}_r(\m)$ is as well. Thus, $\vec{h}^{(t)}_r$ descends to a function on the set of partitions and one can make sense of the expression $\vec{h}^{(t)}_r(\lambda)$ for $\lambda$ a partition. Naturally, one can also extend the definition to $\vec{h}^{(t)}_r(\mu_1, \ldots, \mu_n)$ for any positive integers $\mu_1, \ldots, \mu_n$ by declaring $\vec{h}^{(t)}_r$ to be symmetric.

As previously mentioned, there is a close relation between Weingarten functions and Jucys--Murphy elements in the symmetric group algebra~\cite{mat-nov13,nov10}. The following result is the analogue of \cref{prop:ojucysmurphy} for the Weingarten function $\wga$.

\begin{proposition} \label{prop:jucysmurphy_A}
For each positive integer $k$, we have the following equality in the vector space $\mathbb{C}[\P_k]$.
\[
\sum_{\m \in \P_k} \wga(\m) \, \m = \prod_{i=1}^k \frac{M + J_{2i-1}}{N + J_{2i-1}} \cdot \e_k
\]
\end{proposition}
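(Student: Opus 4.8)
The plan is to prove the identity by induction on $k$, peeling off the factor indexed by $i=k$ from the product on the right-hand side. Write $\Omega_k = \sum_{\m \in \P_k} \wga(\m)\,\m \in \mathbb{C}[\P_k]$ for the left-hand side, and for $\m' \in \P_{k-1}$ let $(\m')^{\uparrow} \in \P_k$ denote the pair partition obtained by adjoining the pair $\{2k-1,2k\}$, extended linearly to a lift $\mathbb{C}[\P_{k-1}] \to \mathbb{C}[\P_k]$. This lift is inverse to the operation $\downarrow$ on pair partitions containing $\{2k-1,2k\}$, sends $\e_{k-1}$ to $\e_k$, and intertwines the actions of $J_{2i-1}$ for $i \leq k-1$, since each transposition $(j~2i-1)$ with $j \leq 2k-2$ fixes both $2k-1$ and $2k$. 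Because the odd Jucys--Murphy elements $J_1, J_3, \ldots, J_{2k-1}$ commute in $\mathbb{C}[S_{2k}]$, the factor indexed by $i=k$ can be separated from the rest, and it suffices to establish the single operator identity
\[
(N + J_{2k-1}) \cdot \Omega_k = (M + J_{2k-1}) \cdot \Omega_{k-1}^{\uparrow}.
\]
Indeed, applying the inductive hypothesis to rewrite $\Omega_{k-1}$, lifting via $\uparrow$ (using $\e_{k-1}^{\uparrow} = \e_k$ and the intertwining property), and then inverting the operator $N + J_{2k-1}$ yields the claim.

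To prove the key identity I would compare the coefficient of an arbitrary $\n \in \P_k$ on each side. Since $J_{2k-1} \cdot \m = \sum_{i=1}^{2k-2} (i~2k-1) \cdot \m$ and each $(i~2k-1)$ is an involution, reindexing shows that the coefficient of $\n$ in $(N+J_{2k-1})\,\Omega_k$ equals
\[
N\,\wga(\n) + \sum_{i=1}^{2k-2} \wga((i~2k-1)\cdot\n).
\]
Multiplying the orthogonality relation of \cref{thm:orthogonality_A} by $N$ and rearranging identifies this quantity precisely with
\[
M\,\delta_{\{2k-1,2k\}\in\n}\,\wga(\n^\downarrow) + \sum_{i=1}^{2k-2} \delta_{\{i,2k\}\in\n}\,\wga([(i~2k-1)\cdot\n]^\downarrow).
\]

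For the right-hand side I would expand $(M+J_{2k-1})\,\Omega_{k-1}^{\uparrow}$ into its two summands. The term $M\,\Omega_{k-1}^{\uparrow}$ is supported on pair partitions containing $\{2k-1,2k\}$ and contributes $M\,\wga(\n^\downarrow)$ to each such $\n$, matching the first (type $B$) term above. The delicate part --- and the step I expect to be the main obstacle --- is the bookkeeping for $J_{2k-1}\,\Omega_{k-1}^{\uparrow}$: one must track how a transposition $(j~2k-1)$ acts on a lifted pair partition $(\m')^{\uparrow}$, which has $\{2k-1,2k\}$ as a block, and then determine, for a fixed target $\n$ with $\{2k-1,2k\}\notin\n$, the unique pair $(\m',j)$ producing it. Since $(j~2k-1)$ sends the block $\{2k-1,2k\}$ to $\{j,2k\}$, the index $j$ is forced to be the partner of $2k$ in $\n$ and $\m'$ is forced to equal $[(j~2k-1)\cdot\n]^\downarrow$; carrying this out shows the contribution is exactly the second (type $C$) term above, completing the match.

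Finally, I would record the base case $k=1$, where $J_1 = 0$ and $\wga(\e_1) = \frac{M}{N}$ follows from $\sum_j A_{jj} = \mathrm{Tr}(A) = M$ together with the $\O(N)$-invariance of $\dd\mu$, and note that the product $\prod_{i=1}^k \frac{M+J_{2i-1}}{N+J_{2i-1}}$ is well-defined because the commuting operators $N+J_{2i-1}$ are invertible in the regime of large $N$, over which the whole statement is an equality of rational functions in $M$ and $N$. The overall structure parallels Matsumoto's proof of \cref{prop:ojucysmurphy}; the only genuinely new ingredient is the extra type $C$ term, which is precisely what upgrades each numerator from $1$ to $M + J_{2i-1}$.
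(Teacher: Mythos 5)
Your proposal is correct and takes essentially the same approach as the paper: both proceed by induction on $k$, use the orthogonality relations of \cref{thm:orthogonality_A}, and peel off the factor $\frac{M+J_{2k-1}}{N+J_{2k-1}}$ via the lift $\n \mapsto \n^{\uparrow}$. The only difference is organisational --- the paper sums the orthogonality relations against the basis vectors $\m \in \P_k$ and recognises the two $i$-sums as actions of $J_{2k-1}$, whereas you verify the equivalent identity $(N+J_{2k-1}) \sum_{\m \in \P_k} \wga(\m)\,\m = (M+J_{2k-1}) \sum_{\n \in \P_{k-1}} \wga(\n)\,\n^{\uparrow}$ coefficient-by-coefficient.
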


\begin{proof}
We use an inductive approach, noting that the base case is straightforward to check. Now suppose that the equation is true for a positive integer $k-1$. Take the orthogonality relation of \cref{thm:orthogonality_A}, multiply by $\m \in \mathbb{C}[\P_k]$, and sum over all $\m \in \P_k$.
\begin{align*}
\sum_{\m \in \P_k} \wga(\m) \, \m = &-\frac{1}{N} \sum_{\m \in \P_k} \sum_{i=1}^{2k-2} \wga((i~2k-1) \cdot \m) \, \m + \frac{M}{N} \sum_{\m \in \P_k} \delta_{\{2k-1,2k\} \in \m} \wga(\m^\downarrow) \, \m \\
&+ \frac{1}{N} \sum_{\m \in \P_k} \sum_{i=1}^{2k-2} \delta_{\{i,2k\}\in\m} \wga([(i~2k-1)\cdot\m]^\downarrow) \, \m
\end{align*}

The two summations over $1 \leq i \leq 2k-2$ can be naturally expressed via the action of the odd Jucys--Murphy elements $J_1, J_3, \ldots, J_{2k-1} \in \mathbb{C}[S_k]$, leading to the equation below. For $\n \in \P_{k-1}$, we use $\n^\uparrow$ to denote the element of $\P_k$ obtained by adding the pair $\{2k-1, 2k\}$.
\[
\sum_{\m \in \P_k} \wga(\m) \, \m = -\frac{1}{N} J_{2k-1} \sum_{\m \in \P_k} \wga(\m) \, \m + \frac{M}{N} \sum_{\n \in \P_{k-1}} \wga(\n) \, \n^{\uparrow} + \frac{1}{N} J_{2k-1} \sum_{\n \in \P_{k-1}} \wga(\n) \, \n^{\uparrow}
\]

The following is obtained by rearranging the equation above and then invoking the inductive hypothesis, thus completing the induction.
\[
\sum_{\m \in \P_k} \wga(\m) \, \m = \frac{M+J_{2k-1}}{N+J_{2k-1}} \sum_{\n \in \P_{k-1}} \wga(\n) \, \n^{\uparrow} = \prod_{i=1}^k \frac{M + J_{2i-1}}{N + J_{2i-1}} \cdot \e_k \qedhere
\]
\end{proof}

Introduce the partition function
\[
Z^{\mathbf{A}}(\bm{p}; \h, z) = \sum_{k \geq 0} \frac{z^k}{\h^k} \sum_{r \geq 0} \sum_{n \geq 1} \frac{\h^r}{n!} \sum_{\mu_1, \ldots, \mu_n \geq 1} \frac{\vec{h}^{(t)}_r(\mu_1, \ldots, \mu_n)}{\mu_1 \cdots \mu_n} \, \frac{p_{\mu_1} \cdots p_{\mu_n}}{2^n},
\]
which stores the weighted enumeration of monotone factorisations of \cref{def:monotonecount}, or essentially equivalently, values of the Weingarten function $\wga$. It is a formal power series in the variables $\h$, $z$ and $p_1, p_2, p_3, \ldots$, and the coefficients are polynomials in $t$.

\begin{proposition}[Virasoro constraints for $Z^\mathbf{A}$] \label{thm:virasoro_A}
The partition function $Z^\mathbf{A}(\bm{p}; \h, z)$ satisfies $L_m^\mathbf{A} \cdot Z^\mathbf{A} = 0$, for $m = 1, 2, 3, \ldots$, where
\[
L_m^\mathbf{A} = \frac{m \partial_m}{\h} - \sum_{i} (i + m) p_i \partial_{i+m} - 2 \sum_{i + j = m} ij \partial_i \partial_j - (m-1) m \partial_m - (t-1) \frac{z}{\h} (m-1) \partial_{m-1} - \delta_{m,1} \frac{z}{2\h^2}.
\]
Here, we write $\partial_i$ as a shorthand for the differential operator $\frac{\partial}{\partial p_i}$, where $i \geq 1$. These constraints uniquely determine $Z^{\mathbf{A}}$ from the initial condition $[z^0] \, Z^{\mathbf{A}} = 1$. Moreover, the operators $L_1^\mathbf{A}, L_2^\mathbf{A}, L_3^\mathbf{A}, \ldots$ satisfy the Virasoro commutation relations
\[
[L_m^\mathbf{A}, L_n^\mathbf{A}] = (m-n) \, L_{m+n}^\mathbf{A}, \qquad \text{for } m, n \geq 1.
\]
\end{proposition}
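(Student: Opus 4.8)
The plan is to reduce both assertions to the Virasoro constraints of \cref{thm:virasoro_b} at the special value $b = 1$, which I denote by $L_m^{(1)}$ and $Z^{(1)}$. A term-by-term comparison of the operators shows that
\[
L_m^{\mathbf{A}} = L_m^{(1)} + R_m, \qquad R_m = -(t-1)\tfrac{z}{\h}(m-1)\partial_{m-1},
\]
so $R_m$ is the sole carrier of the $t$-deformation; it vanishes at $t = 1$ and also for $m = 1$, whence the first constraint $L_1^{\mathbf{A}} = L_1^{(1)}$ is literally the $b=1$ case of \cref{thm:virasoro_b}. As a preliminary step I would record that $Z^{\mathbf{A}}$ is the $b = 1$ specialisation of the $bt$-partition function. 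Combining \cref{prop:jucysmurphy_A} with the Gelfand-pair fact that the odd Jucys--Murphy elements $J_1, J_3, \ldots, J_{2k-1}$ act on the zonal eigenbasis of $\mathbb{C}[\P_k]$ with joint eigenvalues the multiset of $1$-contents $\{c_1(\Box) : \Box \in \lambda\}$, and using the substitution $\h = -\tfrac{1}{N}$ with $t = 1 - \tfrac{N}{M}$, one checks for a box of content $c = c_1(\Box)$ the eigenvalue identity
\[
\frac{M + c}{N + c} = \frac{1}{1-t}\cdot\frac{1 - (1-t)\h c}{1 - \h c}.
\]
This places $Z^{\mathbf{A}}$ in the Jack-function form
\[
Z^{\mathbf{A}} = \sum_{k \geq 0} \frac{z^k}{\h^k}\sum_{\lambda\vdash k}\bigg(\prod_{\Box \in \lambda}\frac{1-(1-t)\h c_1(\Box)}{1-\h c_1(\Box)}\bigg)\frac{J^{(1)}_\lambda}{\mathrm{hook}_1(\lambda)\,\mathrm{hook}_1'(\lambda)},
\]
which is exactly $Z^{(bt)}\big|_{b=1}$.

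For the annihilation $L_m^{\mathbf{A}} Z^{\mathbf{A}} = 0$, I would follow the strategy Bonzom--Chapuy--Do\l\polhk{e}ga use for \cref{thm:virasoro_b}, now carrying the deformation. At $t = 1$ we have $R_m = 0$ and $Z^{\mathbf{A}}\big|_{t=1} = Z^{(1)}$, so the statement degenerates to $L_m^{(1)} Z^{(1)} = 0$. For general $t$ the task is to show that $R_m$ exactly compensates the extra numerator factor $\prod_\Box(1 - (1-t)\h c_1(\Box))$ in the coefficients. Working in the Jack basis, one expresses the action of the transport, quadratic, and linear pieces of $L_m^{(1)}$ on a single $J^{(1)}_\lambda$ through the Pieri/box-adding rules while tracking $b$-contents; the operator $R_m$, being multiplication by $z$ combined with the lowering $\partial_{m-1}$, should then reproduce precisely the factor $(1-t)\h c_1(\Box)$ incurred when a box of content $c_1(\Box)$ is added. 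Equivalently, since the constraints uniquely determine $Z^{\mathbf{A}}$ from $[z^0]Z^{\mathbf{A}} = 1$, one may instead verify the vanishing order by order in $z$.

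I expect the identification of this box-adding action to be the main obstacle: one must prove the exact identity by which $R_m$ inserts the factor $1 - (1-t)\h c_1(\Box)$, which demands careful bookkeeping of $b$-contents under the Jack Pieri rule and is precisely where the $b$- and $t$-deformations interact. A cleaner route, if logically permissible in the paper's ordering, is to establish the general $bt$-constraints of \cref{thm:virasoro_bt} first and then specialise, since $Z^{\mathbf{A}} = Z^{(bt)}|_{b=1}$ and $L_m^{\mathbf{A}} = L_m^{(bt)}|_{b=1}$; the direct argument above is simply the $b = 1$ instance of that computation.

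The Virasoro commutation relations follow from a direct, finite computation in the algebra generated by the $p_i$, $\partial_i$, and the formal scalars $z, \h$. Using $L_m^{\mathbf{A}} = L_m^{(1)} + R_m$, expand
\[
[L_m^{\mathbf{A}}, L_n^{\mathbf{A}}] = [L_m^{(1)}, L_n^{(1)}] + [L_m^{(1)}, R_n] + [R_m, L_n^{(1)}] + [R_m, R_n].
\]
The first bracket is $(m-n)L_{m+n}^{(1)}$ by \cref{thm:virasoro_b} at $b = 1$, and $[R_m, R_n] = 0$ since both operators are proportional to $z\partial_\ast$ and the partial derivatives commute. Only the transport term $-\sum_i(i+m)p_i\partial_{i+m}$ of $L_m^{(1)}$ fails to commute with $R_n$, and a short computation with the canonical relation $[\partial_i, p_j] = \delta_{ij}$ shows that the two cross terms combine to $(m-n)R_{m+n}$. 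Adding these contributions gives $[L_m^{\mathbf{A}}, L_n^{\mathbf{A}}] = (m-n)L_{m+n}^{\mathbf{A}}$, provided one keeps throughout the same commutator convention under which \cref{thm:virasoro_b} is stated.
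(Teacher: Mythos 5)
Your proposal is correct, but it takes a genuinely different route from the paper. The paper's proof never leaves Section 3 and never touches Jack functions: it combines the orthogonality relations of \cref{thm:orthogonality_A} with the expansion of \cref{thm:expansion_A} to derive a cut-and-join-type recursion for the coefficients $\vec{h}^{(t)}_r(\mu_1, \ldots, \mu_n)$, obtains $L^{\mathbf{A}}_{\mu_1} \cdot Z^{\mathbf{A}} = 0$ by multiplying that recursion by $\mu_1 p_{\mu_1} \cdots p_{\mu_n} \h^{r-|\mu|} z^{|\mu|}/|\mu|!$ and summing, reads off uniqueness from the fact that the recursion computes every coefficient from the base case, and checks the commutation relations by brute force. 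You instead identify $L_m^{\mathbf{A}} = L_m^{(bt)}\big|_{b=1}$ (your term-by-term check is right) and $Z^{\mathbf{A}} = Z^{(bt)}\big|_{b=1}$, and then specialise \cref{thm:virasoro_bt}. This is not circular, since the paper's proof of \cref{thm:virasoro_bt} rests on \cite{CDO24a} and the argument of \cite{bon-cha-dol23}, not on \cref{thm:virasoro_A}; but it inverts the paper's order of presentation and trades a self-contained combinatorial argument for dependence on heavier machinery --- both the external input behind \cref{thm:virasoro_bt} and, for your zonal-expansion identification of $Z^{\mathbf{A}}$, the classical Gelfand-pair fact that the odd Jucys--Murphy elements act on the zonal basis with eigenvalues the $b=1$ contents (true, and citable to \cite{mat11}, but not proved in the paper; your eigenvalue identity $\frac{M+c}{N+c} = \frac{1}{1-t}\cdot\frac{1-(1-t)\h c}{1-\h c}$ does check out). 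Note that the identification $\vec{h}^{(t)}_r = \hbt_r\big|_{b=1}$ can also be had more cheaply from inside the paper, by comparing \cref{def:monotonecount} with \cref{prop:btmonotonecombinatorics}, or by observing that $\omega^{(1)}$ assigns weight $1$ to every edge so that $\wgbt\big|_{b=1} = \wga$ and then comparing \cref{thm:expansion_A} with \cref{prop:expansion_BT}. What your route buys is structural transparency --- it makes explicit that the real-Grassmannian theory is exactly the $b=1$ slice of the $bt$-theory, which the paper only signals in \cref{tab:variants} --- and your treatment of the commutation relations via $L_m^{\mathbf{A}} = L_m^{(1)} + R_m$, with $[R_m, R_n] = 0$ and the cross terms $[L_m^{(1)}, R_n] + [R_m, L_n^{(1)}] = (m-n) R_{m+n}$ (I verified this computation; the signs of the cross terms and of $[L_m^{(1)}, L_n^{(1)}]$ flip together under either commutator convention, so your consistency caveat suffices), is genuinely cleaner than the paper's unstructured brute-force check.

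Two caveats. First, your ``direct'' Jack--Pieri argument is only a sketch, and you correctly identify the box-adding bookkeeping as the obstacle; your proof is complete only via the specialisation route, so the logical reordering you flag as optional is in fact essential. Second, the uniqueness assertion in the statement is inherited rather than argued: you should note that extracting coefficients from $L_m^{\mathbf{A}} \cdot Z^{\mathbf{A}} = 0$ expresses each coefficient of $Z^{\mathbf{A}}$ in terms of lower-order ones (the leading term $m\partial_m/\h$ has unit coefficient), an argument that works verbatim at $b = 1$ and does not follow formally from generic uniqueness over $\mathbb{C}(b)$.
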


\begin{proof}
The orthogonality relations of \cref{thm:orthogonality_A}, coupled with the large $N$ expansion of \cref{thm:expansion_A}, imply the following analogue of the cut-and-join recursion for monotone Hurwitz numbers~\cite{gou-gua-nov13}. For positive integers $\mu_1, \mu_2, \ldots, \mu_n$,
\begin{align*}
\vec{h}^{(t)}_{r}(\mu_1, \mu_S) = &\sum_{j=2}^n 2 \mu_j \, \vec{h}^{(t)}_{r-1}(\mu_1+\mu_j, \mu_{S\setminus \{j\}}) + \sum_{\alpha+\beta=\mu_1} \vec{h}^{(t)}_{r-1}(\alpha, \beta, \mu_S) \\
&+ (\mu_1-1) \, \vec{h}^{(t)}_{r-1}(\mu_1, \mu_S) + (t-1) \, \vec{h}^{(t)}_{r-1}(\mu_1-1, \mu_S) + \delta_{\mu_1,1} \, \vec{h}^{(t)}_r(\mu_S).
\end{align*}
Here, we set $S = \{2, 3, \ldots, n\}$ and $\mu_I = (\mu_{i_1}, \mu_{i_2}, \ldots \mu_{i_k})$ for $I = \{i_1, i_2, \ldots, i_k\}$.

The Virasoro constraint $L^\mathbf{A}_{\mu_1} \cdot Z = 0$ is obtained by multiplying both sides of the recursion by the monomial $\mu_1 p_{\mu_1} p_{\mu_2} \cdots p_{\mu_n} \h^{r-|\mu|} \frac{z^{|\mu|}}{|\mu|!}$ and summing over all non-negative integers $r$ and all positive integers $\mu_1, \mu_2, \ldots, \mu_n$.

The uniqueness of the solution to the Virasoro constraints up to scale is apparent as the recursion effectively computes every polynomial $\vec{h}^{(t)}_r(\mu_1, \ldots, \mu_n)$ from the base case $\vec{h}^{(t)}_0(\mu) = \delta_{\mu, \emptyset}$.

Finally, one can check the Virasoro commutation relations directly by a relatively straightforward brute force calculation.
\end{proof}

\cref{thm:expansion_A} demonstrates that the Weingarten function $\wga$ is essentially an enumeration of monotone factorisations. It is customary in the literature to refer to an enumeration of monotone factorisations as a monotone Hurwitz number~\cite{gou-gua-nov14}. In the current context, one could define the notion of a ``$t$-deformed orthogonal monotone Hurwitz number'' $\vec{H}^{(t),\mathrm{orth}}_{g,n}(\mu_1, \ldots, \mu_n)$ to be $\frac{1}{|\mu|!}$ multiplied by the weighted enumeration of connected monotone factorisations of a pair partition of coset-type $\mu$, with length $|\mu|+2g-2+n$. As in \cref{def:monotonecount}, the weight of a monotone factorisation $\bm{\tau}$ is $t^{\mathrm{hive}(\bm{\tau})}$.

We do not pursue the topic of $t$-deformed orthogonal monotone Hurwitz numbers here but instead fold that discussion into a more general one concerning $bt$-monotone Hurwitz numbers, which we introduce in the next section.

\begin{remark}
One can consider a symplectic version of the story told in this section. However, as suggested in \cref{rem:symplectic}, no new information is obtained as the symplectic Weingarten function can be recovered, up to sign, from the orthogonal Weingarten function by a simple substitution. In fact, the three cases of interest are essentially subsumed by the $b$-Weingarten calculus introduced in the next section, from which we recover the unitary case at $b = 0$, the orthogonal case at $b = 1$, and the symplectic case at $b = -\frac{1}{2}$.
\end{remark}

\section{The \texorpdfstring{$bt$}{bt}-monotone Hurwitz numbers} \label{sec:hurwitz}

In this section, we introduce and develop the notion of $b$-Weingarten calculus, which interpolates between unitary and orthogonal Weingarten calculus. and depends on a parameter $b$. These previously well-studied theories are recovered at $b = 0$ and $b = 1$, respectively. Our construction of $b$-Weingarten calculus relies crucially on a weight function that takes two pair partitions and returns an element of the set $\{0, 1, b\}$.

We furthermore show that the $b$-deformation introduced in this section and the $t$-deformation introduced in the previous section can be combined to produce a $bt$-Weingarten calculus and corresponding $bt$-monotone Hurwitz numbers. These can be interpreted as a weighted enumeration of monotone factorisations, in which the $b$-parameter records the ``flip number'' and the $t$-parameter records the ``hive number''. On the basis of extensive numerical evidence, we extend the real-rootedness and interlacing conjectures previously posited for $t$-monotone Hurwitz numbers to this more general setting.

\subsection{A \texorpdfstring{$b$}{b}-Weingarten calculus} \label{subsec:weingarten_B}

The symmetric group $S_k$ naturally embeds into the set of pair partitions $\P_k$ via the map
\[
\sigma \mapsto \{ \{2i-1,2\sigma(i)\} \mid i = 1, 2, \ldots, k\}.
\]
This leads to an embedding of the unitary Weingarten graph $\mathcal{G}^{\mathbf{U}}$, whose vertices correspond to permutations, into the orthogonal Weingarten graph $\mathcal{G}^{\O}$, whose vertices correspond to pair partitions. Our initial goal is to add $b$-dependent weights to the orthogonal Weingarten graph such that the (unweighted) orthogonal Weingarten graph is recovered at $b = 1$, while the unitary Weingarten graph is separated as a connected component at $b = 0$. Here, we consider edges with weight 0 to be removed.

The pivotal object underlying the constructions of this section, and indeed the following section as well, is the weight function $\omega^{(b)}$ defined below.

\begin{definition} \label{def:weightfunction}
For a parameter $b$, define the function $\omega^{(b)}: \P_\bullet \times \P_\bullet \to \{0,1,b\}$ such that $\omega^{(b)}(\m, \n)$ is given by the following procedure.
\begin{enumerate}
\item If $\m \in \P_m$ and $\n \in \P_n$ for $m \neq n$, then set $\omega^{(b)}(\m, \n) = 0$. If $\m \in \P_0$ and $\n \in \P_0$, then set $\omega^{(b)}(\m, \n) = 1$. Otherwise, assume that $\m, \n \in \P_k$ for $k \geq 1$ and proceed to Step 2.

\item If there does not exist a transposition $(i~j) \in S_{2k}$ such that $(i~j) \cdot \m = \n$, then set $\omega^{(b)}(\m, \n) = 0$. Otherwise, assume that there exists a transposition $(i~j) \in S_{2k}$ such that $(i~j) \cdot \m = \n$ and proceed to Step 3.

\item 
\begin{enumerate}
\item Consider the graph $\Gamma(\m)$ on the vertex set $\{1, 2, \ldots, 2k\}$, with an edge joining $u$ and $v$ for each pair $\{u, v\}$ of the identity pair partition $\e_k$ and an edge joining $u$ and $v$ for each pair $\{u, v\}$ of the pair partition $\m$. The graph $\Gamma(\m)$ necessarily comprises disjoint cycles of even lengths.

\item To each vertex $v \in \Gamma(\m)$, assign a {\em charge} $q(v) \in \{+, -\}$ such that the vertex with the largest label in each cycle is assigned $+$ and such that each edge in $\Gamma(\m)$ is incident to one vertex with positive change and one vertex with negative charge.

\item Set $\omega^{(b)}(\m,\n) = 1$ if $q(i) = q(j)$ and set $\omega^{(b)}(\m,\n) = b$ if $q(i) \neq q(j)$.
\end{enumerate}
\end{enumerate}
\end{definition}

Although Step 3 of \cref{def:weightfunction} requires one to choose a transposition $(i~j)$ such that $(i~j) \cdot \m = \n$, the value of $\omega^{(b)}(\m,\n)$ is independent of this choice. In the case $\m = \n$, we have $(i~j) \cdot \m = \n$ for any $\{i,j\}\in \m$. By construction, there is an edge of $\Gamma(\m)$ joining vertices $i$ and $j$, so by the definition of the charge, we have $q(i) \neq q(j)$ and it follows that $\omega^{(b)}(\m, \m) = b$. In the case $\m \neq \n$ with $(i~j) \cdot \m = \n$, there is exactly one other transposition $(i'~j') \neq (i~j)$ such that $(i'~j') \cdot \m = \n$, where $\{i, i'\}$ and $\{j, j'\}$ are pairs in $\m$. By construction, there is an edge of $\Gamma(\m)$ joining vertices $i$ and $i'$ as well as an edge of $\Gamma(\m)$ joining vertices $j$ and $j'$. So by the definition of the charge, we have $q(i) = q(j)$ if and only if $q(i') = q(j')$, and it follows that $\omega^{(b)}(\m, \n)$ is independent of the choice of transposition. Therefore, $\omega^{(b)}$ is indeed well-defined.

\begin{example}
Consider the pair partitions
\[
\m = (1~8 \mmid 2~6 \mmid 3~9 \mmid 4~10 \mmid 5~7) \in \P_5 \qquad \text{and} \qquad \n = (1~8 \mmid 2~6 \mmid 3~5 \mmid 4~10 \mmid 7~9) \in \P_5.
\]
Observe that $\n = (3~7) \cdot \m = (5~9) \cdot \m$. The graph $\Gamma(\m)$ is depicted in \cref{fig:charges} with the vertices labelled by the appropriate charges. Since the charges of vertices 3 and 7 are opposite to each other, we have $\omega^{(b)}(\m, \n) = \omega^{(b)}(\m, (3~7) \cdot \n) = b$.

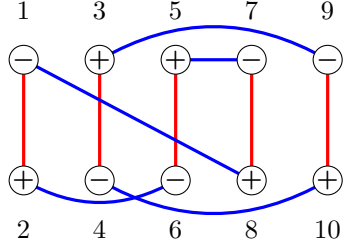
\begin{figure}[ht!]
\centering
\begin{tikzpicture}
\def\x{1}
\def\y{1.6}
\def\d{0.65}

\tikzstyle{M} = [draw=blue, very thick];
\tikzstyle{E} = [draw=red, very thick];

\draw[E] (0*\x,0) -- (0*\x,\y);
\draw[E] (1*\x,0) -- (1*\x,\y);
\draw[E] (2*\x,0) -- (2*\x,\y);
\draw[E] (3*\x,0) -- (3*\x,\y);
\draw[E] (4*\x,0) -- (4*\x,\y);

\draw[M] (0*\x,\y) -- (3*\x,0);
\draw[M] (0*\x,0) to[out=-30,in=-150] (2*\x,0);
\draw[M] (1*\x,\y) to[out=30,in=150] (4*\x,\y);
\draw[M] (1*\x,0) to[out=-30,in=-150] (4*\x,0);
\draw[M] (2*\x,\y) -- (3*\x,\y);

\node at (0*\x,\y+\d) {$1$};
\node at (1*\x,\y+\d) {$3$};
\node at (2*\x,\y+\d) {$5$};
\node at (3*\x,\y+\d) {$7$};
\node at (4*\x,\y+\d) {$9$};

\node at (0*\x,-\d) {$2$};
\node at (1*\x,-\d) {$4$};
\node at (2*\x,-\d) {$6$};
\node at (3*\x,-\d) {$8$};
\node at (4*\x,-\d) {$10$};

\node[circle, draw=black, fill=white, inner sep=0pt] at (0*\x,0) {\small{$\bm{+}$}};
\node[circle, draw=black, fill=white, inner sep=0pt] at (1*\x,0) {\small{$\bm{-}$}};
\node[circle, draw=black, fill=white, inner sep=0pt] at (2*\x,0) {\small{$\bm{-}$}};
\node[circle, draw=black, fill=white, inner sep=0pt] at (3*\x,0) {\small{$\bm{+}$}};
\node[circle, draw=black, fill=white, inner sep=0pt] at (4*\x,0) {\small{$\bm{+}$}};
\node[circle, draw=black, fill=white, inner sep=0pt] at (0*\x,\y) {\small{$\bm{-}$}};
\node[circle, draw=black, fill=white, inner sep=0pt] at (1*\x,\y) {\small{$\bm{+}$}};
\node[circle, draw=black, fill=white, inner sep=0pt] at (2*\x,\y) {\small{$\bm{+}$}};
\node[circle, draw=black, fill=white, inner sep=0pt] at (3*\x,\y) {\small{$\bm{-}$}};
\node[circle, draw=black, fill=white, inner sep=0pt] at (4*\x,\y) {\small{$\bm{-}$}};
\end{tikzpicture}
\caption{The graph $\Gamma(\m)$ corresponding to the pair partition $\m = (1~8 \mmid 2~6 \mmid 3~9 \mmid 4~10 \mmid 5~7) \in \P_5$. The edges corresponding to $\m$ are drawn in blue, while the edges corresponding to $\e_5$ are drawn in red. Each vertex is labelled by its charge.}
\label{fig:charges}
\end{figure}
\end{example}

We use the function $\omega^{(b)}$ to assign weights to the directed edges in the orthogonal Weingarten graph to obtain the $b$-Weingarten graph as follows.

\begin{definition} \label{def:bgraph}
The {\em $b$-Weingarten graph} $\mathcal{G}^{(b)}$ is the infinite weighted directed graph with vertex set $\P_\bullet$ and edge set $E_A \sqcup E_B$, where
\begin{itemize}
\item the set $E_A$ comprises ``type $A$'' edges, which are of the form $\m \longrightarrow (i~2k-1) \cdot \m$ for $\m \in \P_k$ and $1 \leq i \leq 2k-2$, where the weight is $\omega^{(b)}(\m, (i~2k-1) \cdot \m)$,
\item the set $E_B$ comprises ``type $B$'' edges, which are of the form $\m \longrightarrow \m^\downarrow$ for $\m \in \P_k$ with $\{2k-1, 2k\} \in \m$, where the weight is $1$.
\end{itemize}
In short, the graph $\mathcal{G}^{(b)}$ is simply the orthogonal Weingarten graph $\mathcal{G}^{\O}$ with type $A$ directed edges weighted by the function $\omega^{(b)}$.
\end{definition}

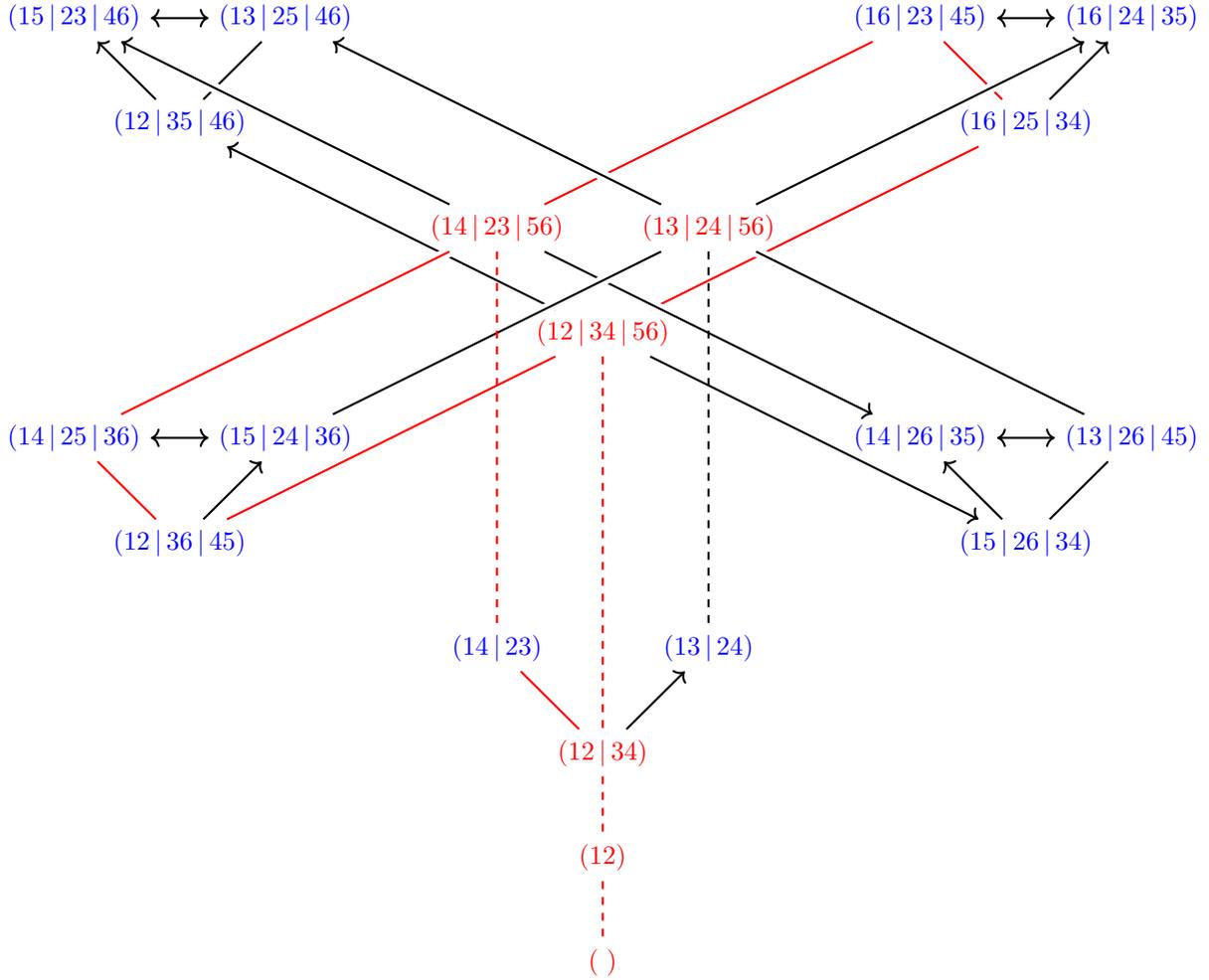
\begin{figure}[ht!]
\centering
\tikzstyle{W} = [color=white, line width=4pt]
\tikzstyle{vBlue} = [draw=white, text=blue, very thick]
\tikzstyle{vRed} = [draw=white, text=red, very thick]
\tikzstyle{eBlack} = [color = black, thick]
\tikzstyle{eRed} = [color = red, thick]
\tikzstyle{aBlack} = [->, thick]
\tikzstyle{aaBlack} = [<->, thick]
\tikzstyle{eBlackDash} = [color=black, thick, dashed]
\tikzstyle{eRedDash} = [color=red, thick, dashed]
\begin{tikzpicture}[scale=0.7]
\def\x{2}
\def\y{2}
\def\l{0.5}

\node[vRed] (empty) at (0*\x,-4*\y) { $(~)$};

\node[vRed] (12) at (0*\x,-3*\y) {$(12)$};

\node[vRed] (1234) at (0*\x,-2*\y) {$(12 \mmid 34)$};
\node[vBlue] (1423) at (-1*\x,-1*\y) {$(14 \mmid 23)$};
\node[vBlue] (1324) at (1*\x,-1*\y) {$(13 \mmid 24)$};

\node[vRed] (123456) at (0*\x,2*\y) {$(12 \mmid 34 \mmid 56)$};
\node[vBlue] (123546) at (-4*\x,4*\y) {$(12 \mmid 35 \mmid 46)$};
\node[vBlue] (152634) at (4*\x,0*\y) {$(15 \mmid 26 \mmid 34)$};
\node[vBlue] (162534) at (4*\x,4*\y) {$(16 \mmid 25 \mmid 34)$};
\node[vBlue] (123645) at (-4*\x,0*\y) {$(12 \mmid 36 \mmid 45)$};
\node[vRed] (142356) at (-1*\x,3*\y) {$(14 \mmid 23 \mmid 56)$};
\node[vBlue] (152346) at (-5*\x,5*\y) {$(15 \mmid 23 \mmid 46)$};
\node[vBlue] (142635) at (3*\x,1*\y) {$(14 \mmid 26 \mmid 35)$};
\node[vBlue] (162345) at (3*\x,5*\y) {$(16 \mmid 23 \mmid 45)$};
\node[vBlue] (142536) at (-5*\x,1*\y) {$(14 \mmid 25 \mmid 36)$};
\node[vRed] (132456) at (1*\x,3*\y) {$(13 \mmid 24 \mmid 56)$};
\node[vBlue] (132546) at (-3*\x,5*\y) {$(13 \mmid 25 \mmid 46)$};
\node[vBlue] (132645) at (5*\x,1*\y) {$(13 \mmid 26 \mmid 45)$}; 
\node[vBlue] (162435) at (5*\x,5*\y) {$(16 \mmid 24 \mmid 35)$};
\node[vBlue] (152436) at (-3*\x,1*\y) {$(15 \mmid 24 \mmid 36)$};

\draw (1423) edge [eRed] (1234);
\draw (1234) edge [aBlack] (1324);

\draw (123546) edge [aBlack] (152346);
\draw (123546) edge [eBlack] (132546);
\draw (132546) edge [aaBlack] (152346);
\draw (162534) edge [eRed] (162345);
\draw (162534) edge [aBlack] (162435);
\draw (162345) edge [aaBlack] (162435);
\draw (123645) edge [eRed] (142536);
\draw (123645) edge [aBlack] (152436);
\draw (142536) edge [aaBlack] (152436);
\draw (152634) edge [aBlack] (142635);
\draw (152634) edge [eBlack] (132645);
\draw (132645) edge [aaBlack] (142635);

\draw[W] (123456) edge (123546); \draw (123456) edge [aBlack] (123546);
\draw[W] (123456) edge (152634); \draw (123456) edge [aBlack] (152634);
\draw[W] (123456) edge (123645); \draw (123456) edge [eRed] (123645);
\draw[W] (123456) edge (162534); \draw (123456) edge [eRed] (162534);

\draw[W] (142356) edge (152346); \draw (142356) edge [aBlack] (152346); 
\draw[W] (142356) edge (142635); \draw (142356) edge [aBlack] (142635); 
\draw[W] (142356) edge (162345); \draw (142356) edge [eRed] (162345);
\draw[W] (142356) edge (142536); \draw (142356) edge [eRed] (142536);

\draw[W] (132456) edge (132546); \draw (132456) edge [aBlack] (132546);
\draw[W] (132456) edge (132645); \draw (132456) edge [eBlack] (132645);
\draw[W] (132456) edge (162435); \draw (132456) edge [aBlack](162435);
\draw[W] (132456) edge (152436); \draw (132456) edge [eBlack] (152436);

\draw[eRedDash] (12) edge (empty);

\draw[eRedDash] (1234) edge (12);

\draw[eRedDash] (123456) edge (1234);
\draw[eRedDash] (142356) edge (1423);
\draw[eBlackDash] (132456) edge (1324);
\end{tikzpicture}
\caption{The $b$-Weingarten graph $\mathcal{G}^{(b)}$, restricted to $\P_0 \sqcup \P_1 \sqcup \P_2 \sqcup \P_3$. Each solid edge represents two type $A$ edges, one in each direction. Each dashed edge represents one type $B$ edge, directed down the page. An arrowhead on an edge indicates that the weight $b$ is assigned to the corresponding directed edge, while all other directed edges are assigned weight $1$. Each blue vertex should also have a directed loop with weight $b$, although these are not depicted to avoid cluttering the diagram. The orthogonal Weingarten graph $\mathcal{G}^{\O}$ is obtained by ignoring the edge weights, while the unitary Weingarten graph $\mathcal{G}^{\mathbf{U}}$ appears as the subgraph comprising the red edges and their incident vertices.}
\label{fig:weingartengraph_B}
\end{figure}

\begin{remark}
The construction of the weight function $\omega^{(b)}$ in \cref{def:weightfunction} was motivated by the idea of interpolating between the orthogonal and unitary Weingarten graphs. However, we make no claims concerning the uniqueness of our construction, as there should be many possible weight functions that satisfy the desired properties. Furthermore, it may be the case that another construction of a suitable weight function makes the conjectures presented in \cref{sec:jucys-murphy} more apparent.
\end{remark}

In general, one recovers the large $N$ expansion of the Weingarten function from the enumeration of paths in the respective Weingarten graph. In the context of the $b$-Weingarten graph, we can form the analogue of the Weingarten function in the following way, even though we do not have a definition via integration with respect to Haar measure, as is usually the case in Weingarten calculus.

\begin{definition}
Define the {\em $b$-Weingarten function} $\wgb: \P_\bullet \to \mathbb{R}[b][[N^{-1}]]$ by the large $N$ expansion
\[
\wgb(\m) = \sum_{\bm{\rho} \in \mathrm{Path}^{(b)}(\m)} \left(-\frac{1}{N} \right)^{\ell_A(\bm{\rho})} \left( \frac{1}{N} \right)^{\ell_B(\bm{\rho})} w(\bm{\rho}), 
\]
where the summation is over all paths in $\mathcal{G}^{(b)}$ from $\m$ to the empty pair partition $(~)$. We use $\ell_A(\bm{\rho})$ to denote the number of type $A$ edges, $\ell_B(\bm{\rho})$ to denote the number of type $B$ edges, and $w(\bm{\rho})$ to denote the product of the weights of the directed edges in the path $\bm{\rho}$.
\end{definition}

From the adjacency structure of the graph $\mathcal{G}^{(b)}$, we obtain the following orthogonality relations for the $b$-Weingarten function. These simply arise from considering each path $\bm{\rho}$ from $\m$ to $(~)$ as an initial edge $\m \to \m'$ followed by a path from $\m'$ to $(~)$.

\begin{proposition}[Orthogonality relations for $\wgb$] \label{prop:orthogonality_B}
For each non-empty pair partition $\m \in \P_k$, the $b$-Weingarten function $\wgb$ satisfies the relation
\[
\wgb(\m) = -\frac{1}{N} \sum_{i=1}^{2k-2} \omega^{(b)}(\m,(i~2k-1) \cdot \m) \, \wgb((i~2k-1) \cdot \m) + \frac{1}{N} \delta_{\{2k-1,2k\} \in \m} \wgb(\m^\downarrow).
\]
\end{proposition}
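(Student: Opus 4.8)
The plan is to decompose the path sum defining $\wgb(\m)$ according to the first edge traversed, in direct analogy with the derivation of the orthogonality relations for the orthogonal Weingarten function. Since $\m$ is non-empty, every path $\bm{\rho} \in \mathrm{Path}^{(b)}(\m)$ has length at least one, so I can speak of its first edge. By \cref{def:bgraph}, the edges leaving a vertex $\m \in \P_k$ are exactly the type $A$ edges $\m \to (i~2k-1) \cdot \m$ for $1 \leq i \leq 2k-2$, together with a single type $B$ edge $\m \to \m^\downarrow$ in the case $\{2k-1,2k\} \in \m$. First I would partition $\mathrm{Path}^{(b)}(\m)$ into blocks according to this first edge, and observe that deleting the first edge sets up a bijection between the paths beginning with a fixed edge $\m \to \m'$ and the paths in $\mathrm{Path}^{(b)}(\m')$.

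The second step is to track how the three statistics $\ell_A$, $\ell_B$ and $w$ behave under this edge-deletion bijection. If the first edge is the type $A$ edge to $\m' = (i~2k-1) \cdot \m$, then $\ell_A$ decreases by one, $\ell_B$ is unchanged, and the weight product $w(\bm{\rho})$ loses exactly the factor $\omega^{(b)}(\m, (i~2k-1) \cdot \m)$; if the first edge is the type $B$ edge to $\m' = \m^\downarrow$, then $\ell_B$ decreases by one, $\ell_A$ is unchanged, and $w(\bm{\rho})$ is unaltered because type $B$ edges carry weight $1$. Substituting these relations into the definition of $\wgb(\m)$ and pulling the contribution of the first edge outside the inner summation, I expect the remaining sums to reassemble precisely into $\wgb((i~2k-1) \cdot \m)$ and $\wgb(\m^\downarrow)$, which would give the claimed identity after summing over the finitely many choices of first edge.

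The one point that genuinely requires care---rather than being a main obstacle, which this argument does not really have---is to confirm that the sum defining $\wgb(\m)$ is a well-defined element of $\mathbb{R}[b][[N^{-1}]]$, so that the rearrangement above is legitimate. Here I would argue that each edge of $\mathcal{G}^{(b)}$ contributes a single factor of $N^{-1}$, so that the coefficient of $N^{-r}$ receives contributions only from paths of length exactly $r$; since every vertex has finitely many outgoing edges, there are finitely many such paths and each coefficient is a genuine polynomial in $b$. This check is worth stating explicitly because the weight-$b$ self-loops at the blue vertices allow arbitrarily long paths, yet only finitely many of them contribute at any fixed order in $N^{-1}$. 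With this in hand, the proposition follows purely from the multiplicativity of the weight product and the additivity of $\ell_A$ and $\ell_B$ under concatenation of paths.
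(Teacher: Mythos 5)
Your proof is correct and follows exactly the paper's route: the paper states that the relations ``simply arise from considering each path $\bm{\rho}$ from $\m$ to $(~)$ as an initial edge $\m \to \m'$ followed by a path from $\m'$ to $(~)$'', which is precisely your first-edge decomposition together with the bookkeeping of $\ell_A$, $\ell_B$ and the edge weights. Your additional check that each coefficient of $N^{-r}$ receives contributions from only finitely many paths (despite the weight-$b$ loops) is a sensible formalisation of why the rearrangement of the series is legitimate, and is implicit in the paper's definition of $\wgb$.
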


The monotone Hurwitz numbers arise from the enumeration of paths in the unitary Weingarten graph~$\mathcal{G}^{\mathbf{U}}$. It is natural to surmise that the $b$-monotone Hurwitz numbers introduced by Bonzom, Chapuy and Do\l{}\k{e}ga via Jack functions and discussed in \cref{subsec:bmonotone} may similarly arise from the $b$-Weingarten graph $\mathcal{G}^{(b)}$. We will show that this is indeed the case by verifying that the Virasoro constraints for $b$-monotone Hurwitz numbers and the orthogonality relations for the $b$-Weingarten function are essentially equivalent.

\begin{proposition} \label{prop:expansion_B}
For $\m \in \P_k$, the $b$-Weingarten function $\wgb$ has the large $N$ expansion
\[
\wgb(\m) = \left( \frac{1}{N}\right)^k \sum_{r=0}^\infty \hb_r(\lambda(\m)) \left(-\frac{1}{N} \right)^r,
\]
where $\hb_r$ denotes the $b$-monotone Hurwitz number defined by \cref{eq:bmonotonedisc} and $\lambda(\m)$ is the coset-type of the pair partition $\m$.
\end{proposition}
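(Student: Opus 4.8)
The plan is to show that the two quantities agree by demonstrating that they satisfy the same recursion with the same initial conditions. The $b$-Weingarten function $\wgb$ obeys the orthogonality relations of \cref{prop:orthogonality_B}, while the $b$-monotone Hurwitz numbers $\hb_r$ are characterised (via \cref{eq:bmonotonedisc}) by the Virasoro constraints of \cref{thm:virasoro_b}. The strategy mirrors the proof of \cref{thm:virasoro_A}: first extract an explicit recursion for $\hb_r(\mu_1, \ldots, \mu_n)$ from the Virasoro operators $L_m^{(b)}$, then show that substituting the claimed expansion into the orthogonality relations reproduces exactly this recursion.

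First I would substitute the conjectured expansion $\wgb(\m) = (1/N)^k \sum_{r \geq 0} \hb_r(\lambda(\m)) (-1/N)^r$ into the orthogonality relations of \cref{prop:orthogonality_B} and compare coefficients of powers of $N^{-1}$. The key input is understanding how the three types of terms on the right-hand side behave under the coset-type map $\lambda$. A type $B$ edge $\m \to \m^\downarrow$ decreases $k$ by one and corresponds to splitting off a trivial pair, which at the level of coset-types either removes a part equal to $1$ (contributing a $\delta_{\mu_1,1}$ term) or leaves the partition otherwise modified. The type $A$ transpositions $(i~2k-1)\cdot \m$, weighted by $\omega^{(b)} \in \{1, b\}$, either merge two cycles of $\Gamma(\m)$, split one cycle into two, or fix $\m$ (the loop, weight $b$); these three possibilities correspond respectively to the join term $\sum 2\mu_j \, \hb_{r-1}(\mu_1 + \mu_j, \ldots)$, the cut term $\sum_{\alpha + \beta = \mu_1} \hb_{r-1}(\alpha, \beta, \ldots)$, and a diagonal term carrying a factor of $b$. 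The crucial combinatorial point is to verify that the $b$-weighting $\omega^{(b)}$ distributes over these cases so as to reproduce precisely the coefficients $(1+b)$, the linear-in-$\mu$ terms, and the $b\,m(m-1)$ term appearing in $L_m^{(b)}$.

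The main obstacle is establishing this exact correspondence between the charge-based weight function $\omega^{(b)}$ and the analytic structure of the Virasoro operator $L_m^{(b)}$. One must carefully analyse, for each transposition $(i~2k-1)$, whether the target $(i~2k-1)\cdot \m$ lies in a smaller or larger number of cycles than $\m$, and track how the charges $q(i), q(2k-1)$ determine the weight. The enumeration of how many transpositions fall into each category, refined by their $\omega^{(b)}$-weight, is what must match the precise numerical coefficients in the Virasoro recursion. In particular, the term $b\sum_i i(i-1) p_i \partial_i$ in the Laplace--Beltrami operator, inherited by $L_m^{(b)}$, should correspond to the weight-$b$ contributions from transpositions acting within a single cycle, and verifying this matching is the technical heart of the argument.

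Once the recursion derived from the orthogonality relations is shown to coincide with the Virasoro recursion, I would check the base cases: both $\wgb$ and the $\hb_r$ are pinned down by $\wgb((~)) = 1$ and $\hb_0(\emptyset) = 1$ together with $\hb_r(\emptyset) = \delta_{r,0}$. Since both recursions are effective --- computing each value from strictly smaller data as in the uniqueness argument of \cref{thm:virasoro_A} --- agreement on initial conditions and agreement of the recursion force the two families to coincide, completing the proof. An alternative, possibly cleaner route would be to appeal directly to the characterisation of $\hb_r$ as a weighted enumeration of monotone factorisations with a flip statistic, observing that $\mathrm{F}: \mathrm{Path}^{(b)}(\m) \to \mathrm{Mono}(\m)$ sends the product of $\omega^{(b)}$-weights along a path to $b^{\mathrm{flip}(\bm{\tau})}$; this would identify $\wgb(\m)$ with the flip-weighted count directly, but it presupposes the combinatorial interpretation of $\hb_r$ that is only fully established later, so I would prefer the Virasoro route here.
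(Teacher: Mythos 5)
Your proposal follows essentially the same route as the paper's proof: use the level structure of the $b$-Weingarten graph to write $\wgb(\m) = (1/N)^k\sum_r \wgb_r(\m)(-1/N)^r$, substitute into the orthogonality relations, classify the type $A$ transpositions $(i~2k\!-\!1)$ by their effect on the coset-type (cut/join/flip) together with their $\omega^{(b)}$-weights, match the resulting recursion against the one extracted from the Virasoro constraint $L_{\lambda_1}^{(b)} \cdot Z^{(b)} = 0$ applied to the expansion in \cref{eq:bmonotonedisc}, and close by induction on $r+k$ from the common base case. One small correction to your second paragraph: the coset-type-preserving, weight-$b$ transpositions are all those moving $2k-1$ an odd distance within its cycle of $\Gamma(\m)$ --- there are $\lambda_1 - 1$ of them, not just the loop fixing $\m$ --- and it is exactly this count that produces the coefficient $b(\lambda_1-1)$ matching the $b\,m(m-1)\,\partial_m$ term of $L_m^{(b)}$, as your third paragraph correctly anticipates.
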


\begin{proof}
For $\m \in \P_k$, the level structure of the $b$-Weingarten graph ensures that we can write
\[
\wgb(\m) = \left( \frac{1}{N}\right)^k \sum_{r=0}^\infty \wgb_r(\m) \left(-\frac{1}{N} \right)^r,
\]
for some coefficients $\wgb_r(\m)$. So we want to prove that for every pair partition $\m$ and $r\geq 0$, we have $\wgb_r(\m) = \hb_r(\lambda(\m))$, where the monotone Hurwitz number $\hb_r(\lambda)$ is defined by \cref{eq:bmonotonedisc}.

We proceed by induction on $\ell = r + k$. The base case $\ell = 0$ is simply the equality
\[
\wgb_0((~)) = 1 = \hb_0((~)).
\]

Now let $\ell \geq 0$ and suppose that for $r, k \geq 0$ with $r+k = \ell$, we have $\wgb_r(\m) = \hb_r(\lambda(\m))$, for all $\m \in \P_k$. Then consider $r, k \geq 0$ with $r+k = \ell+1$ and let $\m \in \P_k$. Suppose that $\lambda(\m) = (\lambda_1, \lambda_2, \ldots, \lambda_n)$, where $2k-1$ is contained in a cycle of length $2\lambda_1$ in the graph $\Gamma(\m)$. Here, we allow the parts of $\lambda(\m)$ to be in any order, not necessarily weakly decreasing.

Express the orthogonality relations of \cref{prop:orthogonality_B}
at the level of the coefficients $\wgb_r(\m)$ and apply the inductive hypothesis to obtain
\begin{align*}
\wgb_r(\m) &= \sum_{i=1}^{2k-2} \omega^{(b)}(\m,(i~2k-1) \cdot \m) \, \wgb_{r-1}((i~2k-1) \cdot \m) + \delta_{\{2k-1,2k\} \in \m} \wgb_r(\m^\downarrow)\\
&= \sum_{i=1}^{2k-2} \omega^{(b)}(\m,(i~2k-1) \cdot \m) \, \hb_{r-1}(\lambda((i~2k-1) \cdot \m)) + \delta_{\lambda_1,1} \, \hb_r(\lambda_S).
\end{align*}
Here, we set $S = \{2, 3, \ldots, n\}$ and $\lambda_I = (\lambda_{i_1}, \lambda_{i_2}, \ldots \lambda_{i_k})$ for $I = \{i_1, i_2, \ldots, i_k\}$.

Now evaluate the coset-type $\lambda((i~2k-1) \cdot \m)$ and weight $\omega^{(b)}(\m,(i~2k-1) \cdot \m)$ for $i = 1, 2, \ldots 2k-2$. First, construct the graph $\Gamma(\m)$ and consider the charges on its vertices, as per the construction of $\omega^{(b)}$ in \cref{def:weightfunction}. There are three possibilities, depending on the value of $i$.

\begin{itemize}
\item {\em Cut}. If $i$ is in the same cycle as $2k-1$ and is an even distance $2\alpha$ from $2k-1$ in $\Gamma(\m)$, the cycle will be cut and $(i~2k-1) \cdot \m$ will have coset-type $(\alpha, \lambda_1-\alpha, \lambda_S)$. In this case, $\omega^{(b)}(\m,(i~2k-1) \cdot \m) = 1$, so these terms contribute
\[
\sum_{\alpha=1}^{\lambda_1-1} \hb_{r-1}(\alpha,\lambda_1-\alpha, \lambda_S).
\]

\item {\em Flip}. If $i$ is in the same cycle as $2k-1$ and is an odd distance from $2k-1$ in $\Gamma(\m)$, the cycle will remain the same length and $(i~2k-1) \cdot \m$ will have coset-type $\lambda$. In this case, $\omega^{(b)}(\m,(i~2k-1) \cdot \m) = b$, so these terms contribute
\[
b(\lambda_1-1) \, \hb_{r-1}(\lambda).
\]
Note that the prefactor is $\lambda_1-1$ and not $\lambda_1$, as $2k-1$ is necessarily in the same cycle as $2k$ and we do not have a term corresponding to $i = 2k$.

\item {\em Join}. If $i$ is in a distinct cycle from $2k-1$ of length $2\lambda_j$, the two cycles will be joined and $(i~2k-1) \cdot \m$ will have coset-type $(\lambda_1+\lambda_j, \lambda_{S \setminus \{j\}})$. The weight $\omega^{(b)}(\m,(i~2k-1) \cdot \m)$ will depend on the charge of $i$. However, summing over all $2\lambda_j$ values of $i$ in the cycle, one obtains $\lambda_j$ values of $i$ with positive charge and $\lambda_j$ values of $i$ with negative charge. So these terms contribute
\[
\sum_{j=2}^n (1+b) \lambda_j \, \hb_{r-1}(\lambda_1+\lambda_j,\lambda_{S \setminus \{j\}}).
\]
\end{itemize}

So we have shown the equality
\[
\wgb_r(\m) = \sum_{\alpha=1}^{\lambda_1-1} \hb_{r-1}(\alpha, \lambda_1-\alpha, \lambda_S) + (1+b) \sum_{j=2}^n \lambda_j \, \hb_{r-1}(\lambda_1+\lambda_j, \lambda_{S \setminus \{j\}}) + b(\lambda_1-1) \, \hb_{r-1}(\lambda) + \delta_{\lambda_1,1} \, \hb_r(\lambda_S).
\]

On the other hand, one can apply the Virasoro constraint $L_{\lambda_1}^{(b)} \cdot Z^{(b)} = 0$ of \cref{thm:virasoro_b}, using the expression of \cref{eq:bmonotonedisc} for the partition function, and extract the coefficient of $\h^{r-1} p_{\lambda_2} \cdots p_{\lambda_n}$ to obtain
\[
\hb_r(\lambda) = \sum_{\alpha=1}^{\lambda_1-1} \hb_{r-1}(\alpha, \lambda_1-\alpha, \lambda_S) + (1+b) \sum_{j=2}^n \lambda_j \, \hb_{r-1}(\lambda_1+\lambda_j, \lambda_{S \setminus \{j\}}) + b(\lambda_1-1) \, \hb_{r-1}(\lambda) + \delta_{\lambda_1,1} \, \hb_r(\lambda_S).
\]

Thus, we immediately obtain the equality $\wgb_r(\m) = \hb_r(\lambda)$, which completes the induction.
\end{proof}

The previous result shows that the $b$-monotone Hurwitz numbers, originally defined by Bonzom, Chapuy and Do\l{}\k{e}ga via Jack functions, can be expressed as a weighted enumeration of paths in the $b$-Weingarten graph $\mathcal{G}^{(b)}$. We have a bijection
\[
\mathrm{F}: \mathrm{Path}^{(b)}(\m) \to \mathrm{Mono}(\m),
\]
between paths from $\m$ to $(~)$ in $\mathcal{G}^{(b)}$ and monotone factorisations of the pair partition $\m$. Since the weight of a path in $\mathcal{G}^{(b)}$ is a power of $b$, we can make the following definition.

\begin{definition} \label{def:flip}
Define the {\em flip number} of a monotone factorisation $\bm{\tau}$ of a pair partition $\m$ to be the unique integer $\mathrm{flip}(\bm{\tau})\geq 0$ such that $w(\bm{\rho}) = b^{\mathrm{flip}(\bm{\tau})}$, where $\bm{\rho} = \mathrm{F}^{-1}(\bm{\tau})$ is the unique path in $\mathcal{G}^{(b)}$ corresponding to~$\bm{\tau}$. 
\end{definition}

From this definition, it is clear that the $b$-Weingarten function can be expanded for $\m \in \P_k$ as 
\[
\wgb(\m) = \left( \frac{1}{N}\right)^k \sum_{r=0}^\infty \left(-\frac{1}{N} \right)^r \sum_{\substack{\bm{\tau}\in \mathrm{Mono}(\m) \\ \ell(\bm{\tau}) = r}} b^{\mathrm{flip}(\bm{\tau})}.
\]
By comparing this equation with \cref{prop:expansion_B}, the $b$-monotone Hurwitz numbers may be realised in the following way.

\begin{proposition} \label{prop:bmonotoneinterpretation}
For $\lambda$ a partition and $r \geq 0$, the $b$-monotone Hurwitz numbers satisfy
\[
\hb_r(\lambda) = \sum_{\substack{\bm{\tau}\in \mathrm{Mono}(\m)\\ \ell(\bm{\tau}) = r}} b^{\mathrm{flip}(\bm{\tau})},
\]
where $\m$ is any pair partition of coset-type $\lambda$. It follows that the ``connected'' $b$-monotone Hurwitz numbers defined by \cref{eq:bmonotone} satisfy 
\[
\Hb_{g,n}(\lambda) = \frac{1}{\prod \lambda_i} \sum_{\substack{\bm{\tau}\in \mathrm{CMono}(\m)\\ \ell(\bm{\tau}) = |\lambda|+2g-2+n}} b^{\mathrm{flip}(\bm{\tau})}.
\]
(Recall from \cref{def:factorisation} that $\mathrm{CMono}(\m)$ denotes the set of connected monotone factorisations of $\m$.)
\end{proposition}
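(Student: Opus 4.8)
The plan is to establish the two identities separately: the first is an immediate comparison of expansions, while the second is a connected-versus-disconnected argument via the exponential formula. For the first identity, I would compare the two available expansions of the $b$-Weingarten function. The paragraph preceding the statement already rewrites the defining path-sum of $\wgb(\m)$, using the bijection $\mathrm{F}\colon \mathrm{Path}^{(b)}(\m) \to \mathrm{Mono}(\m)$ and \cref{def:flip}, in the form $\wgb(\m) = (1/N)^k \sum_{r \geq 0} (-1/N)^r \sum_{\bm{\tau} \in \mathrm{Mono}(\m),\, \ell(\bm{\tau}) = r} b^{\mathrm{flip}(\bm{\tau})}$, whereas \cref{prop:expansion_B} gives $\wgb(\m) = (1/N)^k \sum_{r \geq 0} \hb_r(\lambda(\m)) \, (-1/N)^r$. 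Both are power series in $N^{-1}$ sharing the prefactor $(1/N)^k$, so extracting the coefficient of $(-1/N)^r$ yields $\hb_r(\lambda(\m)) = \sum_{\bm{\tau} \in \mathrm{Mono}(\m),\, \ell(\bm{\tau}) = r} b^{\mathrm{flip}(\bm{\tau})}$; since the right side depends only on the coset-type, this is the first claim.

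For the second identity, I would pass from the disconnected counts $\hb_r$ to the connected $\Hb_{g,n}$ using the fact that $Z^{(b)}$ is expressed both as the disconnected series \cref{eq:bmonotonedisc} in the $\hb_r$ and as the exponential of a connected series \cref{eq:bmonotone} in the $\Hb_{g,n}$. Since the logarithm is uniquely determined, it suffices to prove the combinatorial exponential formula: that the disconnected series assembled from $\sum_{\bm{\tau} \in \mathrm{Mono}(\m)} b^{\mathrm{flip}(\bm{\tau})}$ (the first identity) exponentiates the connected series assembled from $\sum_{\bm{\tau} \in \mathrm{CMono}(\m)} b^{\mathrm{flip}(\bm{\tau})}$. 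The structural input is that every $\bm{\tau} \in \mathrm{Mono}(\m)$ decomposes uniquely along the orbits of $\langle \tau_1, \ldots, \tau_r, \iota \rangle$ on $\{1, \ldots, 2k\}$. Indeed, since the factorisation relation forces the involution associated with $\m$ to lie in this group, each orbit is $\iota$-invariant and a union of cycles of $\Gamma(\m)$, so $\m$ restricts to a pair partition on each orbit and $\bm{\tau}$ restricts to a connected monotone factorisation there; moreover the length and the flip number are additive over orbits, the latter because each cut, join, or flip step is internal to a single orbit. Conversely, because distinct orbits carry disjoint label sets, the odd labels $b_i$ attached to transpositions in different pieces are distinct, so the monotone merge reassembling the pieces into a single monotone factorisation is unique. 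Summing the multiplicative weight $\prod_i b^{\mathrm{flip}(\bm{\tau}^{(i)})}$ over all orbit decompositions and all connected factorisations therefore recovers $\sum_{\bm{\tau}} b^{\mathrm{flip}(\bm{\tau})}$, and encoding this set-partition decomposition in power sums --- with each part $\mu_i$ weighted by $\frac{p_{\mu_i}}{\mu_i(b+1)}$ and the factor $\frac{1}{n!}$ recording the unordered grouping into orbits --- is exactly the exponential formula. Matching powers of $\h$ through $r = |\lambda| + 2g - 2 + n$ then gives the stated connected formula.

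The first identity is essentially immediate, so the substance lies in the second. The main obstacle is to confirm that the combinatorial notion of connectedness, namely transitivity of $\langle \tau_1, \ldots, \tau_r, \iota\rangle$, matches the algebraic passage to $\log Z^{(b)}$ with all normalisations correctly apportioned between the connected and disconnected series --- the factors $\frac{1}{\prod \mu_i}$, $(b+1)^{-n}$, and $\frac{1}{n!}$. The most delicate point is that the monotone merge produces no multiplicities beyond those already accounted for by the exponential formula; this relies precisely on the distinctness of the odd labels across distinct orbits, and it is where the monotonicity constraint interacts non-trivially with the component decomposition.
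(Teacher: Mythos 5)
Your proposal is correct and takes essentially the same route as the paper: the first identity is obtained, exactly as in the text preceding the proposition, by comparing the flip-weighted rewriting of the path expansion of $\wgb$ with \cref{prop:expansion_B} and extracting the coefficient of $(-1/N)^r$. For the second identity the paper simply appeals to the exponential in \cref{eq:bmonotone} converting the disconnected enumeration into the connected one ``in the usual manner''; your orbit-decomposition argument (invariance of orbits under $\iota$ and the involution of $\m$, additivity of length and flip number, and uniqueness of the monotone merge from distinctness of odd labels across orbits) is a correct, more detailed justification of precisely that standard step.
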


A combinatorial interpretation for the $b$-monotone Hurwitz numbers was already provided by Bonzom, Chapuy and Do\l{}\k{e}ga in terms of monotone Hurwitz graphs enumerated with weight $b$ to the power of a certain ``measure of non-orientability''~\cite{bon-cha-dol23}. \cref{prop:bmonotoneinterpretation} provides a closely related, yet in some sense distinct, combinatorial interpretation in terms of monotone factorisations and the notion of ``flip number''.

\begin{remark}
It is not clear to the authors whether $\mathrm{flip}(\bm{\tau})$ can be directly and explicitly determined from an arbitrary monotone sequence of transpositions $\bm{\tau}$. A naive approach to calculating $\mathrm{flip}(\bm{\tau})$ would be to inductively apply the relation
\[
b^{\mathrm{flip}(\bm{\tau})} = \omega^{(b)}(\m, \tau_r \cdot \m) \, b^{\mathrm{flip}(\bm{\tau}')},
\]
where $\bm{\tau} = (\tau_1, \tau_2, \ldots, \tau_r) \in \mathrm{Mono}(\m)$ and $\bm{\tau}' = (\tau_1, \tau_2, \ldots, \tau_{r-1}) \in \mathrm{Mono}(\tau_r \cdot \m)$. This ``local'' approach to calculating $\mathrm{flip}(\bm{\tau})$ is effective yet only recursive, and does not preclude the existence of a more ``global'' approach to directly determine $\mathrm{flip}(\bm{\tau})$.
\end{remark}

\subsection{The \texorpdfstring{$bt$-}{bt-}monotone Hurwitz numbers} \label{subsec:weingarten_BT}

In previous sections, we have discussed two distinct deformations of the monotone Hurwitz numbers.

\begin{itemize}
\item The $b$-deformation of the monotone Hurwitz numbers is inspired by the combinatorics of Jack functions, as described in \cref{subsec:bmonotone}. The $b$-parameter records a certain ``measure of non-orientability'' for monotone Hurwitz maps~\cite{bon-cha-dol23}, or as described in \cref{subsec:weingarten_B}, a ``flip number'' for monotone factorisations of pair partitions. In \cref{sec:jucys-murphy}, we extend the notion of $b$-deformation to the Jucys--Murphy elements in the symmetric group algebra.

\item The $t$-deformation of the monotone Hurwitz numbers was originally introduced in previous work of the authors and Moskovsky by passing from Weingarten calculus for unitary groups to complex Grassmannians~\cite{cou-do-mos23}. An analogous story was told in \cref{sec:weingarten-A} in the context of Weingarten calculus for real Grassmannians. In both the previous and present works, the $t$-parameter records the ``hive number'' of a monotone sequence of transpositions.
\end{itemize}

It is natural to consider the common generalisation of these two deformations, in the context of Weingarten calculus and monotone Hurwitz numbers. The upshot of this section is that these deformations are compatible in the sense that one can carry both parameters in a natural way. We present the definitions without motivation and the results without proof, unless there are subtleties or differences from those of previous sections.

\begin{definition}
The $bt$-Weingarten graph $\mathcal{G}^{(bt)}$ is the infinite weighted directed graph with vertex set $\P_\bullet$ and edge set $E_A \sqcup E_B \sqcup E_C$, where
\begin{itemize}
\item the set $E_A$ comprises ``type $A$'' edges, which are of the form $\m \longrightarrow (i~2k-1) \cdot \m$ for $\m \in \P_k$ and $1 \leq i \leq 2k-2$, where the weight is $\omega^{(b)}(\m, (i~2k-1) \cdot \m)$,
\item the set $E_B$ comprises ``type $B$'' edges, which are of the form $\m \longrightarrow \m^\downarrow$ for $\m \in \P_k$ with $\{2k-1, 2k\} \in \m$, where the weight is $1$,
\item the set $E_C$ comprises ``type $C$'' edges, which are of the form $\m \longrightarrow [(i~2k-1) \cdot \m]^{\downarrow})$ for $\m \in \P_k$ with $\{i,2k\} \in \m$, where the weight is $1$.
\end{itemize}
In short, the graph $\mathcal{G}^{(bt)}$ is simply the $t$-deformed orthogonal Weingarten graph $\mathcal{G}^{\mathbf{A}}$ with type $A$ directed edges weighted by the function $\omega^{(b)}$.
\end{definition}

Perhaps the only subtlety here is that the type $C$ edges carry the weight $1$ rather than depending on the parameter $b$. That this is the correct choice of weight will not become apparent until the proof of \cref{prop:btmonotonecombinatorics}.

\begin{definition} \label{def:btweingarten}
Define the {\em $bt$-Weingarten function} $\wgbt: \P_\bullet \to \mathbb{R}[b, M][[N^{-1}]]$ by the large $N$ expansion
\[
\wgbt(\m) = \sum_{\bm{\rho} \in \mathrm{Path}^{(b)}(\m)} \left(-\frac{1}{N} \right)^{\ell_A(\bm{\rho})} \left( \frac{M}{N} \right)^{\ell_B(\bm{\rho})} \left( \frac{1}{N} \right)^{\ell_C(\bm{\rho})} w(\bm{\rho}), 
\]
where the summation is over all paths in $\mathcal{G}^{(bt)}$ from $\m$ to the empty pair partition $(~)$. We use $\ell_K(\bm{\rho})$ to denote the number of type $K$ edges for $K \in \{A, B, C\}$ and $w(\bm{\rho})$ to denote the product of the weights of the directed edges in the path $\bm{\rho}$.
\end{definition}

Again, from the adjacency structure of the graph $\mathcal{G}^{(bt)}$, we obtain the following orthogonality relations for the $bt$-Weingarten function.

\begin{proposition}[Orthogonality relations for $\wgbt$] \label{prop:orthogonality_BT}
For each non-empty pair partition $\m \in \P_k$, the $bt$-Weingarten function $\wgbt$ satisfies the relation
\begin{align*}
\wgbt(\m) ={}& -\frac{1}{N} \sum_{i=1}^{2k-2} \omega^{(b)}(\m,(i~2k-1) \cdot \m) \, \wgbt((i~2k-1) \cdot \m) + \frac{M}{N} \delta_{\{2k-1,2k\} \in \m} \wgbt(\m^\downarrow) \\
&+ \frac{1}{N} \sum_{i=1}^{2k-2} \delta_{\{i,2k\} \in \m} \wgbt([(i~2k-1) \cdot \m]^{\downarrow}).
\end{align*}
\end{proposition}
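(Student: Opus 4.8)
The plan is to argue directly from \cref{def:btweingarten} by decomposing each path in $\mathcal{G}^{(bt)}$ according to its first edge, exactly as in the proof of \cref{prop:orthogonality_B}. The function $\wgbt(\m)$ is by definition a sum over all paths $\bm{\rho} = (\m = \m_0, \m_1, \ldots, \m_\ell = (~))$ from $\m$ to the empty pair partition, each contributing $(-1/N)^{\ell_A(\bm{\rho})} (M/N)^{\ell_B(\bm{\rho})} (1/N)^{\ell_C(\bm{\rho})} w(\bm{\rho})$. The essential structural fact is that this contribution is multiplicative under concatenation: if $\bm{\rho}$ is an initial edge $\m \to \m_1$ followed by a path $\bm{\rho}'$ from $\m_1$ to $(~)$, then each of the three edge-type counts is additive, and $w(\bm{\rho})$, being the product of the individual edge weights, factors as the weight of the first edge times $w(\bm{\rho}')$.

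First I would partition $\mathrm{Path}^{(b)}(\m)$ (interpreted, as in \cref{def:btweingarten}, as the set of paths in $\mathcal{G}^{(bt)}$) according to the initial edge $\m \to \m_1$. For $\m \in \P_k$ with $k \geq 1$, the edges emanating from $\m$ are precisely the type $A$ edges $\m \to (i~2k-1)\cdot\m$ for $1 \leq i \leq 2k-2$, of weight $\omega^{(b)}(\m,(i~2k-1)\cdot\m)$ and monomial factor $-1/N$; the type $B$ edge $\m \to \m^\downarrow$, present exactly when $\{2k-1,2k\} \in \m$, of weight $1$ and factor $M/N$; and the type $C$ edges $\m \to [(i~2k-1)\cdot\m]^\downarrow$, present exactly when $\{i,2k\} \in \m$, of weight $1$ and factor $1/N$. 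For each admissible choice of first edge, the truncated remainder $\bm{\rho}'$ ranges bijectively over $\mathrm{Path}^{(b)}(\m_1)$, and summing the multiplicative contributions of all such $\bm{\rho}'$ reproduces exactly $\wgbt(\m_1)$. Factoring out the contribution of the first edge and summing over the three edge types then yields the three terms on the right-hand side, with the indicators $\delta_{\{2k-1,2k\}\in\m}$ and $\delta_{\{i,2k\}\in\m}$ recording the existence of the type $B$ and type $C$ edges, respectively.

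I expect no genuine obstacle here, as the argument is pure bookkeeping of the first-edge decomposition; the result is the $bt$-analogue of \cref{prop:orthogonality_B}, the only new feature being the presence of the type $C$ edges, which contribute the final sum. The points demanding the most care are the multiplicativity of $w(\bm{\rho})$ over concatenation and the correct accounting of which edges exist at a given vertex, both of which are immediate once the edge set of $\mathcal{G}^{(bt)}$ is made explicit. It is worth emphasising that, unlike the type $A$ edges, the type $B$ and type $C$ edges carry weight $1$, so the parameter $b$ enters the relation only through the coefficients $\omega^{(b)}(\m,(i~2k-1)\cdot\m)$ attached to the type $A$ terms.
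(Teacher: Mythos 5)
Your proposal is correct and takes essentially the same route as the paper, which justifies \cref{prop:orthogonality_BT} precisely by the adjacency structure of $\mathcal{G}^{(bt)}$: each path from $\m$ to $(~)$ is split as an initial edge (of type $A$, $B$, or $C$) followed by a path from the next vertex, with the contribution multiplicative under this concatenation. The first-edge bookkeeping you describe, including the weight-$1$ convention on type $B$ and $C$ edges, is exactly the intended argument.
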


\begin{example}
From the base case $\wgbt((~)) = 1$, one immediately obtains $\wgbt((12)) = \frac{M}{N}$. The three orthogonality relations for $\wgbt(\m)$, where $\m \in \P_2$, read
\begin{align*}
\wgbt((12 \mmid 34)) &= -\frac{1}{N} \wgbt((14 \mmid 23)) - \frac{b}{N} \wgbt((13 \mmid 24)) + \frac{M}{N} \wgbt((12)) \\
\wgbt((14 \mmid 23)) &= -\frac{1}{N} \wgbt((12 \mmid 34)) - \frac{b}{N} \wgbt((14 \mmid 23)) + \frac{1}{N} \wgbt((12)) \\
\wgbt((13 \mmid 24)) &= -\frac{b}{N} \wgbt((13 \mmid 24)) -\frac{1}{N} \wgbt((12 \mmid 34)) + \frac{1}{N} \wgbt((12)).
\end{align*}
After substituting $\wgbt((12)) = \frac{M}{N}$, one obtains a non-degenerate linear system with solution
\[
\begin{aligned}
\wgbt((12 \mmid 34)) &= \frac{M(MN + bM - b-1)}{N (N+b+1) (N-1)} \\
\wgbt((14 \mmid 23)) &= \wgbt((13 \mmid 24)) = \frac{M(N-M)}{N(N+b+1) (N-1)}.
\end{aligned}
\]
In general, the orthogonality relations provide a linear system from which one can deduce any value of the Weingarten function from the base case $\wgbt((~)) = 1$. It follows that each such value is in fact a rational function of $N$, from which the corresponding series in $N^{-1}$ is obtained as a large $N$ expansion.
\end{example}

By direct analogy with \cref{eq:bmonotone,eq:bmonotonedisc} for the $b$-monotone Hurwitz numbers, introduce the partition function
\begin{align} \label{eq:btmonotone}
Z^{(bt)}(\bm{p}; \h, z) &= \sum_{k \geq 0} \frac{z^k}{\h^k} \sum_{\lambda \vdash k} \Bigg( \prod_{\Box \in \lambda} \frac{1- (1-t) \h c_b(\Box)}{1 - \h c_b(\Box)} \Bigg) \, \frac{J^{(b)}_\lambda(\bm{p})}{\mathrm{hook}_b(\lambda) \, \mathrm{hook}_b'(\lambda)} \notag \\
&= \exp \Bigg[ \sum_{k \geq 0} z^k \sum_{g \geq 0} \sum_{n \geq 1} \frac{\h^{2g-2+n}}{n!} \sum_{\mu_1, \ldots, \mu_n \geq 1} \Hbt_{g,n}(\mu_1, \ldots, \mu_n) \, \frac{p_{\mu_1} \cdots p_{\mu_n}}{(b+1)^n} \Bigg] \notag \\
&= \sum_{k \geq 0} \frac{z^k}{\h^k} \sum_{r \geq 0} \sum_{n \geq 1} \frac{\h^r}{n!} \sum_{\mu_1, \ldots, \mu_n \geq 1} \frac{\hbt_r(\mu_1, \ldots, \mu_n)}{\mu_1 \cdots \mu_n} \, \frac{p_{\mu_1} \cdots p_{\mu_n}}{(b+1)^n}.
\end{align}
This defines the $bt$-monotone Hurwitz numbers $\Hbt_{g,n}(\mu_1, \ldots, \mu_n)$ and $\hbt_r(\mu_1, \ldots, \mu_n)$, where we consider the latter a ``disconnected'' renormalised version of the former. In the summation over $g$ in the second line, one must allow $g$ to range over $\frac{1}{2} \mathbb{N} = \{0, \frac{1}{2}, 1, \frac{3}{2}, \ldots\}$, as in the case of \cref{eq:bmonotone} for the $b$-monotone Hurwitz numbers. See \cref{tab:bthurwitz1,tab:bthurwitz2,tab:bthurwitz3} for some examples of $bt$-monotone Hurwitz numbers.

We will prove that the partition function satisfies Virasoro constraints, which in some sense, align with the orthogonality relations for $\wgbt$. As a result, we deduce that the $bt$-Weingarten function $\wgbt$ stores $bt$-monotone Hurwitz numbers in its large $N$ expansion.

\begin{theorem}[Virasoro constraints for $Z^{(bt)}$] \label{thm:virasoro_bt}
The partition function $Z^{(bt)}(\bm{p}; \h, z)$ satisfies $L_m^{(bt)} \cdot Z^{(bt)} = 0$, for $m = 1, 2, 3, \ldots$, where
\[
L_m^{(bt)} = \frac{m \partial_m}{\h} - (1+b) \sum_{i + j = m} ij \partial_i \partial_j - \sum_{i} (i + m) p_i \partial_{i+m} - b m (m-1) \partial_m - \frac{z}{\h} (t-1) (m-1) \partial_{m-1} - \delta_{m,1} \frac{z}{\h^2(1+b)}.
\]
Here, we write $\partial_i$ as a shorthand for the differential operator $\frac{\partial}{\partial p_i}$, for $i \geq 1$. These constraints uniquely determine $Z^{(bt)}$ from the initial condition $\left. Z^{(bt)} \right|_{z=0} = 1$. Moreover, the operators $L_1^{(bt)}, L_2^{(bt)}, L_3^{(bt)}, \ldots$ satisfy the Virasoro commutation relations
\[
[L_m^{(bt)}, L_n^{(bt)}] = (m-n) \, L_{m+n}^{(bt)}, \qquad \text{for } m, n \geq 1.
\]
\end{theorem}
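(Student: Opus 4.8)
The plan is to follow the template of the proofs of \cref{thm:virasoro_b} (the $b$-case of Bonzom--Chapuy--Do\l{}\k{e}ga) and \cref{thm:virasoro_A} (the real-Grassmannian case), exploiting the fact that $L_m^{(bt)}$ differs from the Bonzom--Chapuy--Do\l{}\k{e}ga operator $L_m^{(b)}$ only by the single term
\[
R_m = -\frac{z}{\h}(t-1)(m-1)\partial_{m-1},
\]
and that $Z^{(bt)}$ differs from $Z^{(b)}$ only by replacing the content-product weight $\prod_{\Box}\frac{1}{1-\h c_b(\Box)}$ with $\prod_{\Box}\frac{1-(1-t)\h c_b(\Box)}{1-\h c_b(\Box)}$. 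Since
\[
\frac{1-(1-t)\h c_b(\Box)}{1-\h c_b(\Box)} = 1 + \frac{t\,\h c_b(\Box)}{1-\h c_b(\Box)},
\]
the new numerator factor is a content-linear perturbation of the $b$-case weight, and the role of $R_m$ will be precisely to absorb it. Note also the consistency checks $L_m^{(bt)}|_{t=1}=L_m^{(b)}$ and $L_m^{(bt)}|_{b=1}=L_m^{\mathbf{A}}$, so the argument must interpolate between the two known proofs.

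For the annihilation $L_m^{(bt)}\cdot Z^{(bt)} = 0$, I would extract the coefficient of a monomial $\h^{r-1}z^{|\mu|}p_{\mu_2}\cdots p_{\mu_n}$ from the identity, exactly as in the proofs of \cref{thm:virasoro_b,thm:virasoro_A}. This converts the differential constraint into the cut-join-flip recursion of \cref{cor:cjfrecursion} for the numbers $\hbt_r(\mu_1,\ldots,\mu_n)$, in which the join, cut and flip terms reproduce the $b$-case recursion underlying \cref{prop:expansion_B} and the one genuinely new ``hive'' term, proportional to $(t-1)\,\hbt_{r-1}(\mu_1-1,\mu_S)$, is contributed by $R_m$. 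It then suffices to prove this recursion directly from the Jack-function definition \cref{eq:btmonotone}, using that the Laplace--Beltrami operator $D(b)$ is diagonal on Jack functions with eigenvalue $\sum_{\Box}c_b(\Box)$, together with the Jack Pieri rules governing one-box addition and removal (which control $\sum_i(i+m)p_i\partial_{i+m}$ and $\sum_{i+j=m}ij\partial_i\partial_j$) and the telescoping of the content-product weight under box addition. The crucial point is that the extra factor $1-(1-t)\h c_b(\Box)$ contributes a term in which the content $c_b(\Box)$ of the added box appears linearly, and this is exactly what is matched by the content-weighted term $R_m$. This last step is where I expect the main difficulty to lie, since it requires re-running the full Jack Pieri bookkeeping of Bonzom--Chapuy--Do\l{}\k{e}ga while tracking the extra numerator factor and verifying that its content-linear contribution matches $R_m$ mode by mode.

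Uniqueness then follows formally once the recursion is in hand: the cut-join-flip recursion computes every $\hbt_r(\mu_1,\ldots,\mu_n)$ from data of strictly smaller complexity (measured by $r+|\mu|$), with base case $\hbt_0(\emptyset)=1$, so the constraints $L_m^{(bt)}\cdot Z^{(bt)}=0$ for $m\geq 1$ determine $Z^{(bt)}$ uniquely from $Z^{(bt)}|_{z=0}=1$.

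For the Virasoro commutation relations, I would compute $[L_m^{(bt)},L_n^{(bt)}]$ directly, organising the calculation by writing $L_m^{(bt)}=L_m^{(b)}+R_m$ so as to isolate the new contributions. The bracket expands into $[L_m^{(b)},L_n^{(b)}]$, which is the known relation of \cref{thm:virasoro_b}; the term $[R_m,R_n]=0$, since $R_m$ and $R_n$ involve only $z$ and the commuting derivatives $\partial_{m-1},\partial_{n-1}$; and two cross-brackets, each of which reduces to the computation $[L_m^{(b)},\partial_{n-1}]=(m+n-1)\partial_{m+n-1}$, governed solely by the join term $-\sum_i(i+m)p_i\partial_{i+m}$ (every other summand of $L_m^{(b)}$ being free of the variables $p_i$). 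This reduces the commutation relations to a finite, mechanical verification, exactly as already noted for $L_m^{\mathbf{A}}$ in the proof of \cref{thm:virasoro_A}.
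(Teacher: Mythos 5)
Your handling of the commutation relations is fine, and in fact tidier than the paper's: writing $L_m^{(bt)} = L_m^{(b)} + R_m$ with $R_m = -\frac{z}{\h}(t-1)(m-1)\partial_{m-1}$, noting $[R_m,R_n]=0$, and reducing the cross-brackets to $[L_m^{(b)},\partial_{n-1}]=(m+n-1)\partial_{m+n-1}$ (only the join term contributes) organises what the paper dismisses as a brute-force check. One caveat: if you actually carry out this bookkeeping you will find that the operators as literally written satisfy $[L_m,L_n]=(n-m)L_{m+n}$ (the cross terms produce $(n-m)R_{m+n}$, and already $[\ell_m,\ell_n]=(m-n)\ell_{m+n}$ for $\ell_m=\sum_i(i+m)p_i\partial_{i+m}$ forces this sign on the pure $L^{(b)}$ part); the sign $(m-n)$ in the statement is a convention discrepancy that afflicts the paper's statement equally, so fix a bracket convention before matching terms. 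The uniqueness argument is also fine.

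The genuine gap is in the annihilation statement $L_m^{(bt)}\cdot Z^{(bt)}=0$, which is the heart of the theorem. You propose to prove the mode-by-mode recursion directly from the Jack expansion, claiming that the one-box Pieri rules ``control $\sum_i(i+m)p_i\partial_{i+m}$ and $\sum_{i+j=m}ij\partial_i\partial_j$''. They do not: one-box addition and removal correspond to multiplication by $p_1$ and to $\partial_1$ only, whereas the fixed-$m$ operators $\partial_m$, $\sum_{i+j=m}ij\partial_i\partial_j$, $\sum_i(i+m)p_i\partial_{i+m}$ have no tractable action on the Jack basis (their matrix elements are skew-Jack/power-sum transition coefficients with no usable closed form). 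What Jack theory gives cleanly is only the $p$-weighted aggregate: summing $p_mL_m^{(bt)}$ over $m$ assembles these operators into the degree operator $\sum_m mp_m\partial_m$ and the Laplace--Beltrami operator $D(b)$, both of which act diagonally on Jack functions. That yields exactly one equation, the master equation $\sum_{n\geq1}p_nL_n^{(bt)}\cdot Z^{(bt)}=0$, and this is precisely how the paper proceeds, quoting \cite[Theorem~2.7]{CDO24a} for it. The individual constraints are then extracted by a separate, nontrivial disentangling induction modelled on \cite[Lemma~2.6]{bon-cha-dol23}: setting $\tilde{L}_m^{(bt)}=\h L_m^{(bt)}$ and $x=z/\h$, one computes $\bigl[\sum_n p_n\tilde{L}_n^{(bt)},\,p_m\tilde{L}_m^{(bt)}\bigr]\cdot Z^{(bt)}$ in two ways, using the already-established Virasoro commutation relations and an induction on the bidegree in $(x,\h)$, to peel off each constraint $L_m^{(bt)}\cdot Z^{(bt)}=0$ from the aggregated one. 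This disentangling step --- which is also why the commutation relations must be proven first, as an input to the annihilation proof rather than an afterthought --- is entirely absent from your proposal; without it, or a genuinely new argument replacing the infeasible fixed-$m$ Pieri bookkeeping, the central claim is unproven. (Note too that deriving the recursion from the combinatorial model of \cref{prop:btmonotonecombinatorics} is not an available fallback here, since in this paper that interpretation is itself a downstream consequence of \cref{thm:virasoro_bt}.)
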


\begin{proof}
First, observe that the Virasoro commutation relation $[L_m^{(bt)}, L_n^{(bt)}] = (m-n) \, L_{m+n}^{(bt)}$ can be checked directly using a relatively straightforward brute force calculation.

The remainder of the proof can be divided into two parts. The first part of the proof is a direct application of \cite[Theorem~2.7]{CDO24a}, which gives a partial differential equation satisfied by the partition functions for the broad class of $b$-deformations of weighted Hurwitz numbers. In the case of $bt$-monotone Hurwitz numbers, this result asserts that $Z^{(bt)}$ is the unique solution of the equation
\begin{equation} \label{eq:evolution_bt}
\sum_{n \geq 1} p_n L_n \cdot Z^{(bt)} = 0,
\end{equation}
with initial condition $\left. Z^{(bt)} \right|_{z=0} = 1$.

The second part of the proof uses an argument based on the proof of~\cite[Lemma~2.6]{bon-cha-dol23}. It will be convenient for us to define $\tilde{L}_m^{(bt)} = \h L_m^{(bt)}$ and $x = \frac{z}{\h}$. We use \cref{eq:evolution_bt} to show by induction on $\ell$ that $[x^k \h^r] \, \tilde{L}_m^{(bt)} \cdot Z^{(bt)} = 0$ for any integers $k, r \geq 0$ with $k + r = \ell$ and for all $m \geq 1$, from which the desired claim follows.

Note that the operator $\tilde{L}_m^{(bt)}$ may be written as
\begin{equation} \label{eq:Lexpansion}
\tilde{L}_m^{(bt)} = m \partial_m + O(x,\h),
\end{equation}
where we use $O(x,\h) \in \mathbb{C}(b)[\bm{p}, \bm{\partial}, \h, x]$ to denote a sum of monomials of degree at least $1$ in $x$ or $\h$. As usual, we use the notation $\bm{p} = (p_1, p_2, p_3, \ldots)$ and we introduce the notation $\bm{\partial} = (\frac{\partial}{\partial p_1}, \frac{\partial}{\partial p_2}, \frac{\partial}{\partial p_3}, \ldots)$. The base case $\ell = 0$ is satisfied since $\left. Z^{(bt)} \right|_{z=0} = 1$ implies that for all $m \geq 1$,
\[
[x^0\h^0] \, \tilde{L}_m^{(bt)} \cdot Z^{(bt)} = [z^0\h^0] \, \tilde{L}_m^{(bt)} \cdot Z^{(bt)} = m \partial_m \cdot \left( [z^0\h^0] \, Z^{(bt)} \right) = 0.
\]

Now consider $\ell \geq 1$ and let $k, r \geq 0$ with $k + r = \ell$. Assume for the sake of induction that $[x^k \h^r] \, \tilde{L}_m \cdot Z^{(bt)} = 0$ for any $k, r \geq 0$ with $k + r < \ell$ and for all $m \geq 1$. Let $\mathcal{L}^{\geq 1}$ denote the space spanned by operators of the form $f \tilde{L}_m^{(bt)}$, where $m \geq 1$ and $f \in \mathbb{C}(b)[\bm{p}, \bm{\partial}, \h, x]$ is any monomial of degree at least $1$ in $x$ or $\h$. By the inductive hypothesis, we have $[x^k \h^r] \, X \cdot Z^{(bt)} = 0$ for any $X \in \mathcal{L}^{\geq 1}$.

For $m \geq 1$, consider the action of the commutator $\left[ \sum p_n \tilde{L}_n^{(bt)}, p_m \tilde{L}_m^{(bt)} \right]$ on the partition function $Z^{(bt)}$. On one hand, applying \cref{eq:evolution_bt} leads to
\[
\left[\sum_{n \geq 1} p_n \tilde{L}_n^{(bt)}, p_m \tilde{L}_m^{(bt)} \right] \cdot Z^{(bt)} = \sum_{n \geq 1} p_n \tilde{L}_n^{(bt)} p_m \tilde{L}_m^{(bt)} \cdot Z^{(bt)} = \left[ \sum_{n \geq 1} n p_n \partial_n p_m \tilde{L}_m^{(bt)} + X \right] \cdot Z^{(bt)},
\]
for some $X \in \mathcal{L}^{\geq 1}$. Thus, invoking the inductive hypothesis leads to
\[
[x^k\h^r]\left[\sum_{n \geq 1} p_n \tilde{L}_n^{(bt)}, p_m \tilde{L}_m^{(bt)} \right] \cdot Z^{(bt)} = [x^k\h^r] \left( \sum_{n \geq 1} n p_n \partial_n \right) p_m \tilde{L}_m^{(bt)} \cdot Z^{(bt)} = [x^k\h^r] \, k p_m \tilde{L}_m^{(bt)} \cdot Z^{(bt)}.
\]
The final equality here uses the fact that the power of $x$ in $Z^{(bt)}$ records the weighted homogeneous degree in $\bm{p}$, while the operator $p_m \tilde{L}_m^{(bt)}$ preserves the degree.

On the other hand, one can expand the commutator $\left[ \sum p_n \tilde{L}_n^{(bt)}, p_m \tilde{L}_m^{(bt)} \right]$ as
\begin{align*}
& \left[ \sum_{n \geq 1} p_n \tilde{L}_n^{(bt)}, p_m \tilde{L}_m^{(bt)} \right] = \sum_{n \geq 1} \left( p_n \left[ \tilde{L}_n^{(bt)}, p_m \right] \tilde{L}_m^{(bt)} + p_m \left[ p_n, \tilde{L}_m^{(bt)} \right] \tilde{L}_n^{(bt)} + p_n p_m \left[ \tilde{L}_n^{(bt)}, \tilde{L}_m^{(bt)} \right] \right) \\
&\qquad\qquad = \sum_{n \geq 1} \left( p_n \left[ n \partial_n + O(x, \h), p_m \right] \tilde{L}_m^{(bt)} + p_m \left[ p_n, m \partial_m + O(x, \h) \right] \tilde{L}_n^{(bt)} + p_n p_m (n-m) \h \tilde{L}_{n+m}^{(bt)} \right).
\end{align*}
Here, we have used \cref{eq:Lexpansion} on the first two terms and the Virasoro commutation relation on the third term. One can now recognise that the entire expression is an element of $\mathcal{L}^{\geq 1}$. The terms involving $O(x, \h)$ and the final term are evidently elements of $\mathcal{L}^{\geq 1}$, while the remaining terms actually cancel out to zero.

Finally, putting these pieces together yields the equality
\[
0 = [x^k\h^r] \left[ \sum_{n \geq 1} p_n \tilde{L}_n^{(bt)}, p_m \tilde{L}_m^{(bt)} \right] \cdot Z^{(bt)} = [x^k\h^r] \, k p_m \tilde{L}_m^{(bt)} \cdot Z^{(bt)}.
\]
If $k \neq 0$, we deduce from this equation that $[x^k\h^r] \, \tilde{L}_m^{(bt)} \cdot Z^{(bt)} = 0$, as desired. If $k = 0$, we use the initial condition $\left. Z^{(bt)} \right|_{z=0} = 1$ to deduce that $[x^k\h^r] \, \tilde{L}_m^{(bt)} \cdot Z^{(bt)} = 0$. This completes the induction, so the desired claim follows.
\end{proof}

At the level of the $bt$-monotone Hurwitz numbers, the Virasoro constraints of \cref{thm:virasoro_bt} imply the following generalisation of the cut-join recursion for the usual monotone Hurwitz numbers~\cite{gou-gua-nov13}. We call it the ``cut-join-flip recursion'' as it can be alternatively proved using the interpretation for the $bt$-monotone Hurwitz numbers of \cref{prop:btmonotonecombinatorics} as weighted enumerations of monotone factorisations. In that case, the recursion arises by considering the behaviour of a transposition on the coset-type of a pair partition, which falls into one of three types.

\begin{corollary}[Cut-join-flip recursion] \label{cor:cjfrecursion}
Apart from the initial condition $\Hbt_{0,1}(1) = 1$, the $bt$-monotone Hurwitz numbers satisfy 
\begin{align*}
\mu_1 \Hbt_{g,n}(\mu_1, \mu_S) &= (b+1) \sum_{i=2}^n (\mu_1+\mu_i) \Hbt_{g,n-1}(\mu_1+\mu_i, \mu_{S \setminus \{i\}}) \\
&+ \sum_{\alpha + \beta = \mu_1} \alpha \beta \left[ \Hbt_{g-1,n+1}(\alpha, \beta, \mu_S) + \sum_{\substack{g_1+g_2=g \\ I_1 \sqcup I_2 = S}} \Hbt_{g_1,|I_1|+1}(\alpha, \mu_{I_1}) \, \Hbt_{g_2,|I_2|+1}(\beta, \mu_{I_2}) \right] \\
&+ b \mu_1 (\mu_1-1) \Hbt_{g-\frac{1}{2},n}(\mu_1, \mu_S) + (t-1) (\mu_1-1) \Hbt_{g,n}(\mu_1-1, \mu_S),
\end{align*}
where we set $S = \{2, 3, \ldots, n\}$ and $\mu_I = \{\mu_{i_1}, \mu_{i_2}, \ldots, \mu_{i_k}\}$ for $I = \{i_1, i_2. \ldots, i_k\}$. In the summation over $g_1 + g_2 = g$, we allow $g_1$ and $g_2$ to range over $\frac{1}{2} \mathbb{N} = \{0, \frac{1}{2}, 1, \frac{3}{2}, \ldots\}$.
\end{corollary}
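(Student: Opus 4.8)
The plan is to derive the recursion directly from the Virasoro constraints of \cref{thm:virasoro_bt} by passing to the free energy. Write $Z^{(bt)} = \exp(F)$, where
\[
F = \sum_{k \geq 0} z^k \sum_{g \geq 0} \sum_{n \geq 1} \frac{\h^{2g-2+n}}{n!} \sum_{\mu_1, \ldots, \mu_n \geq 1} \Hbt_{g,n}(\mu_1, \ldots, \mu_n) \, \frac{p_{\mu_1} \cdots p_{\mu_n}}{(b+1)^n}
\]
is the connected generating function from the second line of \cref{eq:btmonotone}. The constraint $L_m^{(bt)} \cdot Z^{(bt)} = 0$ involves only first- and second-order differential operators in the $p_i$, so after applying $L_m^{(bt)}$ to $e^F$ and dividing through by the invertible series $e^F$, it becomes a partial differential equation for $F$ alone. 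Concretely, each first-order piece $\partial_i$ acts as $\partial_i F$, while the single quadratic piece produces $\partial_i \partial_j F + (\partial_i F)(\partial_j F)$; the two summands of this last expression are precisely what will separate the connected ``cut'' contribution from the disconnected product of two connected factors.

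First I would set $m = \mu_1$ and extract the coefficient of $\h^{2g-2+n} z^{|\mu|} \frac{p_{\mu_2} \cdots p_{\mu_n}}{(b+1)^{n-1}}$ in this PDE, using the symmetry of $\Hbt_{g,n}$ in its arguments to absorb the $\frac{1}{n!}$ factors. Each term of $L_{\mu_1}^{(bt)}$ then matches one term of the recursion: the leading $\frac{m \partial_m}{\h} F$ supplies $\mu_1 \Hbt_{g,n}(\mu_1, \mu_S)$ on the left; the term $-\sum_i (i+m) p_i \partial_{i+m} F$ is the join, contributing $(b+1) \sum_{i=2}^n (\mu_1 + \mu_i) \Hbt_{g,n-1}(\mu_1 + \mu_i, \mu_{S \setminus \{i\}})$, where the prefactor $b+1$ arises from the mismatch in the normalisation $(b+1)^{-n}$ between the two sides; the quadratic term $-(1+b)\sum_{i+j=m} ij\,[\cdots]$ supplies the cut and the disconnected terms with weight $\alpha\beta$; the term $-bm(m-1)\partial_m F$ supplies the flip $b\mu_1(\mu_1-1) \Hbt_{g-\frac{1}{2},n}(\mu_1, \mu_S)$, where the missing factor of $\h^{-1}$ relative to the leading term forces the genus shift $g \mapsto g - \tfrac{1}{2}$; and the term $-\frac{z}{\h}(t-1)(m-1)\partial_{m-1} F$ supplies the $t$-deformation term $(t-1)(\mu_1 - 1)\Hbt_{g,n}(\mu_1 - 1, \mu_S)$, the factor of $z$ accounting for the increment in $|\mu|$. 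The anomaly term $-\delta_{m,1} \frac{z}{\h^2(1+b)}$ contributes only when $\mu_1 = 1$ and encodes the initial condition $\Hbt_{0,1}(1) = 1$.

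I expect the main obstacle to be the careful bookkeeping of the combinatorial prefactors --- in particular, verifying that the split $\partial_i \partial_j F + (\partial_i F)(\partial_j F)$ reproduces exactly the connected genus-reduction term $\Hbt_{g-1,n+1}(\alpha,\beta,\mu_S)$ together with the sum $\sum_{g_1+g_2=g,\, I_1 \sqcup I_2 = S} \Hbt_{g_1,|I_1|+1}(\alpha, \mu_{I_1}) \Hbt_{g_2,|I_2|+1}(\beta, \mu_{I_2})$, with the correct symmetrisation over how the remaining parts $\mu_S$ are distributed and with all $\frac{1}{n!}$ and $(b+1)^{-n}$ factors reconciled. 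Tracking the simultaneous shifts in the three gradings (the $p$-degree controlled by $z$, the genus controlled by $\h$, and the arity $n$) across the five operator terms is where errors are most likely to creep in. As the statement notes, one could instead verify the recursion combinatorially via the interpretation of \cref{prop:btmonotonecombinatorics}, reading off the cut, join, and flip contributions from the effect of the last transposition on the coset-type of $\m$; this provides a useful independent check on the prefactors obtained from the Virasoro route.
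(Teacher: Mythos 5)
Your proposal is correct and follows essentially the same route as the paper: the recursion is precisely the coefficient-wise restatement of the Virasoro constraints $L_{\mu_1}^{(bt)} \cdot Z^{(bt)} = 0$ applied to $Z^{(bt)} = e^F$, which is exactly why the paper presents it as a corollary of \cref{thm:virasoro_bt}, and your term-by-term matching (the $(b+1)$ bookkeeping for the join and cut terms, the half-integer genus shift forced by the missing $\h^{-1}$ in the flip term, and the anomaly term encoding $\Hbt_{0,1}(1)=1$) is the computation the paper leaves implicit. The only slip is the stated extraction power $\h^{2g-2+n}$, which should be $\h^{2g-3+n}$ since the leading term $\frac{\mu_1 \partial_{\mu_1}}{\h}F$ carries an extra factor of $\h^{-1}$; this shifts all terms consistently, so it does not affect the argument.
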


We now apply the methods of \cref{subsec:weingarten_B} to realise the coefficients of the partition function $Z^{(bt)}$ as enumerations of monotone factorisation of pair partitions, weighted by certain monomials in $b$ and $t$.

\begin{proposition}[Large $N$ expansion]\label{prop:expansion_BT}
For $\m \in \P_k$, the $bt$-Weingarten function $\wgbt$ has the large $N$ expansion
\[
\wgbt(\m) = \frac{1}{(1-t)^k} \sum_{r=0}^\infty \hbt_r(\lambda(\m)) \left(-\frac{1}{N} \right)^r,
\]
where $\hbt_r$ denotes the $bt$-monotone Hurwitz number defined by \cref{eq:btmonotone}, $\lambda(\m)$ is the coset-type of the pair partition $\m$, and $t = 1 - \frac{N}{M}$.
\end{proposition}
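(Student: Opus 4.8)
Since the orthogonality relations of \cref{prop:orthogonality_BT} are triangular with respect to the grading by powers of $N^{-1}$, they determine $\wgbt$ uniquely from the base case $\wgbt((~)) = 1$. The plan is therefore to verify that the proposed closed form satisfies these relations, following closely the inductive argument for \cref{prop:expansion_B} and isolating the two genuinely new features: the prefactor $\frac{1}{(1-t)^k}$ and the type $C$ edges. Writing $W(\m) = \frac{1}{(1-t)^k}\sum_{r\geq 0}\hbt_r(\lambda(\m))\,(-1/N)^r$ for $\m\in\P_k$, it suffices to show $W$ obeys \cref{prop:orthogonality_BT}, which I would do by induction on $\ell = r + k$.

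The first new point is the bookkeeping of the prefactor when $W$ is substituted into the three terms of the orthogonality relation. A type $A$ edge preserves the level $k$ and shifts $r\mapsto r+1$; a type $B$ edge lowers the level to $k-1$, but its weight $\frac{M}{N} = \frac{1}{1-t}$ precisely restores the factor $\frac{1}{(1-t)^k}$ from $\frac{1}{(1-t)^{k-1}}$; and a type $C$ edge lowers the level while carrying only $\frac{1}{N}$, so that combining $\frac{1}{(1-t)^{k-1}} = (1-t)\,\frac{1}{(1-t)^k}$ with $\frac{1}{N}(-1/N)^r = -(-1/N)^{r+1}$ yields the coefficient $(t-1)$. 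Extracting the coefficient of $\frac{1}{(1-t)^k}(-1/N)^r$ thus reduces the orthogonality identity to the recursion
\begin{align*}
\hbt_r(\lambda(\m)) ={}& \sum_{i=1}^{2k-2}\omega^{(b)}(\m,(i~2k-1)\cdot\m)\,\hbt_{r-1}(\lambda((i~2k-1)\cdot\m)) + \delta_{\{2k-1,2k\}\in\m}\,\hbt_r(\lambda(\m^\downarrow)) \\
&+ (t-1)\sum_{i=1}^{2k-2}\delta_{\{i,2k\}\in\m}\,\hbt_{r-1}(\lambda([(i~2k-1)\cdot\m]^\downarrow)).
\end{align*}

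Next I would evaluate the coset-types and the weights $\omega^{(b)}$, with $\lambda(\m) = (\lambda_1,\lambda_S)$, where $S = \{2,\ldots,n\}$, and $2k-1$ lying in a cycle of length $2\lambda_1$ of $\Gamma(\m)$. The type $A$ sum is weighted by the same function $\omega^{(b)}$ as in \cref{prop:orthogonality_B}, so the Cut/Flip/Join trichotomy of \cref{prop:expansion_B} applies verbatim and contributes $\sum_{\alpha=1}^{\lambda_1-1}\hbt_{r-1}(\alpha,\lambda_1-\alpha,\lambda_S)$, $b(\lambda_1-1)\hbt_{r-1}(\lambda(\m))$, and $(1+b)\sum_{j\geq 2}\lambda_j\,\hbt_{r-1}(\lambda_1+\lambda_j,\lambda_{S\setminus\{j\}})$, while the type $B$ term gives $\delta_{\lambda_1,1}\hbt_r(\lambda_S)$. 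The type $C$ sum has a single term, indexed by the unique partner $i_0$ of $2k$ in $\m$; since $2k-1$, $2k$ and $i_0$ all lie in the $2\lambda_1$-cycle with $2k-1$ and $i_0$ adjacent to $2k$, applying $(i_0~2k-1)$ creates the pair $\{2k-1,2k\}$, whose removal leaves coset-type $(\lambda_1-1,\lambda_S)$; hence this term equals $(t-1)\,\hbt_{r-1}(\lambda_1-1,\lambda_S)$. The resulting recursion is exactly the one obtained by extracting the coefficient of $p_{\lambda_2}\cdots p_{\lambda_n}$ (at the appropriate powers of $z$ and $\h$) from the Virasoro constraint $L^{(bt)}_{\lambda_1}\cdot Z^{(bt)} = 0$ of \cref{thm:virasoro_bt}, which completes the induction.

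The step I expect to demand the most care is the identification of the type $C$ contribution with the Virasoro term $-\frac{z}{\h}(t-1)(m-1)\partial_{m-1}$. On the orthogonality side this term is a single summand with coefficient $(t-1)$ and no factor $\lambda_1-1$, whereas the operator carries an explicit $(m-1)$; the match works because the normalisation $\frac{1}{\mu_1\cdots\mu_n}$ in the third line of \cref{eq:btmonotone} supplies a compensating $\frac{1}{\lambda_1-1}$, so the operator factor cancels and the net coefficient is $(t-1)$, while the case $\lambda_1 = 1$ is handled by the empty type $C$ sum. Finally, the absence of any power of $b$ on the type $C$ term (the weight-$1$ choice flagged after \cref{def:btweingarten}) is consistent because the Virasoro $(t-1)$-term is $b$-independent; combinatorially this is forced by the fact that the transposition $(i_0~2k-1)$ joins two vertices, $i_0$ and $2k-1$, that both receive charge $-$ in $\Gamma(\m)$ — as $2k$ is the maximal label of its cycle and is adjacent to both — so that $\omega^{(b)}(\m,(i_0~2k-1)\cdot\m) = 1$.
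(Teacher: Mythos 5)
Your proposal is correct and takes essentially the same route as the paper's proof: an induction on $\ell = r+k$ that matches the coefficient-level form of the orthogonality relations of \cref{prop:orthogonality_BT} (via the cut/flip/join analysis of \cref{prop:expansion_B} together with the new type $C$ observation $\lambda([(i_0~2k-1)\cdot\m]^\downarrow) = (\lambda_1-1,\lambda_S)$) against the Virasoro constraint $L^{(bt)}_{\lambda_1}\cdot Z^{(bt)} = 0$ of \cref{thm:virasoro_bt}. Your write-up is in fact more detailed than the paper's, which simply cites the method of \cref{prop:expansion_B} plus the coset-type observation: in particular, the cancellation of the operator factor $(m-1)$ against the normalisation $\frac{1}{\mu_1\cdots\mu_n}$, and the charge argument forcing weight $1$ on type $C$ edges, are left implicit there (the latter appears in the paper only inside the proof of \cref{prop:btmonotonecombinatorics}).
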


\begin{proof}
The method of proof is identical to that of \cref{prop:expansion_B}, inductively equating the coefficients of the $bt$-Weingarten function with the coefficients of $Z^{(bt)}$ via the orthogonality relations of \cref{prop:orthogonality_BT} and \cref{thm:virasoro_bt}. There is just one additional observation required: in the case when $\{i,2k\} \in \m$, we have $\lambda ([(i~2k-1) \cdot \m]^{\downarrow}) = (\lambda_1-1,\lambda_I)$, where $\lambda(\m) = (\lambda_1, \lambda_2, \ldots, \lambda_n)$ and $2k$ is assumed to be in the cycle of length $2\lambda_1$.
\end{proof}

As one would expect, the $bt$-monotone Hurwitz numbers defined by the partition function of \cref{eq:btmonotone} can be interpreted as weighted enumerations of monotone factorisations.

\begin{proposition} \label{prop:btmonotonecombinatorics}
For $\lambda$ a partition and $r \geq 0$, the $bt$-monotone Hurwitz numbers satisfy
\[
\hbt_r(\lambda) = \sum_{\substack{\bm{\tau} \in \mathrm{Mono}(\m) \\ \ell(\bm{\tau}) = r}} b^{\mathrm{flip}(\bm{\tau})} t^{\mathrm{hive}(\bm{\tau})}, 
\]
where $\m$ is any pair partition of coset-type $\lambda$. It follows that the ``connected'' $bt$-monotone Hurwitz numbers satisfy 
\[
\Hbt_{g,n}(\lambda) = \frac{1}{\prod \lambda_i} \sum_{\substack{\bm{\tau}\in \mathrm{CMono}(\m) \\ \ell(\bm{\tau}) = |\lambda|+2g-2+n}} b^{\mathrm{flip}(\bm{\tau})} t^{\mathrm{hive}(\bm{\tau})}.
\]
\end{proposition}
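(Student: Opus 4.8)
The plan is to mirror the two-step strategy already carried out for the pure $t$-deformation in \cref{thm:expansion_A} and for the pure $b$-deformation in \cref{prop:bmonotoneinterpretation}, now tracking both statistics at once through the fibres of the recording map $\mathrm{F} \colon \mathrm{Path}^{(b)}(\m) \to \mathrm{Mono}(\m)$ associated to $\mathcal{G}^{(bt)}$. Fixing $\m \in \P_k$, I would start from the path expansion of $\wgbt$ in \cref{def:btweingarten} and group the paths by their image under $\mathrm{F}$, so that
\[
\wgbt(\m) = \sum_{\bm{\tau} \in \mathrm{Mono}(\m)} \sum_{\bm{\rho} \in \mathrm{F}^{-1}(\bm{\tau})} \left(-\frac{1}{N}\right)^{\ell_A(\bm{\rho})} \left(\frac{M}{N}\right)^{\ell_B(\bm{\rho})} \left(\frac{1}{N}\right)^{\ell_C(\bm{\rho})} w(\bm{\rho}).
\]
The fibre $\mathrm{F}^{-1}(\bm{\tau})$ and its generating function are already understood purely combinatorially in \cref{subsec:expansion_A}: each fibre is obtained from the unique type-$A$/type-$B$ path over $\bm{\tau}$ by independently exchanging any of its $\mathrm{hive}(\bm{\tau})$ eligible consecutive type-$A$/type-$B$ pairs for a type-$C$ edge, giving $\sum_{\bm{\rho} \in \mathrm{F}^{-1}(\bm{\tau})} x^{\ell_A(\bm{\rho})} y^{\ell_B(\bm{\rho})} z^{\ell_C(\bm{\rho})} = x^{\ell(\bm{\tau})} y^{k} (1 + x^{-1} y^{-1} z)^{\mathrm{hive}(\bm{\tau})}$. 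Since the type-$C$ edges of $\mathcal{G}^{(bt)}$ are attached exactly as in $\mathcal{G}^{\mathbf{A}}$, this identity transfers verbatim.

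The one genuinely new point --- and the step I expect to be the crux --- is that the product of edge weights $w(\bm{\rho})$ is constant, equal to $b^{\mathrm{flip}(\bm{\tau})}$, across the whole fibre $\mathrm{F}^{-1}(\bm{\tau})$, so that it factors out of the inner sum. Because type-$B$ and type-$C$ edges carry weight $1$, this reduces to showing that every type-$A$ edge eligible to be traded for a type-$C$ edge already carries weight $1$ under $\omega^{(b)}$. Such an edge has the form $\m' \to (i~2j-1)\cdot \m'$ with $\{i, 2j\} \in \m'$, the pair $\{i,2j\}$ being precisely what lets the move create $\{2j-1,2j\}$ and enables the following type-$B$ step. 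I would read the weight off \cref{def:weightfunction}: in $\Gamma(\m')$ the vertex $2j$ carries the largest label of its cycle and hence charge $+$, while its two neighbours $2j-1$ (through the $\e_j$-edge) and $i$ (through the $\m'$-edge $\{i,2j\}$) are both forced to charge $-$; thus $q(i) = q(2j-1)$ and $\omega^{(b)}(\m', (i~2j-1)\cdot\m') = 1$. This is exactly the computation that justifies weighting type-$C$ edges by $1$, and it guarantees that $w(\bm{\rho})$ agrees with the value $b^{\mathrm{flip}(\bm{\tau})}$ computed on the canonical type-$A$/type-$B$ path of \cref{def:flip}.

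With $b^{\mathrm{flip}(\bm{\tau})}$ factored out, substituting $x = -\frac{1}{N}$, $y = \frac{M}{N}$, $z = \frac{1}{N}$ and using $1 + x^{-1} y^{-1} z = 1 - \frac{N}{M} = t$ together with $y^{k} = (1-t)^{-k}$ gives
\[
\wgbt(\m) = \frac{1}{(1-t)^{k}} \sum_{r \geq 0} \left(-\frac{1}{N}\right)^{r} \sum_{\substack{\bm{\tau} \in \mathrm{Mono}(\m) \\ \ell(\bm{\tau}) = r}} b^{\mathrm{flip}(\bm{\tau})} t^{\mathrm{hive}(\bm{\tau})}.
\]
Matching this against the large $N$ expansion of \cref{prop:expansion_BT} and reading off the coefficient of $\left(-\frac{1}{N}\right)^{r}$ yields $\hbt_r(\lambda(\m)) = \sum_{\bm{\tau} \in \mathrm{Mono}(\m),\, \ell(\bm{\tau}) = r} b^{\mathrm{flip}(\bm{\tau})} t^{\mathrm{hive}(\bm{\tau})}$, which depends only on $\lambda = \lambda(\m)$ by coset-type invariance, establishing the first identity.

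For the connected statement I would invoke the standard connected--disconnected exponential correspondence, exactly as in the passage from $\hb_r$ to $\Hb_{g,n}$ in \cref{prop:bmonotoneinterpretation}: the transitivity condition defining $\mathrm{CMono}(\m)$ is connectedness, the statistics $\ell$, $\mathrm{flip}$ and $\mathrm{hive}$ are additive over the connected components of a disconnected monotone factorisation, and the $\exp$ relation between the two forms of $Z^{(bt)}$ in \cref{eq:btmonotone} then identifies $\frac{1}{\prod \lambda_i} \sum_{\bm{\tau} \in \mathrm{CMono}(\m)} b^{\mathrm{flip}(\bm{\tau})} t^{\mathrm{hive}(\bm{\tau})}$ with $\Hbt_{g,n}(\lambda)$. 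The only remaining bookkeeping --- matching the length $r = |\lambda| + 2g - 2 + n$ with the genus index and the normalisation $\frac{1}{\prod\lambda_i}$ --- is identical to the $b = 1$ setting and involves no new ideas.
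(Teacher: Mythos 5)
Your proposal is correct and follows essentially the same route as the paper's proof: grouping paths over the fibres of $\mathrm{F}$, reusing the generating function identity from \cref{subsec:expansion_A}, proving weight-constancy on each fibre by the charge argument showing that any type-$A$ edge exchangeable for a type-$C$ edge has $\omega^{(b)}$-weight $1$, matching against \cref{prop:expansion_BT}, and passing to the connected count via the exponential in \cref{eq:btmonotone}. The only cosmetic difference is that you substitute into the fibre generating function directly, whereas the paper reduces to the per-fibre identity $\sum_{\bm{\rho}} (-N/M)^{\ell_C(\bm{\rho})} w(\bm{\rho}) = b^{\mathrm{flip}(\bm{\tau})} t^{\mathrm{hive}(\bm{\tau})}$; the substance is identical.
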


\begin{proof}
The recording of transpositions along a path in the Weingarten graph $\mathcal{G}^{(bt)}$ gives rise to a map
\[
\mathrm{F} : \mathrm{Path}^{(bt)}(\m) \to \mathrm{Mono}(\m),
\]
for each pair partition $\m$. As in the case of the $t$-deformed orthogonal Weingarten graph discussed in \cref{subsec:expansion_A}, this construction does not provide a bijection and we have the generating function identity
\begin{equation} \label{eq:gfidentity}
\sum_{\bm{\rho} \in F^{-1}(\bm{\tau})} x^{\ell_A(\bm{\rho})} y^{\ell_B(\bm{\rho})} z^{\ell_C(\bm{\rho})} = x^{\ell(\bm{\tau})} y^k (1 + x^{-1} y^{-1} z)^{\mathrm{hive}(\bm{\tau})}.
\end{equation}

By aligning \cref{def:btweingarten} for the $bt$-Weingarten function with its expression from \cref{prop:expansion_BT}, and using $\ell_A(\bm{\rho}) + \ell_C(\bm{\rho}) = r$ and $\ell_B(\bm{\rho}) + \ell_C(\bm{\rho}) = k$, it suffices to show that 
\[
\sum_{\bm{\rho} \in F^{-1}(\bm{\tau})} \left( -\frac{N}{M} \right)^{\ell_C(\bm{\rho})} w(\bm{\rho}) = b^{\mathrm{flip}(\bm{\tau})} t^{\mathrm{hive}(\bm{\tau})},
\]
for any monotone factorisation $\bm{\tau}$ of $\m \in \P_k$, where $t = 1 - \frac{N}{M}$. However, this would follow directly from \cref{eq:gfidentity}, as long as $w(\bm{\rho}) = b^{\mathrm{flip}(\bm{\tau})}$ for all $\bm{\rho} \in \mathrm{F}^{-1}(\bm{\tau})$. We devote the remainder of the proof to showing that this is indeed the case.

Recall that there is a unique path $\bm{\rho}_{AB} \in F^{-1}(\bm{\tau})$ comprising only edges of types $A$ and $B$. All of the $2^{\mathrm{hive}(\bm{\tau})}$ paths in $F^{-1}(\bm{\tau})$ can be obtained from $\bm{\rho}_{AB}$ by exchanging a pair of consecutive edges of types $A$ and $B$ in order with the corresponding edge of type $C$. We claim that such an exchange has no effect on the weight of the path --- that is, for all $\bm{\rho} \in F^{-1}(\bm{\tau})$,
\[
w(\bm{\rho}) = w(\bm{\rho}_{AB}) = b^{\mathrm{flip}(\bm{\tau})}.
\]
From this observation, the desired result follows under the identification $t = 1 - \frac{N}{M}$.

To prove the claimed equality, let $\bm{\rho}$ be an arbitrary path in $\mathrm{Path}^{(bt)}(\m)$ with at least one pair of consecutive edges of types $A$ and $B$ in order, given by the triple of pair partitions $\mathfrak{a} \longrightarrow \mathfrak{b} \longrightarrow \mathfrak{c}$. Since the edge $\mathfrak{b} \to \mathfrak{c}$ is of type $B$, the pair partition $\mathfrak{b} \in \P_j$ must contain the pair $\{2j-1, 2j\}$. Moreover, since $\mathfrak{b} = (i~2j-1) \cdot \mathfrak{a}$ for some $1\leq i \leq 2j-2$, it must be the case that $\mathfrak{a}$ contains the pair $\{i, 2j\}$. To evaluate the weight $\omega^{(b)}(\mathfrak{a}, \mathfrak{b})$, we note that in the graph $\Gamma(\mathfrak{a})$, vertices $i$ and $2j-1$ are both adjacent to vertex $2j$. Thus, the vertices $i$ and $2j-1$ have the same charge and $\omega^{(b)}(\mathfrak{a}, \mathfrak{b}) = 1$. Now suppose that $\bm{\rho}'$ is the path obtained from $\bm{\rho}$ by exchanging this pair of consecutive edges $\mathfrak{a} \longrightarrow \mathfrak{b} \longrightarrow \mathfrak{c}$ of types $A$ and $B$ in order with the corresponding edge $\mathfrak{a} \longrightarrow \mathfrak{c}$ of type $C$. Since all edges of type $C$ have weight $1$, it is clear that $w(\bm{\rho}) = w(\bm{\rho}')$ and the desired claim follows.

The analogous statement for $\Hbt_{g,n}(\lambda)$ follows immediately since the exponential in \cref{eq:btmonotone} passes from the possibly disconnected enumeration to the connected enumeration in the usual manner.
\end{proof}

\begin{remark}
The refined topological recursion is known to govern certain $b$-deformations of enumerative problems, with the $b$-monotone Hurwitz numbers providing one example~\cite{CDO24b,osu24}. The form of the partition function in \cref{eq:btmonotone} --- in particular, the fact that the product in the first line involves a rational function of the $b$-content ---implies that the $bt$-monotone Hurwitz numbers also satisfy the refined topological recursion. We note that the refined topological recursion is currently only defined for a restricted set of spectral curves. One expects that this definition can be broadened so that the refined topological recursion could be applied to a much larger class of $b$-deformations of enumerative problems.
\end{remark}

\begin{table}[pht!]
\caption{The $bt$-monotone Hurwitz numbers $\Hbt_{g,1}(\lambda)$ for $0 \leq g \leq \frac{3}{2}$ and $2 \leq \lambda \leq 6$.}
\label{tab:bthurwitz1}
\begin{tabularx}{\textwidth}{ccX} \toprule
$g$ & $\lambda$ & $\lambda \, \Hbt_{g,1}(\lambda)$ \\ \midrule
$0$ &$(2)$ & $t$ \\
 & $(3)$ & $t^2 + t$ \\
 & $(4)$ & $t^3 + 3t^2 + t$ \\
 & $(5)$ & $t^4 + 6t^3 + 6t^2 + t$ \\
 & $(6)$ & $t^5 + 10t^4 + 20t^3 + 10t^2 + t$ \\ \midrule
$\frac{1}{2}$ & $(2)$ & $bt$ \\
 & $(3)$ & $3bt^2 + 3bt$ \\
 & $(4)$ & $6bt^3 + 17bt^2 + 6bt$ \\
 & $(5)$ & $10bt^4 + 55bt^3 + 55bt^2 + 10bt$ \\
 & $(6)$ & $15bt^5 + 135bt^4 + 262bt^3 + 135bt^2 + 15bt$ \\ \midrule
$1$ & $(2)$ & $(b^2+b+1)t$ \\
 & $(3)$ & $(7b^2+5b+5)t^2 + (7b^2+5b+5)t$ \\
 & $(4)$ & $(25b^2+15b+15)t^3 + (68b^2+40b+40)t^2 + (25b^2+15b+15)t$ \\
 & $(5)$ & $(65b^2+35b+35) t^4 + (335b^2+175b+175) t^3 + (335b^2+175b+175) t^2 + (65b^2+35b+35) t$ \\ 
 & $(6)$ & $(140b^2+70b+70)t^5 + (1162b^2+560b+560)t^4 + (2202b^2+1050b+1050)t^3$ \\
 & & $\quad+ (1162b^2+560b+560)t^2 + (140b^2+70b+70)t$ \\ \midrule
$\frac{3}{2}$ & $(2)$ & $( b^3+2b^2+2b) t$ \\
 & $(3)$ & $(15b^3+24b^2+24b) t^2 + (15b^3+24b^2+24b) t$ \\
 & $(4)$ & $(90b^3+127b^2+127b) t^3 + (238b^3+332b^2+332b) t^2 + (90b^3+127b^2+127b) t$ \\
 & $(5)$ & $(350b^3+455b^2+455b) t^4 + (1720b^3+2195b^2+2195b) t^3 + (1720b^3+2195b^2+2195b) t^2$ \\ 
 & & $\quad+ (350b^3+455b^2+455b) t$ \\
 & $(6)$ & $(1050b^3+1288b^2+1288b)t^5 + (8196b^3+9823b^2+9823b) t^4 + (15246b^3+18148b^2+18148b) t^3$ \\
 & & $\quad+ (8196b^3+9823b^2+9823b) t^2 + (1050b^3+1288b^2+1288b)t$ \\ \bottomrule
\end{tabularx}
\end{table}

\begin{table}[pht!]
\caption{The $bt$-monotone Hurwitz numbers $\Hbt_{g,2}(\lambda_1, \lambda_2)$ for $0 \leq g \leq 1$ and $2 \leq |\lambda| \leq 4$.}
\label{tab:bthurwitz2}
\begin{tabularx}{\textwidth}{ccX} \toprule
$g$ & $\lambda$ & $\prod \lambda_i \, \Hbt_{g,n}(\lambda_1, \ldots, \lambda_n)$ \\ \midrule
$0$ & $(1,1)$ & $(b+1) t$ \\
 & $(2,1)$ & $(2b+2) t^2 + (2b+2) t$ \\
 & $(3,1)$ & $(3b+3) t^3 + (9b+9) t^2 + (3b+3) t$ \\
 & $(2,2)$ & $(4b+4) t^3 + (10b+10) t^2 + (4b+4) t$ \\ \midrule
$\frac{1}{2}$ & $(1,1)$ & $(b^2+b) t$ \\
 & $(2,1)$ & $(6b^2+6b) t^2 + (6b^2+6b) t$ \\
 & $(3,1)$ & $(18b^2+18b) t^3 + (51b^2+51b) t^2 + (18b^2+18b) t$ \\
 & $(2,2)$ & $(22b^2+22b) t^3 + (56b^2+56b) t^2 + (22b^2+22b) t$ \\ \midrule
$1$ & $(1,1)$ & $(b^3+2b^2+2b+1) t$ \\
 & $(2,1)$ & $(14b^3+24b^2+20b+10) t^2 + (14b^3+24b^2+20b+10)$ \\
 & $(3,1)$ & $(75b^3+120b^2+90b+45) t^3 + (204b^3+324b^2+240b+120) t^2 + (75b^3+120b^2+90b+45) t$ \\
 & $(2,2)$ & $(86b^3+136b^2+100b+50) t^3 + (220b^3+348b^2+256b+128) t^2$ \\
 & & $\quad + (86b^3+136b^2+100b+50) t$ \\ \bottomrule
\end{tabularx}
\end{table}

\begin{table}[ht!]
\caption{The $bt$-monotone Hurwitz numbers $\Hbt_{g,3}(\lambda_1, \lambda_2, \lambda_3)$ for $0 \leq g \leq 1$ and $3 \leq |\lambda| \leq 4$.}
\label{tab:bthurwitz3}
\begin{tabularx}{\textwidth}{ccX} \toprule
$g$ & $\lambda$ & $\prod \lambda_i \, \Hbt_{g,n}(\lambda_1, \ldots, \lambda_n)$ \\ \midrule
$0$ & $(1,1,1)$ & $(4b^2+8b+4) t^2 + (4b^2+8b+4) t$ \\
 & $(2,1,1)$ & $(10b^2+20b+10) t^3 + (28b^2+56b+28) t^2 + (10b^2+20b+10) t$ \\ \midrule
$\frac{1}{2}$ & $(1,1,1)$ & $(12b^3+24b^2+12b) t^2 + (12b^3+24b^2+12b) t$ \\
 & $(2,1,1)$ & $(58b^3+116b^2+58b) t^3 + (158b^3+316b^2+158b) t^2 + (58b^3+116b^2+58b) t$ \\ \midrule
$1$ & $(1,1,1)$ & $(28b^4+76b^3+88b^2+60b+20) t^2 + (28b^4+76b^3+88b^2+60b+20) t$ \\ 
 & $(2,1,1)$ & $(236b^4+612b^3+656b^2+420b+140) t^3 + (628b^4+1624b^3+1732b^2+1104b+368) t^2$ \\
 & & $\quad+ (236b^4+612b^3+656b^2+420b+140) t$ \\ \bottomrule
\end{tabularx}
\end{table}

\subsection{Real-rootedness and interlacing}

The $bt$-monotone Hurwitz numbers can be computed effectively using the recursion of \cref{cor:cjfrecursion}, and some of these appear in \cref{tab:bthurwitz1,tab:bthurwitz2,tab:bthurwitz3}. Such data suggests interesting structure concerning the coefficients and roots of the $bt$-monotone Hurwitz numbers, considered as polynomials in $t$.

For instance, it appears that $(b+1)^{n-1}$ is a factor of each coefficient of $\Hbt_{g,n}(\mu_1, \ldots, \mu_n)$. This is a direct consequence of the recursion of \cref{cor:cjfrecursion} and can be shown rigorously by induction. On the other hand, a combinatorial explanation for this factor using the combinatorial interpretation for $bt$-monotone Hurwitz numbers of \cref{prop:btmonotonecombinatorics} does not appear to be immediate.

In the previous work of the authors and Moskovsky, it was shown that the $t$-monotone Hurwitz numbers, which can be obtained by setting $b = 0$ in the current setting, are $\Lambda$-polynomials~\cite{cou-do-mos23}. A $\Lambda$-polynomial is a polynomial whose sequence of coefficients is non-negative, palindromic and unimodal~\cite{bre90}. The following results states that a similar statement holds for the $bt$-monotone Hurwitz numbers. We omit the proof, which is almost identical to that of~\cite[Proposition~3.9]{cou-do-mos23} and uses certain additive and multiplicative closure properties of the set of $\Lambda$-polynomials. The only caveat is that we require $b$ to be positive.

\begin{proposition}
For $g \geq 0$, $n \geq 1$, $|\mu| \geq 2$ and any positive value of $b$, the $bt$-monotone Hurwitz number $\Hbt_{g,n}(\mu_1, \ldots, \mu_n)$ is a $\Lambda$-polynomial in $t$ of degree $|\mu| - 1$.
\end{proposition}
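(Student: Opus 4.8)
The plan is to argue by induction via the cut-join-flip recursion of \cref{cor:cjfrecursion}, inducting on the number of transpositions $r = |\mu| + 2g - 2 + n$, which strictly decreases in every term on the right-hand side. Fixing a positive value of $b$, I read ``$\Lambda$-polynomial of degree $|\mu|-1$'' as a non-negative, unimodal polynomial in $t$ that is palindromic about the midpoint $\tfrac{|\mu|}{2}$ of its support $[1,|\mu|-1]$; the inductive hypothesis asserts this for every $\Hbt_{g',n'}(\mu')$ with $|\mu'| \geq 2$ and smaller $r'$. The two closure facts I would use are multiplicative closure (a product of $\Lambda$-polynomials centred at $c_1$ and $c_2$ is a $\Lambda$-polynomial centred at $c_1+c_2$) and additive closure (a sum of $\Lambda$-polynomials sharing a common centre is a $\Lambda$-polynomial centred there). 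The single exceptional input is the weight-one value $\Hbt_{0,1}(1) = 1$, which is centred at $0$ rather than at $\tfrac12$; controlling its influence is the crux of the proof.

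In the inductive step I would apply \cref{cor:cjfrecursion} to $\mu_1\Hbt_{g,n}(\mu_1,\mu_S)$, choosing $\mu_1 \geq 2$ whenever a part of that size exists. By the inductive hypothesis and multiplicative closure, the join term, the connected cut term, and the flip term $b\mu_1(\mu_1-1)\Hbt_{g-\frac12,n}(\mu_1,\mu_S)$ are each $\Lambda$-polynomials centred at $\tfrac{|\mu|}{2}$; it is precisely here that positivity of $b$ (and hence of $b$ and $b+1$) is needed to keep these contributions non-negative, which explains the stated caveat. Similarly, every disconnected product $\Hbt_{g_1,|I_1|+1}(\alpha,\mu_{I_1})\,\Hbt_{g_2,|I_2|+1}(\beta,\mu_{I_2})$ in which both factors have weight at least $2$ is centred at $\tfrac{|\mu|}{2}$.

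The decisive step is to reconcile the two pieces that are individually neither non-negative nor correctly centred: the hive term $(t-1)(\mu_1-1)\Hbt_{g,n}(\mu_1-1,\mu_S)$ and those disconnected products in which one factor is the exceptional value $\Hbt_{0,1}(1)=1$. The latter occur exactly when $\alpha=1,\,I_1=\emptyset,\,g_1=0$ or under the symmetric choice, and for $|\mu|\geq 3$ they sum to $2(\mu_1-1)\Hbt_{g,n}(\mu_1-1,\mu_S)$. Adding these to the hive term absorbs the offending $t-1$ into
\[
(1+t)\,(\mu_1-1)\,\Hbt_{g,n}(\mu_1-1,\mu_S).
\]
Since $1+t$ is a $\Lambda$-polynomial centred at $\tfrac12$ and, by induction, $\Hbt_{g,n}(\mu_1-1,\mu_S)$ has weight $|\mu|-1$ and is centred at $\tfrac{|\mu|-1}{2}$, multiplicative closure makes this combined term a $\Lambda$-polynomial centred at $\tfrac{|\mu|}{2}$. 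The entire right-hand side is then a non-negative sum of $\Lambda$-polynomials all centred at $\tfrac{|\mu|}{2}$, so additive closure shows $\mu_1\Hbt_{g,n}(\mu)$, and thus $\Hbt_{g,n}(\mu)$, is a $\Lambda$-polynomial centred at $\tfrac{|\mu|}{2}$.

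Finally I would dispose of the low-weight cases $\mu=(2)$ and $\mu=(1,1)$ directly as base cases (the regrouping above needs $|\mu|\geq 3$, and when $\mu_1=1$ only the join term survives, so the argument is even simpler), and I would confirm the degree is exactly $|\mu|-1$ by checking that the coefficient of $t^{|\mu|-1}$ is positive---contributed by the $(1+t)\Hbt_{g,n}(\mu_1-1,\mu_S)$ term when $\mu_1\geq 2$, and by the join term (with $n\geq 2$) when $\mu_1=1$---which holds since every contribution is non-negative. I expect the main obstacle to be exactly this regrouping: seeing that the negative part of the hive term is cancelled by the off-centre disconnected products built from $\Hbt_{0,1}(1)=1$, thereby converting $t-1$ into the manifestly $\Lambda$ factor $1+t$.
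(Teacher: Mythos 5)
Your proposal is correct and takes essentially the same approach as the paper's own (omitted) proof, which is deferred to \cite[Proposition~3.9]{cou-do-mos23}: induction through the cut-join-flip recursion of \cref{cor:cjfrecursion}, using additive closure of $\Lambda$-polynomials with a common centre and multiplicative closure with centres adding, with positivity of $b$ guaranteeing that the $b$- and $(b+1)$-weighted terms stay non-negative. In particular, your regrouping of the hive term with the exceptional disconnected products involving $\Hbt_{0,1}(1)=1$ to produce the factor $(1+t)(\mu_1-1)\Hbt_{g,n}(\mu_1-1,\mu_S)$ is exactly the key step of that argument.
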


In the previous work of the authors and Moskovsky, various conjectures were stated concerning the roots of the $t$-monotone Hurwitz numbers~\cite{cou-do-mos23}. Extensive numerical evidence suggests that these conjectures remain true for the $bt$-monotone Hurwitz numbers, as long as $b$ avoids the problematic values $0$ and $-1$. Note that these only cause an issue since $\Hbt_{g,n}(\mu_1, \ldots, \mu_n)$ can have global factors of $b$ (for $g$ non-integral) and $b+1$ (for $n \geq 2$), so the polynomial can vanish if $b$ is equal to $0$ or $-1$.

\begin{conjecture}[Real-rootedness] \label{con:btrealrooted}
For $g \geq 0$, $n \geq 1$, $\mu_1, \ldots, \mu_n \geq 1$ and any value of $b \in \mathbb{R} \setminus \{0, -1\}$, the $bt$-monotone Hurwitz number $\Hbt_{g,n}(\mu_1, \ldots, \mu_n)$ is a real-rooted polynomial in $t$.
\end{conjecture}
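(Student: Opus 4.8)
The plan is to prove the real-rootedness of \cref{con:btrealrooted} together with the interlacing property of \cref{con:btinterlacing} in a single simultaneous induction, taking the cut-join-flip recursion of \cref{cor:cjfrecursion} as the driving mechanism. The key structural observation that makes an induction feasible is that every term on the right-hand side of that recursion has \emph{length} exactly one less than the left-hand side: writing $r = |\mu| + 2g - 2 + n$, each of the join term $\Hbt_{g,n-1}(\mu_1+\mu_i, \mu_{S \setminus \{i\}})$, the connected cut term $\Hbt_{g-1,n+1}(\alpha,\beta,\mu_S)$, the disconnected products $\Hbt_{g_1,|I_1|+1}(\alpha,\mu_{I_1}) \, \Hbt_{g_2,|I_2|+1}(\beta,\mu_{I_2})$, the flip term $\Hbt_{g-\frac{1}{2},n}(\mu_1,\mu_S)$, and the hive term $\Hbt_{g,n}(\mu_1-1,\mu_S)$ carries length $r-1$. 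Thus $r$ is the correct complexity parameter: I would induct on $r$, treating the small cases (including those recorded in \cref{tab:bthurwitz1,tab:bthurwitz2,tab:bthurwitz3}) as base cases, and at each stage assume that all length-$(r-1)$ numbers are real-rooted in $t$ and satisfy \cref{con:btinterlacing}.

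The inductive step would be assembled from the standard calculus of interlacing polynomials. Products of real-rooted polynomials are real-rooted, which immediately handles the disconnected cut contributions. A non-negative linear combination of real-rooted polynomials that admit a common interlacer is again real-rooted, and this is the mechanism I would use for the join sum $\sum_{i=2}^n (\mu_1+\mu_i) \Hbt_{g,n-1}(\mu_1+\mu_i, \mu_{S\setminus\{i\}})$ and the connected cut sum $\sum_{\alpha+\beta=\mu_1} \alpha\beta \, \Hbt_{g-1,n+1}(\alpha,\beta,\mu_S)$. Organising the whole family $\{\Hbt_{g,n}(\mu_1,\mu_S)\}_{\mu_1 \geq 1}$, for fixed $(g,n)$ and $\mu_S$, into an interlacing sequence indexed by $\mu_1$ is the natural framework: the $j=1$ case of \cref{con:btinterlacing} asserts exactly that consecutive members of this sequence interlace, and this is what one propagates. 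Because the existence of the requisite common interlacers is itself an interlacing statement, real-rootedness and interlacing genuinely must be bootstrapped together rather than proved in sequence.

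The main obstacle is the hive term $(t-1)(\mu_1-1) \, \Hbt_{g,n}(\mu_1-1,\mu_S)$, in which essentially all of the difficulty is concentrated. For $b > 0$ the factor $\Hbt_{g,n}(\mu_1-1,\mu_S)$ is a $\Lambda$-polynomial with non-negative, palindromic coefficients, but the prefactor $(t-1)$ introduces a root at $t=1$ and destroys the positivity on which the simplest interlacing lemmas rely; the full right-hand side is therefore a sum of positivity-respecting terms together with one sign-indefinite term. To push the argument through, I would need a sharpened interlacing relation certifying that $\Hbt_{g,n}(\mu_1-1,\mu_S)$ interlaces the suitably normalised sum of the remaining terms in a manner compatible with multiplication by $(t-1)$, so that the assembled right-hand side is real-rooted and, moreover, realises the next instance of \cref{con:btinterlacing}. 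A compounding difficulty is that the interlacing relations furnished by \cref{con:btinterlacing} increment a single part by one, whereas the sums above range over merged parts $\mu_1+\mu_i$ and over compositions $\alpha+\beta=\mu_1$; producing common interlacers for these families may require strengthening the inductive hypothesis beyond the statement of \cref{con:btinterlacing} itself.

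Finally, I would stress two points that mark the genuine depth of the conjecture. First, it is asserted for all $b \in \mathbb{R} \setminus \{0,-1\}$, yet the $\Lambda$-polynomial property, and hence the coefficient positivity that feeds every interlacing lemma, is presently available only for $b > 0$; for negative $b$ one loses sign control entirely, so some genuinely new input beyond the combinatorial model of \cref{prop:btmonotonecombinatorics} would be required. Second, the specialisation $b = 0$ already recovers the still-open real-rootedness conjecture for $t$-monotone Hurwitz numbers of \cite{cou-do-mos23}, so any successful execution of this plan would in particular settle that case. This confirms that the crux is not any single algebraic identity but rather the faithful propagation of interlacing through the recursion, which is exactly the step I expect to be hardest.
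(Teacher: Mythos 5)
There is no proof to compare against: \cref{con:btrealrooted} is stated in the paper as an open conjecture, supported only by computational verification (the checks listed after \cref{con:btinterlacing}, amounting to several thousand cases), not by any argument. So the relevant question is whether your proposal actually closes the statement, and by your own account it does not. What you have written is a proof \emph{programme} -- induction on the length $r = |\mu|+2g-2+n$ through the cut-join-flip recursion of \cref{cor:cjfrecursion}, bootstrapping real-rootedness and interlacing together -- with the decisive steps left as unproved desiderata. Concretely: (i) the hive term $(t-1)(\mu_1-1)\,\Hbt_{g,n}(\mu_1-1,\mu_S)$ is sign-indefinite, and every standard common-interlacer lemma (the Hermite--Biehler/Obreschkoff calculus you invoke) requires nonnegative linear combinations; the ``sharpened interlacing relation'' that would absorb this term is never formulated, and it is precisely the missing ingredient, not a routine refinement. (ii) The join and connected-cut sums require common interlacers \emph{across} the families $\{\Hbt_{g,n-1}(\mu_1+\mu_i,\mu_{S\setminus\{i\}})\}_i$ and $\{\Hbt_{g-1,n+1}(\alpha,\beta,\mu_S)\}_{\alpha+\beta=\mu_1}$; the inductive hypothesis furnished by \cref{con:btinterlacing} only relates polynomials whose arguments differ by incrementing a single part by one, and chaining such relations does not automatically produce a common interlacer for a whole family (pairwise interlacing of summands is not sufficient for real-rootedness of the sum without a compatibility of orientations). (iii) For $b<0$ the $\Lambda$-polynomial property fails and there is no coefficient positivity at all, so the entire framework has no footing on half of the parameter range the conjecture covers. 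Your closing observation that success at $b\to 0$ would resolve the still-open conjecture of \cite{cou-do-mos23} is accurate, and is itself evidence that the plan as stated cannot be completed by the tools it names.

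In short, the proposal identifies the natural attack route and correctly locates where it breaks, but it proves nothing beyond what the recursion already gives; the statement remains exactly as open after your argument as before it. This is not a criticism of the strategy -- it is likely the strategy anyone would try first -- but a reviewer must flag that each of steps (i)--(iii) is a genuine gap, not a deferred technicality.
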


The evidence suggests that the roots of the $bt$-monotone Hurwitz numbers are not only real, but also possess interesting structure relative to each other. We say that a polynomial $P$ {\em interlaces} a polynomial $Q$ if
\begin{itemize}
\item the degree of $P$ is $n$ and the degree of $Q$ is $n + 1$ for some positive integer $n$;
\item $P$ has $n$ real roots $a_1 \leq a_2 \leq \cdots \leq a_n$ and $Q$ has $n+1$ real roots $b_1 \leq b_2 \leq \cdots \leq b_{n+1}$, allowing for multiplicity; and
\item $b_1 \leq a_1 \leq b_2 \leq a_2 \leq \cdots \leq b_n \leq a_n \leq b_{n+1}$.
\end{itemize}
By convention, we also say that a polynomial $P$ interlaces a polynomial $Q$ if $P$ is constant and $Q$ is affine.

\begin{conjecture}[Interlacing] \label{con:btinterlacing}
For $g \geq 0$, $n \geq 1$, $\mu_1, \ldots, \mu_n \geq 1$ and any value of $b \in \mathbb{R} \setminus \{0, -1\}$, the $bt$-monotone Hurwitz number $\Hbt_{g,n}(\mu_1, \ldots, \mu_n)$ interlaces each of the $n$ polynomials
\[
\Hbt_{g,n}(\mu_1+1, \mu_2, \ldots, \mu_n), \quad \Hbt_{g,n}(\mu_1, \mu_2+1, \ldots, \mu_n), \quad \ldots, \quad \Hbt_{g,n}(\mu_1, \mu_2, \ldots, \mu_n+1).
\]
\end{conjecture}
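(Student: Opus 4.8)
The final statement is a conjecture, so what follows is the natural line of attack rather than a complete argument; I note at the outset that even the specialisation $b=0$ (the interlacing conjecture for the $t$-monotone Hurwitz numbers of \cite{cou-do-mos23}) is open, so a full proof should not be expected from the tools below.

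\textbf{Overall scheme.} The plan is to prove real-rootedness (\cref{con:btrealrooted}) and interlacing (\cref{con:btinterlacing}) \emph{simultaneously} by induction on $r = |\mu| + 2g-2+n$, using the cut-join-flip recursion of \cref{cor:cjfrecursion} as the inductive engine. As is typical for such results, one almost certainly must strengthen the hypothesis from ``real-rooted'' to an entire web of interlacing relations among the family $\{\Hbt_{g,n}(\mu) : r \text{ fixed}\}$ that is closed under the recursion. A guiding structural input is the palindromic ($\Lambda$-polynomial) property established just above: after dividing by the largest power of $t$ that divides it, each $\Hbt_{g,n}(\mu)$ is self-reciprocal, so its nonzero roots occur in pairs $\{\rho, 1/\rho\}$, and numerically these lie in $(-\infty,0)$. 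The substitution $t \mapsto t + t^{-1}$ linearises this reciprocal symmetry and should reduce interlacing of the $\Hbt$ to interlacing of smaller polynomials in the new variable.

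\textbf{Toolkit and term-by-term control.} The workhorses are the two common-interlacer closure lemmas: if polynomials $f_1,\ldots,f_m$ share a common interlacer then every nonnegative combination $\sum c_i f_i$ is real-rooted and retains that interlacer, and dually, if $f$ interlaces each of $g_1,\ldots,g_m$ then $f$ interlaces $\sum c_i g_i$. Since we restrict to $b>0$, all structure constants appearing in \cref{cor:cjfrecursion} are positive, so these lemmas are available. One would then treat each term separately:
\begin{itemize}
\item the join term $(b+1)\sum_{i}(\mu_1+\mu_i)\Hbt_{g,n-1}(\mu_1+\mu_i,\mu_{S\setminus\{i\}})$, of lower $n$ and degree $|\mu|-1$, covered by induction;
\item the cut term $\sum_{\alpha+\beta=\mu_1}\alpha\beta\,\Hbt_{g-1,n+1}(\alpha,\beta,\mu_S)$, of higher $n$ but lower $g$, again degree $|\mu|-1$;
\item the flip term $b\mu_1(\mu_1-1)\Hbt_{g-\frac12,n}(\mu_1,\mu_S)$, of the same shape but lower $g$;
\item the shift term $(t-1)(\mu_1-1)\Hbt_{g,n}(\mu_1-1,\mu_S)$, where multiplication by $(t-1)$ raises the degree from $|\mu|-2$ to $|\mu|-1$ and inserts a root at $t=1$.
\end{itemize}
The interlacing assertion itself, namely that $\Hbt_{g,n}(\mu_1,\mu_S)$ interlaces $\Hbt_{g,n}(\mu_1+1,\mu_S)$, would be extracted by writing down the recursion at $\mu_1$ and at $\mu_1+1$ and comparing them termwise, feeding each matched pair into the closure lemmas.

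\textbf{The main obstacle.} Two features resist the elementary approach. First, the disconnected \emph{product} terms $\sum_{g_1+g_2=g,\,I_1\sqcup I_2=S}\Hbt_{g_1,|I_1|+1}(\alpha,\mu_{I_1})\,\Hbt_{g_2,|I_2|+1}(\beta,\mu_{I_2})$ are genuinely problematic: while a product of real-rooted polynomials is real-rooted, interlacing is \emph{not} preserved under multiplication, so no common-interlacer lemma applies and one needs a finer device (for instance controlling the product after the $t+t^{-1}$ substitution, or via a multiplier-sequence / Borcea--Br\"and\'en stability argument). These terms moreover have degree $|\mu|-2$, one below the rest, so they act as lower-order corrections that nonetheless perturb the precise root locations that interlacing measures. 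Second, the factor $(t-1)$ in the shift term contributes a root at $t=1$, which lies outside the interval $(-\infty,0]$ occupied by all other roots, and one must show that this stray root is absorbed harmlessly into the interlacing pattern. Consequently the crux is to identify precisely the strengthened family of interlacing relations that is stable under all four operations at once; this is exactly the point at which no scheme has yet closed, and it is the reason the statement remains conjectural. As an alternative route, one notes from \cref{eq:btmonotone} that the $t$-dependence enters solely through $\prod_{\Box\in\lambda}\bigl(1-(1-t)\h\,c_b(\Box)\bigr)$, a product of factors linear in $t$; a generating-function proof might try to control real-rootedness directly through this product, although the passage to the connected numbers $\Hbt$ via the logarithm and the sum over $\lambda$ obstruct a straightforward argument.
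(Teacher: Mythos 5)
The statement you were asked about is \cref{con:btinterlacing}, which the paper itself does not prove: it is stated as a conjecture, supported only by the computational verification described immediately after it (over 4000 independent checks across ranges of $g$, $n$, $|\mu|$ and nine values of $b$), together with the remark that even the $b=0$ specialisation from the earlier work with Moskovsky remains open. So there is no proof in the paper to compare your sketch against, and your decision to present a strategy rather than claim a proof is the correct reading of the situation.

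That said, your sketch is well aligned with what the paper actually establishes. The ingredients you invoke are exactly the ones available in the text: the cut-join-flip recursion (\cref{cor:cjfrecursion}) as the only effective recursive structure, and the $\Lambda$-polynomial (palindromicity) proposition as the structural input on roots --- noting that the paper, like you, can only obtain that proposition for $b > 0$, which already falls short of the conjectured range $b \in \mathbb{R} \setminus \{0,-1\}$. Your degree bookkeeping for the four types of terms in the recursion is accurate (the join, cut and flip terms have degree $|\mu|-1$, the disconnected products degree $|\mu|-2$, and the shift term regains degree $|\mu|-1$ through the factor $t-1$), and the two obstacles you single out --- that interlacing is not preserved under the products appearing in the disconnected cut term, and that the root at $t=1$ introduced by the shift term sits outside the locus of the remaining roots --- are genuine and are plausibly the reason no proof has closed. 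The one caution is that your proposed reduction via the substitution $t \mapsto t + t^{-1}$ is asserted rather than carried out, and it is precisely at such a step that a strengthened induction hypothesis stable under all four operations would need to be exhibited; absent that, your text is an accurate map of the difficulty rather than progress on it, which is consistent with the paper's own assessment.
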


The real-rootedness and interlacing properties of \cref{con:btrealrooted,con:btinterlacing} have been checked computationally for $\Hbt_{g,n}(\mu_1, \ldots, \mu_n)$ in many cases, including the following.
\begin{itemize}
\item $g \in \{0, \frac{1}{2}, 1, \frac{3}{2}\}$, ~ $1 \leq n \leq 5$, ~ $1 \leq |\mu| \leq 16$, ~ $b \in \{-5, -4, -3, -2, 1, 2, 3, 4, 5\}$
\item $g \in \{2, \frac{5}{2}, 3, \frac{7}{2}\}$, ~ $1 \leq n \leq 5$, ~ $1 \leq |\mu| \leq 14$, ~ $b \in \{-5, -4, -3, -2, 1, 2, 3, 4, 5\}$
\item $g \in \{4, \frac{9}{2}, 5, \frac{11}{2}\}$, ~ $1 \leq n \leq 5$, ~ $1 \leq |\mu| \leq 12$, ~ $b \in \{-5, -4, -3, -2, 1, 2, 3, 4, 5\}$
\end{itemize}
This constitutes over 4000 independent checks, with each such check involving the 9 values of $b$ listed above.

The fact that the real-rootedness and interlacing conjectures for $t$-monotone Hurwitz numbers observed previously extend to the $bt$-monotone Hurwitz numbers appears rather remarkable. As claimed in our previous work with Moskovsky, these conjectural properties might not be isolated and may extend to broad classes of enumerative problems~\cite{cou-do-mos23}. In light of the data presented here, such classes of enumerative problems could include $b$-deformations and it may also be the case that the extra structure provided by $b$-deformations may shed light on the original conjectures.

\section{Introducing the \texorpdfstring{$b$}{b}-deformed Jucys--Murphy operators} \label{sec:jucys-murphy}

In this section, we introduce a $b$-deformation of the Jucys--Murphy elements in the symmetric group algebra, motivated by the $b$-deformation of the Weingarten graph appearing in the previous section.

\subsection{Definition and conjectural properties}

One can think of the Jucys--Murphy elements, introduced in \cref{eq:jucysmurphy}, as encapsulating the adjacency structure of the unitary Weingarten graph, which governs the Weingarten calculus for unitary groups~\cite{col-mat17}. A similar statement holds for the odd Jucys--Murphy elements $J_1, J_3, J_5, \ldots$ and the orthogonal Weingarten graph. By analogy, we introduce below the $b$-deformed Jucys--Murphy operators, which encapsulate the adjacency structure of the $b$-Weingarten graph $\mathcal{G}^{(b)}$ of \cref{subsec:weingarten_B}.

\begin{definition} \label{def:bJM}
For $k$ a positive integer, let $\V_k = \mathbb{C}(b)[\P_k]$ be the vector space with basis the set of pair partitions of $\{1, 2, \ldots, 2k\}$. Define the {\em $b$-deformed Jucys--Murphy operators} $\J_1, \J_2, \ldots, \J_k: \V_k \to \V_k$ by
\[
\J_i(\m) = \sum_{a=1}^{2i-2} \omega^{(b)}((a~2i-1) \cdot \m, \m) \, (a~2i-1) \cdot \m,
\]
where $\m \in \P_k$ and $\omega^{(b)}$ is the weight function of \cref{def:weightfunction}. We interpret the formula for $i = 1$ as $\J_1 = 0$ and refer to these operators collectively as $\J$-operators.
\end{definition}

The operator $\J_k$ acting on $\V_k$ encapsulates the adjacency structure of the type $A$ edges in the $b$-Weingarten graph at level $k$ in the following way. For any $\n, \m \in \P_k$, the entry of the matrix representing $\J_k$ in the row labelled $\n$ and the column labelled $\m$ is non-zero if and only if there is a type $A$ edge $\n \to \m$ in the $b$-Weingarten graph $\mathcal{G}^{(b)}$. Moreover, the entry of the matrix is equal to the weight $\omega^{(b)}(\n, \m)$ of the edge.

Note that due to the natural inclusions $\P_j \subseteq \P_k$ and $S_{2j} \subseteq S_{2k}$ for $j < k$, we need not distinguish between the operator $\J_i: \V_j \to \V_j$ and the restriction of the operator $\J_i: \V_k \to \V_k$ to $\V_j$.

It is natural to ask which properties of the usual Jucys--Murphy elements lift to their $b$-deformed counterparts introduced in \cref{def:bJM}. In the following, we enumerate some fundamental results concerning Jucys--Murphy elements~\cite{oko-ver96}.

\begin{proposition} ~ \label{prop:jm}
\begin{enumerate} [label=({\alph*})]
\item The Jucys--Murphy elements commute --- that is, $J_m J_n = J_n J_m$ for positive integers $m$ and $n$. They generate a maximal commutative subalgebra $X(k) = \langle J_1, J_2, \ldots, J_k \rangle$ of the group algebra $\mathbb{C}[S_k]$, known as the {\em Gelfand--Tsetlin algebra}.

\item There exists a basis $\{w_\mathsf{T}: \mathsf{T} \in \Tab(k)\}$ of $X(k)$, known as the {\em Gelfand--Tsetlin basis}, where $\Tab(k)$ denotes the set of standard Young tableaux with $k$ boxes. The Jucys--Murphy elements $J_1, J_2, \ldots, J_k$ act diagonally on $X(k)$, according to the formula
\[
J_i \cdot w_\mathsf{T} = c_0(\mathsf{T}_i) \, w_\mathsf{T}, \qquad \text{for } i = 1, 2, \ldots, k,
\]
where $\mathsf{T}_i$ is the box labelled $i$ in the tableau $\mathsf{T}$ and $c_0$ denotes the content.

\item For any tableau $\mathsf{S} \in \Tab(j)$ with $j \leq k$, the vector $w_\mathsf{S} \in X(j) \subseteq X(k)$ satisfies
\[
w_\mathsf{S} = \sum_{\substack{\mathsf{T} \in \Tab(k) \\ \mathsf{S} \subseteq \mathsf{T}}} w_\mathsf{T}.
\]

\item For any tableau $\mathsf{T} \in \Tab(k)$, the vector $w_\mathsf{T} \in X(k)$ satisfies the recursive equation
\[
w_\mathsf{T} = \Bigg(\prod_{\substack{\mathsf{S} \in \Tab(k) \\ \mathsf{S} \neq \mathsf{T}, \overline{\mathsf{S}} = \overline{\mathsf{T}}}} \frac{J_k - c_0(\mathsf{S}_k)}{c_0(\mathsf{T}_k) - c_0(\mathsf{S}_k)} \Bigg) \cdot w_{\overline{\mathsf{T}}},
\]
where $\overline{\mathsf{T}}$ denotes the tableau obtained from $\mathsf{T}$ by removing the box labelled $k$.
\end{enumerate}
\end{proposition}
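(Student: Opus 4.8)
The plan is to follow the Okounkov--Vershik approach to the representation theory of the symmetric groups~\cite{oko-ver96}, in which all four statements emerge from analysing the \emph{Gelfand--Tsetlin algebra} $\mathrm{GZ}(k) \subseteq \mathbb{C}[S_k]$ generated by the centres $Z\mathbb{C}[S_1], Z\mathbb{C}[S_2], \ldots, Z\mathbb{C}[S_k]$ along the chain $S_1 \subseteq S_2 \subseteq \cdots \subseteq S_k$. For part (a), I would first observe that the partial sum $C_i = J_1 + J_2 + \cdots + J_i$ is precisely the sum of all transpositions in $S_i$, hence lies in $Z\mathbb{C}[S_i]$. For $j \leq i$ the centre $Z\mathbb{C}[S_i]$ commutes with all of $\mathbb{C}[S_i] \supseteq \mathbb{C}[S_j] \supseteq Z\mathbb{C}[S_j]$, so the centres pairwise commute and $\mathrm{GZ}(k)$ is commutative. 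Since $J_i = C_i - C_{i-1} \in \mathrm{GZ}(k)$, the Jucys--Murphy elements commute and generate a commutative subalgebra $X(k) \subseteq \mathrm{GZ}(k)$. Maximality follows once one knows $X(k) = \mathrm{GZ}(k)$ and that this algebra is self-centralising; at the level of dimensions, a maximal commutative semisimple subalgebra of $\mathbb{C}[S_k] \cong \bigoplus_{\lambda \vdash k} \mathrm{Mat}_{\dim \lambda}(\mathbb{C})$ has dimension $\sum_{\lambda} \dim \lambda$, which equals the number of standard Young tableaux with $k$ boxes, and this in turn is the number of multiplicity-free chains of irreducibles, hence $\dim \mathrm{GZ}(k)$.

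For part (b), since $\mathbb{C}[S_k]$ is semisimple, so is the commutative algebra $\mathrm{GZ}(k)$, which therefore splits as a direct sum of one-dimensional ideals whose primitive idempotents index the joint eigenlines of $\mathrm{GZ}(k)$. By the \emph{simple branching} of the tower $S_1 \subseteq \cdots \subseteq S_k$ (each restriction being multiplicity-free), these eigenlines are in bijection with chains of irreducibles, equivalently with standard Young tableaux $\mathsf{T} \in \Tab(k)$; I would let $w_{\mathsf{T}}$ denote the corresponding primitive idempotent, so that $\{w_{\mathsf{T}}\}$ is the Gelfand--Tsetlin basis. Each $w_{\mathsf{T}}$ is then a common eigenvector of every $J_i$, say $J_i \, w_{\mathsf{T}} = a_i(\mathsf{T}) \, w_{\mathsf{T}}$, and the crux is to identify $a_i(\mathsf{T}) = c_0(\mathsf{T}_i)$. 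This identification is the heart of the Okounkov--Vershik argument and the step I expect to be the main obstacle: one studies the joint spectrum $\{(a_1(\mathsf{T}), \ldots, a_k(\mathsf{T}))\}$ using the relations between $J_i$, $J_{i+1}$ and the adjacent transposition $s_i = (i~i+1)$ --- namely that $s_i$ commutes with $J_j$ for $j \neq i, i+1$ and that $s_i J_i s_i = J_{i+1} - s_i$ --- to deduce the combinatorial constraints (such as $a_{i+1} \neq a_i$ and the admissibility of successive eigenvalues) that force the content vectors to be exactly those read off from standard Young tableaux.

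Parts (c) and (d) would then follow as corollaries of this spectral picture. For part (c), the inclusion $\mathrm{GZ}(j) \subseteq \mathrm{GZ}(k)$ means that the primitive idempotent $w_{\mathsf{S}}$ of $\mathrm{GZ}(j)$ decomposes in the finer algebra $\mathrm{GZ}(k)$ as the sum of those primitive idempotents $w_{\mathsf{T}}$ whose eigenline refines that of $w_{\mathsf{S}}$, and by the content description in part (b) these are exactly the tableaux $\mathsf{T} \in \Tab(k)$ with $\mathsf{S} \subseteq \mathsf{T}$. For part (d), applying part (c) with $j = k-1$ gives $w_{\overline{\mathsf{T}}} = \sum_{\overline{\mathsf{S}} = \overline{\mathsf{T}}} w_{\mathsf{S}}$, on each summand of which $J_k$ acts by the scalar $c_0(\mathsf{S}_k)$. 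Since distinct addable boxes of a fixed shape have distinct contents, the scalars $c_0(\mathsf{S}_k)$ are pairwise distinct, so the Lagrange interpolation expression $\prod_{\mathsf{S} \neq \mathsf{T}} \frac{J_k - c_0(\mathsf{S}_k)}{c_0(\mathsf{T}_k) - c_0(\mathsf{S}_k)}$ is the spectral projector onto the $c_0(\mathsf{T}_k)$-eigenspace of $J_k$; applying it to $w_{\overline{\mathsf{T}}}$ extracts exactly $w_{\mathsf{T}}$, yielding the stated recursion.
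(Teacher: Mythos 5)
Your proposal is correct and follows exactly the Okounkov--Vershik approach that the paper itself relies on: the paper states \cref{prop:jm} as a known result with a citation to \cite{oko-ver96} and gives no proof of its own. Your sketch --- commuting centres generating the Gelfand--Tsetlin algebra, primitive idempotents indexed by standard Young tableaux via simple branching, the spectral identification of eigenvalues with contents, and parts (c)--(d) deduced from refinement of idempotents and Lagrange interpolation --- is precisely that standard argument.
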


A naive generalisation of \cref{prop:jm} cannot hold since the $\J$-operators do not commute in general. For example, one can check that $\J_2 \J_3(\m) \neq \J_3 \J_2(\m)$, for $\m = (13 \mmid 24 \mmid 56) \in \V_3$. However, it appears that the $\J$-operators exhibit better behaviour when restricted to the following analogue of the Gelfand--Tsetlin algebra.

\begin{definition}
For $k$ a positive integer, let 
\[
\X(k) = \langle \J_1, \J_2, \ldots, \J_k \rangle \cdot \e_k \subseteq \V_k.
\]
That is, $\X(k)$ is the orbit of $\e_k$ under the action of the algebra of $\J$-operators.
\end{definition}

Motivated by the role of the Jucys--Murphy elements in the representation theory of the symmetric groups, we form the following conjectures. These are analogous to the results of \cref{prop:jm}, which correspond to the case $b = 0$.

\begin{conjecture} \label{con:jucys-murphy} ~
\begin{enumerate} [label=({\alph*})]
\item The $\J$-operators commute when restricted to $\X(k)$ --- that is, $\J_m \J_n (v) = \J_n \J_m (v)$ for $1 \leq m \leq n \leq k$ and for all $v \in \X(k)$.

\item There exists a basis $\{\w_\mathsf{T}: \mathsf{T} \in \Tab(k)\}$ of $\X(k)$. The $\J$-operators $\J_1, \J_2, \ldots, \J_k$ act diagonally on $\X(k)$, according to the formula
\[
\J_i \cdot \w_\mathsf{T} = c_b(\mathsf{T}_i) \, \w_\mathsf{T}, \qquad \text{for } i = 1, 2, \ldots, k,
\]
where $c_b$ denotes the $b$-content.

\item For any tableau $\mathsf{S} \in \Tab(j)$ with $j \leq k$, the vector $\w_\mathsf{S} \in \X(j) \subseteq \X(k)$ satisfies
\[
\w_\mathsf{S} = \sum_{\substack{\mathsf{T} \in \Tab(k) \\ \mathsf{S} \subseteq \mathsf{T}}} \w_\mathsf{T}.
\]
In particular, the special case $j = 0$ leads to the statement
\[
\e_k = \sum_{\mathsf{T} \in \Tab(k)} \w_\mathsf{T}.
\]

\item For any tableau $\mathsf{T} \in \Tab(k)$, the vector $\w_\mathsf{T} \in \X(k)$ satisfies the recursive equation
\[
\w_\mathsf{T} = \Bigg(\prod_{\substack{\mathsf{S} \in \Tab(k) \\ \mathsf{S} \neq \mathsf{T}, \overline{\mathsf{S}} = \overline{\mathsf{T}}}} \frac{\J_k - c_b(\mathsf{S}_k)}{c_b(\mathsf{T}_k) - c_b(\mathsf{S}_k)} \Bigg) \cdot \w_{\overline{\mathsf{T}}},
\]
where $\overline{\mathsf{T}}$ denotes the tableau obtained from $\mathsf{T}$ by removing the box labelled $k$.
\end{enumerate}
\end{conjecture}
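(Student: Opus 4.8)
The plan is to argue by induction on $k$, working throughout over the field $\mathbb{C}(b)$ and exploiting the branching structure exactly as in the Okounkov--Vershik treatment of the classical case recorded in \cref{prop:jm}. The decisive simplification at general $b$ is that the $b$-contents $c_b(\Box) = (b+1)(j-1) - (i-1)$ of distinct boxes are \emph{distinct as elements of $\mathbb{C}(b)$} — indeed $c_b(\Box)$ already determines both coordinates $(i,j)$ — whereas at $b = 0$ the content depends only on $j-i$ and collisions must be handled with care. Consequently, once part (a) is known and the spectrum of each $\J_i$ on $\X(k)$ is identified with a set of $b$-contents, the joint eigenspaces are automatically one-dimensional and are forced to be indexed by standard Young tableaux. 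I would therefore reduce the whole statement to two inputs: commutativity (a), and the claim that the eigenvalues of $\J_i$ on $\X(k)$ are precisely $b$-contents with tableau-matching multiplicities. Parts (b), (c), (d) then follow formally.

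For the inductive step, I would first identify $\X(k-1)$ with the span inside $\V_k$ of those pair partitions that contain the pair $\{2k-1, 2k\}$: each operator $\J_1, \ldots, \J_{k-1}$ involves only transpositions $(a~2i-1)$ with $a, 2i-1 < 2k-1$, hence preserves this span and restricts there to the corresponding level-$(k-1)$ operator, and $\e_k$ lies in it. Granting commutativity and the spectral claim on $\X(k)$, decompose $\X(k)$ into joint eigenspaces for $\J_1, \ldots, \J_{k-1}$; their eigenvalue-tuples are content-sequences of tableaux, distinct over $\mathbb{C}(b)$, so the eigenspaces are labelled by $\mathsf{S} \in \Tab(k-1)$, and $\w_\mathsf{S} \in \X(k-1) \subseteq \X(k)$ lies in the $\mathsf{S}$-eigenspace. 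The operator $\J_k$ commutes with $\J_1, \ldots, \J_{k-1}$ and hence preserves each such eigenspace; I would show that its eigenvalues on the piece above $\mathsf{S}$ are exactly the $c_b(\Box)$ for the addable boxes $\Box$ of the shape of $\mathsf{S}$, each occurring once. Defining $\w_\mathsf{T}$, for $\overline{\mathsf{T}} = \mathsf{S}$, to be the $c_b(\mathsf{T}_k)$-eigencomponent of $\w_\mathsf{S}$ under $\J_k$ yields the diagonal action of (b); the identity $\w_\mathsf{S} = \sum_{\overline{\mathsf{T}} = \mathsf{S}} \w_\mathsf{T}$ is simply the resolution of $\w_\mathsf{S}$ into its $\J_k$-eigencomponents, which iterates to the general branching statement (c) and, at $j = 0$, to $\e_k = \sum_{\mathsf{T}} \w_\mathsf{T}$. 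Finally, (d) is the Lagrange interpolation projector onto the $c_b(\mathsf{T}_k)$-eigenline, whose denominators $c_b(\mathsf{T}_k) - c_b(\mathsf{S}_k)$ are nonzero precisely because of the content-separation noted above.

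The main obstacle is establishing the two inputs at general $b$, namely commutativity and the identification of the spectrum; it is exactly here that the representation-theoretic scaffolding available at $b = 0$ (the Gelfand--Tsetlin algebra of $\mathbb{C}[S_k]$) and at $b = 1$ (the Gelfand pair $(S_{2k}, H_k)$, its zonal spherical functions, and the odd Jucys--Murphy description of \cref{prop:jucysmurphy_A,prop:ojucysmurphy}) is missing. I see two routes. The first is a direct combinatorial attack: compute $[\J_m, \J_n] \cdot v$ for $v \in \X(k)$ from the charge rules defining $\omega^{(b)}$ in \cref{def:weightfunction}, reducing commutativity to a finite case analysis of how two successive transpositions reorganise the cycles and charges of the graph $\Gamma(\m)$. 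This is delicate because the identity is false on all of $\V_k$ and holds only after restriction to $\X(k)$, so one must first secure an intrinsic, charge-invariant description of $\X(k)$ to have something concrete to compute against. The second route is to pin the spectrum using the already-proven case of \cref{prop:bJucysMurphy}: since complete homogeneous symmetric polynomials generate the ring of symmetric functions, the established identity $h_m(\J_1, \ldots, \J_k) \cdot \w_\lambda = h_m(\mathrm{cont}_b(\lambda)) \, \w_\lambda$ constrains the multiset of eigenvalues on each $\w_\lambda$ to be $\mathrm{cont}_b(\lambda)$, and the $\mathbb{C}(b)$-separation of contents would then let one peel off the individual eigenlines; this must be combined with a polynomiality argument in $b$, anchored at the solved values $b \in \{0, 1\}$, to upgrade from symmetrised statements about $\w_\lambda$ to the full commuting diagonalisation.

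Ultimately I would regard the genuine missing ingredient as an explicit algebraic model for $\X(k)$ — presumably furnished by Jack symmetric functions and the $b$-deformed characteristic map $\mathrm{ch}^{(b)}$ — on which a manifestly commutative family of operators acts with $b$-content spectrum; transporting that structure back through the definition of the $\J$-operators would simultaneously yield (a) and the spectral statement, and thereby settle \cref{con:jucys-murphy}. Absent such a model, the combinatorial route above is the most concrete available path, with the intrinsic description of $\X(k)$ as its principal difficulty.
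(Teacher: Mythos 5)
The statement you address is \cref{con:jucys-murphy}, which is a \emph{conjecture} in the paper: the authors offer no proof, only direct computational verification for $1 \leq k \leq 5$ and the partial result \cref{prop:bJucysMurphy}, and they explicitly name the missing representation-theoretic foundations at general $b$ as the obstruction. So there is no paper proof to compare against; the question is whether your proposal closes the gap, and it does not. Its formal skeleton is sound and matches the Okounkov--Vershik scaffolding the authors themselves have in mind: over $\mathbb{C}(b)$ distinct boxes have distinct $b$-contents (since $c_b(\Box) = (j-1)b + (j-i)$ determines $(i,j)$), so \emph{granting} commutativity on $\X(k)$ and the identification of the joint spectrum with tableau content-vectors, parts (b), (c), (d) do follow by eigenspace decomposition, branching, and Lagrange interpolation, with the denominators in (d) nonzero by content-separation. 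Your observation that $\J_1, \ldots, \J_{k-1}$ only involve transpositions not touching $2k-1$ or $2k$, hence preserve the span of pair partitions containing $\{2k-1,2k\}$ and identify $\X(k-1)$ inside $\X(k)$, is also correct. But you defer precisely the two inputs that constitute the entire difficulty, so the proposal is a plan, not a proof.

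Moreover, your ``second route'' contains a concrete error. You cite as already proven the identity $h_m(\J_1, \ldots, \J_k) \cdot \w_\lambda = h_m(\mathrm{cont}_b(\lambda)) \, \w_\lambda$. What \cref{prop:bJucysMurphy} actually establishes is the identity applied to $\e_k$, namely $h_r^{\geq}(\J_1, \ldots, \J_k) \cdot \e_k = \sum_{\lambda \vdash k} h_r(\mathrm{cont}_b(\lambda)) \, \w_\lambda$; the eigenvector statement for an individual $\w_\lambda$ is part (b) of \cref{con:bJMsymmetric} and remains open. One cannot peel the action on a single $\w_\lambda$ out of the summed identity without already knowing that symmetric functions of the $\J$-operators act diagonally on the $\w_\lambda$ --- that is, without assuming what is to be proved. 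The proposed rescue by ``polynomiality in $b$ anchored at $b \in \{0,1\}$'' also fails: the matrix entries of $[\J_m, \J_n]$ are polynomials in $b$ of degree up to $2$, and vanishing at two values of $b$ does not force identical vanishing; worse, the subspace $\X(k)$ is itself defined over $\mathbb{C}(b)$ and its specialisations vary with $b$, so there is no fixed space on which to interpolate. That leaves your first route --- a direct charge-and-cycle analysis of $\omega^{(b)}$ on $\Gamma(\m)$, preceded by an intrinsic description of $\X(k)$ --- which is exactly the open combinatorial problem the authors leave unresolved.
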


\cref{con:jucys-murphy} has been verified directly for $1 \leq k \leq 5$ using SageMath. Since the vector space $\V_k$ has dimension $(2k-1)!!$, extending these calculations to the case $k = 6$ naively involves working with $10395 \times 10395$ matrices, beyond the computational power readily available to us. It is likely that a more efficient implementation should be able to confirm the conjecture for larger values of $k$, although the verifications for $1 \leq k \leq 5$ already provide reasonably compelling evidence.

\cref{prop:jm} follows from an analysis of the branching structure of the irreducible representations of the symmetric groups~\cite{oko-ver96}. More generally, variations on this theme arise for certain multiplicity-free sequences of finite-dimensional algebras~\cite{DLS18}. However, the current context does not immediately suggest an obvious candidate for a family of algebraic structures whose representation theory would lead to \cref{con:jucys-murphy}. A naive approach would be to seek a family of algebras on the spaces $\V_1, \V_2, \V_3, \ldots$, but we were unable to accomplish this. On the other hand, it appears that the subspaces $\X(1), \X(2), \X(3), \ldots$ are strongly analogous to the Gelfand--Tsetlin algebras that arise in the representation theory of symmetric groups.

\begin{example} \label{ex:jmconjecture}
The $k = 3$ case of \cref{con:jucys-murphy} is sufficiently small to be computed by hand, yet sufficiently large to be illustrative. Consider the four standard Young tableaux with three boxes.

\ytableausetup{mathmode, boxframe=0.3mm,boxsize=6mm}
\[
\begin{ytableau}
\none[\mathsf{A}] & \none[=~~] & \mathsf{1} \\
\none & \none & \mathsf{2} \\
\none & \none & \mathsf{3}
\end{ytableau}
\qquad \qquad
\begin{ytableau}
\none[\mathsf{B}] & \none[=~~] & \mathsf{1} & \mathsf{2} \\
\none & \none & \mathsf{3}
\end{ytableau}
\qquad \qquad
\begin{ytableau}
\none[\mathsf{C}] & \none[=~~] & \mathsf{1} & \mathsf{3} \\ 
\none & \none & \mathsf{2}
\end{ytableau}
\qquad \qquad
\begin{ytableau}
\none[\mathsf{D}] & \none[=~~] & \mathsf{1} & \mathsf{2} & \mathsf{3}
\end{ytableau}
\]

The corresponding basis vectors of $\X(3)$ can be recursively computed via part (d) of \cref{con:jucys-murphy} to produce the following.
\begin{align*}
\w_{\mathsf{A}} &= \frac{(b+1) - \J_2}{b+2} \cdot \frac{(b+1) - \J_3}{b+3} \cdot \e_3 \\
\w_{\mathsf{B}} &= \frac{\J_2 + 1}{b+2} \cdot \frac{2(b+1) - \J_3}{2b+3} \cdot \e_3 \\
\w_{\mathsf{C}} &= \frac{(b+1) - \J_2}{b+2} \cdot \frac{\J_3 + 2}{b+3} \cdot \e_3 \\
\w_{\mathsf{D}} &= \frac{\J_2 + 1}{b+2} \cdot \frac{\J_3 + 1}{2b+3} \cdot \e_3
\end{align*}

One can then verify that the operators $\J_1, \J_2, \J_3$ are mutually diagonalised by these basis vectors. Furthermore, one can compute the eigenvalues to be the following, thus verifying parts (a) and (b) of \cref{con:jucys-murphy}.
\begin{align*}
\J_1 \cdot \w_\mathsf{A} &= 0 &
\J_2 \cdot \w_\mathsf{A} &= - \w_\mathsf{A} &
\J_3 \cdot \w_\mathsf{A} &= -2 \, \w_\mathsf{A} \\
\J_1 \cdot \w_\mathsf{B} &= 0 &
\J_2 \cdot \w_\mathsf{B} &= (b+1) \, \w_\mathsf{B} &
\J_3 \cdot \w_\mathsf{B} &= - \w_\mathsf{B} \\
\J_1 \cdot \w_\mathsf{C} &= 0 &
\J_2 \cdot \w_\mathsf{C} &= - \w_\mathsf{C} &
\J_3 \cdot \w_\mathsf{C} &= (b+1) \, \w_\mathsf{C} \\
\J_1 \cdot \w_\mathsf{D} &= 0 &
\J_2 \cdot \w_\mathsf{D} &= (b+1) \, \w_\mathsf{D} &
\J_3 \cdot \w_\mathsf{D} &= 2(b+1) \, \w_\mathsf{D}
\end{align*}

Finally, part (c) of \cref{con:jucys-murphy} can then be checked in a straightforward manner --- for example, the fact that $\w_{\mathsf{A}} + \w_{\mathsf{B}} + \w_{\mathsf{C}} + \w_{\mathsf{D}} = \e_3$ follows from the algebraic identity
\[
\frac{(b+1) - \J_2}{b+2} \cdot \frac{(b+1) - \J_3}{b+3} +
\frac{\J_2 + 1}{b+2} \cdot \frac{2(b+1) - \J_3}{2b+3} +
\frac{(b+1) - \J_2}{b+2} \cdot \frac{\J_3 + 2}{b+3} +
\frac{\J_2 + 1}{b+2} \cdot \frac{\J_3 + 1}{2b+3} = 1.
\]
\end{example}

Recall that the orthogonality relations for the Weingarten function $\wga$ of \cref{thm:orthogonality_A} led to the succinct formula of \cref{prop:jucysmurphy_A} for the Weingarten function in terms of Jucys--Murphy elements. An analogous, more general, statement can be made for the Weingarten function $\wgb$ for which \cref{prop:orthogonality_BT} plays the role of the orthogonality relations. The precise statement is provided below without proof, which would be almost identical to that of \cref{prop:jucysmurphy_A}.

\begin{proposition} \label{prop:jucysmurphy_b}
For each positive integer $k$, we have the following equality in $\V_k$.
\[
\sum_{\m \in \P_k} \wgb(\m) \, \m = \frac{M + \J_k}{N + \J_k} \cdots \frac{M + \J_2}{N + \J_2} \cdot \frac{M + \J_1}{N + \J_1} \cdot \e_k
\]
\end{proposition}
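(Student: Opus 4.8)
The plan is to prove the identity by induction on $k$, following closely the template of the proof of \cref{prop:jucysmurphy_A}, with the plain odd Jucys--Murphy element $J_{2k-1}$ replaced throughout by the $b$-deformed operator $\J_k$. Writing $S_k = \sum_{\m \in \P_k} \wgbt(\m)\,\m \in \V_k$, the base case $k = 1$ is immediate from $\wgbt((12)) = \frac{M}{N}$ together with $\J_1 = 0$. For the inductive step I would take the orthogonality relations of \cref{prop:orthogonality_BT}, multiply by $\m$, and sum over all $\m \in \P_k$, producing three sums corresponding to the three terms on the right-hand side.

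The key manipulation is the change of variables $\n = (i~2k-1)\cdot\m$ in each sum. For the first (type $A$) term this turns the weight $\omega^{(b)}(\m,(i~2k-1)\cdot\m)$ into $\omega^{(b)}((i~2k-1)\cdot\n,\n)$, which is exactly the weight appearing in \cref{def:bJM}; summing over $i$ reconstitutes $\J_k(\n)$, so the term equals $-\frac{1}{N}\J_k S_k$. The second (type $B$) term collects the pair partitions containing $\{2k-1,2k\}$ and equals $\frac{M}{N}\,T_{k-1}$, where $T_{k-1} = \sum_{\n\in\P_{k-1}}\wgbt(\n)\,\n^{\uparrow}$ and $\n^{\uparrow}$ denotes $\n$ with the pair $\{2k-1,2k\}$ adjoined. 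For the third (type $C$) term one checks that, given $\n \in \P_{k-1}$ and $1 \le i \le 2k-2$, the element $\m = (i~2k-1)\cdot\n^{\uparrow}$ is the unique pair partition with $\{i,2k\}\in\m$ and $[(i~2k-1)\cdot\m]^{\downarrow} = \n$, so this term equals $\frac{1}{N}\sum_{\n}\wgbt(\n)\sum_{i=1}^{2k-2}(i~2k-1)\cdot\n^{\uparrow}$. Rearranging the resulting identity
\[
S_k = -\frac{1}{N}\J_k S_k + \frac{M}{N}T_{k-1} + \frac{1}{N}\J_k T_{k-1}
\]
gives $(N + \J_k)\,S_k = (M + \J_k)\,T_{k-1}$, whence $S_k = \frac{M+\J_k}{N+\J_k}\,T_{k-1}$, the operator $(N+\J_k)^{-1}$ being well defined as a power series in $N^{-1}$.

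The point requiring the most care — and the reason the type $C$ edges were assigned weight $1$ — is the identification of the third sum with $\frac{1}{N}\J_k T_{k-1}$, which hinges on the behaviour of $\omega^{(b)}$ on pair partitions containing $\{2k-1,2k\}$. Concretely, I would show that for any $\n \in \P_{k-1}$ one has $\J_k(\n^{\uparrow}) = \sum_{i=1}^{2k-2}(i~2k-1)\cdot\n^{\uparrow}$ with every weight equal to $1$: in the graph $\Gamma((i~2k-1)\cdot\n^{\uparrow})$ the vertices $i$ and $2k-1$ are both adjacent to $2k$ and therefore carry equal charge, so $\omega^{(b)}((i~2k-1)\cdot\n^{\uparrow},\n^{\uparrow}) = 1$. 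This is exactly the charge computation already made in the proof of \cref{prop:btmonotonecombinatorics}, and it confirms both that the third term may be rewritten as $\frac{1}{N}\J_k T_{k-1}$ and that no spurious factor of $b$ intrudes.

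Finally, to close the induction I would observe that the embedding $v \mapsto v^{\uparrow}$ intertwines each $\J_i$ for $i \le k-1$: the transpositions defining $\J_i$ fix the labels $2k-1,2k$, and adjoining the two-cycle $\{2k-1,2k\}$ to the graph leaves the charges of all smaller labels unchanged, so the relevant weights are unaffected. Applying $\uparrow$ to the inductive hypothesis for $S_{k-1}$ and using $\e_{k-1}^{\uparrow} = \e_k$ then yields $T_{k-1} = \frac{M+\J_{k-1}}{N+\J_{k-1}}\cdots\frac{M+\J_1}{N+\J_1}\,\e_k$, and substituting into $S_k = \frac{M+\J_k}{N+\J_k}T_{k-1}$ gives the claimed product in the stated order. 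The main obstacle is thus not the algebraic rearrangement, which is formally identical to the $t$-deformed case, but the two weight computations for $\omega^{(b)}$ — that it degenerates to the trivial weight on pair partitions containing $\{2k-1,2k\}$, and that it is stable under $\uparrow$ — which together guarantee that the weighted sums assemble precisely into the operators $\J_1, \ldots, \J_k$.
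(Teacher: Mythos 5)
Your proof is correct and follows exactly the route the paper intends: the paper states \cref{prop:jucysmurphy_b} without proof, remarking that it would be ``almost identical'' to the inductive proof of \cref{prop:jucysmurphy_A}, and your argument is precisely that induction via \cref{prop:orthogonality_BT}, with the right interpretation of the Weingarten function as the $bt$-version. You also correctly supply the two points the paper glosses over --- that $\omega^{(b)}$ degenerates to $1$ on the type $C$ terms (the same charge argument as in the proof of \cref{prop:btmonotonecombinatorics}) so the third sum assembles into $\frac{1}{N}\J_k T_{k-1}$, and that the weights are stable under $\n \mapsto \n^{\uparrow}$ so the inductive hypothesis transports from $\V_{k-1}$ to $\V_k$.
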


\subsection{Relations with symmetric functions}

As previously mentioned, there remain many unresolved conjectures concerning Jack functions, such as those of Hanlon~\cite{han88}, Stanley~\cite{sta89}, Goulden--Jackson~\cite{gou-jac96}, and Alexandersson--Haglund--Wang~\cite{ale-hag-wan21}. Furthermore, the notion of $b$-deformation that is inspired by Jack functions appears to fit into the more general framework of refinement in mathematical physics, although this picture is far from complete~\cite{CDO24b}. Thus, it would be desirable to have a deeper understanding of Jack functions and $b$-deformations. The main obstruction is a lack of the representation-theoretic foundations that are known when 
\begin{itemize}
\item $b = 0$: corresponding to the theory of Schur functions, symmetric groups $S_k$, and general linear groups $GL_n(\mathbb{C})$, and 
\item $b = 1$: corresponding to the analogous theory of zonal polynomials, Gelfand pairs $(S_{2k}, H_k)$, and Gelfand pairs $(GL_n(\mathbb{R}), O_n)$.
\end{itemize}

In this section, we consider some conjectural relations that link the $b$-deformed Jucys--Murphy operators to the theory of symmetric functions. They support the notion that the $b$-deformation obtained by passing from Schur functions to Jack functions is aligned with the $b$-deformation of the Jucys--Murphy elements that we introduced in \cref{def:bJM}. Furthermore, one can speculate that these conjectures hint at a deeper underlying algebraic theory that may help to unlock some of the unresolved problems concerning Jack functions and $b$-deformations.

Drawing analogy from the representation theory underlying the cases $b=0$ and $b=1$, we introduce the following concepts. Denote by $\Z \V_k \subseteq \V_k$ the subspace of vectors whose coefficients with respect to the basis of pair partitions are constant on coset-type. For $\lambda \vdash k$, define
\begin{equation} \label{eq:pwbases}
\p_\lambda = \sum_{\substack{\m \in \P_k \\ \lambda(\m) = \lambda}} \m \qquad \qquad \text{and} \qquad \qquad \w_\lambda = \frac{1}{\mathrm{hook}_b(\lambda) \, \mathrm{hook}_b'(\lambda)} \sum_{\mu \vdash k} \big\langle J_\lambda^{(b)}(\bm{p}), p_\mu \big\rangle_b \, \p_\mu,
\end{equation}
where the $b$-deformed inner product is defined by \cref{eq:innerproduct_b}. It is immediate that $\{\p_\lambda: \lambda \vdash k\}$ is a basis for $\Z \V_k$ and that these basis vectors are analogous to the conjugacy classes $\{C_\lambda: \lambda \vdash k\}$ in the centre of the symmetric group algebra $Z\mathbb{C}[S_k]$. Since the Jack functions form a basis for the algebra of symmetric functions, it follows that $\{\w_\lambda: \lambda \vdash k\}$ is also a basis for $\Z \V_k$. These basis vectors are analogous to the orthogonal idempotents $\{\epsilon_\lambda: \lambda \vdash k\}$ in the centre of the symmetric group algebra $Z\mathbb{C}[S_k]$. The Gelfand--Tsetlin basis $\{w_\mathsf{T}: \mathsf{T} \in \Tab(k)\}$ is related to the orthogonal idempotents via the equation
\[
\sum_{\mathsf{T} \in \Tab(\lambda)} w_\mathsf{T} = \epsilon_\lambda,
\]
where $\Tab(\lambda)$ denotes the set of standard Young tableaux with shape $\lambda$. Furthermore, for any symmetric function $f$, we have the relation
\[
f(J_1, J_2, \ldots, J_k) \cdot \epsilon_\lambda = f(\mathrm{cont}_0(\lambda)) \, \epsilon_\lambda.
\]
These observations motivate the following conjectures.

\begin{conjecture} ~ \label{con:bJMsymmetric}
\begin{enumerate} [label=({\alph*})]
\item For each partition $\lambda \vdash k$,
\[
\w_\lambda = \sum_{\mathsf{T} \in \Tab(\lambda)} \w_\mathsf{T}.
\]

\item For any symmetric function $f$ in $k$ variables and any partition $\lambda \vdash k$,
\[
f(\J_1, \J_2, \ldots, \J_k) \cdot \w_\lambda = f(\mathrm{cont}_b(\lambda)) \, \w_\lambda,
\]
where $\mathrm{cont}_b(\lambda)$ is the multiset of $b$-contents of boxes in $\lambda$, as defined in \cref{subsec:jack}. As a corollary, any symmetric function of the $\J$-operators preserves the space $\Z \V_k$.
\end{enumerate}
\end{conjecture}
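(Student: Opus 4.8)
The plan is to deduce parts (a), (b) and the corollary from \cref{con:jucys-murphy}, which supplies the simultaneous eigenbasis $\{\w_\mathsf{T} : \mathsf{T} \in \Tab(k)\}$ of $\X(k)$, together with the Laplace--Beltrami correspondence of \cref{con:b-laplace}. All of these remain conjectural at general $b$, so what follows is a conditional argument designed to isolate the one genuinely new structural input that is needed.

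Granting \cref{con:jucys-murphy}, part (b) reduces to part (a) by a short eigenvalue computation. Writing $\tilde{\w}_\lambda = \sum_{\mathsf{T} \in \Tab(\lambda)} \w_\mathsf{T}$, each summand satisfies $\J_i \cdot \w_\mathsf{T} = c_b(\mathsf{T}_i)\, \w_\mathsf{T}$, so for any symmetric polynomial $f$,
\[
f(\J_1, \ldots, \J_k) \cdot \w_\mathsf{T} = f\big( c_b(\mathsf{T}_1), \ldots, c_b(\mathsf{T}_k) \big)\, \w_\mathsf{T} = f(\mathrm{cont}_b(\lambda))\, \w_\mathsf{T},
\]
where the second equality uses that $\{c_b(\mathsf{T}_1), \ldots, c_b(\mathsf{T}_k)\} = \mathrm{cont}_b(\lambda)$ as multisets for every $\mathsf{T} \in \Tab(\lambda)$ and that $f$ is symmetric. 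Summing over $\mathsf{T} \in \Tab(\lambda)$ and invoking part (a), namely $\w_\lambda = \tilde{\w}_\lambda$, gives $f(\J) \cdot \w_\lambda = f(\mathrm{cont}_b(\lambda))\, \w_\lambda$, which is (b). The corollary is then immediate: since $\{\w_\lambda : \lambda \vdash k\}$ is a basis of $\Z\V_k$ and $f(\J)$ scales each $\w_\lambda$, the operator $f(\J)$ preserves $\Z\V_k$.

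The heart of the matter is therefore part (a). I would first check that $\tilde{\w}_\lambda \in \Z\V_k$ --- for instance, by exhibiting the sum over a full shape as invariant under the coset-type symmetry, so that both $\w_\lambda$ and $\tilde{\w}_\lambda$ lie in $\Z\V_k$ and may be transported to symmetric functions via the characteristic map $\mathrm{ch}^{(b)}$ of \cref{eq:char_b}. By the construction of $\w_\lambda$ in \cref{eq:pwbases} together with the inner product \cref{eq:innerproduct_b}, the image $\mathrm{ch}^{(b)}(\w_\lambda)$ is a fixed scalar multiple of the Jack function $J_\lambda^{(b)}$. Thus (a) is equivalent to the claim that $\mathrm{ch}^{(b)}(\tilde{\w}_\lambda)$ is the same scalar multiple of $J_\lambda^{(b)}$. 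To establish this, I would exploit (b): $\tilde{\w}_\lambda$ is a joint eigenvector of every $f(\J)$ with eigenvalue $f(\mathrm{cont}_b(\lambda))$. Under $\mathrm{ch}^{(b)}$ these operators should correspond to a commuting family of operators on symmetric functions, containing $D(b)$ via \cref{con:b-laplace} (which matches $\J_1 + \cdots + \J_k$ with the Laplace--Beltrami operator) and, more generally, the Sekiguchi--Debiard operators whose joint eigenfunctions are exactly the Jack functions. Since for generic $b$ the multiset $\mathrm{cont}_b(\lambda)$ determines $\lambda$ --- the linear polynomial $c_b(\Box) = (b+1)(j-1) - (i-1)$ records both coordinates of the box $\Box$ --- this forces $\mathrm{ch}^{(b)}(\tilde{\w}_\lambda)$ to be proportional to $J_\lambda^{(b)}$. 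The proportionality constant is then pinned down by summing over $\lambda$ and matching $\sum_\lambda \tilde{\w}_\lambda = \e_k = \p_{(1^k)}$ against $\sum_\lambda \w_\lambda$, yielding (a).

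The main obstacle is precisely the structural input of the previous paragraph. The excerpt furnishes only the single relation $\J_1 + \cdots + \J_k \leftrightarrow D(b)$ of \cref{con:b-laplace} and, via \cref{prop:bJucysMurphy}, the eigenvalue statement (b) for $f$ a complete homogeneous symmetric polynomial. Upgrading this to the full correspondence between symmetric functions of the $\J$-operators and a commuting family of higher-order operators on symmetric functions --- and verifying that this family has the Jack functions as joint eigenfunctions with the predicted eigenvalues --- is exactly what is missing, and it is essentially equivalent to the representation-theoretic foundation for general $b$ whose absence is flagged after \cref{con:jmmeta}. Underlying even this is the assumption in \cref{con:jucys-murphy} that the $\J$-operators are simultaneously diagonalisable on $\X(k)$ with $b$-content eigenvalues; establishing that commuting, diagonalisable structure without an ambient algebra is, in my view, the hardest single step.
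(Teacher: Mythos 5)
The statement you are addressing is a \emph{conjecture} in the paper: the authors give no proof of \cref{con:bJMsymmetric}, only computational verification for $1 \leq k \leq 5$ via SageMath and the partial result of \cref{prop:bJucysMurphy}, which establishes the analogue of part (b) applied to $\e_k$ when $f$ is an elementary or complete homogeneous symmetric polynomial, using the ordered substitution $f^{\geq}$ to sidestep the unproven commutativity. Your proposal is therefore rightly conditional, and its sound core --- deducing part (b) and the corollary from part (a) together with \cref{con:jucys-murphy} --- is exactly the dependency the paper itself records after \cref{con:b-laplace}, where it observes that part (b) of \cref{con:bJMsymmetric} and \cref{con:b-laplace} would follow from \cref{con:jucys-murphy} and part (a) of \cref{con:bJMsymmetric}. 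That reduction is correct: each $\w_\mathsf{T}$ with $\mathsf{T} \in \Tab(\lambda)$ is a joint eigenvector whose eigenvalue multiset is $\mathrm{cont}_b(\lambda)$, so summing over $\Tab(\lambda)$ and invoking (a) gives the eigenvalue equation for $\w_\lambda$, and the corollary follows because $\{\w_\lambda : \lambda \vdash k\}$ spans $\Z\V_k$.

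The genuine gap lies in your argument for part (a), and you have named it yourself: you need that, under $\mathrm{ch}^{(b)}$, the family $\{f(\J_1,\ldots,\J_k)\}$ restricted to $\Z\V_k$ corresponds to a commuting family of operators on symmetric functions whose joint eigenfunctions are the Jack functions (a Sekiguchi--Debiard-type correspondence). Nothing in the paper supplies this beyond the single, itself conjectural, instance of \cref{con:b-laplace}; establishing it is essentially equivalent to the missing representation-theoretic foundation that the paper flags as the main obstruction to all of \cref{con:jmmeta}. Two smaller steps in your sketch also need attention. First, the claim $\sum_{\mathsf{T}\in\Tab(\lambda)}\w_\mathsf{T} \in \Z\V_k$ is not implied by \cref{con:jucys-murphy} as stated and would require its own argument. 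Second, pinning down the proportionality constants by summing over $\lambda$ requires the identity $\sum_{\lambda \vdash k} \w_\lambda = \e_k$, which does hold but only because the coefficient of $p_1^k$ in $J_\lambda^{(b)}$ equals $1$ in the $J$-normalisation, whence $\sum_{\lambda \vdash k} \big\langle J_\lambda^{(b)}, p_\mu \big\rangle_b \big/ \big(\mathrm{hook}_b(\lambda)\,\mathrm{hook}_b'(\lambda)\big) = \delta_{\mu,(1^k)}$; this is where the normalisation in \cref{eq:pwbases} enters and should be spelled out. In short: your conditional reductions are correct and reproduce the paper's own observations, but the conjecture remains open, and the step you identify as hardest --- the commuting, simultaneously diagonalisable structure with $b$-content eigenvalues, absent any ambient algebra --- is indeed the one no one has closed.
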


Observe that part (b) of \cref{con:bJMsymmetric} --- in particular, the expression $f(\J_1, \J_2, \ldots, \J_k)$ --- only makes sense in light of part (a) of \cref{con:jucys-murphy}, which asserts that the $\J$-operators commute on the space $\X(k)$.

Introduce the following $b$-deformation of the Frobenius characteristic map.
\begin{align} \label{eq:char_b}
\mathrm{ch}^{(b)}: \Z \V_k &\to \mathbb{C}(b)[p_1,p_2,\ldots]_k \notag \\
\p_\mu &\mapsto \frac{1}{(1+b)^{\ell(\mu)} z_\mu} \, p_\mu,
\end{align}
This defines a natural vector space isomorphism to the symmetric functions of homogeneous degree $k$, expressed in terms of power-sum symmetric functions. By construction, applying the $b$-deformed characteristic map to the ``orthogonal idempotent'' $\w_\lambda$ recovers the Jack polynomial $J_\lambda^{(b)}$ via the formula
\[
\mathrm{ch}^{(b)} (\w_\lambda) = \frac{J_\lambda^{(b)}(\bm{p})}{\mathrm{hook}_b(\lambda) \, \mathrm{hook}_b'(\lambda)}.
\]
Again, by analogy with the known behaviour of Jucys--Murphy elements and Schur functions, the pull-back of the sum $\J_1 + \J_2 + \cdots + \J_k$ should give rise to the Laplace--Beltrami operator, which appears in the definition of the Jack functions --- see \cref{def:jack}.

\begin{conjecture} \label{con:b-laplace}
For all $\lambda \vdash k$,
\[
\mathrm{ch}^{(b)} \big( (\J_1 + \J_2 + \cdots + \J_k) \cdot \p_\lambda \big) = D(b) \cdot \mathrm{ch}^{(b)}(\p_\lambda).
\]
\end{conjecture}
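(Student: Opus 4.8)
The plan is to compute the action of the operator $\Phi := \J_1 + \J_2 + \cdots + \J_k$ on a basis vector $\p_\lambda$ of $\Z\V_k$ directly in the pair-partition basis, to show that the result again lies in $\Z\V_k$, and then to read off the claimed identity after applying $\mathrm{ch}^{(b)}$. This closely parallels the classical fact that, in $Z\mathbb{C}[S_k]$, multiplication by the sum of all transpositions $\sum_i J_i$ intertwines (under the Frobenius characteristic) with the cut-and-join operator $D(0)$; here I would carry out the Jack/$b$ analogue using the explicit weight function $\omega^{(b)}$ of \cref{def:weightfunction}. Concretely, fixing a target $\n \in \P_k$ and using that each transposition is an involution, the coefficient of $\n$ in $\Phi \cdot \p_\lambda$ can be rewritten as
\[
[\n]\, \Phi \cdot \p_\lambda = \sum_{\substack{(a~c):\, c \text{ odd},\, a < c \\ \lambda((a~c) \cdot \n) = \lambda}} \omega^{(b)}\big(\n, (a~c) \cdot \n\big),
\]
a weighted count of odd transpositions whose action carries the coset-type of $\n$ to $\lambda$, with each weight computed from the graph $\Gamma(\n)$ and its charges.

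First I would classify these transpositions by their effect on the cycle structure of $\Gamma(\n)$, exactly along the cut/join/flip trichotomy appearing in the proof of \cref{prop:expansion_B}: a transposition with both endpoints in one cycle either splits it (endpoints at even distance, hence equal charge and weight $1$) or preserves its length (odd distance, hence opposite charge and weight $b$), while a transposition joining two cycles merges them, with weight $1$ or $b$ according to the charges. Since $\n$ indexes the output and $\lambda$ the input, a merge in $\Gamma(\n)$ records a splitting of $\lambda$ and a split in $\Gamma(\n)$ records a merging of $\lambda$. Summing the charge contributions over the transpositions realising a fixed merge of two cycles yields the factor $(1+b)$, the odd-distance transpositions contribute $b$, and the splitting transpositions contribute $1$; these three features correspond respectively to the summands $\sum (i+j) p_i p_j \partial_{i+j}$, $b\sum i(i-1) p_i \partial_i$, and $(b+1)\sum ij\, p_{i+j}\partial_i\partial_j$ of $D(b)$. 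Granting that the coefficient above depends only on $\lambda(\n)$, one obtains $\Phi \cdot \p_\lambda = \sum_\mu a_{\lambda\mu}(b)\, \p_\mu$, and the normalisation $\p_\mu \mapsto (1+b)^{-\ell(\mu)} z_\mu^{-1} p_\mu$ built into $\mathrm{ch}^{(b)}$ redistributes the powers of $(1+b)$ exactly so that each structure constant matches the corresponding coefficient of $D(b) \cdot \mathrm{ch}^{(b)}(\p_\lambda)$ term by term. I have verified this bookkeeping for $\lambda = (2)$ and $\lambda = (1,1)$ at $k=2$: there $\Phi \cdot \p_{(2)} = b\,\p_{(2)} + (b+1)\,\p_{(1,1)}$ and $\Phi \cdot \p_{(1,1)} = \p_{(2)}$, which reproduce the $p_2$, $p_1^2$ outputs of $D(b)$ after normalisation.

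The main obstacle is the step granted above, namely that $\Phi \cdot \p_\lambda$ is constant on coset-type, i.e.\ that $\Phi$ preserves $\Z\V_k$. This is delicate because $\omega^{(b)}$ is genuinely \emph{not} symmetric and is not equivariant for the $H_k$-action relating pair partitions of equal coset-type: both the constraint ``$c$ odd'' and the charge rule ``largest label is positive'' depend on the labelling, and the naive relabelling bijection between the transpositions counted for $\n$ and for $\rho \cdot \n$, with $\rho \in H_k$, respects neither. One must therefore show that these two label-dependencies compensate after summation, for instance by reorganising the weighted count into a sum over unlabelled cycle data of $\Gamma(\n)$ and proving that, among the transpositions realising a given merge, split, or preservation, the odd-endpoint condition and the two charge classes always occur in the fixed proportions that produce the clean factors $1$, $(1+b)$, and $b$. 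The subsequent verification of the resulting structure constants against $D(b)$ is careful but routine.

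An alternative, shorter route would assume the earlier conjectures: \cref{con:jucys-murphy}(a) ensures that $\Phi$ acts on $\X(k)$, and taking $f = p_1$ in \cref{con:bJMsymmetric}(b) gives $\Phi \cdot \w_\lambda = \big(\sum_{\Box \in \lambda} c_b(\Box)\big) \w_\lambda$; applying $\mathrm{ch}^{(b)}$ and using $\mathrm{ch}^{(b)}(\w_\lambda) = J_\lambda^{(b)} / (\mathrm{hook}_b(\lambda)\,\mathrm{hook}_b'(\lambda))$ together with the defining eigenvalue equation $D(b) J_\lambda^{(b)} = (\sum_{\Box} c_b(\Box)) J_\lambda^{(b)}$ yields \cref{con:b-laplace} on the $\w_\lambda$-basis, hence on the $\p_\lambda$-basis by linearity. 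Since those statements are themselves conjectural, I would present the direct combinatorial computation as the primary argument and retain this route only as a consistency check.
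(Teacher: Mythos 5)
Your proposal does not close the statement, and it is worth being clear that the paper does not either: \cref{con:b-laplace} is stated as a conjecture, verified by computer for $1 \leq k \leq 5$, and the only argument the paper offers is the remark that it would follow from \cref{con:jucys-murphy} together with part (a) of \cref{con:bJMsymmetric} --- which is exactly your ``alternative, shorter route''. So that route is faithful to the paper's own (conditional) reasoning, but it is not a proof, since the inputs are themselves open.

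Your primary, direct route contains a genuine gap, and it is the one you yourself flag: showing that $(\J_1 + \cdots + \J_k) \cdot \p_\lambda$ is constant on coset-types, i.e.\ that the operator preserves $\Z\V_k$, with structure constants given by the clean factors $1$, $b$, $(1+b)$. This is not a bookkeeping step to be ``granted''; it is essentially the entire content of the conjecture. The cut/join/flip trichotomy with those factors is established in the paper (proof of \cref{prop:expansion_B}) only for transpositions $(i~2k-1)$ involving the \emph{top} element of $\P_k$, where the charge of the neighbouring vertex $2k$ and the count of admissible $i$ work out uniformly. In the sum $\J_1 + \cdots + \J_k$, the coefficient of $\n$ is a sum over all transpositions $(a~c)$ with $c$ odd and $a<c$, and both the constraint ``the larger endpoint is odd'' and the charge rule of \cref{def:weightfunction} depend on the labelling in ways that are not constant across the $H_k$-orbit of $\n$: two pair partitions of the same coset-type can distribute odd and even labels across their cycles quite differently, so there is no automatic matching of the weighted counts. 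Your correct $k=2$ computation (which I have checked: $\Phi \cdot \p_{(2)} = b\,\p_{(2)} + (b+1)\,\p_{(1,1)}$ and $\Phi \cdot \p_{(1,1)} = \p_{(2)}$, consistent with $D(b)$ after applying \cref{eq:pwbases} and the normalisation of $\mathrm{ch}^{(b)}$) shows the phenomenon but gives no mechanism forcing the ``fixed proportions'' in general; producing that mechanism is precisely what is missing, both in your write-up and in the paper. Until it is supplied, the proposal should be regarded as a plausible strategy plus a consistency check, not a proof.
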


Recall that the definition of the $\J$-operators relies heavily on the weight function $\omega^{(b)}$ of \cref{def:weightfunction}. The motivation for the construction of the weight function was primarily to recover the Virasoro constraints for the $b$-monotone Hurwitz numbers. However, there should be many such choices of the weight function and it would be desirable to understand to what extent the properties of the corresponding $\J$-operators would depend on this choice.

Observe that these conjectures concerning the $\J$-operators are not entirely independent. In particular, part~(b) of \cref{con:bJMsymmetric} and \cref{con:b-laplace} would follow as consequences of \cref{con:jucys-murphy} and part (a) of \cref{con:bJMsymmetric}.

\cref{con:bJMsymmetric,con:b-laplace} have been verified directly for $1 \leq k \leq 5$ using SageMath. We conclude with a result that constitutes evidence of a more general nature towards part (b) of \cref{con:bJMsymmetric}. Since we have not yet proven that the $\J$-operators commute on the space $\X(k)$, let us define $f^{\geq}(\J_1, \J_2, \ldots, \J_k)$ for a symmetric function $f$ in $k$ variables as the result of ordering each monomial $\J_{i_1} \J_{i_2} \cdots \J_{i_r}$ such that $i_1 \geq i_2 \geq \cdots \geq i_r$.

\begin{proposition} \label{prop:bJucysMurphy}
For $k\geq 1$ and $r\geq 0$, the elementary and homogeneous symmetric polynomials in the $\J$-operators obey the equations
\begin{align}
e_r^{\geq}(\J_1, \J_2, \ldots, \J_k) \cdot \e_k &= \sum_{\substack{\lambda \vdash k \\ \ell(\lambda) = k-r}} \p_\lambda, \label{eq:bJucysMurphy1} \\ 
h_r^{\geq}(\J_1, \J_2, \ldots, \J_k) \cdot \e_k &= \sum_{\lambda \vdash k} h_r(\mathrm{cont}_b(\lambda)) \, \w_\lambda. \label{eq:bJucysMurphy2}
\end{align}
\end{proposition}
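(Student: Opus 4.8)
The plan is to package each equation into a generating-function identity and then establish it by exploiting the adjacency structure of the $b$-Weingarten graph. Write $P(x) = (1+x\J_k)(1+x\J_{k-1})\cdots(1+x\J_1)$ and $Q(x) = (1-x\J_k)^{-1}(1-x\J_{k-1})^{-1}\cdots(1-x\J_1)^{-1}$, so that, because each elementary (resp.\ complete homogeneous) monomial appears exactly once in weakly decreasing order, $[x^r]P(x) = e_r^{\geq}(\J_1,\ldots,\J_k)$ and $[x^r]Q(x) = h_r^{\geq}(\J_1,\ldots,\J_k)$. Then \eqref{eq:bJucysMurphy1} is equivalent to $P(x)\cdot\e_k = \sum_{\m\in\P_k}x^{k-\ell(\lambda(\m))}\m$, where $\ell(\lambda(\m))$ is the number of cycles of $\Gamma(\m)$, while \eqref{eq:bJucysMurphy2} is equivalent to $Q(x)\cdot\e_k = \sum_{\lambda\vdash k}\prod_{\Box\in\lambda}(1-xc_b(\Box))^{-1}\,\w_\lambda$, using $\sum_r h_r(\mathrm{cont}_b(\lambda))\,x^r = \prod_{\Box\in\lambda}(1-xc_b(\Box))^{-1}$.

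For \eqref{eq:bJucysMurphy1} I would induct on $k$, writing $P(x)=(1+x\J_k)\,P_{k-1}(x)$ with $P_{k-1}(x)=(1+x\J_{k-1})\cdots(1+x\J_1)$. Since $\J_1,\ldots,\J_{k-1}$ never move the labels $2k-1,2k$, the lifting map $\n\mapsto\n^{\uparrow}$ intertwines the level-$(k-1)$ and level-$k$ operators, and as $\ell(\lambda(\n^{\uparrow}))=\ell(\lambda(\n))+1$ the inductive hypothesis gives $P_{k-1}(x)\cdot\e_k = \sum_{\{2k-1,2k\}\in\m}x^{k-\ell(\lambda(\m))}\m$. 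Applying the remaining factor $(1+x\J_k)$ amounts to analysing $\J_k(\m)$ for $\m$ containing the pair $\{2k-1,2k\}$: here $2k-1$ lies in an isolated $2$-cycle of $\Gamma(\m)$, so every transposition $(a~2k-1)$ with $1\le a\le 2k-2$ performs a ``join'', and $\m\mapsto(a~2k-1)\cdot\m$ is a bijection onto the pair partitions $\m'$ with $\{2k-1,2k\}\notin\m'$ (the preimage recovered by taking $a$ to be the partner of $2k$ in $\m'$), with $\ell(\lambda(\m'))=\ell(\lambda(\m))-1$. The one point requiring care is the edge weight: in $\Gamma(\m')$ the vertex $2k$ carries the largest label and hence charge $+$, while its two neighbours $a$ and $2k-1$ both receive charge $-$, so $\omega^{(b)}(\m',\m)=1$. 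Thus the $b$-weights never contribute and the join terms reproduce exactly $\sum_{\{2k-1,2k\}\notin\m'}x^{k-\ell(\lambda(\m'))}\m'$, completing the induction.

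For \eqref{eq:bJucysMurphy2} I would first record the Jucys--Murphy formula $\sum_{\m\in\P_k}\wgb(\m)\,\m = \tfrac{1}{N+\J_k}\cdots\tfrac{1}{N+\J_1}\cdot\e_k$, proved by the same induction that yields \cref{prop:jucysmurphy_A}, now applied to the orthogonality relations of \cref{prop:orthogonality_B}. Substituting $x=-1/N$ gives $\tfrac{1}{N+\J_i}=\tfrac1N(1-x\J_i)^{-1}$, whence $Q(x)\cdot\e_k = N^k\sum_{\m}\wgb(\m)\,\m$ read at $N=-1/x$. It therefore suffices to establish the Jack expansion $\sum_{\m\in\P_k}\wgb(\m)\,\m = \sum_{\lambda\vdash k}\prod_{\Box\in\lambda}\tfrac{1}{N+c_b(\Box)}\,\w_\lambda$; granting it, the factor $N^k\prod_{\Box}(N+c_b(\Box))^{-1}$ becomes $\prod_{\Box}(1-xc_b(\Box))^{-1}$ under $N=-1/x$, and extracting $[x^r]$ yields the claim. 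Since $\wgb(\m)$ depends only on coset type by \cref{prop:expansion_B}, both sides lie in $\Z\V_k$, so it is enough to match $\p_\mu$-coefficients; by the definition \eqref{eq:pwbases} of $\w_\lambda$ this reduces to the scalar identity $\sum_r \hb_r(\mu)\,\h^r = \sum_{\lambda\vdash k}\prod_{\Box\in\lambda}(1-\h c_b(\Box))^{-1}\,\tfrac{\langle J_\lambda^{(b)},p_\mu\rangle_b}{\mathrm{hook}_b(\lambda)\,\mathrm{hook}_b'(\lambda)}$ with $\h=-1/N$, which is exactly the equality of the two forms of $Z^{(b)}$ in \cref{eq:bmonotone,eq:bmonotonedisc} read off at the coefficient of $\tfrac{z^k}{\h^k}p_\mu$.

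The main obstacle is the bookkeeping in this last reduction for \eqref{eq:bJucysMurphy2}: one must translate power-sum coefficients of Jack functions into the inner product $\langle\cdot,\cdot\rangle_b$ and correctly track the factors $z_\mu$, $(b+1)^{\ell(\mu)}$ and the hook products relating \cref{eq:bmonotone} to \cref{eq:bmonotonedisc}. By contrast the combinatorial input for \eqref{eq:bJucysMurphy1} is the single charge computation giving $\omega^{(b)}(\m',\m)=1$. I would stress that neither argument invokes the conjectural commutativity or diagonalisation of the $\J$-operators: \eqref{eq:bJucysMurphy1} is purely combinatorial, and \eqref{eq:bJucysMurphy2} uses only the honest, order-prescribed product $Q(x)$ together with the already-established expansion of $\wgb$.
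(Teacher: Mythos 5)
Your proposal is correct, but it takes a genuinely different route from the paper on both equations. For \cref{eq:bJucysMurphy1}, the paper makes the same key observation that you do --- namely that $\omega^{(b)}((a~2k-1)\cdot\m,\m)=1$ whenever $\{2k-1,2k\}\in\m$, established by exactly your charge argument --- but then uses it to conclude that the $\J$-operators act as the odd Jucys--Murphy elements in this situation, so that $e_r^{\geq}(\J_1,\ldots,\J_k)\cdot\e_k = e_r(J_1,J_3,\ldots,J_{2k-1})\cdot\e_k$, and the evaluation of the latter is quoted from Matsumoto~\cite{mat11}; your induction on $k$ with the explicit join-bijection reproves that identity from scratch, which makes the argument self-contained at the cost of redoing known combinatorics. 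For \cref{eq:bJucysMurphy2}, the paper stays inside the $bt$-machinery: it starts from \cref{prop:jucysmurphy_b} (with the factors $\tfrac{M+\J_i}{N+\J_i}$), expands both sides via \cref{prop:expansion_BT} and the Jack-function form of $Z^{(bt)}$ in \cref{eq:btmonotone}, and tracks the parameter $t$ throughout; extracting the coefficient of $t^\ell$ yields the refined identity \cref{eq:bJucysMurphy3} for length-graded sums of monomial symmetric functions, from which \cref{eq:bJucysMurphy2} follows by summing over $\ell$. You instead discard $t$ entirely: your $Q(x)$ is the $t$-free skeleton of that computation, your intermediate lemma $\sum_{\m\in\P_k}\wgb(\m)\,\m = \tfrac{1}{N+\J_k}\cdots\tfrac{1}{N+\J_1}\cdot\e_k$ (not stated explicitly in the paper, but correct by the induction you indicate applied to \cref{prop:orthogonality_B}) plays the role of \cref{prop:jucysmurphy_b}, and the two forms \cref{eq:bmonotone,eq:bmonotonedisc} of $Z^{(b)}$ replace the $bt$-partition function; your bookkeeping with $z_\mu$, $(b+1)^{\ell(\mu)}$ and the hook products is the correct translation between power-sum coefficients and the inner product $\langle\cdot,\cdot\rangle_b$. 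The trade-off is clear: your argument is leaner and invokes only the $b$-deformed results, but it yields exactly the stated proposition, whereas the paper's $t$-refinement delivers the strictly stronger statement \cref{eq:bJucysMurphy3} essentially for free.
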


\begin{proof}
For the proof of \cref{eq:bJucysMurphy1}, observe that if $\m \in \P_j$ is a pair partition containing the pair $\{2j-1, 2j\}$, then $\omega^{(b)}((i ~ 2j-1) \cdot \m, \m) = 1$ for all $1 \leq i \leq 2j-2$. It then follows that, for $j \leq k$ and $\m \in \P_j \subseteq \P_k$ containing the pair $\{2j-1, 2j\}$,
\[
\J_j \cdot \m = J_{2j-1} \cdot \m.
\]
Note that this follows from the particular construction of the weight function $\omega^{(b)}$ appearing in \cref{def:weightfunction}. Since $\J_j \cdot \m \in \P_{j}$ for any $\m \in \P_j$, we have that for any $i_1 > i_2 > \cdots > i_r$,
\[
\J_{i_1} \J_{i_2} \cdots \J_{i_r} \cdot \e_k = J_{2i_1-1} J_{2i_2 -1} \cdots J_{2i_r-1} \cdot \e_k.
\]
This leads to the relation
\[
e_r^{\geq}(\J_1, \J_2, \ldots, \J_k) \cdot \e_k = e_r(J_1, J_3, \ldots, J_{2k-1}) \cdot \e_k = \sum_{\substack{\lambda \vdash k \\ \ell(\lambda) = k-r}} \p_\lambda,
\]
where the second equality follows from~\cite[Proposition~3.1]{mat11}. This completes the proof of \cref{eq:bJucysMurphy1}.

For the proof of \cref{eq:bJucysMurphy2}, we actually prove the stronger claim that for the sum of monomial symmetric functions
\[
g_{r,\ell} = \sum_{\substack{\nu \vdash r \\ \ell(\nu) = \ell}} m_\nu,
\]
we have
\begin{align} \label{eq:bJucysMurphy3}
g_{r,\ell}^{\geq}(\J_1, \J_2, \ldots, \J_k) \cdot \e_k &= \sum_{\lambda \vdash k} g_{r,\ell}(\mathrm{cont}_b(\lambda)) \, \w_\lambda.
\end{align}
Then \cref{eq:bJucysMurphy2} would follow from \cref{eq:bJucysMurphy3} by summing over $\ell = 1, 2, \ldots, r$.

Now for the proof of \cref{eq:bJucysMurphy3}, we draw on results from previous sections. We start with the statement of \cref{prop:jucysmurphy_b} and expand the right side using $\h = -\frac{1}{N}$ and $t = 1 - \frac{N}{M}$.
\begin{align} \label{eq:Jsymproof1}
\sum_{\m \in \P_k} \wgb(\m) \, \m &= \frac{M + \J_k}{N + \J_k} \cdots \frac{M + \J_2}{N + \J_2} \cdot \frac{M + \J_1}{N + \J_1} \cdot \e_k \notag \\
&= \frac{1}{(1-t)^k} \bigg( 1 + \frac{\h t \J_k}{1 - \h \J_k} \bigg) \cdots \bigg( 1 + \frac{\h t \J_2}{1 - \h \J_2} \bigg) \bigg( 1 + \frac{\h t \J_1}{1 - \h \J_1} \bigg) \cdot \e_k \notag \\
&= \frac{1}{(1-t)^k} \sum_{r=0}^\infty \h^r \sum_{\nu \vdash r} t^{\ell(\nu)} m_\nu^{\geq}(\J_1, \J_2, \ldots, \J_k) \cdot \e_k
\end{align}
On the other hand, \cref{prop:expansion_BT} allows us to express the expansion as follows.
\begin{equation} \label{eq:Jsymproof2}
\sum_{\m \in \P_k} \wgb(\m) \, \m = \frac{1}{(1-t)^k} \sum_{r=0}^\infty \h^r \sum_{\mu \vdash k } \hbt_r(\mu) \, \p_\mu.
\end{equation}
Equating the expressions in \cref{eq:Jsymproof1} and \cref{eq:Jsymproof2} leads to
\begin{equation} \label{eq:Jsymproof3}
\sum_{\mu \vdash k} \hbt_r(\mu) \, \p_\mu = \sum_{\nu \vdash r} t^{\ell(\nu)} m_\nu^{\geq}(\J_1, \J_2, \ldots, \J_k) \cdot \e_k.
\end{equation}

Now recall the expressions for the partition function given by \cref{eq:btmonotone}.
\begin{align*}
Z^{(bt)}(\bm{p}; \h, z) &= \sum_{k=0}^\infty \frac{z^k}{\h^k} \sum_{\lambda\vdash k} \Bigg( \prod_{\Box \in \lambda} \frac{1- (1-t) \h c_b(\Box)}{1 - \h c_b(\Box)} \Bigg) \frac{J_\lambda^{(b)}(\bm{p})}{\mathrm{hook}_b(\lambda) \, \mathrm{hook}_b'(\lambda)} \\
&= \sum_{k \geq 0} \frac{z^k}{\h^k} \sum_{r \geq 0} \sum_{n \geq 1} \frac{\h^r}{n!} \sum_{\mu_1, \ldots, \mu_n \geq 1} \frac{\hbt_r(\mu_1, \ldots, \mu_n)}{\mu_1 \cdots \mu_n} \, \frac{p_{\mu_1} \cdots p_{\mu_n}}{(b+1)^n}
\end{align*}

Under the assumption $|\mu| = k$, this allows us to calculate $\hbt_r(\mu)$ as a coefficient in the following way.
\begin{align*}
\hbt_r(\mu) &= [\h^r p_\mu] \sum_{\lambda\vdash k} \Bigg( \prod_{\Box\in \lambda}\frac{1 - (1-t) \h c_b(\Box)}{1- \h c_b(\Box)} \Bigg) \frac{J_\lambda^{(b)}(\bm{p})}{\mathrm{hook}_b(\lambda) \, \mathrm{hook}_b'(\lambda)} \\
&= \sum_{\lambda \vdash k} \sum_{\nu \vdash r} t^{\ell(\nu)} m_\nu(\mathrm{cont}_b(\lambda)) \frac{\langle J_\lambda^{(b)}(\bm{p}), p_\mu \rangle}{\mathrm{hook}_b(\lambda) \, \mathrm{hook}_b'(\lambda)}
\end{align*}

It follows from this equation and the definition of $\w_\lambda$ that
\begin{align} \label{eq:Jsymproof4}
\sum_{\mu \vdash k} \hbt_r(\mu) \, \p_\mu &= \sum_{\mu \vdash k} \sum_{\lambda \vdash k} \sum_{\nu \vdash r} t^{\ell(\nu)} m_\nu(\mathrm{cont}_b(\lambda)) \frac{\langle J_\lambda^{(b)}(\bm{p}), p_\mu \rangle}{\mathrm{hook}_b(\lambda) \, \mathrm{hook}_b'(\lambda)} \, \p_\mu \notag \\
&= \sum_{\lambda \vdash k} \sum_{\nu \vdash r} t^{\ell(\nu)} m_\nu(\mathrm{cont}_b(\lambda)) \, \w_\lambda
\end{align}

Finally, equate the expressions appearing in \cref{eq:Jsymproof3,eq:Jsymproof4} and extract the coefficient of $t^\ell$ to obtain
\[
\sum_{\substack{\nu \vdash r \\ \ell(\nu) = \ell}} m_\nu^{\geq}(\J_1, \J_2, \ldots, \J_k) \cdot \e_k = \sum_{\lambda \vdash k} \sum_{\substack{\nu \vdash r \\ \ell(\nu) = \ell}} m_\nu(\mathrm{cont}_b(\lambda)) \, \w_\lambda.
\]
Then \cref{eq:bJucysMurphy3} follows directly from the definition of $g_{r,\ell}$, which completes the proof.
\end{proof}

\bibliographystyle{plain}
\bibliography{bt-monotone-hurwitz}

\begin{thebibliography}{10}

\bibitem{ale-hag-wan21}
Per Alexandersson, James Haglund, and George Wang.
\newblock Some conjectures on the {S}chur expansion of {J}ack polynomials.
\newblock {\em J. Comb.}, 12(2):215--233, 2021.

\bibitem{bon-cha-dol23}
Valentin Bonzom, Guillaume Chapuy, and Maciej Do\l\polhk{e}ga.
\newblock {$b$}-monotone {H}urwitz numbers: {V}irasoro constraints, {BKP}
  hierarchy, and {$O(N)$}-{BGW} integral.
\newblock {\em Int. Math. Res. Not. IMRN}, (14):12172--12230, 2023.

\bibitem{BCDG19}
Ga\"{e}tan Borot, S\'{e}verin Charbonnier, Norman Do, and Elba Garcia-Failde.
\newblock Relating ordinary and fully simple maps via monotone {H}urwitz
  numbers.
\newblock {\em Electron. J. Combin.}, 26(3):Paper No. 3.43, 24, 2019.

\bibitem{BCGLS21}
Ga\"{e}tan Borot, S\'{e}verin Charbonnier, Elba Garcia-Failde, Felix Leid, and
  Sergey Shadrin.
\newblock Functional relations for higher-order free cumulants.
\newblock \href{https://arxiv.org/abs/2112.12184}{arXiv:2112.12184 [math.OA]},
  2021.

\bibitem{bre90}
Francesco Brenti.
\newblock Unimodal polynomials arising from symmetric functions.
\newblock {\em Proc. Amer. Math. Soc.}, 108(4):1133--1141, 1990.

\bibitem{che-eyn06}
Leonid Chekhov and Bertrand Eynard.
\newblock Hermitian matrix model free energy: {F}eynman graph technique for all
  genera.
\newblock {\em J. High Energy Phys.}, (3):014, 18, 2006.

\bibitem{CDO24a}
Nitin~K. Chidambaram, Maciej Do\l\polhk{e}ga, and Kento Osuga.
\newblock $b$-{H}urwitz numbers from {W}hittaker vectors for
  $\mathcal{W}$-algebras.
\newblock \href{https://arxiv.org/abs/2401.12814}{ arXiv:2401.12814 [math.AG]},
  2024.

\bibitem{CDO24b}
Nitin~Kumar Chidambaram, Maciej Do\l\polhk{e}ga, and Kento Osuga.
\newblock $\mathfrak{b}$-{H}urwitz numbers from refined topological recursion.
\newblock \href{https://arxiv.org/abs/2412.17502}{arXiv:2412.17502 [math.CO]},
  2024.

\bibitem{col03}
Beno\^{i}t Collins.
\newblock Moments and cumulants of polynomial random variables on unitary
  groups, the {I}tzykson-{Z}uber integral, and free probability.
\newblock {\em Int. Math. Res. Not.}, (17):953--982, 2003.

\bibitem{col-mat09}
Beno\^{i}t Collins and Sho Matsumoto.
\newblock On some properties of orthogonal {W}eingarten functions.
\newblock {\em J. Math. Phys.}, 50(11):113516, 14, 2009.

\bibitem{col-mat17}
Beno\^{i}t Collins and Sho Matsumoto.
\newblock Weingarten calculus via orthogonality relations: new applications.
\newblock {\em ALEA Lat. Am. J. Probab. Math. Stat.}, 14(1):631--656, 2017.

\bibitem{col-mat-nov22}
Beno\^{i}t Collins, Sho Matsumoto, and Jonathan Novak.
\newblock The {W}eingarten calculus.
\newblock {\em Notices Amer. Math. Soc.}, 69(5):734--745, 2022.

\bibitem{col-sni06}
Beno\^{i}t Collins and Piotr \'{S}niady.
\newblock Integration with respect to the {H}aar measure on unitary, orthogonal
  and symplectic group.
\newblock {\em Comm. Math. Phys.}, 264(3):773--795, 2006.

\bibitem{cou-do-mos23}
Xavier Coulter, Norman Do, and Ellena Moskovsky.
\newblock Integration on complex {G}rassmannians, deformed monotone {H}urwitz
  numbers, and interlacing phenomena.
\newblock \href{https://arxiv.org/abs/2308.04015}{arXiv:2308.04015 [math.CO]},
  2023.

\bibitem{do-dye-mat17}
Norman Do, Alastair Dyer, and Daniel~V. Mathews.
\newblock Topological recursion and a quantum curve for monotone {H}urwitz
  numbers.
\newblock {\em J. Geom. Phys.}, 120:19--36, 2017.

\bibitem{DLS18}
Stephen Doty, Aaron Lauve, and George~H. Seelinger.
\newblock Canonical idempotents of multiplicity-free families of algebras.
\newblock {\em Enseign. Math.}, 64(1-2):23--63, 2018.

\bibitem{eyn-ora07}
B.~Eynard and N.~Orantin.
\newblock Invariants of algebraic curves and topological expansion.
\newblock {\em Commun. Number Theory Phys.}, 1(2):347--452, 2007.

\bibitem{gou-gua-nov13}
I.~P. Goulden, Mathieu Guay-Paquet, and Jonathan Novak.
\newblock Monotone {H}urwitz numbers in genus zero.
\newblock {\em Canad. J. Math.}, 65(5):1020--1042, 2013.

\bibitem{gou-gua-nov14}
I.~P. Goulden, Mathieu Guay-Paquet, and Jonathan Novak.
\newblock Monotone {H}urwitz numbers and the {HCIZ} integral.
\newblock {\em Ann. Math. Blaise Pascal}, 21(1):71--89, 2014.

\bibitem{gou-jac96}
I.~P. Goulden and D.~M. Jackson.
\newblock Connection coefficients, matchings, maps and combinatorial
  conjectures for {J}ack symmetric functions.
\newblock {\em Trans. Amer. Math. Soc.}, 348(3):873--892, 1996.

\bibitem{gua-har15}
Mathieu Guay-Paquet and J.~Harnad.
\newblock 2{D} {T}oda {$\tau$}-functions as combinatorial generating functions.
\newblock {\em Lett. Math. Phys.}, 105(6):827--852, 2015.

\bibitem{han88}
Phil Hanlon.
\newblock Jack symmetric functions and some combinatorial properties of {Y}oung
  symmetrizers.
\newblock {\em J. Combin. Theory Ser. A}, 47(1):37--70, 1988.

\bibitem{jac70}
Henry Jack.
\newblock A class of symmetric polynomials with a parameter.
\newblock {\em Proc. Roy. Soc. Edinburgh Sect. A}, 69:1--18, 1970/71.

\bibitem{juc74}
A.-A.~A. Jucys.
\newblock Symmetric polynomials and the center of the symmetric group ring.
\newblock {\em Rep. Mathematical Phys.}, 5(1):107--112, 1974.

\bibitem{las09}
Michel Lassalle.
\newblock Jack polynomials and free cumulants.
\newblock {\em Adv. Math.}, 222(6):2227--2269, 2009.

\bibitem{mac15}
I.~G. Macdonald.
\newblock {\em Symmetric functions and {H}all polynomials}.
\newblock Oxford Classic Texts in the Physical Sciences. The Clarendon Press,
  Oxford University Press, New York, second edition, 2015.
\newblock With contribution by A. V. Zelevinsky and a foreword by Richard
  Stanley, Reprint of the 2008 paperback edition [ MR1354144].

\bibitem{mat11}
Sho Matsumoto.
\newblock Jucys-{M}urphy elements, orthogonal matrix integrals, and {J}ack
  measures.
\newblock {\em Ramanujan J.}, 26(1):69--107, 2011.

\bibitem{mat13}
Sho Matsumoto.
\newblock Weingarten calculus for matrix ensembles associated with compact
  symmetric spaces.
\newblock {\em Random Matrices Theory Appl.}, 2(2):1350001, 26, 2013.

\bibitem{mat-nov13}
Sho Matsumoto and Jonathan Novak.
\newblock Jucys-{M}urphy elements and unitary matrix integrals.
\newblock {\em Int. Math. Res. Not. IMRN}, (2):362--397, 2013.

\bibitem{mur81}
G.~E. Murphy.
\newblock A new construction of {Y}oung's seminormal representation of the
  symmetric groups.
\newblock {\em J. Algebra}, 69(2):287--297, 1981.

\bibitem{nov10}
Jonathan~I. Novak.
\newblock Jucys-{M}urphy elements and the unitary {W}eingarten function.
\newblock In {\em Noncommutative harmonic analysis with applications to
  probability {II}}, volume~89 of {\em Banach Center Publ.}, pages 231--235.
  Polish Acad. Sci. Inst. Math., Warsaw, 2010.

\bibitem{oko-ver96}
Andrei Okounkov and Anatoly Vershik.
\newblock A new approach to representation theory of symmetric groups.
\newblock {\em Selecta Math. (N.S.)}, 2(4):581--605, 1996.

\bibitem{osu24}
Kento Osuga.
\newblock Refined topological recursion revisited: properties and conjectures.
\newblock {\em Comm. Math. Phys.}, 405(12):Paper No. 296, 47, 2024.

\bibitem{sta89}
Richard~P. Stanley.
\newblock Some combinatorial properties of {J}ack symmetric functions.
\newblock {\em Adv. Math.}, 77(1):76--115, 1989.

\bibitem{wei78}
Don Weingarten.
\newblock Asymptotic behavior of group integrals in the limit of infinite rank.
\newblock {\em J. Mathematical Phys.}, 19(5):999--1001, 1978.

\end{thebibliography}

\end{document}